\documentclass{amsart}

\usepackage[letterpaper,top=2cm,bottom=2cm,left=3cm,right=3cm,marginparwidth=1.75cm]{geometry}

\usepackage[utf8]{inputenc}
\usepackage{amsmath}
\usepackage{mathtools}   
\usepackage{amsfonts}
\usepackage{listings}
\usepackage{cancel}
\usepackage{comment}
\usepackage{mathabx}
\usepackage{amssymb}
\usepackage{subfig}

\usepackage{hyperref}
\usepackage{verbatim}

\usepackage{todonotes}

\theoremstyle{plain}   						
\newtheorem{theorem}{Theorem}[section]			
\newtheorem{lemma}[theorem]{Lemma} 		 	
\newtheorem{proposition}[theorem]{Proposition}	
\newtheorem{corollary}[theorem]{Corollary}

\DeclareMathOperator{\sign}{sign}
\theoremstyle{remark} 
\newtheorem{remark}{Remark}[section]

\theoremstyle{definition}
\newtheorem{definition}{Definition}[section]

\title[On the Gross-Pitaevskii model with a moving impurity]{On the Gross-Pitaevskii model with a moving impurity: Cauchy problem and superfluidity criterion}

\author[]{Paolo Antonelli}
\address{Gran Sasso Science Institute, Viale Francesco Crispi, 7, 67100, L'Aquila, Italy}
\email{paolo.antonelli@gssi.it} 
\author[]{Martino Caliaro}
\address{Gran Sasso Science Institute, Viale Francesco Crispi, 7, 67100, L'Aquila, Italy}
\email{martino.caliaro@gssi.it}

\keywords{Gross-Pitaevskii equation, Cauchy problem, traveling waves, singular potential, orbital stability}
\subjclass{Primary: 35Q55. Secondary: 35A01, 35B35, 76A25 }

\begin{document}
\maketitle
\begin{abstract}
    We study the one-dimensional Gross-Pitaevskii equation with a traveling delta potential and non-zero conditions at infinity. This model describes the effect of a moving impurity in a quantum fluid. Firstly, we show that the associated Cauchy problem is globally well-posed in the energy space. This requires the definition of a conserved energy, which involves the notion of renormalized momentum. Secondly, we study the existence and stability of stationary states in a co-moving reference frame. It is known that there exists an impurity-dependent critical velocity above which no stationary state exists. For velocities below the critical one, two different stationary states appear. We show the orbital stability of the one with higher minimal density.
\end{abstract}
\hfill

 

\section{Introduction}
A classical experiment to probe superfluidity in dilute Bose-Einstein condensates (BEC) and superfluid helium consists in studying the response of such fluids to a moving impurity \cite{inouye, kwon,onofrio, neely, raman}. In this experiment, superfluidity manifests in the existence of a critical velocity below which the motion of the impurity is dissipationless, i.e. it happens with no exchange of momentum or energy with the fluid.\\ 
From the mathematical point of view, this phenomenon has been investigated within the Gross-Pitaevskii model, which provides an accurate description of the dynamics of dilute BEC at very low temperatures, and is a qualitative model for superfluid helium \cite{primer_quantum_fluids,pitaevskii_stringari}. A central contribution in this direction, comes from the work of Frisch, Pomeau and Rica \cite{frisch}, who studied numerically the 2D Gross-Pitaevskii equation in the presence of a traveling impenetrable disk (modeling the impurity). In their work, they observe a 'transition to dissipation': below a critical velocity the disk moves without exchanging momentum or energy with the fluid. When this velocity threshold is exceeded, the transfer of momentum happens through the nucleation of vortex-antivortex pairs at the opposite sides of the disk's surface. This transition, in agreement with the experiments, has been observed in subsequent numerical simulations of the Gross-Pitaevskii equation in dimensions two and three \cite{nore, Winiecki,Winiecki_2}.\\
In the present work we focus on a one-dimensional formulation of this problem. We consider a quantum fluid on the line, which is at rest at infinity with constant density. The impurity is modeled by means of a short-range external potential and moves through the fluid at constant speed. More precisely, we study the 1D Gross-Pitaevskii equation with a repulsive delta potential $\gamma \delta$ traveling at speed $v \in \mathbf{R}$:
\begin{equation}
    i\partial_t u + \partial_x^2 u - \gamma \delta(x-vt)u +(1-|u|^2)u  = 0, \qquad x \in \mathbf{R}, \quad t \in \mathbf{R}, \quad  \gamma >0,
    \label{eq:initial}
\end{equation}
for $u$ complex valued field on $\mathbf{R}$. We impose the condition of constant density for the fluid at infinity, which, up to rescaling, reads
\begin{equation}
    |u(x)| \to 1 \qquad \text{as} \qquad |x| \to \infty.
    \label{bozza_condition}
\end{equation}
Equation (\ref{eq:initial}) with condition (\ref{bozza_condition}) has been considered by Hakim in \cite{hakim1997nonlinear} and by several other authors in the physical literature (\cite{pham_2, pitaevskii, Leboeuf, maris2003, pavloff, pham-brachet}). As we will see, in its simplicity, this one-dimensional model captures many of the important features of its higher dimensional analogues, such as the mentioned transition to dissipation, and it provides a simple framework in which to study certain aspects of superfluidity.\\
It is convenient to write equation (\ref{eq:initial}) in a co-moving reference frame, i.e. after the change of coordinates $x \to x-vt$. Thus, we obtain 
\begin{equation}
\tag{GP-$\delta$}
    i\partial_t u + \partial_x^2 u -iv\partial_xu  - \gamma \delta(x)u +(1-|u|^2)u  = 0.
    \label{bozza_NLS}
\end{equation}
This form of the equation, with condition (\ref{bozza_condition}), is the object of our study. As we detail in the next sections, the first of our goals is to formulate the Cauchy problem associated to \eqref{bozza_NLS} and to show its global well-posedness. Moreover, we study the existence and stability of time-independent solutions to \eqref{bozza_NLS} and we discuss their relation to the phenomenon of superfluidity.
\subsection{The Cauchy problem}
In defining the space of finite energy states in which we formulate the Cauchy problem for \eqref{bozza_NLS}, we must take into account the condition \eqref{bozza_condition}. To this end, we draw inspiration from the standard Gross-Pitaevskii equation
\begin{equation}
    i\partial_t u + \partial_x^2 u +(1-|u|^2)u  = 0,
    \label{bozza_intro_gp}
\end{equation}
that corresponds to \eqref{bozza_NLS} by considering $\gamma =v =0$. In the presence of condition \eqref{bozza_condition}, this equation has been studied by many authors (\cite{zhidkov_2,zhidkov,gerard,  Gallo,antonelli}), and it provides a natural setting in which to study nonlinear phenomena in freely evolving BEC. Differently from the case of vanishing conditions at infinity, the presence of condition (\ref{bozza_condition}) permits a rich dynamics, which includes the existence of standing and traveling waves. Equation (\ref{bozza_intro_gp}) can be interpreted as the Hamiltonian system for the energy
\begin{equation}
    E_0(u) = \frac{1}{2}\int_{\mathbf{R}}|\partial_x u|^2dx + \frac{1}{4} \int_{\mathbf{R}}(1-|u|^2)^2dx.
    \label{energy_e_0}
\end{equation}
Therefore, the natural energy space associated to \eqref{energy_e_0} is determined by
\begin{equation*}
    \mathcal{E} = \{u \in H^1_{loc}(\mathbf{R})| \quad \partial_xu \in L^2(\mathbf{R}), \quad 1-|u|^2 \in L^2(\mathbf{R}) \}.
\end{equation*}
Notice that $\mathcal{E}$ is \textit{not} a vector space. On the other hand, by defining the distance
\begin{equation}
    d_{\infty}(u,w) = \bigl|\bigl|\partial_xu-\partial_xw\bigr|\bigr|_{L^2(\mathbf{R})} + \bigl|\bigl|u-w\bigr|\bigr|_{L^{\infty}(\mathbf{R})} + \bigl|\bigl||u|^2-|w|^2\bigr|\bigr|_{L^2(\mathbf{R})}
\end{equation}
the pair $(\mathcal{E},d_{\infty})$ can be shown to be a complete metric space \cite{gerard}. Moreover, one can show that $\mathcal{E}$ consists of uniformly continuous functions which satisfy (\ref{bozza_condition}). 
In \cite{zhidkov_2, zhidkov}, Zhidkov studied the Cauchy problem for \eqref{bozza_intro_gp} and proved that it is globally well-posed in $\mathcal{E}$. In higher dimensions, analogous results have been obtained by Gérard \cite{gerard}, Gallo \cite{Gallo} and Antonelli et al. \cite{antonelli}.\\ 
The singularly perturbed Gross-Pitaevskii equation, obtained from \eqref{bozza_NLS} by setting $v=0$, requires a more delicate analysis, especially in the treatment of the linear propagator. This model has been studied by Ianni et al. in \cite{LeCozIR} and it is used to describe the dynamics of a BEC in the presence of a static impurity. With condition \eqref{bozza_condition}, this equation can be interpreted as the Hamiltonian system for the energy 
\begin{equation*}
    E_\gamma(u) = E_0(u) + \frac{\gamma}{2}|u(0)|^2
\end{equation*}
and, therefore, it admits $\mathcal{E}$ as the natural energy space of the system. As proven in \cite{LeCozIR}, the Cauchy problem for this equation is globally well-posed in $\mathcal{E}$.\\
As we will discuss, the space $\mathcal{E}$ constitutes the  natural energy space also for \eqref{bozza_NLS}, and this fact motivates us to study the Cauchy problem for \eqref{bozza_NLS} in this space.\\
We begin by analyzing the \textit{local} well-posedness of \eqref{bozza_NLS} in $\mathcal{E}$, and by identifying the associated linear evolution. Firstly, we give a rigorous definition of the operator  $H_{\gamma}:=-\partial_x^2+iv\partial_x+\gamma\delta(x)$ as a singular perturbation of $-\partial_x^2 + iv\partial_x$ in $H^2(\mathbf{R})$. This is done thanks to the theory of self-adjoint extensions (\cite{albeverio_solvable, teta2018primer}). The self-adjoint operator $H_{\gamma}$ that we obtain generates a unitary group in $L^2(\mathbf{R})$, which we are able to compute explicitly, following \cite{albeverio1995fundamental}. Using the approach of Ianni et al. \cite{LeCozIR}, we extend the action of the unitary group from $L^2(\mathbf{R})$ to $\dot{H}^1(\mathbf{R})$, and consequently to $\mathcal{E}$. This extension defines the linear propagator $T_{\gamma}(t)$ associated to \eqref{bozza_NLS} in $\mathcal{E}$. \\
Once the linear propagator is identified, we show the local well-posedness in $\mathcal{E}$ by the Duhamel formulation of the equation and a fixed point argument. Moreover, we obtain a blow-up alternative which holds on the functional $E_0$, defined in (\ref{energy_e_0}).\\
The study of the propagator $T_{\gamma}(t)$ reveals an additional property of the dynamics associated to (\ref{bozza_NLS}), which is inherited from the Gross-Pitaevskii equation \cite{gerard}. If $u_0 \in \mathcal{E}$ is the initial datum of the Cauchy problem and $u(t)$ its corresponding solution, then it holds
\begin{equation}
    u(t) -u_0\in  H^1(\mathbf{R}) \qquad \text{for all $t$ of existence.}
    \label{intro_property}
\end{equation}
In other words, the dynamics associated to \eqref{bozza_NLS} preserves the behavior of $u_0$ at infinity.\\
We now turn to the problem of defining the conserved energy associated to \eqref{bozza_NLS}.
As we discuss in Section \ref{section:energy}, its definition requires a proper notion of \textit{linear momentum} for fields in $\mathcal{E}$, as the classical one $P(u) = \frac{1}{2}\Im \int_{\mathbf{R}}\overline{u}\partial_xu\ dx$ is not well-defined in this setting.
This issue has been already addressed in the context of the standard Gross-Pitaevskii equation and has been dealt with a renormalization procedure  (\cite{barashenkov, bethuel,de_laire,maris_mur}). Here we adopt a slightly different approach, which is similar to the previously mentioned, but more oriented towards the study of the Cauchy problem for \eqref{bozza_NLS}. \\
Given $u_0 \in \mathcal{E}$, we consider the affine subspace $u_0+H^1(\mathbf{R}) \subset \mathcal{E}$. In this set, we define the momentum $P_{u_0}:u_0+H^1(\mathbf{R})  \to \mathbf{R}$ by  
\begin{equation}
    P_{u_0}(u) = \frac{1}{2}\Im\int_{\mathbf{R}}(\overline{u}-\overline{u}_0)\partial_x(u+u_0)dx.
    \label{bozza_intro_mom_u0}
\end{equation}
We interpret $P_{u_0}(u)$ as the relative momentum of $u$ with respect to $u_0$. In the following, we will take $u_0$ to be the initial datum of the Cauchy problem. If $u(t)$ is the corresponding solution, the map $t \to P_{u_0}(u(t))$ is well-defined for all times of existence thanks to the property \eqref{intro_property}. Notice that the momentum is not a conserved quantity, due to the presence of the external potential in the equation of motion. \\
The Hamiltonian associated to \eqref{bozza_NLS} can be now defined as follows. Given $u_0 \in \mathcal{E}$ we define the Hamiltonian as the map $K_{u_0}:u_0+H^1(\mathbf{R}) \to \mathbf{R}$ such that
\begin{equation}
    K_{u_0}(u) = E_{\gamma}(u) - vP_{u_0}(u).
    \label{bozza_intro_energy_u0}
\end{equation} 
and we prove that it is conserved by any solution $u(t)$ with initial datum $u(0) \in u_0 + H^1(\mathbf{R})$.\\
We now exploit the conservation of $K_{u_0}$ to study the global well-posedness of \eqref{bozza_NLS}. 
By the blow-up alternative, we need to establish a bound on $E_0(u(t))$. Since $K_{u_0}$ is not sign-definite, its conservation alone is insufficient to this goal. For this reason we study the time evolution of $P_{u_0}(u(t))$, which leads to a Gr\"{o}nwall-type inequality for $E_0(u(t))$ and, consequently, to the global well-posedness of the Cauchy problem (Section \ref{section: nbu}).
\begin{theorem}
    Let $\gamma >0$ and $v \in \mathbf{R}$. For any $u_0 \in \mathcal{E}$, there exists a unique, global, continuous (for $d_{\infty}$) solution $u:\mathbf{R} \to \mathcal{E}$ to equation (\ref{bozza_NLS}) with $u(0) = u_0$. Moreover, the following properties hold.
    \begin{itemize}
        \item Conservation of energy:
    \begin{equation*}
         K_{u_0}(u(t)) = K_{u_0}(u_0) \qquad \forall t \in \mathbf{R}.
    \end{equation*}
    \item Continuity with respect to initial data: Let $R>0$ and $T>0$. There exist two constants $C_R >0$ and $C'_R>0$ such that, if $u_0,\tilde{u}_0 \in \mathcal{E}$ satisfy $E_0(u_0) \leq R$ and $E_0(\tilde{u}_0) \leq R$, the corresponding solutions $u$ and $\tilde{u}$ satisfy
        \begin{equation*}
            d_{\infty}(u(t),\tilde{u}(t)) \leq C_Rd_{\infty}(u_0,\tilde{u}_0), \qquad \forall t \in (-T,T).
        \end{equation*}
        If, in addition, $u_0-\tilde{u}_0 \in H^1(\mathbf{R})$, then the corresponding solutions $u, \tilde{u}$ satisfy $u(t)-\tilde{u}(t) \in C^0(\mathbf{R}, H^1(\mathbf{R}))$ and
        \begin{equation*}
        ||u(t)-\tilde{u}(t)||_{H^1(\mathbf{R})} \leq C'_{R}\Bigl(1+||u_0-\tilde{u}_0||_{H^1(\mathbf{R})}\Bigr)||u_0-\tilde{u}_0||_{H^1(\mathbf{R})}, \qquad \forall t \in (-T,T).
        \end{equation*}
        \end{itemize}
        \label{bozza_intro_prop_cauchy}
\end{theorem}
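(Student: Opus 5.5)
The proof has three parts: local well-posedness with a blow-up alternative, conservation of $K_{u_0}$, and an a priori bound on $E_0(u(t))$ that excludes blow-up.

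\textbf{Local theory.} First I construct $H_\gamma=-\partial_x^2+iv\partial_x+\gamma\delta$ as a self-adjoint extension of $-\partial_x^2+iv\partial_x$ restricted to $C_c^\infty(\mathbf{R}\setminus\{0\})$. The domain is given by $H^2(\mathbf{R}\setminus\{0\})$ functions with the jump condition $u'(0^+)-u'(0^-)=\gamma u(0)$. The gauge $u\mapsto e^{ivx/2}u$ conjugates $H_\gamma$ to $-\partial_x^2+\gamma\delta+v^2/4$. The group $e^{-itH_\gamma}$ is then explicit, via the known kernel of the delta Schr\"odinger group (Albeverio et al.). It is unitary on $L^2$ and bounded on $H^1$, since the form domain of the delta operator is $H^1$.

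Following Ianni et al., I extend the propagator to $\mathcal{E}$ by writing $T_\gamma(t)u_0=u_0+\int_0^t e^{-i(t-s)H_\gamma}\bigl(-H_\gamma u_0\bigr)\,ds$. Here $H_\gamma u_0=-\partial_x^2u_0+iv\partial_xu_0+\gamma u_0(0)\delta$ lies in $H^{-1}$ for $u_0\in\mathcal{E}$, and the group acts on $H^{-1}$ by duality. This gives $T_\gamma(t)u_0-u_0\in H^1$ with a bound $C(1+|t|)\bigl(\|\partial_xu_0\|_{L^2}+|u_0(0)|\bigr)$.

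Next I write the Duhamel formula $u(t)=T_\gamma(t)u_0+i\int_0^tT_\gamma(t-s)\bigl[(1-|u|^2)u\bigr]\,ds$. I run a contraction in the set $u_0+C([-T,T],H^1)$ for the $L^\infty_tH^1$ norm of $u-u_0$. The nonlinearity $(1-|u|^2)u$ is locally Lipschitz from $u_0+H^1$ to $H^1$ in terms of $E_0(u_0)$: use $\|u\|_{L^\infty}\le C(1+E_0(u)^{1/2})$ for $u\in\mathcal{E}$ together with $1-|u|^2\in L^2$ and $\partial_x|u|^2\in L^2$.

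The existence time depends only on a bound for $E_0(u_0)$. This yields uniqueness, continuity for $d_\infty$ (via $d_\infty(u,w)\lesssim\|u-w\|_{H^1}\bigl(1+\|u-w\|_{H^1}\bigr)$ on the same affine space), property \eqref{intro_property}, and the blow-up alternative: if $T^*<\infty$ then $E_0(u(t))\to\infty$. Lipschitz dependence on data in $d_\infty$, and the $H^1$ estimate when $u_0-\tilde u_0\in H^1$, follow from the same Gronwall argument on the difference of the Duhamel formulas, iterated on a uniform time step while $E_0\le R$ is controlled.

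\textbf{Energy conservation.} For data with $u_0-\phi\in D(H_\gamma)$ for a smooth reference $\phi$, the solution is regular. I differentiate $K_{u_0}(u(t))$ directly. The relative momentum is built so that
\[
\frac{d}{dt}P_{u_0}(u)=\Re\int\partial_t\bar u\,\partial_xu\,dx\cdot(\dots)
\]
matches the transport term $-iv\partial_xu$. A short computation then gives
\[
\frac{d}{dt}E_\gamma(u)=v\frac{d}{dt}P_{u_0}(u),
\]
with the boundary terms at $0$ cancelling through the jump condition. The general case follows by density and the continuous dependence.

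\textbf{Global bound (main obstacle).} $K_{u_0}$ does not control $E_0$ because $P_{u_0}$ is not sign-definite. I would compute $\frac{d}{dt}P_{u_0}(u(t))$ instead. Away from the impurity, momentum is conserved; the delta potential contributes only a localized force term $\frac{\gamma}{2}\partial_x\bigl(|u|^2\bigr)$ evaluated at $0$, and a transport contribution arises from $u_0$. I expect
\[
\Bigl|\frac{d}{dt}P_{u_0}(u)\Bigr|\le C\bigl(1+E_0(u)\bigr),
\]
using $|u(0)|^2\lesssim1+E_0$ and controlling the one-sided derivative jumps by the equation or by an averaged (weak) formulation. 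Since
\[
E_0(u(t))\le K_{u_0}(u_0)+|v|\,|P_{u_0}(u(t))|,
\]
integrating gives
\[
E_0(u(t))\le C+C|v|\int_0^t\bigl(1+E_0(u(s))\bigr)\,ds,
\]
and Gronwall yields exponential growth bounds, hence global existence by the blow-up alternative.

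The delicate point is making the derivative of $P_{u_0}$ rigorous. The pointwise trace $\partial_x|u|^2(0^\pm)$ is not controlled by $E_0$. I would try to rewrite the force term as $\gamma\,\Re\bigl(\bar u(0)\,\partial_tu\bigr)$-type quantities, or test against a cutoff $\chi$ near $0$, to keep everything at the $H^1$ level.
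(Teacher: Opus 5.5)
\textbf{Verdict: there is a genuine gap, in the global a priori bound.} Your local theory and energy identity are essentially fine. The $H^{-1}$ spectral-calculus construction of $T_\gamma(t)$, where the gain to $H^1$ comes from $\bigl|(1-e^{-it\lambda})/\lambda\bigr|(\lambda+\lambda_0)\lesssim 1+|t|$, is a legitimate shortcut to the paper's kernel estimates. The problem is the step that excludes blow-up.

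\textbf{The claimed inequality is false.} The bound $\bigl|\frac{d}{dt}P_{u_0}(u)\bigr|\le C(1+E_0(u))$ does not hold. For a regular solution, the equation on $\mathbf{R}\backslash\{0\}$ and the jump condition give
\[
\frac{d}{dt}P_{u_0}(u(t))=\Im\int_{\mathbf{R}}\partial_t\overline{u}\,\partial_xu\,dx=\frac{1}{2}\Bigl(|\partial_xu(t,0^+)|^2-|\partial_xu(t,0^-)|^2\Bigr)=\gamma\Re\bigl(\overline{u}(t,0)\,\partial_xu(t,0^-)\bigr)+\frac{\gamma^2}{2}|u(t,0)|^2 .
\]
There is no transport contribution from $u_0$: the $u_0$-dependence drops out, the $v$-term is real, and the nonlinear term is the exact derivative $\frac14\partial_x(1-|u|^2)^2$. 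What remains is a trace of $\partial_xu$ at the impurity. This is unbounded on sets where $E_0$ is bounded: data in $X^2_\gamma\cap\mathcal{E}$ with $E_0\le 1$ can have $|\partial_xu(0^-)|$ arbitrarily large. So there is no pointwise-in-time Gronwall inequality for $P_{u_0}$. The two remedies you mention are not carried out. As written, global existence is not established. Neither are the continuity estimates on arbitrary intervals $(-T,T)$, since they need a bound on $E_0(u(t))$ uniform over data with $E_0(u_0)\le R$.

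\textbf{A smaller point.} Your regularized class $u_0\in\phi+D(H_\gamma)$ with $\phi$ smooth violates the jump condition unless $\phi(0)=0$. Otherwise $\tilde H_\gamma u_0\notin L^2$, so $\partial_tu\notin L^2$. This is why the paper uses $\xi_ne^{i\varphi}$ with $\xi_n=1+\frac{\gamma}{2}|x|e^{-nx^2}$.

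\textbf{How the paper closes the bound.} It never differentiates the momentum. Conservation gives $K_{u_0}(u(t))=E_\gamma(u_0)$. Combined with $|P_{u_0}(u)|\le\frac12\|u-u_0\|_{L^2}\|\partial_x(u+u_0)\|_{L^2}$ and Young's inequality, this yields $E_0(u(t))\le C\bigl(E_\gamma(u_0)+\|u(t)-u_0\|^2_{L^2}\bigr)$. The paper then differentiates $\|u(t)-u_0\|^2_{L^2}=2\Re\langle u-u_0,-i\tilde H_\gamma u+iF(u)\rangle$.
\begin{itemize}
\item Since $u-u_0\in H^1$, the pairing with $\tilde H_\gamma u$ stays at the $H^1$ level.
\item The delta contributes only $-2\gamma\Re\bigl(i(\overline{u}(0)-\overline{u}_0(0))u(0)\bigr)$, which is bounded by $2\gamma|u_0(0)||u(0)|$.
\item The nonlinear term reduces to $2\Re\int i(\overline{u}-\overline{u}_0)u_0(1-|u|^2)$.
\end{itemize}
This gives $\frac{d}{dt}\|u-u_0\|^2_{L^2}\le C_1(u_0)\bigl(1+E_\gamma(u_0)+\|u-u_0\|^2_{L^2}\bigr)$. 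Gronwall then applies for regular data, and density extends it, with constants continuous in $d_\infty$.

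\textbf{Your cutoff idea can be made rigorous, but only with a modified functional.} Take $\chi\in C^\infty_0$ with $\chi=1$ near $0$, and set $\tilde P=P_{u_0}(u)-\frac12\Im\int\chi\,\overline{u}\,\partial_xu\,dx$. Then $\bigl|\frac{d}{dt}\tilde P\bigr|\le C_\chi(1+E_0(u))$, with no contribution from $x=0$. The correction term must then be absorbed into $E_0$ in $E_0(u(t))\le E_\gamma(u_0)+|v||P_{u_0}(u(t))|$. This works via $\|u\|_{L^\infty}\lesssim 1+E_0^{1/3}$ (Lemma~\ref{bozza_L_infty}), which makes the correction $O(1+E_0^{5/6})$. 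Your cruder bound $\|u\|_{L^\infty}\lesssim 1+E_0^{1/2}$ only makes it $O(1+E_0)$, which does not absorb for large $|v|$ unless $\|\chi\|_{L^2}$ is taken small.
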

\subsection{Stationary solutions} The second aim of the present work is the study of time-independent solutions to (\ref{bozza_NLS}) and their stability. In the following, we state our results assuming $v>0$. Analogous results hold for the case $v<0$, thanks to the symmetries of the equation. For the case $v=0$, instead, we refer to the work of Ianni et al. in \cite{LeCozIR}.\\
A time-independent solution to (\ref{bozza_NLS}) is a function $u \in \mathcal{E}$ that satisfies
\begin{equation}
     \partial_x^2 u -iv\partial_x u - \gamma \delta(x)u +(1-|u|^2)u  = 0, \qquad v>0.
     \label{bozza_stationary_sol_intro}
\end{equation}
In the absence of the external potential, i.e. when $\gamma =0$, equation (\ref{bozza_stationary_sol_intro}) reduces to
\begin{equation}
\tag{TW}
     \partial_x^2 u -iv\partial_x u +(1-|u|^2)u  = 0.
     \label{bozza_traveling_w_equation}
\end{equation}
This equation arises in the study of traveling waves for the standard Gross-Pitaevskii equation \eqref{bozza_intro_gp}, and the solutions to \eqref{bozza_traveling_w_equation} are called traveling wave profiles. These have been extensively studied (see \cite{barashenkov, bethuel_existence, chiron_paper, zhiwu}) and they play a central role in solving \eqref{bozza_stationary_sol_intro}. In the following theorem we collect some well-known properties of solutions to \eqref{bozza_traveling_w_equation}, while for a more comprehensive review of these results we refer to \cite{bethuel_existence}. As we will see, a central role is played by the speed $\sqrt{2}$, which corresponds to the speed of sound waves at infinity around the constant solution $u=1$ \cite{chiron_paper}.
\begin{theorem}[\cite{bethuel_existence}]
    Let $v \geq 0$. Assume $u$ is a solution to \eqref{bozza_traveling_w_equation}. 
    \begin{itemize}
        \item[i)] If $v \geq \sqrt{2}$, then $u$ is a constant of modulus one;
        \item[ii)] If $0 \leq v < \sqrt{2}$, up to a multiplication by a constant of modulus one and a translation, $u$ is either identically equal to 1, or $u(x) = (1+r(x))e^{i\theta(x)}$ where
        \begin{equation}
            r(x) = -1+\sqrt{\frac{v^2}{2}+(1-\frac{v^2}{2})\tanh^2\Bigl(\sqrt{\frac{2-v^2}{4}}x\Bigr)}
            \label{bozza_tw_mod}
        \end{equation}
        and $\theta(x)$ is the solution of 
        \begin{equation}
            \partial_x\theta = \frac{v}{2}\Bigl(1-\frac{1}{(1+r)^2}\Bigr), \qquad \text{with} \qquad \theta(0) = 0.
            \label{bozza_tw_phase}
        \end{equation}
    \end{itemize}
    \label{bozza_theorem_tw}
\end{theorem}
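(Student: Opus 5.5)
The plan is to reduce \eqref{bozza_traveling_w_equation} to an ODE for the density via the standard hydrodynamic first integrals. First we establish regularity and behavior at infinity. Since $u \in \mathcal{E}$ solves \eqref{bozza_traveling_w_equation} in the distributional sense, bootstrapping gives $u \in C^\infty$. Because $u$ is uniformly continuous with $|u|\to 1$, and the equation then forces $\partial_x u \in H^k$, we get $\partial_x u \to 0$ and $\eta := 1-|u|^2 \to 0$ at infinity.

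Next we derive the conservation laws. Multiplying \eqref{bozza_traveling_w_equation} by $\overline{u}$ and taking imaginary parts gives
\[
\partial_x\Bigl(\Im(\overline{u}\partial_x u) + \tfrac{v}{2}\eta\Bigr)=0 .
\]
Multiplying by $\partial_x\overline{u}$ and taking real parts gives
\[
\partial_x\Bigl(|\partial_x u|^2 - \tfrac12\eta^2\Bigr)=0 .
\]
The limits at infinity fix both constants to be zero. Hence $\Im(\overline{u}\partial_x u) = -\tfrac v2 \eta$ and $|\partial_x u|^2 = \tfrac12 \eta^2$.

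We then get an ODE for $\rho = |u|^2$. We have $\partial_x \rho = 2\Re(\overline u \partial_x u)$, and $\rho|\partial_x u|^2 = (\Re \overline u\partial_xu)^2 + (\Im\overline u \partial_x u)^2$. Combining these gives
\[
(\partial_x\rho)^2 = 2\rho\,\eta^2 - v^2\eta^2 = (2\rho - v^2)(1-\rho)^2 .
\]
We now split cases.
\begin{itemize}
\item If $\rho \equiv 1$, then $\partial_x u = 0$, so $u$ is a unimodular constant. In case i), this is the only possibility.
\item Otherwise, at a point where $\rho<1$ we must have $\rho\ge v^2/2$. Since $\rho\to1$ at infinity, $\rho$ attains a minimum $\rho_{\min}<1$ where $\partial_x\rho=0$, which forces $\rho_{\min}=v^2/2$.
\end{itemize}
When $v\ge\sqrt2$ this is impossible, because $\rho_{\min} = v^2/2 \ge 1$; this proves i). A technical subtlety is the case $v=\sqrt2$ exactly. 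There the ODE becomes $(\rho')^2 = 2(\rho-1)^3\cdot(-1)$, which has the wrong sign whenever $\rho<1$. We can also rule out $\rho>1$, since $2\rho - v^2 > 0$ there; but then maxima with $\rho>1$ would need $\rho'=0$, forcing $\rho=1$. So this case is fine.

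For $v<\sqrt2$, translate so that the minimum sits at $x=0$. Because the ODE is autonomous and Lipschitz away from its equilibria, uniqueness identifies $\rho$ with the explicit even solution $\rho = \tfrac{v^2}{2} + (1-\tfrac{v^2}{2})\tanh^2(\sqrt{(2-v^2)/4}\,x)$. We check this by direct differentiation, which gives \eqref{bozza_tw_mod}. We also need to exclude branching at $\rho_{\min}$, where the right-hand side is non-Lipschitz in the square-root form. This is handled by using the second-order equation $\rho'' = \dots$ obtained from differentiating, which is smooth.

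Finally, for the phase, on $\rho>0$ (true since $v>0$; the case $v=0$ needs separate care via the black soliton $\tanh(x/\sqrt2)$) we write $u=(1+r)e^{i\theta}$. Then $\Im(\overline u\partial_xu)=\rho\,\partial_x\theta = -\tfrac v2(1-\rho)$. This gives $\partial_x\theta = \tfrac v2(1-\tfrac1\rho)$ up to the sign convention of $v$ in the equation, matching \eqref{bozza_tw_phase}. The constant phase is absorbed by the unimodular factor.

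The main obstacle is the uniqueness and case analysis of the degenerate ODE $(\rho')^2=(2\rho-v^2)(1-\rho)^2$. This includes:
\begin{itemize}
\item ruling out solutions that sit at $\rho=1$ on half-lines and then depart, which is excluded since $\rho=1$ is an equilibrium of the smooth second-order equation;
\item the vanishing case $v=0$.
\end{itemize}
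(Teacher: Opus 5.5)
The paper does not prove this theorem; it quotes it from Bethuel et al. The closest argument in the paper is the proof of Proposition~\ref{bozza_prop_stat_nonvanishing}. There $u=(1+r)e^{i\theta}$ is written on a half-line near infinity, $(\partial_x r)^2=f(r)^2$ is derived, and non-vanishing must be propagated to all of $\mathbf{R}$ before the profile can be identified. Your two first integrals are written directly in $u$ and hold on all of $\mathbf{R}$. They give the same relation, since $(\rho')^2=4(1+r)^2(r')^2$ and $4(1+r)^2f(r)^2=(2\rho-v^2)(1-\rho)^2$, without any a priori non-vanishing, which is cleaner. Your computations, the vanishing of the constants at infinity, the explicit profile and the phase are all correct. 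In fact $\rho\,\partial_x\theta=\Im(\overline{u}\partial_xu)=-\frac v2(1-\rho)$ gives \eqref{bozza_tw_phase} exactly, so no sign caveat is needed.

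\textbf{Gap in the case analysis.} The dichotomy ``$\rho\equiv1$ or $\rho<1$ somewhere'' is not exhaustive: you must also exclude $\rho\ge1$ with $\rho>1$ somewhere.
For $v<\sqrt2$ your critical-point argument handles this, because at a maximum $\rho'=0$ forces $\rho\in\{1,v^2/2\}$, both $\le1$; but you never state it.
For $v>\sqrt2$ the same argument only yields $\max\rho=v^2/2>1$, which is no contradiction. So part i) is not proved as written. You can close it in either of two ways:
\begin{itemize}
\item use the sign constraint with the intermediate value theorem: $\rho$ cannot take values in $(1,v^2/2)$, yet $\rho\to1$ continuously at infinity;
\item use the second-order equation below, which gives $\rho''=(1-v^2/2)^2>0$ at such a maximum, a contradiction.
\end{itemize}
Also, at $v=\sqrt2$ the relation is $(\rho')^2=2(\rho-1)^3$. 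The factor $-1$ you inserted would reverse your conclusion.

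\textbf{The second-order equation and the case $v=0$.} Derive the second-order equation from \eqref{bozza_traveling_w_equation} itself:
\[
\rho''=2|\partial_xu|^2+2\Re(\overline{u}\,\partial_x^2u)=(1-\rho)(1+v^2-3\rho),
\]
which holds everywhere. Differentiating $(\rho')^2=(2\rho-v^2)(1-\rho)^2$ gives it only where $\rho'\neq0$, and that is exactly what fails on a hypothetical plateau $\rho\equiv v^2/2$. With the direct formula, the smooth Cauchy problem with data $(\rho,\rho')(x_0)=(v^2/2,0)$ identifies $\rho$ globally. Your separate discussion of branching and of departures from $\rho=1$ then becomes unnecessary.

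The case $v=0$ is within reach of the same method and should be carried out rather than deferred:
\begin{itemize}
\item $\rho_{\min}=0$ is attained at some $x_0$, and the same Cauchy problem gives $\rho=\tanh^2((x-x_0)/\sqrt2)$;
\item $\Im(\overline{u}\partial_xu)=0$ makes the phase constant on each side of $x_0$;
\item $C^1$ matching of $u$ at $x_0$ forces a phase jump of $\pi$, so $u=e^{i\phi}\tanh((x-x_0)/\sqrt2)$.
\end{itemize}
Your caution here is justified. Read literally at $v=0$, \eqref{bozza_tw_phase} gives $\theta\equiv0$ and hence $u=|\tanh(x/\sqrt2)|$. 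That function is not $C^1$ and not a solution, so the statement has to be understood with a $\pi$ phase jump at the origin.
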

In the case $\gamma>0$, equation \eqref{bozza_stationary_sol_intro} differs from \eqref{bozza_traveling_w_equation} due to the presence of the delta potential. The latter imposes a boundary condition at $x=0$, that solutions to \eqref{bozza_stationary_sol_intro} have to satisfy. This condition takes the form of a jump discontinuity on the first derivative and it reads
\begin{equation}
    \partial_xu(0^+)-\partial_xu(0^-) = \gamma u(0).
    \label{bozza_jump_intro}
\end{equation}
Therefore, in order to obtain a solution to \eqref{bozza_stationary_sol_intro} it is sufficient to match two traveling wave profiles at $x=0$ in such a way that continuity and \eqref{bozza_jump_intro} are satisfied. Following this approach, Hakim \cite{hakim1997nonlinear} determined the criterion for the existence of solutions to \eqref{bozza_stationary_sol_intro} and their explicit expression. 
\begin{proposition}[\cite{hakim1997nonlinear}]
    Let $v >0$ and $\gamma >0$. An element $u \in \mathcal{E}$ is a solution to (\ref{bozza_stationary_sol_intro}) if and only if, up to phase shifts, $u(x) = (1+r(x;\xi))e^{i\theta(x;\xi)}$ where
    \begin{equation}
        r(x;\xi) = -1+ \sqrt{\frac{v^2}{2}+(1-\frac{v^2}{2})\tanh^2\Bigl(\sqrt{\frac{2-v^2}{4}}(x\pm \xi)\Bigr)}, \qquad \pm x \geq 0,
        \label{bozza_intro_stat_sol}
    \end{equation}
    and $\theta(x;\xi)$ is the solution of $$\partial_x\theta(x;\xi) = \frac{v}{2}\Bigl(1-\frac{1}{(1+r(x;\xi))^2}\Bigr), \qquad \text{with} \qquad \theta(0;\xi) = 0.$$
    Here, $\xi>0$ is a displacement parameter that satisfies
    \begin{equation}
        \gamma = \sqrt{2}(1-v^2/2)^{3/2}\frac{\tanh(\sqrt{1/2-v^2/4}\ \xi)}{v^2/2+\sinh^2(\sqrt{1/2-v^2/4}\ \xi)}.
        \label{bozza_matching}
    \end{equation}
    \label{bozza_intro_hakim}
\end{proposition}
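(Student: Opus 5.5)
The plan is to reduce the problem, away from the origin, to the classification of traveling wave profiles in Theorem \ref{bozza_theorem_tw}, and then impose the matching conditions at $x=0$.

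\textbf{Step 1: splitting at the origin.} Let $u\in\mathcal{E}$ solve \eqref{bozza_stationary_sol_intro} in the sense of distributions. On $(0,\infty)$ and $(-\infty,0)$ the delta term vanishes, so $u$ solves \eqref{bozza_traveling_w_equation} there. By elliptic regularity $u$ is smooth on each half-line. The delta then produces the continuity of $u$ at $0$ together with the jump condition \eqref{bozza_jump_intro}.

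\textbf{Step 2: each half-line is a piece of a traveling wave.} For a solution of \eqref{bozza_traveling_w_equation} on a half-line with $|u|\to1$ at infinity, I would recover the first integrals used in \cite{bethuel_existence}.
\begin{itemize}
\item Multiplying by $\bar u$ and taking imaginary parts gives $\partial_x\bigl(\Im(\bar u\partial_x u)-\tfrac v2|u|^2\bigr)=0$.
\item Multiplying by $\partial_x\bar u$ and taking real parts gives an energy identity of the form $|\partial_xu|^2-\tfrac12(1-|u|^2)^2=\mathrm{const}$.
\end{itemize}
The constants are fixed by the decay at $\pm\infty$, using $\partial_xu\in L^2$ and $1-|u|^2\in L^2$ together with smoothness to get pointwise decay. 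This yields the ODE $(\partial_x\eta)^2 = 2\eta^2(\eta - v^2/2)\cdot(\text{const})$ for $\eta=|u|^2$, plus the phase relation \eqref{bozza_tw_phase}. These are exactly the equations solved in Theorem \ref{bozza_theorem_tw}.

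Hence on each half-line $|u|$ is either identically $1$ or a restriction of the translated profile $1+r(\cdot+a_\pm)$, and $\theta$ is determined up to a constant. In particular, if $v\ge\sqrt2$, then $u$ is constant of modulus one on each side. Continuity then makes $u$ globally constant, and the jump condition gives $\gamma u(0)=0$, which is impossible. So no solution exists in that case, consistent with the statement: for $v\ge\sqrt2$ the right-hand side of \eqref{bozza_matching} is undefined or nonpositive. The same argument excludes the case where either side is identically constant of modulus $1$ when $v<\sqrt2$: that side then has $\partial_x|u|=0$, and the modulus matching below forces the other side to be flat too.

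\textbf{Step 3: matching at $0$.} Continuity of $|u|$ forces $r(a_+)=r(a_-)$. Since the profile $r$ is even and strictly monotone on each half-line, this gives $a_+=\pm a_-$.

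The jump condition \eqref{bozza_jump_intro} has to be written in polar form. Divide by $u(0)$ and take real and imaginary parts.
\begin{itemize}
\item The imaginary part compares $\partial_x\theta(0^\pm)$. By \eqref{bozza_tw_phase} these depend only on $r(0)$, so they automatically agree.
\item The real part gives $\partial_x r(0^+)-\partial_x r(0^-)=\gamma(1+r(0))$.
\end{itemize}
Since $\gamma>0$, the right-hand side is positive, so the slope must jump upward. This excludes $a_+=a_-$, where the profile is smooth and there is no jump. It also fixes the sign: $a_-=-a_+$, with $a_+=\xi>0$ and the dip sitting outside the origin on both sides. This is the form \eqref{bozza_intro_stat_sol}, namely $x+\xi$ for $x\ge0$ and $x-\xi$ for $x\le0$, matching the statement's $x\pm\xi$ convention. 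The phase is normalized by $\theta(0)=0$ up to a global phase shift.

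\textbf{Step 4: the matching equation.} Computing $2\partial_x r(0^+)/(1+r(0))$ explicitly from \eqref{bozza_tw_mod} with $k=\sqrt{1/2-v^2/4}$ should yield
\[
\gamma=\frac{(1-v^2/2)\,2k\,\tanh(k\xi)\,\mathrm{sech}^2(k\xi)}{v^2/2+(1-v^2/2)\tanh^2(k\xi)},
\]
which simplifies to \eqref{bozza_matching}.

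\textbf{Step 5: the converse.} Such a glued function is in $\mathcal{E}$, solves \eqref{bozza_traveling_w_equation} on each side, and is continuous with the right jump. Therefore it solves \eqref{bozza_stationary_sol_intro} distributionally.

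\textbf{Main obstacle.} I expect the hardest step to be Step 2: rigorously deriving the first integrals on a half-line from the weak $\mathcal{E}$ setting and invoking the classification of Theorem \ref{bozza_theorem_tw}, which is stated for entire-line solutions. I would handle this by solving the resulting first-order ODE for $\eta$ directly rather than appealing to the theorem. The algebra in Step 4 is routine but error-prone.
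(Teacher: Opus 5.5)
Steps 4--5 of your proposal are correct, and so is the structure: first integrals, the polar form of the jump condition, and sign selection from $\gamma>0$. There is a genuine gap in Step 2, however, and Step 3 depends on it. The correct first integral is $(\partial_x\eta)^2=2(1-\eta)^2(\eta-\tfrac{v^2}{2})$, not $\eta^2(\cdots)$. For $v<\sqrt2$ its right-hand side is positive for \emph{every} $\eta>1$. So besides $\eta\equiv 1$ and translates of $(1+r)^2$, a half-line carries a third family of decaying solutions:
\[
\eta(x)=\tfrac{v^2}{2}+\bigl(1-\tfrac{v^2}{2}\bigr)\coth^2\bigl(k(x+c)\bigr),\qquad x\ge 0,\quad c>0,\quad k=\sqrt{\tfrac12-\tfrac{v^2}{4}},
\]
together with their mirror images on $(-\infty,0]$, with phase given by \eqref{bozza_tw_phase}. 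These solve \eqref{bozza_traveling_w_equation} on the half-line and have finite energy there. They blow up at $x=-c$, however, so they are not restrictions of any entire-line profile, which is exactly why Theorem \ref{bozza_theorem_tw} cannot be transplanted to half-lines. Your dichotomy ``$|u|\equiv1$ or a translate of $1+r$'' is therefore false, and the reduction $r(a_+)=r(a_-)\Rightarrow a_+=\pm a_-$ is unjustified. The same omission breaks your $v\ge\sqrt2$ claim at $v=\sqrt2$: there $(\partial_x\eta)^2=2(\eta-1)^3$ has the half-line solutions $\eta=1+2/(x+c)^2$, so $u$ need not be constant on each side.

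The missing idea is that these branches are precisely what $\gamma>0$ excludes. Gluing two of them gives a density peak at the origin, which is the stationary state for an attractive delta. Continuity of $\eta$ at $0$ forces both sides into the same family, because the three families have disjoint ranges of $\eta(0)$. In the $\eta>1$ family each side is monotone toward $1$, so $\partial_x\rho(0^+)<0<\partial_x\rho(0^-)$, and the real part of the jump condition gives $\gamma<0$. Equivalently, you can use the paper's maximum-principle step in Proposition \ref{bozza_prop_stat_nonvanishing}. A positive interior maximum of $r$ contradicts $\partial_x^2 r>0$ from \eqref{bozza_eq_r}, and a maximum at $0$ forces $\gamma\le0$. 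Once $r\le 0$ is established, your Step 3 goes through.

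A smaller caution concerns solving the first-order ODE for $\eta$ directly. The map $\eta\mapsto\sqrt{\eta-v^2/2}$ is not Lipschitz at $v^2/2$, so that equation alone admits spurious solutions that sit at $\eta=v^2/2$ on an interval. You must discard them with the second-order equation, where $\eta=v^2/2$ is not a rest point. Alternatively, follow the paper: first show $r(0)\notin\{\tfrac{v}{\sqrt2}-1,0\}$ using $\gamma\neq0$, then work on maximal intervals where $r$ avoids these two values.
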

\begin{figure}
\centering
\subfloat[]
{\includegraphics[width=.6\textwidth]{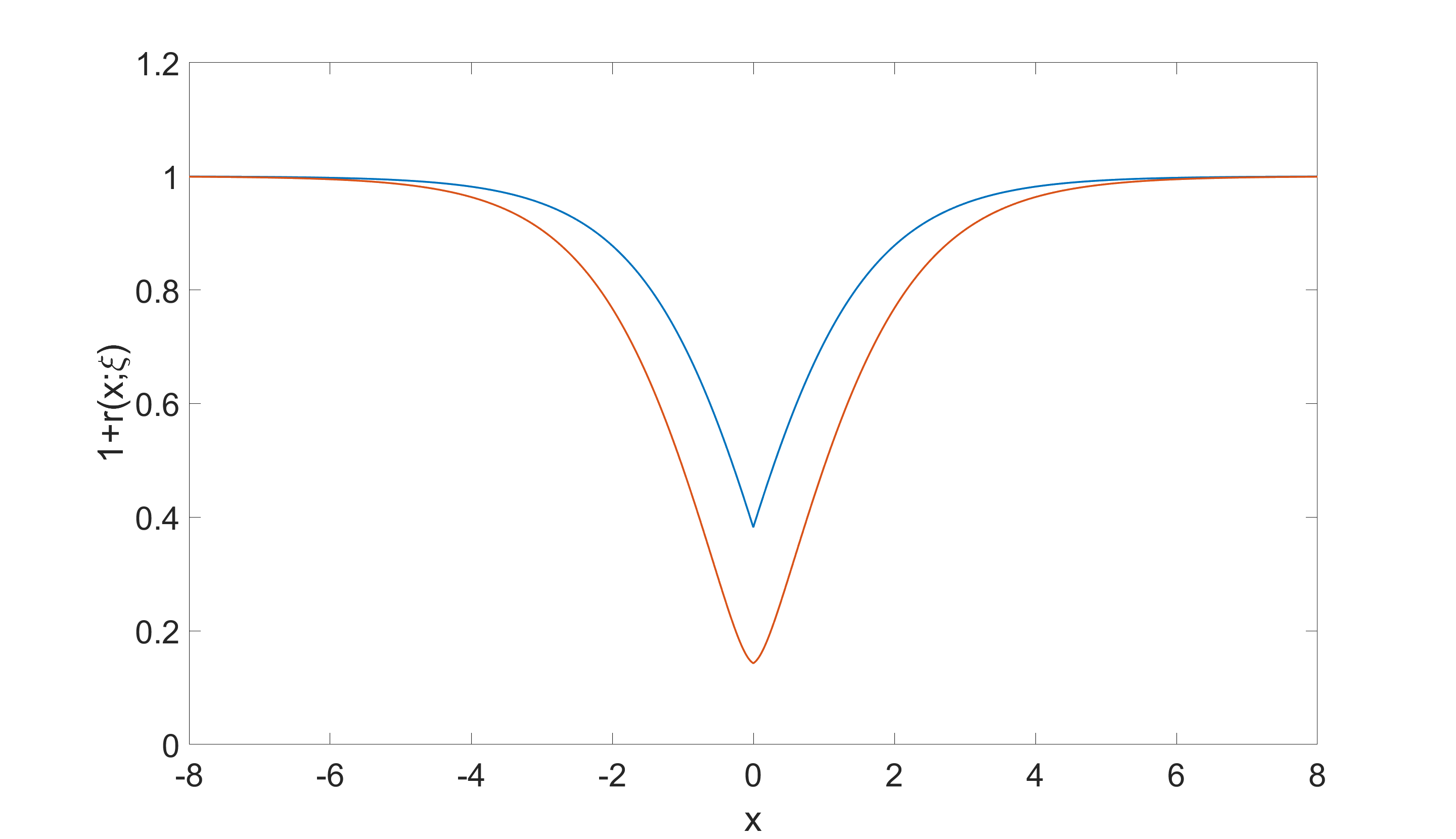}} \quad
\subfloat[]
{\includegraphics[width=.6\textwidth]{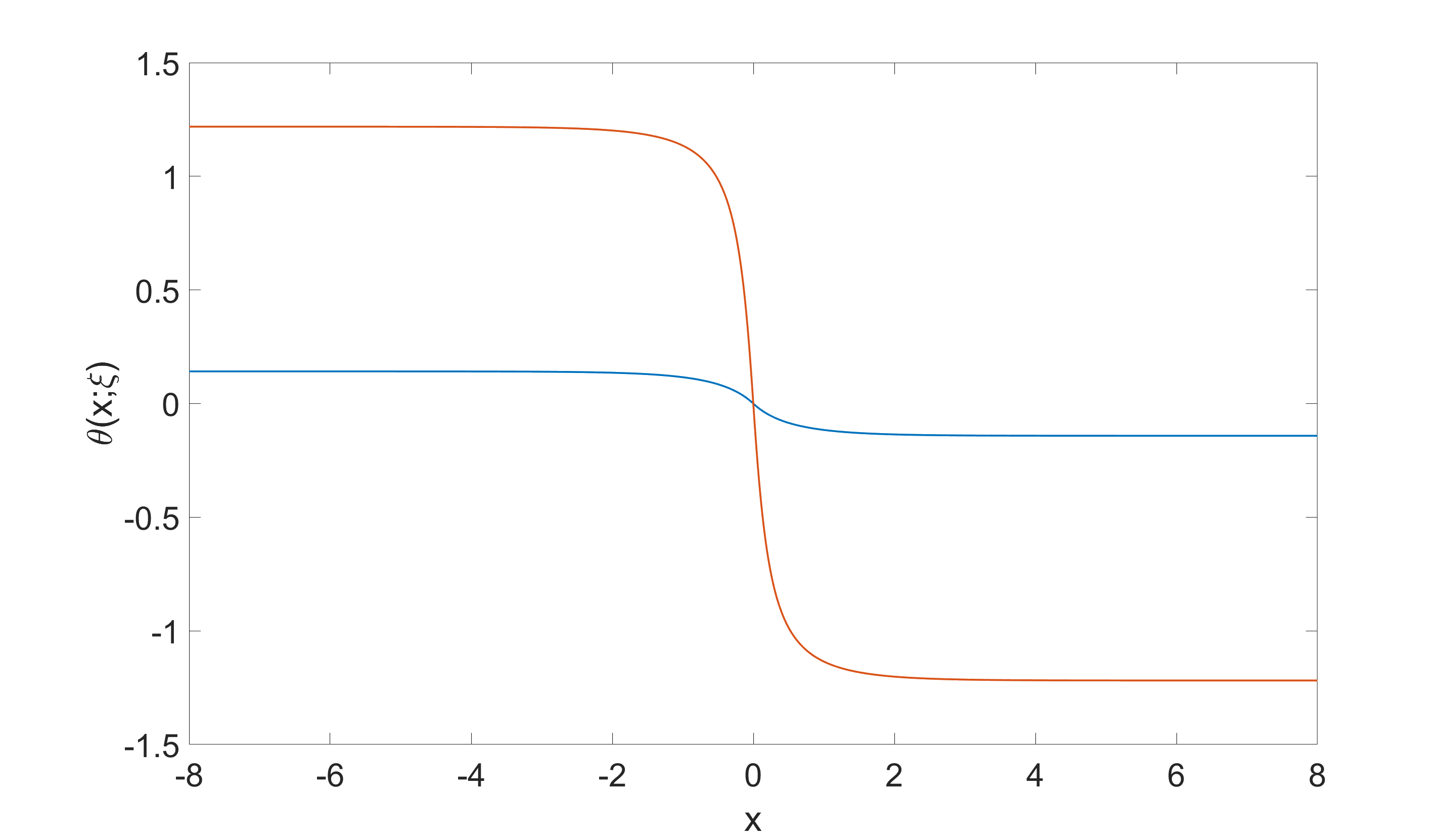}}
\caption{ We set $v=0.2$ and $\gamma = 2$. In this case the critical velocity is $v_{cr} \sim 0.419$ and the two roots of \eqref{bozza_matching} are $\xi_2\sim0.044$ and $\xi_1 \sim 0.766$. In A) we report the function $1+r(x;\xi)$ for these two values of $\xi$. The upper curve corresponds to $1+r(x;\xi_1)$, the bottom curve to $1+r(x;\xi_2)$. In B) we report the associated phases. The curve that has the smallest range corresponds to $\theta(x;\xi_1)$, while the one with the biggest range corresponds to $\theta(x;\xi_2)$.  }
\label{fig:stat_sol}
\end{figure}

By studying the relation (\ref{bozza_matching}), one can deduce the following assertions (see Proposition \ref{bozza_maris} for the complete statement).
\begin{corollary}[\cite{hakim1997nonlinear,maris2003}]
    Given $\gamma >0$, there exists a critical velocity $0<v_{cr}(\gamma)<\sqrt{2}$ such that:
    \begin{itemize}
        \item if $0<v<v_{cr}(\gamma)$, there exist two values $\xi_1,\xi_2 >0$, with $\xi_2<\xi_1$, satisfying (\ref{bozza_matching}) and, up to phase shifts, two solutions of (\ref{bozza_stationary_sol_intro});
        \item if $v=v_{cr}(\gamma)$, there exists a single value $\xi>0$ satisfying (\ref{bozza_matching}) and, up to phase shifts, a single solution to (\ref{bozza_stationary_sol_intro});
        \item if $v>v_{cr}(\gamma)$, there are no values of $\xi>0$ satisfying (\ref{bozza_matching}) and no solutions of (\ref{bozza_stationary_sol_intro}).
    \end{itemize}
    \label{bozza_corollary_intro}
\end{corollary}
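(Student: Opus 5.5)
The plan is to reduce the corollary to a one-variable analysis of the right-hand side of \eqref{bozza_matching}. By Proposition \ref{bozza_intro_hakim}, solutions of \eqref{bozza_stationary_sol_intro} (up to phase shifts) correspond bijectively to values $\xi>0$ solving \eqref{bozza_matching}. Distinct $\xi$ give distinct moduli $1+r(\cdot;\xi)$, since $1+r(0;\xi)$ is strictly increasing in $\xi$. So it suffices to count roots $\xi>0$ of $\gamma = F(v,\xi)$. For $v<\sqrt2$ we set $k=\sqrt{1/2-v^2/4}$, $a=v^2/2$ and $s=k\xi$, and write
\begin{equation*}
F(v,\xi)=\sqrt2(1-a)^{3/2}\,g_a(s),\qquad g_a(s)=\frac{\tanh s}{a+\sinh^2 s}.
\end{equation*}
For $v\ge\sqrt2$ Theorem \ref{bozza_theorem_tw} gives only constant profiles. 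These cannot satisfy the jump condition \eqref{bozza_jump_intro} with $u(0)\neq0$, so there are no solutions there, which is consistent with $v_{cr}<\sqrt2$.

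The first step is to show that, for fixed $a\in(0,1)$, $g_a$ is unimodal on $(0,\infty)$. It satisfies $g_a(0)=0$ and $g_a(s)\to0$ as $s\to\infty$, and it has exactly one critical point $s^*(a)$, which is a strict maximum. Differentiating, $g_a'(s)=0$ is equivalent to
\begin{equation*}
(a+\sinh^2 s)\,\mathrm{sech}^2 s = 2\tanh s\,\sinh s\cosh s,
\end{equation*}
which simplifies to $a+\sinh^2 s = 2\sinh^4 s$ after multiplying by $\cosh^2 s$. In the variable $y=\sinh^2 s$ this becomes $2y^2-y-a=0$, which has exactly one positive root $y^*=(1+\sqrt{1+8a})/4$. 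Hence $g_a$ increases and then decreases. We define
\begin{equation*}
M(v)=\max_{\xi>0}F(v,\xi)=\sqrt2(1-a)^{3/2}g_a(s^*(a)).
\end{equation*}
Then for $\gamma<M(v)$ there are exactly two roots $\xi_2<\xi_1$, for $\gamma=M(v)$ exactly one, and for $\gamma>M(v)$ none.

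The second step, which I expect to be the main obstacle, is to show that $M$ is continuous and strictly decreasing on $(0,\sqrt2)$, with $M(v)\to+\infty$ as $v\to0^+$ and $M(v)\to0$ as $v\to\sqrt2^-$. The limits are direct. As $a\to0$, $s^*\to\operatorname{arcsinh}(1/\sqrt2)$ stays bounded, but the explicit formula $g_a(s^*)=\tanh s^*/(a+y^*)$ does not blow up. In fact $M(0^+)$ is finite, so the claim $M\to\infty$ must be dropped. Instead I would define $v_{cr}(\gamma)$ as the unique solution of $M(v)=\gamma$ when $\gamma<M(0^+)$, and set $v_{cr}=0^+$-degenerate otherwise. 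I suspect the paper's Proposition \ref{bozza_maris} handles this, but the phrasing "$0<v_{cr}$" suggests the intended range covers every $\gamma>0$. So I would recheck the limit $a\to0$ carefully: near $\xi\to0$ we have $g_a(s)\approx s/(a+s^2)$, whose maximum $1/(2\sqrt a)\to\infty$. This is the correct blow-up. The critical point computed above came from dropping a term, so the derivative computation must be redone carefully as a polynomial in $y$ with possibly two relevant roots.

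Monotonicity of $M$ follows from the envelope theorem, $M'(v)=\partial_v F(v,\xi)|_{\xi=\xi^*}$. The factor $(1-a)^{3/2}$ decreases in $v$. At fixed $s$, $g_a$ is decreasing in $a$, but the substitution $s=k\xi$ couples $\xi$ with $v$. Working at fixed $s$ avoids this, because the maximum over $\xi$ equals the maximum over $s$. So $M(v)=\sqrt2(1-a)^{3/2}\max_s g_a(s)$ is a product of two positive strictly decreasing functions of $a$, hence strictly decreasing. With continuity, the intermediate value theorem gives a unique $v_{cr}(\gamma)\in(0,\sqrt2)$ with $M(v_{cr})=\gamma$. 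Combining this with the root count of the first step yields the three cases of the corollary.
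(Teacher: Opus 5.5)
There is a genuine gap in your first step, and it is exactly the step that gives the count ``two/one/none''. Your critical-point computation for $g_a$ is wrong. You then leave open the possibility of ``two relevant roots''. So unimodality of $g_a$ is not established, and neither is uniqueness of the root at $v=v_{cr}$.

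The error is in the last simplification. Since $2\tanh s\,\sinh s\cosh s=2\sinh^2 s$ and $\cosh^2 s=1+\sinh^2 s$, you get $\cosh^2 s\,(a+y)^2\,g_a'(s)=a+y-2y(1+y)=a-y-2y^2$ with $y=\sinh^2 s$. The critical-point equation is therefore $2y^2+y-a=0$, not $2y^2-y-a=0$. Its roots have product $-a/2<0$, so there is exactly one positive root $y^*(a)=(\sqrt{1+8a}-1)/4$. Since $y=\sinh^2 s$ is strictly increasing, $g_a$ is strictly increasing on $(0,s^*]$ and strictly decreasing on $[s^*,\infty)$, as you wanted.

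With the correct root, $y^*(a)\sim a$ as $a\to0$, so $M(v)\to+\infty$ as $v\to0^+$. This also follows from your estimate $g_a(\sqrt a)\sim 1/(2\sqrt a)$, which you should state as a lower bound for $\max_s g_a$. The claim that $M(0^+)$ is finite and the ``degenerate'' definition of $v_{cr}$ must be removed. Finally, with $y^*$ explicit, $M(v)=\sqrt2(1-a)^{3/2}\sqrt{y^*/(1+y^*)}/(a+y^*)$ is visibly continuous. Substituting $a=v^2/2$, one checks that it coincides with $\varphi(v)$ in \eqref{bozza_def_varphi}.

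A smaller point concerns $v\ge\sqrt2$. Theorem \ref{bozza_theorem_tw} classifies solutions of \eqref{bozza_traveling_w_equation} on the whole line, but a stationary solution only solves it on $(-\infty,0)$ and $(0,\infty)$. Also, \eqref{bozza_matching} is not even defined there. You need the half-line analysis of Proposition \ref{bozza_prop_stat_nonvanishing}: the first integral $(\partial_x r)^2=f(r)^2$ with $r\to0$ at infinity, plus the maximum-principle/jump argument at $v=\sqrt2$. This forces $|u|\equiv1$, hence $u$ constant, contradicting the jump condition.

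Once these points are fixed, your route is correct and differs from the paper's. The paper follows Mari\c{s} and parametrizes solutions by the minimal density $1+r(0)$. The substitution $\xi=-c(r(0))$ (Remark \ref{bozza_remark_xi_1_a_1}) turns \eqref{bozza_matching} into $\gamma=k_v(r(0))=-2f(r(0))/(1+r(0))$. It then uses unimodality of $k_v$ and the explicit maximum $\varphi(v)$, whose monotonicity and limits are asserted from the closed formula (Proposition \ref{bozza_maris}). You instead rescale $s=k\xi$ and observe that, at fixed $s$, both $(1-v^2/2)^{3/2}$ and $g_{v^2/2}(s)$ decrease strictly in $v$. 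This gives a cleaner proof that the maximal value is strictly decreasing, without differentiating the closed form of $\varphi$. The paper's parametrization, in turn, is the one it reuses later for the variational characterization through $h'(a)=(a+1)(\gamma-k_v(a))$; there $a$ is the minimal-density parameter, not your $v^2/2$.
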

    The plot of two stationary solutions for a particular choice of $v$ and $\gamma$ is reported in Figure \ref{fig:stat_sol}.\\
    The existence of stationary solutions in the subcritical regime $0<v\leq v_{cr}(\gamma)$ has interesting physical implications. In contrast to generic time-dependent solutions to (\ref{bozza_NLS}), stationary states naturally conserve the linear momentum. This means that if a quantum fluid is in a stationary configuration, the motion of the impurity happens without transfer of momentum. In the present setting, this corresponds to absence of drag forces \cite{pavloff}, and it is interpreted as superfluid motion. For this reason, the regime $0<v\leq v_{cr}(\gamma)$ is called \textit{superfluid} regime \cite{Leboeuf}.\\
    The supercritical regime $v > v_{cr}(\gamma)$, where no stationary solutions exist, is instead called \textit{dissipative}\footnote{From the mathematical point of view, the word dissipative doesn't refer to dissipation of the total energy, as the governing equations are conservative. It refers to the formation of traveling localized structures (i.e. gray solitons or quantum vortices) which carry a portion of energy away from the impurity location towards infinity. This process has the net effect of reducing the fluid velocity in the proximity of the impurity. In the experimental setting, instead, the word dissipative refers to the heating induced by a moving laser beam on a BEC, as observed in \cite{raman}.} \cite{frisch}. Numerical simulations of (\ref{bozza_NLS}) in this regime, performed by Hakim \cite{hakim1997nonlinear} and Pham \cite{pham-brachet}, display a dynamics characterized by the repeated emission of gray solitons from the location of the impurity. It is by means of this mechanism that the main transfer of momentum between the impurity and the fluid happens. This behavior is analogous to the one observed in the 2D and 3D cases, with gray solitons replaced by quantum vortices \cite{frisch, Winiecki_2}.\\
    The existence of a critical velocity above which stationary solutions disappear is in agreement with Landau's criterion of superfluidity \cite{landau}, which predicts dissipative motion above a velocity threshold. While for a homogeneous quantum fluid the critical velocity is the speed of sound $\sqrt{2}$, the value of $v_{cr}$ from Corollary \ref{bozza_corollary_intro} is strictly below $\sqrt{2}$. Moreover, it depends on the characteristics of the impurity (such as the strength $\gamma$). These two facts are consistent with BEC experiments \cite{onofrio, raman} and 2D and 3D numerical simulations of Gross-Pitaevskii models \cite{frisch, nore}\footnote{We mention that, in certain models (see \cite[Section C]{hakim1997nonlinear}), the value of the critical velocity can be determined with the following criterion: the fluid velocity (relative to the impurity) must be everywhere subsonic, i.e. smaller than the \textit{local} speed of sound. At those velocities $v$ for which this condition cannot be fulfilled, stationary solutions cease to exist and dissipative motion takes place. The occurrence of a critical speed below $\sqrt{2}$ is then explained by the fact that the fluid velocity may become locally supersonic even if $v < \sqrt{2}$. Then, the onset of dissipation corresponds to the emission of solitons (or quantum vortices) from these supersonic regions, which are usually located near impurity (see \cite{hakim1997nonlinear}, \cite{pavloff} and \cite{frisch} for a dedicated discussion in the 1D and 2D cases).}. \\
    The considerations made above seem to indicate that superfluid motion in \eqref{bozza_NLS} is characteristic of stationary states only. These states are, however, isolated configurations in the space $\mathcal{E}$, and it is of interest to investigate whether nearby solutions exhibit similar superfluid properties. To address this question, we study the stability of stationary solutions to \eqref{bozza_NLS}.\\
    From now on we focus on the case $0<v<v_{cr}(\gamma)$. Let $u(x;\xi_1)$ and $u(x;\xi_2)$ be the stationary solutions associated to the parameters $\xi_1,\xi_2$, respectively, with $\xi_2<\xi_1$, see Corollary \ref{bozza_corollary_intro}. 
    It is believed that $u(x;\xi_1)$ is stable, while $u(x;\xi_2)$ is unstable with respect to the (\ref{bozza_NLS}) dynamics \cite{hakim1997nonlinear,pham-brachet}.\\
    Our second main result consists in proving the stability of $u(x;\xi_1)$. The proof is based on a variational argument, which is inspired by the work of Mari\c{s} \cite{maris2003}.\\
    More precisely, we consider the set $U_0 = \{u \in \mathcal{E}\ | \ \inf_{x \in \mathbf{R}}|u(x)|>0\}$ of nowhere vanishing functions. On this set we can define a different notion of linear momentum $\mathcal{P}: U_0 \to \mathbf{R}$ and energy $\mathcal{K}:U_0 \to \mathbf{R}$ as in \cite{bethuel_existence}. Namely, for $u(x) = \rho(x)e^{i\theta(x)} \in U_0$, we define
    \begin{equation}
        \mathcal{P}(u) := \frac{1}{2}\Im\int_{\mathbf{R}}(\overline{u}-e^{-i\theta})\partial_x(u+e^{i\theta})dx = \frac{1}{2}\int_{\mathbf{R}}(\rho^2-1)\partial_x\theta dx
        \label{bozza_intro_mom_theta}
    \end{equation}
    and 
    \begin{equation*}
    \mathcal{K}(u) := E_{\gamma}(u) - v\mathcal{P}(u).   
    \end{equation*}
    In this case, we interpret $\mathcal{P}(u)$ in (\ref{bozza_intro_mom_theta}) as the momentum of $u \in U_0$ relative to its own phase field $e^{i\theta} \in U_0$. In Section \ref{section:energy} and \ref{section: nbu} we study the relation between the two notions of  momentum and energy we have encountered so far. In particular, we show that also the functional $\mathcal{K}$ is conserved by the solutions to (\ref{bozza_NLS}), as long as they belong to $U_0$.\\
    By following the variational argument in \cite{maris2003}, we show that the stationary state $u(x;\xi_1)$  is a local minimum of $\mathcal{K}$ in $U_0$. More precisely, $u(x;\xi_1)$ minimizes the energy $\mathcal{K}$ among all functions whose minimal density is above a certain threshold
    (see Corollary \ref{bozza_corollary_u_a1} for a precise statement).\\
    The proof of stability is then based on a careful analysis of the \eqref{bozza_NLS} dynamics in $U_0$ and on the argument of Cazenave and Lions \cite{cazenave_lions} for the stability of constrained minima. In particular, the latter argument requires us to study the compactness of minimizing sequences, which we do following the work of Bethuel et al. \cite{bethuel_existence}.\\ 
    For the stability result we use the following distance on $\mathcal{E}$
    \begin{equation}
    d_{A}(u,w) = \bigl|\bigl|\partial_xu-\partial_xw\bigr|\bigr|_{L^2(\mathbf{R})} + \bigl|\bigl|u-w\bigr|\bigr|_{L^{\infty}([-A,A])} + \bigl|\bigl||u|^2-|w|^2\bigr|\bigr|_{L^2(\mathbf{R})},
    \label{bozza_d_A}
    \end{equation}
    defined for any $A>0$.
    \begin{theorem}
        For $\gamma >0$ and $v \in \bigl(0,v_{cr}(\gamma)\bigr)$, let $\xi_1,\xi_2 >0$ be the two solutions of equation (\ref{bozza_matching}), with $\xi_2<\xi_1$. If $u(x;\xi_1)$ denotes the stationary solution to \eqref{bozza_NLS} associated with $\xi_1$, then $u(x;\xi_1)$ is orbitally stable. More precisely, for any $\varepsilon >0$ and $A>0$ there exists $\delta >0$ such that, for any $u_0 \in \mathcal{E}$ satisfying
        \begin{equation*}
            d_{A}(u_0,u(\cdot;\xi_1)) < \delta,
        \end{equation*}
        it holds
        \begin{equation*}
            \sup_{t \in \mathbf{R}}\inf_{\phi \in \mathbf{R}}d_A(\psi(t),e^{i\phi}u(\cdot;\xi_1)) \leq \varepsilon.
        \end{equation*}
        Here, $\psi(t)$ is the unique solution to (\ref{bozza_NLS}) with initial datum $u_0$.
        \label{bozza_intro_stability}
    \end{theorem}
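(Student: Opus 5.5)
We follow the Cazenave--Lions scheme, using the variational characterization of $u(\cdot;\xi_1)$ quoted above (Corollary \ref{bozza_corollary_u_a1}). Set $\mathcal{U}_\mu=\{u\in\mathcal{E}:\inf_{\mathbf{R}}|u|>\mu\}$ and let $\mu_1=\min|u(\cdot;\xi_1)|=1+r(0;\xi_1)$. The threshold $\mu_0<\mu_1$ is chosen so that:
\begin{itemize}
\item $u(\cdot;\xi_1)$ minimizes $\mathcal{K}$ on the closure $\overline{\mathcal{U}_{\mu_0}}$;
\item every minimizer of $\mathcal{K}$ on $\overline{\mathcal{U}_{\mu_0}}$ equals $e^{i\phi}u(\cdot;\xi_1)$;
\item $\inf\{\mathcal{K}(u): \inf|u|=\mu_0\}>\mathcal{K}(u(\cdot;\xi_1))$.
\end{itemize}
Such a $\mu_0$ exists because, following Mari\c{s}, minimizing $\mathcal{K}$ at fixed minimal density $\mu$ and fixed location of the minimum reduces to the one-dimensional profiles \eqref{bozza_intro_stat_sol}. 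The resulting reduced energy, viewed as a function of $\mu$, has a strict local minimum at $\mu_1$ (corresponding to $\xi_1$) and a maximum at the value corresponding to $\xi_2$. We take $\mu_0$ strictly between these two values, and we note that the minimum of $|u|$ for $u(\cdot;\xi_2)$ is smaller than $\mu_1$.

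\textbf{Step 1 (compactness).} Let $(u_n)\subset\overline{\mathcal{U}_{\mu_0}}$ satisfy $\mathcal{K}(u_n)\to\mathcal{K}(u(\cdot;\xi_1))$. We claim that, after passing to a subsequence and applying phase shifts $e^{i\phi_n}$, we have $d_A(e^{i\phi_n}u_n,u(\cdot;\xi_1))\to0$.

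Since $|u_n|\ge\mu_0>0$, the momentum $|\mathcal{P}(u_n)|$ is controlled by $E_0(u_n)$ with a constant $<\sqrt2/v$ times it. This uses the inequality $|(\rho^2-1)\theta'|\le \frac{1}{\sqrt2\,\mu_0}(\ldots)$ in the spirit of Bethuel et al., and works because $v<v_{cr}<\sqrt2$, possibly after shrinking in the form $\frac12\rho^2\theta'^2+\frac14(1-\rho^2)^2$ weighted by $1/\rho^2$. Consequently $\mathcal{K}$ is coercive, and $E_0(u_n)$ is bounded.

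The delta potential pins the solution at the origin, so no translations are needed. We extract weak limits of $\partial_xu_n$ and $1-|u_n|^2$ together with locally uniform convergence of $e^{i\phi_n}u_n$. The limit $u$ lies in $\overline{\mathcal{U}_{\mu_0}}$.

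We then rule out loss of energy at infinity. Any mass escaping to $\pm\infty$ is a configuration with modulus $\ge\mu_0$ and without the potential. For such configurations $E_0-v\mathcal P\ge0$ by the same subsonic inequality, with equality only at the constant state. Hence splitting strictly increases the energy, and $\mathcal{K}(u)\le\liminf\mathcal{K}(u_n)$ with strong convergence of the energy densities. This gives $L^2$ convergence of $\partial_x u_n$ and of $|u_n|^2-1$, while the $L^\infty([-A,A])$ convergence follows from Sobolev embedding on compacts. Uniqueness of the minimizer then identifies $u=e^{i\phi}u(\cdot;\xi_1)$.

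\textbf{Step 2 (continuity of the functionals).} We show that $\mathcal{K}$ is continuous for $d_A$ on $\mathcal{U}_{\mu_0}\cap\{E_0\le R\}$. This is the delicate point: $\mathcal{P}$ involves $\partial_x\theta=\Im(\bar u\partial_xu)/|u|^2$ and $(\rho^2-1)$, both controlled in $L^2$ by $d_A$ once $|u|\ge\mu_0$, so the product converges.

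\textbf{Step 3 (dynamics).} Take $u_0$ with $d_A(u_0,u(\cdot;\xi_1))<\delta$. For small $\delta$, $u_0\in\mathcal{U}_{\mu_0}$, because $d_A$ together with the energy bound controls $\sup|u|-|w|$ globally via $|u|^2-|w|^2\in H^1$-type estimates. By Step 2, $\mathcal{K}(u_0)$ is close to $\mathcal{K}(u(\cdot;\xi_1))$.

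By Theorem \ref{bozza_intro_prop_cauchy} the solution $\psi$ is global and continuous, and $\mathcal{K}(\psi(t))$ is conserved as long as $\psi(t)\in U_0$. A continuity argument in $t$ then applies. The map $t\mapsto\inf|\psi(t)|$ is continuous, because $d_\infty$-continuity holds and $d_\infty$ controls the sup. If $\inf|\psi(t)|$ first reached $\mu_0$, conservation would force $\mathcal{K}(\psi(t))$ to be close to $\mathcal{K}(u(\cdot;\xi_1))$, contradicting the third property of $\mu_0$ (the boundary inequality). So $\psi(t)\in\mathcal{U}_{\mu_0}$ for all $t$.

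Finally, suppose the stability conclusion failed. Then there would exist $\delta_n\to0$ and times $t_n$ with $\inf_\phi d_A(\psi_n(t_n),e^{i\phi}u(\cdot;\xi_1))>\varepsilon$. The sequence $\psi_n(t_n)$ is minimizing in $\overline{\mathcal{U}_{\mu_0}}$, which contradicts Step 1.

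The main obstacle is Step 1, specifically the exclusion of dichotomy and the energy inequality at infinity under the density constraint. We would first try to adapt the concentration-compactness lemma of Bethuel et al. directly, using the strict subsonic bound $v<\sqrt2$.
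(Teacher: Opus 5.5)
Your proposal is correct and follows essentially the paper's argument. Both use the Mari\c{s} variational characterization through $h(a)$, and the pointwise subsonic bound $|vp(u)|\le(1-\varepsilon)e(u)$ on a minimal-density set, which gives coercivity and nonnegative tail contributions in the compactness lemma (no translations are needed). Both then use $d_A$-continuity of $\mathcal{P}$ and $\mathcal{K}$ on nowhere-vanishing fields, a potential-well invariance argument, and a Cazenave--Lions contradiction.

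The only real difference is the threshold. You take $\mu_0\in(1+a_2,1+a_1)$ with barrier $h(\mu_0-1)$; the paper takes exactly $\mu_0=1+a_2$ (the set $V_2$) with barrier $h(a_2)=\mathcal{K}(u_{a_2})$, and the two are equivalent. One correction: coercivity comes from $|v\mathcal{P}(u)|\le \frac{v}{\sqrt2\,\mu_0}E_0(u)$, so it rests on $\mu_0>v/\sqrt2$ (true since $1+a_2>v/\sqrt2$), not on $v<\sqrt2$ alone, and the momentum constant is $<1/v$, not $<\sqrt2/v$.
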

     The orbital stability of the stationary state $u(x;\xi_1)$ allows us to estimate, for certain solutions, the total exchange of momentum between the impurity and the quantum fluid. More precisely, we have the following corollary.
     \begin{corollary}
         Under the same assumptions of Theorem \ref{bozza_intro_stability}, consider the stationary state $u(x;\xi_1)$. For any $\varepsilon>0$ and $A>0$, there exists $\delta>0$ such that, if $u_0 \in U_0$ satisfies $d_A(u_0,u(\cdot;\xi_1)) <\delta$, then the solution $\psi(t)$ to \ref{bozza_NLS} with $\psi(0) = u_0$ satisfies $\psi(t) \in U_0$ for all $t \in \mathbf{R}$ and
         \begin{equation*}
             \sup_{t \in \mathbf{R}}|\mathcal{P}(\psi(t))-\mathcal{P}(u_0)| \leq \varepsilon.
         \end{equation*}
         \label{bozza_corollary_intro_momentum_exchange}
     \end{corollary}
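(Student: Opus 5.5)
The plan is to deduce the corollary from Theorem \ref{bozza_intro_stability}, the conservation of $\mathcal{K}$ along solutions staying in $U_0$, and the continuity of $E_\gamma$ with respect to $d_A$ on the orbit of $u(\cdot;\xi_1)$.

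\emph{Step 1: the solution stays in $U_0$.} Set $u_1=u(\cdot;\xi_1)$ and $m_1=\inf_x|u_1(x)|>0$. The density $1+r(x;\xi_1)$ is bounded below by a positive constant: its minimum is attained at $x=0$ and equals $\bigl(v^2/2+(1-v^2/2)\tanh^2(\sqrt{1/2-v^2/4}\,\xi_1)\bigr)^{1/2}>0$. Apply Theorem \ref{bozza_intro_stability} with $\varepsilon'\le\varepsilon$ still to be chosen. On $[-A,A]$ the $L^\infty$ part of $d_A$ gives $|\psi(t)|\ge m_1-\varepsilon'$. Outside $[-A,A]$ we use the bound on $\||\psi|^2-|u_1|^2\|_{L^2}+\|\partial_x\psi-\partial_x u_1\|_{L^2}$. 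By the one-dimensional Sobolev-type estimate, the function $\eta=|\psi|^2-|u_1|^2$ satisfies $\|\eta\|_{L^\infty}^2\lesssim\|\eta\|_{L^2}\|\partial_x\eta\|_{L^2}$, and $\|\partial_x\eta\|_{L^2}$ is bounded in terms of $E_0$. We therefore get $\bigl||\psi|^2-|u_1|^2\bigr|\lesssim\varepsilon'^{1/2}$ uniformly on $\mathbf{R}$. Choosing $\varepsilon'$ small then gives $\inf_x|\psi(t,x)|\ge m_1/2$ for every $t$, so $\psi(t)\in U_0$. The energy bound needed here is uniform in $t$ because $d_A$-closeness to the orbit controls $\|\partial_x\psi\|_{L^2}$ and $\|1-|\psi|^2\|_{L^2}$.

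\emph{Step 2: control of $E_\gamma$.} Write $\inf_\phi d_A(\psi(t),e^{i\phi}u_1)\le\varepsilon'$ and pick $\phi_t$ realizing it up to a factor. The quantity $E_\gamma(\psi(t))-E_\gamma(e^{i\phi_t}u_1)$ splits into three differences:
\begin{itemize}
\item the gradient terms, controlled by the $\dot H^1$ distance;
\item the terms $\frac14\int(1-|\cdot|^2)^2$, controlled by the $L^2$ distance of the squared moduli, since both functions are bounded in $L^2$;
\item the term $\frac{\gamma}{2}|\cdot(0)|^2$, controlled by the $L^\infty([-A,A])$ part of $d_A$.
\end{itemize}
Since $E_\gamma$ is phase invariant, this gives $|E_\gamma(\psi(t))-E_\gamma(u_1)|\le\omega(\varepsilon')$ with $\omega(s)\to0$ as $s\to0$. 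The same bound holds at $t=0$, with $\delta\le\varepsilon'$.

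\emph{Step 3: conclusion.} By the conservation of $\mathcal{K}$ for solutions remaining in $U_0$, established in Sections \ref{section:energy}--\ref{section: nbu}, we have $v\mathcal{P}(\psi(t))-v\mathcal{P}(u_0)=E_\gamma(\psi(t))-E_\gamma(u_0)$. This is bounded by $2\omega(\varepsilon')$. Since $v>0$, choosing $\varepsilon'$ with $2\omega(\varepsilon')\le v\varepsilon$ gives $\sup_t|\mathcal{P}(\psi(t))-\mathcal{P}(u_0)|\le\varepsilon$.

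The main obstacle is Step 1: it needs the uniform lower bound on $|\psi|$ away from $[-A,A]$, which the $L^\infty$ part of $d_A$ does not see. My first approach is the interpolation estimate on $|\psi|^2-|u_1|^2$ described there. If the derivative bound on $\eta$ proves awkward, the fallback is the $H^1$ embedding applied to $|\psi|-|u_1|$, using $\partial_x|\psi|\in L^2$.
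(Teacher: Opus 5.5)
Your proof is correct, but it follows a different route from the paper's.

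\textbf{The paper's route.} For small $\delta$, Lemma \ref{bozza_lemma_modulus} and the $d_A$-continuity of $\mathcal{K}$ give $u_0\in V_2$ and $\mathcal{K}(u_0)<\mathcal{K}(u_{a_2})$. The potential-well Lemma \ref{bozza_pot_well} then traps $\psi(t)$ in $V_2\subset U_0$ for all $t$. The paper next splits $|\mathcal{P}(\psi(t))-\mathcal{P}(u_0)|\le|\mathcal{P}(\psi(t))-\mathcal{P}(u_{a_1})|+|\mathcal{P}(u_{a_1})-\mathcal{P}(u_0)|$. Both terms are made small by Theorem \ref{bozza_intro_stability}, together with the $d_A$-continuity of $\mathcal{P}$ (Lemma \ref{bozza_mom_continuity}) and its phase invariance.

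\textbf{What you do differently.}
\begin{itemize}
\item You get non-vanishing directly from the conclusion of the stability theorem, via a Gagliardo--Nirenberg bound on $|\psi|^2-|u_1|^2$. This is a variant of Lemma \ref{bozza_lemma_modulus} that needs no lower bound on the reference function. Since that bound already holds on all of $\mathbf{R}$, your separate treatment of $[-A,A]$ is unnecessary.
\item You never invoke the continuity of $\mathcal{P}$. Instead, the conservation law of Lemma \ref{bozza_lemma_en_cons_nonvanishing} applies with $I=\mathbf{R}$ once Step 1 holds. It turns $v\bigl(\mathcal{P}(\psi(t))-\mathcal{P}(u_0)\bigr)$ into $E_\gamma(\psi(t))-E_\gamma(u_0)$.
\item That energy difference is controlled by the elementary local Lipschitz continuity of $E_\gamma$ near the phase orbit of $u_1$.
\end{itemize}

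\textbf{What each approach buys.} Your route gives a bound linear in the radius $\varepsilon'$ of the orbital neighbourhood, namely $|\mathcal{P}(\psi(t))-\mathcal{P}(u_0)|\lesssim v^{-1}\varepsilon'$. The paper's route only yields a qualitative modulus, because the tail step in Lemma \ref{bozza_mom_continuity} is not explicit.

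Your argument does not remove the continuity of $\mathcal{P}$ from the paper as a whole. It only moves it into the proof of Lemma \ref{bozza_lemma_en_cons_nonvanishing}, which needs $t\mapsto\mathcal{K}(\psi(t))$ to be continuous.

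The paper's route is shorter given its earlier lemmas. It also yields the sharper trapping $\inf_x|\psi(t,x)|\ge 1+a_2$ rather than just $\psi(t)\in U_0$.
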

     We conclude the introduction by mentioning an interesting model related to \eqref{bozza_NLS}, which is called the Gross-Clark-Schr\"{o}dinger system. This system also describes the motion of an impurity in a quantum fluid. Differently from the case presented here, there the impurity is described by a wavefunction which obeys a linear Schr\"{o}dinger equation, coupled to a Gross-Pitaevskii equation for the fluid. We refer the interested reader to \cite{maris_clarck, alhelou} (see also \cite{giurato,salvador} for related models). On the numerical side, we mention the recent work in \cite{chauleur}.
\section{Preliminaries}
\subsection{Notation}
We begin by briefly introducing the notation. We denote the Fourier transform as 
    \begin{equation*}
     (\mathcal{F}f)(p) = \hat{f}(p) = \frac{1}{\sqrt{2\pi}}\int_{\mathbf{R}} e^{-ipx}f(x)dx,
    \end{equation*}
    and its inverse as
    \begin{equation*}
     (\mathcal{F}^{-1}g)(x) = \check{g}(x) = \frac{1}{\sqrt{2\pi}}\int_{\mathbf{R}} e^{ipx}g(p)dp.
    \end{equation*}
    We endow the space $L^2(\mathbf{R})$ with the following inner product 
    \begin{equation*}
        \langle u,w\rangle = \int_{\mathbf{R}}\overline{u}(x)w(x)dx.
    \end{equation*}
    We indicate with $\dot{H}^1(\mathbf{R})$ the homogeneous Sobolev space defined by 
    \begin{equation}
        \dot{H}^1(\mathbf{R}) = \{u \in L^2(\mathbf{R}) | \  \ \partial_xu \in L^2(\mathbf{R})\}.
    \end{equation}
    Finally, we indicate with $\mathcal{S}(\mathbf{R})$ the Schwartz space of rapidly decaying functions, and with $C^{\infty}_0(\mathbf{R})$ the space of smooth functions from $\mathbf{R}$ to $\mathbf{C}$ with compact support. For $x \in \mathbf{R}$, we use the Japanese bracket to denote $\langle x \rangle = \sqrt{1+x^2}$.\\
    Given a time interval $I \subset\mathbf{R}$, we write $u \in C^0(I, \mathcal{E})$ to indicate a function $u:I \to \mathcal{E}$ which is continuous from $I$ to $(\mathcal{E},d_{\infty})$. In particular, if $u \in C^0(I,\mathcal{E})$, then the map $t \to u(t)$ is continuous from $I$ to $(\mathcal{E},d_A)$ for every $A>0$. In order to stay close to the notation in \cite{LeCozIR}, we denote for $u \in \mathcal{E}$
    \begin{equation*}
        |u|^2_{\mathcal{E}}:=E_0(u) = \frac{1}{2}\int_{\mathbf{R}}|\partial_xu|^2dx + \frac{1}{4} \int_{\mathbf{R}}(1-|u|^2)^2dx.
    \end{equation*}
\subsection{\texorpdfstring{Properties of the energy space $\mathcal{E}$}{Properties of the energy space E}}
The goal of this subsection is to collect some properties of functions in $\mathcal{E}$. Some of these properties have been already proven elsewhere, see for example \cite{gerard,LeCozIR}.\\
We begin with the following Lemma, which characterizes elements of $\mathcal{E}$ as bounded and uniformly continuous functions, whose modulus tends to one at infinity. 
\begin{lemma}[\cite{LeCozIR}]
    Let $u \in \mathcal{E}$. Then $u$ is uniformly continuous and bounded and 
    \begin{equation*}
        \lim_{|x| \to +\infty} |u(x)| = 1.
    \end{equation*}
    Moreover, there exists $C>0$ such that for any $u \in \mathcal{E}$ we have
    \begin{equation}
        ||u||_{L^{\infty}(\mathbf{R})} \leq C (1+|u|_{\mathcal{E}}^{2/3}).
    \end{equation}
    \label{bozza_L_infty}
\end{lemma}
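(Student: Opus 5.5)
The plan is to reduce the question to a one-dimensional Sobolev-type estimate applied to the scalar quantity $\eta = 1-|u|^2$. Let $u \in \mathcal{E}$. Since $u \in H^1_{loc}(\mathbf{R})$ with $\partial_x u \in L^2(\mathbf{R})$, the function $u$ is absolutely continuous. By Cauchy--Schwarz,
\begin{equation*}
|u(x)-u(y)| \leq |x-y|^{1/2}\,\|\partial_x u\|_{L^2(\mathbf{R})},
\end{equation*}
so $u$ is $1/2$-H\"older continuous and in particular uniformly continuous. Boundedness will follow from the quantitative estimate below.

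\textbf{Limit at infinity.} Consider $\eta = 1-|u|^2$, which lies in $L^2(\mathbf{R})$ by definition of $\mathcal{E}$. On bounded sets of $u$, $\eta$ is locally uniformly continuous. A uniformly continuous $L^2$ function must tend to $0$ at infinity. Indeed, if $|\eta(x_n)| \geq \varepsilon$ along a sequence $|x_n| \to \infty$, uniform continuity gives $|\eta| \geq \varepsilon/2$ on intervals of fixed length around each $x_n$. Passing to a subsequence, these intervals are disjoint, which contradicts $\eta \in L^2$. The circularity here (we need $u$ bounded to get $\eta$ uniformly continuous) is resolved by proving the $L^\infty$ bound first. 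Alternatively, one can work with $\eta$ on the set where $|u| \leq 2M$ and argue as above.

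\textbf{Quantitative bound.} This is the key step. Fix a point $x_0$ with $|u(x_0)| = M$, and assume $M \geq 2$ (otherwise there is nothing to prove). By the H\"older estimate, for $|x-x_0| \leq \ell := M^2/(4\|\partial_x u\|_{L^2}^2)$ we have $|u(x)| \geq M/2$. Hence $(|u|^2-1)^2 \geq cM^4$ on an interval of length $2\ell$. This gives
\begin{equation*}
4|u|_{\mathcal{E}}^2 \geq \int (1-|u|^2)^2 \geq c M^4 \cdot \frac{M^2}{\|\partial_x u\|_{L^2}^2} \geq c\,\frac{M^6}{|u|_{\mathcal{E}}^2},
\end{equation*}
so $M^6 \lesssim |u|_{\mathcal{E}}^4$, that is, $M \lesssim |u|_{\mathcal{E}}^{2/3}$. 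Combined with the case $M<2$, this yields
\begin{equation*}
\|u\|_{L^\infty(\mathbf{R})} \leq C\bigl(1+|u|_{\mathcal{E}}^{2/3}\bigr).
\end{equation*}
If the supremum is not attained, the same argument applied at points with $|u(x_0)| \geq M-\epsilon$ gives the bound.

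The main care point is getting the exponent $2/3$ exactly. This comes from balancing the size $M^4$ of the potential term against the length $M^2/\|\partial_x u\|^2$ of the interval, as done above. With boundedness in hand, $\eta$ is uniformly continuous, since $|\eta(x)-\eta(y)| \leq 2\|u\|_\infty |u(x)-u(y)|$, and the limit $|u(x)| \to 1$ as $|x| \to \infty$ follows from the argument in the previous paragraph.
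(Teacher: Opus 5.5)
The paper gives no proof of this lemma; it imports it from \cite{LeCozIR}. Your argument is a correct, self-contained proof. H\"older-$\tfrac12$ continuity forces $|u|\ge M/2$ on an interval of length comparable to $M^2/\|\partial_x u\|_{L^2}^2$. Paying potential energy density of order $M^4$ on that interval gives $M^6\lesssim |u|_{\mathcal{E}}^4$, which is exactly the exponent $2/3$.

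Two small repairs are needed.
\begin{itemize}
\item \emph{Threshold.} $M\ge 2$ is too low: $|u|\ge M/2$ then only gives $|u|\ge 1$, and $(|u|^2-1)^2\ge cM^4$ fails near $M=2$. With $M\ge 3$ one has $|u|^2-1\ge M^2/4-1\ge 5M^2/36$ on the interval, and the rest goes through.
\item \emph{Finiteness of the supremum.} $\|u\|_{L^\infty(\mathbf{R})}$ is not known to be finite in advance. Run the argument at an arbitrary point $x_0$ with $m:=|u(x_0)|\ge 3$ to get $m\le C|u|_{\mathcal{E}}^{2/3}$. This gives boundedness and the estimate at once, and makes the remark about non-attained suprema unnecessary.
\end{itemize}
In the degenerate case $\partial_x u=0$, $u$ is constant and $1-|u|^2\in L^2(\mathbf{R})$ forces $|u|\equiv 1$.

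\textbf{Comparison with the Gagliardo--Nirenberg route.} An equally short alternative applies the one-dimensional Gagliardo--Nirenberg inequality to $\eta=1-|u|^2$:
$\|\eta\|_{L^\infty}^2\le\|\eta\|_{L^2}\|\partial_x\eta\|_{L^2}\le 2\|\eta\|_{L^2}\|u\|_{L^\infty}\|\partial_x u\|_{L^2}$.
This gives $(M^2-1)^2\lesssim M|u|_{\mathcal{E}}^2$, hence $M\lesssim|u|_{\mathcal{E}}^{2/3}$ for large $M$. However, it needs $\eta\in H^1(\mathbf{R})$, and so $M<\infty$, beforehand; your local argument avoids this.

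\textbf{The limit at infinity.} The circularity you worried about does not arise. Suppose $\bigl|1-|u(x_n)|^2\bigr|\ge\epsilon$ along $|x_n|\to\infty$. Then $|u(x_n)|$ is at distance at least $2\delta_0>0$ from $1$, with $\delta_0$ depending only on $\epsilon$. Uniform continuity of $u$ alone keeps $\bigl||u|-1\bigr|\ge\delta_0$, hence $\bigl|1-|u|^2\bigr|\ge\delta_0$, on intervals of fixed length around each $x_n$. This contradicts $1-|u|^2\in L^2(\mathbf{R})$, with no boundedness needed. Your ordering (bound first, then uniform continuity of $\eta$) is also valid.
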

 In the following Lemma we study the effect of adding a function $ w \in H^1(\mathbf{R})$ to an element in $u \in \mathcal{E}$. We always refer to \cite[Lemma 2.3]{LeCozIR} for a proof.
\begin{lemma}[\cite{LeCozIR}]
    The following assertions hold.
    \begin{itemize}
        \item [(i)] If $u \in \mathcal{E}$ and $w \in H^1(\mathbf{R})$, then $u+w \in H^1(\mathcal{E})$;
        \item [(ii)] There exists $C>0$ such that for $u \in \mathcal{E}$ and $w \in H^1(\mathbf{R})$ we have
        \begin{equation*}
            |u+w|_{\mathcal{E}} \leq C (1+|u|_{\mathcal{E}})\bigl(1+||w||^2_{H^1(\mathbf{R})}\bigr);
        \end{equation*}
        \item [iii)] Let $R>0$. There exists $C_{R}>0$ such that for every $u \in \mathcal{E}$ satisfying $|u|_{\mathcal{E}}\leq R$ we have
        \begin{equation*}
            d_{\infty}(u,u+w) \leq C_{R}\bigl(1+||w||_{H^1(\mathbf{R})}\bigr)||w||_{H^1(\mathbf{R})}, \qquad \forall w \in H^1(\mathbf{R});
        \end{equation*}
        \item[(iv)] let $R>0$. There exists $C_R>0$ such that for every $u_1,u_2 \in \mathcal{E}$ and $w_1,w_2 \in H^1(\mathbf{R})$, satisfying $\max \bigl(|u_1|_{\mathcal{E}},|u_2|_{\mathcal{E}} ||w_1||_{H^1(\mathbf{R})},||w_2||_{H^1(\mathbf{R})}\bigr)\leq R$, we have
        \begin{equation*}
            d_{\infty}(u_1+w_1,u_2+w_2) \leq C_R(d_{\infty}(u_1,u_2)+||w_1-w_2||_{H^1(\mathbf{R})}).
        \end{equation*}
    \end{itemize}
    \label{bozza_lemma_E}
\end{lemma}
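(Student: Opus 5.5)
We prove the four assertions of Lemma \ref{bozza_lemma_E} in order. The tools are the one-dimensional Sobolev embedding $\|w\|_{L^\infty}\le C\|w\|_{H^1}$ and the $L^\infty$ bound of Lemma \ref{bozza_L_infty}. The basic algebraic identity is
\begin{equation*}
|u+w|^2-1=(|u|^2-1)+2\Re(\overline{u}w)+|w|^2 .
\end{equation*}
Note that the statement of (i) presumably means $u+w\in\mathcal{E}$. This follows immediately: $u+w\in H^1_{loc}$ and $\partial_x(u+w)\in L^2$. In the identity above, the first term is in $L^2$ by assumption. The second term is bounded by $2\|u\|_{L^\infty}\|w\|_{L^2}$ in $L^2$, which is finite by Lemma \ref{bozza_L_infty}. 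The third term satisfies $\||w|^2\|_{L^2}\le\|w\|_{L^\infty}\|w\|_{L^2}\le C\|w\|_{H^1}^2$.

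For (ii), we make these bounds quantitative. First, $\|\partial_x(u+w)\|_{L^2}\le \sqrt2|u|_{\mathcal{E}}+\|w\|_{H^1}$. Second,
\begin{equation*}
\bigl\|1-|u+w|^2\bigr\|_{L^2}\le 2|u|_{\mathcal{E}}+2\|u\|_{L^\infty}\|w\|_{L^2}+C\|w\|_{H^1}^2 .
\end{equation*}
By Lemma \ref{bozza_L_infty}, $\|u\|_{L^\infty}\le C(1+|u|_{\mathcal{E}}^{2/3})\le C(1+|u|_{\mathcal{E}})$. Hence $|u+w|_{\mathcal{E}}^2\le C\bigl(|u|_{\mathcal{E}}+(1+|u|_{\mathcal{E}})\|w\|_{H^1}+\|w\|_{H^1}^2\bigr)^2$. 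Taking square roots and using $\|w\|_{H^1}\le 1+\|w\|_{H^1}^2$ gives the claimed bound $C(1+|u|_{\mathcal{E}})(1+\|w\|^2_{H^1})$.

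For (iii), we estimate each of the three pieces of $d_\infty(u,u+w)$ directly. The derivative term equals $\|\partial_xw\|_{L^2}\le\|w\|_{H^1}$. The sup term satisfies $\|w\|_{L^\infty}\le C\|w\|_{H^1}$. For the modulus term we use $|u+w|^2-|u|^2=2\Re(\overline{u}w)+|w|^2$, which gives
\begin{equation*}
\bigl\||u+w|^2-|u|^2\bigr\|_{L^2}\le 2\|u\|_{L^\infty}\|w\|_{L^2}+C\|w\|_{H^1}^2\le C_R(1+\|w\|_{H^1})\|w\|_{H^1}.
\end{equation*}
Here the constant depends on $R$ only through $\|u\|_{L^\infty}\le C(1+R^{2/3})$.

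Assertion (iv) is the only one needing care, because the modulus term mixes the two $\mathcal{E}$ elements. The derivative and sup parts are handled by the triangle inequality together with $\|w_1-w_2\|_{L^\infty}\le C\|w_1-w_2\|_{H^1}$. For the modulus part we expand
\begin{equation*}
|u_1+w_1|^2-|u_2+w_2|^2=(|u_1|^2-|u_2|^2)+2\Re(\overline{u_1}w_1-\overline{u_2}w_2)+(|w_1|^2-|w_2|^2).
\end{equation*}
The first term is bounded by $d_\infty(u_1,u_2)$. For the middle term we write $\overline{u_1}w_1-\overline{u_2}w_2=(\overline{u_1}-\overline{u_2})w_1+\overline{u_2}(w_1-w_2)$. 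Its $L^2$ norm is then at most $\|u_1-u_2\|_{L^\infty}\|w_1\|_{L^2}+\|u_2\|_{L^\infty}\|w_1-w_2\|_{L^2}$, which is bounded by $C_R\bigl(d_\infty(u_1,u_2)+\|w_1-w_2\|_{H^1}\bigr)$ by Lemma \ref{bozza_L_infty}. For the last term we factor $|w_1|^2-|w_2|^2=\Re\bigl((\overline{w_1}-\overline{w_2})(w_1+w_2)\bigr)$. This is bounded in $L^2$ by $\|w_1-w_2\|_{L^2}\|w_1+w_2\|_{L^\infty}\le C_R\|w_1-w_2\|_{H^1}$.

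The key point, and the only real obstacle, is that $u_1-u_2$ is never required to lie in $L^2$. This is why $w_1$, not $u_1-u_2$, must carry the $L^2$ integrability in the middle term.
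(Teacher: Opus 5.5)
Your proof is correct and complete, including your reading of $H^1(\mathcal{E})$ in (i) as a typo for $\mathcal{E}$. The paper does not prove this lemma itself; it only cites \cite[Lemma 2.3]{LeCozIR}, where the argument is this same elementary one: the expansion $|u+w|^2-1=(|u|^2-1)+2\Re(\overline{u}w)+|w|^2$, combined with the $L^\infty$ bound of Lemma \ref{bozza_L_infty} and the embedding $H^1(\mathbf{R})\subset L^\infty(\mathbf{R})$, with the cross term in (iv) split so that $w_1$ carries the $L^2$ integrability.
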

The next Lemma studies the continuity of the map $u \in \mathcal{E} \to |u|_{\mathcal{E}}^2$ for the distance $d_{\infty}$.
\begin{lemma}
    There exists $C>0$ such that, for any $u_1,u_2 \in \mathcal{E}$,  it holds
    \begin{equation*}
        \Bigl||u_1|_{\mathcal{E}}^2-|u_2|_{\mathcal{E}}^2\Bigr| \leq C d_{\infty}(u_1,u_2)\Bigl(1+|u_2|_{\mathcal{E}}^2 + d_{\infty}(u_1,u_2)\Bigr)
    \end{equation*}
    \label{bozza_cont_norm}
\end{lemma}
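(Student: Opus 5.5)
We split $|u|_{\mathcal{E}}^2$ into its kinetic and potential parts and estimate the difference of each separately by $d_{\infty}(u_1,u_2)$. Throughout we write $d:=d_{\infty}(u_1,u_2)$.

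\textbf{Kinetic part.} We use the identity $|a|^2-|b|^2 = \Re\bigl((\overline{a}-\overline{b})(a+b)\bigr)$ with $a=\partial_x u_1$ and $b=\partial_x u_2$. By Cauchy--Schwarz,
\begin{equation*}
\Bigl|\|\partial_x u_1\|_{L^2}^2-\|\partial_x u_2\|_{L^2}^2\Bigr| \leq \|\partial_x u_1-\partial_x u_2\|_{L^2}\bigl(\|\partial_x u_1-\partial_x u_2\|_{L^2}+2\|\partial_x u_2\|_{L^2}\bigr).
\end{equation*}
The first factor is at most $d$. In the bracket, $\|\partial_x u_1-\partial_x u_2\|_{L^2}\leq d$, and $\|\partial_x u_2\|_{L^2}\leq \sqrt{2}\,|u_2|_{\mathcal{E}}\leq \sqrt{2}\,(1+|u_2|_{\mathcal{E}}^2)$, using $s\leq 1+s^2$. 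Hence this part is bounded by $C d\,(1+|u_2|_{\mathcal{E}}^2+d)$.

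\textbf{Potential part.} Set $\eta_j=1-|u_j|^2\in L^2(\mathbf{R})$. Then $\eta_1^2-\eta_2^2=(\eta_1-\eta_2)(\eta_1+\eta_2)$ and $\eta_1-\eta_2=|u_2|^2-|u_1|^2$. The latter has $L^2$ norm at most $d$ by the definition of $d_{\infty}$. Cauchy--Schwarz therefore gives
\begin{equation*}
\Bigl|\|\eta_1\|_{L^2}^2-\|\eta_2\|_{L^2}^2\Bigr|\leq d\,\bigl(d+2\|\eta_2\|_{L^2}\bigr).
\end{equation*}
Since $\|\eta_2\|_{L^2}\leq 2|u_2|_{\mathcal{E}}\leq 2(1+|u_2|_{\mathcal{E}}^2)$, this part is again bounded by $C d\,(1+|u_2|_{\mathcal{E}}^2+d)$.

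Adding the two bounds with the weights $\tfrac12$ and $\tfrac14$ from the definition of $E_0$ gives the claimed estimate. Neither the $L^\infty$ part of $d_{\infty}$ nor Lemma \ref{bozza_L_infty} is needed. There is no real obstacle here. The only point requiring care is writing both differences so that they involve only $u_2$ and the difference, never $u_1$ alone, since the bound is stated asymmetrically. The polarization identities above do exactly this, producing the linear factor $d$ times $(d+\text{size of }u_2)$.
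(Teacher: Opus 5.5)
Your proof is correct and follows essentially the same route as the paper, which also splits $|u|_{\mathcal{E}}^2$ into the gradient and potential terms, bounds each difference by $\bigl||A|^2-|B|^2\bigr|\leq |A-B|\,(2|B|+|A-B|)$ together with Cauchy--Schwarz, and absorbs $\|\partial_x u_2\|_{L^2}$ and $\|1-|u_2|^2\|_{L^2}$ into $1+|u_2|_{\mathcal{E}}^2$. You pair the factor $2|B|$ explicitly with $u_2$, which is slightly more careful than the paper's displayed inequality: that inequality writes $2|A|$ but is then applied with $u_2$.
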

    \begin{proof}
        The proof is based on the following inequality: for any $A,B \in \mathbf{C}$ it holds
        \begin{equation*}
            \Bigl||A|^2-|B|^2\Bigr| = \Bigl|\Re\Bigl((A-B)(\overline{A}+\overline{B})\Bigr)\Bigr| \leq |A-B|\Bigl(2|A|+|A-B|\Bigr).
        \end{equation*}
        We have, for any $u_1,u_2 \in \mathcal{E}$,
        \begin{equation*}
           \begin{split}
                \Bigl||u_1|_{\mathcal{E}}^2-|u_2|_{\mathcal{E}}^2\Bigr| & \leq \int_{\mathbf{R}}\Bigl||\partial_xu_1|^2-|\partial_xu_2|^2\Bigr| dx +\int_{\mathbf{R}}\Bigl|(1-|u_1|^2)^2-(1-|u_2|^2)^2\Bigr|dx \\ & \leq  ||\partial_xu_1-\partial_xu_2||_{L^2(\mathbf{R})}\Bigl(2||\partial_xu_2||_{L^2(\mathbf{R})}+||\partial_xu_1-\partial_xu_2||_{L^2(\mathbf{R})}\Bigr ) \\ & +  \bigl|\bigl||u_1|^2-|u_2|^2\bigr|\bigr|_{L^2(\mathbf{R})}\Bigl(2||1-|u_2|^2||_{L^2(\mathbf{R})}+\bigl|\bigl||u_1|^2-|u_2|^2\bigr|\bigr|_{L^2(\mathbf{R})}\Bigr ) \\ & \leq 16d_{\infty}(u_1,u_2)\Bigl(1+|u_2|^2_{\mathcal{E}}+d_{\infty}(u_1,u_2)\Bigr)
           \end{split}
        \end{equation*}
    \end{proof}
    The following Lemma will be useful in the study of the stability of stationary solutions. It allows us to show that if an element $u_1 \in \mathcal{E}$ is nowhere vanishing, also its close neighbors for the distance $d_A$ in \eqref{bozza_d_A} are nowhere vanishing.
\begin{lemma}
    Set $A>0$. Consider $u_1,u_2 \in \mathcal{E}$ such that $\inf_{x \in \mathbf{R}}|u_1(x)|= \varepsilon>0$. Then, there exists $C>0$, depending on $\varepsilon, ||\partial_xu_1||_{L^2(\mathbf{R})}$, such that
    \begin{equation*}
        \Bigl|\Bigl||u_1|-|u_2|\Bigr|\Bigr|^2_{L^{\infty}(\mathbf{R})} \leq C d_A(u_1,u_2)(1+d_A(u_1,u_2))
    \end{equation*}
    \label{bozza_lemma_modulus}
\end{lemma}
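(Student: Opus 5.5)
We bound $f:=|u_1|-|u_2|$ in $L^\infty(\mathbf{R})$ using $d_A$, which controls $\|\partial_x u_1-\partial_x u_2\|_{L^2}$, $\|u_1-u_2\|_{L^\infty([-A,A])}$ and $\||u_1|^2-|u_2|^2\|_{L^2}$. The idea is the one-dimensional Gagliardo--Nirenberg-type estimate $\|g\|_{L^\infty}^2\le \|g\|_{L^2}\|g'\|_{L^2}$ for $g\in H^1(\mathbf{R})$. Here, however, $f$ need not lie in $L^2$ on its own. What is in $L^2$ is $|u_1|^2-|u_2|^2=f\,(|u_1|+|u_2|)$. This is where the hypothesis $\inf|u_1|=\varepsilon>0$ enters: it gives $|u_1|+|u_2|\ge\varepsilon$, hence
\begin{equation*}
|f|\le \varepsilon^{-1}\bigl||u_1|^2-|u_2|^2\bigr| \quad\text{pointwise, so}\quad \|f\|_{L^2(\mathbf{R})}\le \varepsilon^{-1} d_A(u_1,u_2).
\end{equation*}

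Next we control $\partial_x f$ in $L^2$. The moduli $|u_j|$ are in $H^1_{loc}$ with $|\partial_x|u_j||\le|\partial_x u_j|$, but $\partial_x|u_1|-\partial_x|u_2|$ is not directly bounded by $|\partial_x u_1-\partial_x u_2|$, because of the phase. We will therefore work with $g:=|u_1|^2-|u_2|^2$ instead. Its derivative is $2\Re(\overline{u_1}\partial_x u_1-\overline{u_2}\partial_x u_2)$, which we split as
\begin{equation*}
2\Re\bigl(\overline{u_1}(\partial_x u_1-\partial_x u_2)\bigr)+2\Re\bigl((\overline{u_1}-\overline{u_2})\partial_x u_2\bigr).
\end{equation*}
The first term is bounded in $L^2$ by $\|u_1\|_{L^\infty}\,d_A$, and Lemma \ref{bozza_L_infty} bounds $\|u_1\|_{L^\infty}$.

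The second term is the main obstacle. It involves $\|u_1-u_2\|_{L^\infty}$ on all of $\mathbf{R}$, which $d_A$ does not control outside $[-A,A]$. So the plan is to avoid $g'$ away from $[-A,A]$.

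We will instead apply the one-dimensional estimate in the form
\begin{equation*}
|f(x)|^2\le |f(x_0)|^2+2\int_{x_0}^{x}|f||f'|.
\end{equation*}
Here $f'=\partial_x|u_1|-\partial_x|u_2|$ satisfies $\|f'\|_{L^2}\le\|\partial_x u_1\|_{L^2}+\|\partial_x u_2\|_{L^2}$. Writing $\|\partial_x u_2\|_{L^2}\le\|\partial_x u_1\|_{L^2}+d_A$, this gives $\|f'\|_{L^2}\le 2\|\partial_x u_1\|_{L^2}+d_A$. Choosing a base point $x_0\in[-A,A]$, we have $|f(x_0)|\le|u_1(x_0)-u_2(x_0)|\le d_A$. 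Cauchy--Schwarz then yields
\begin{equation*}
\|f\|_{L^\infty}^2\le d_A^2+2\|f\|_{L^2}\|f'\|_{L^2}\le d_A^2+2\varepsilon^{-1}d_A\bigl(2\|\partial_x u_1\|_{L^2}+d_A\bigr).
\end{equation*}
This is of the form $C\,d_A(1+d_A)$ with $C$ depending only on $\varepsilon$ and $\|\partial_x u_1\|_{L^2}$, as claimed. In fact, since $f\in L^2$ and $f'\in L^2$ give $f\in H^1$, the base-point term is unnecessary: we may let $x_0\to\pm\infty$, where $f(x_0)\to0$ along a sequence.

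The only technical point to check is that $|u_j|\in H^1_{loc}$ with $|\partial_x|u_j||\le|\partial_x u_j|$ almost everywhere. This holds for $H^1_{loc}$ complex functions by the standard chain rule for the modulus, or simply away from zeros of $u_2$, with $\partial_x|u_2|=0$ a.e. on its zero set.
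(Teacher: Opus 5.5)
Your argument is correct and is essentially the paper's proof: the one-dimensional Gagliardo--Nirenberg bound $\|f\|_{L^\infty}^2\lesssim\|f\|_{L^2}\|\partial_x f\|_{L^2}$ applied to $f=|u_1|-|u_2|$, combined with $\|f\|_{L^2}\le\varepsilon^{-1}\||u_1|^2-|u_2|^2\|_{L^2}$ and $\|\partial_x|u_j|\|_{L^2}\le\|\partial_x u_j\|_{L^2}$. As you note yourself, neither the detour through $g=|u_1|^2-|u_2|^2$ nor the base point in $[-A,A]$ is needed.
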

\begin{proof}
    By the Gagliardo-Nirenberg inequality, we have\begin{equation*}
            \begin{split}
                \Bigl|\Bigl| |u_1| - |u_2  \Bigr|\Bigr|^{2}_{L^{\infty}(\mathbf{R})} & \lesssim  \Bigl|\Bigl| |u_1| - |u_2|  \Bigr|\Bigr|_{L^{2}(\mathbf{R})} \Bigl|\Bigl| \partial_x|u_1| - \partial_x|u_2| \Bigr|\Bigr|_{L^{2}(\mathbf{R})} \\ & \lesssim \varepsilon^{-1} \Bigl|\Bigl| |u_1|^2 - |u_2|^2  \Bigr|\Bigr|_{L^{2}(\mathbf{R})} \Bigl(|| \partial_xu_1||_{L^2(\mathbf{R})} + ||\partial_xu_2||_{L^{2}(\mathbf{R})}\Bigr) \\ & \leq C d_A(u_1,u_2)(1+d_A(u_1,u_2)).
            \end{split}
        \end{equation*}
        Here we used the fact that $\bigl|\bigl|\partial_x|f|\bigr|\bigr|_{L^2(\mathbf{R})} \leq \bigl|\bigl|\partial_xf\bigr|\bigr|_{L^2(\mathbf{R})}$, for any $f \in \dot{H}^1(\mathbf{R})$.
\end{proof}
The following Theorem has been proved by Gérard in \cite[Theorem 1.8]{farina}, and provides a useful decomposition of the elements of $\mathcal{E}$.
\begin{theorem}[\cite{farina}]
    The energy space $\mathcal{E}$ consists of functions $u$ of the form
    \begin{equation}
        u=e^{i\varphi} + w
        \label{bozza_decomposition_formula}
    \end{equation}
    where $\varphi$ is a real-valued function on $\mathbf{R}$ such that $\partial_x\varphi \in L^2(\mathbf{R})$ and $w \in H^1(\mathbf{R})$. The phase $\varphi$ is determined up to adding a real function $\psi$ on $\mathbf{R}$ such that $\partial_x\psi \in L^2(\mathbf{R})$ and such that there exist $(k_+,k_-) \in \mathbf{Z}^2$ with $\psi-2\pi k_{\pm} \in L^2(\mathbf{R}_{\pm})$. Moreover, one can choose $\varphi$ so that $\partial_{x}^{\alpha}\varphi \in L^2(\mathbf{R})$ for every $\alpha >0$.
    \label{theorem_gerard}
\end{theorem}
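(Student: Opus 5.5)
We prove the decomposition \eqref{bozza_decomposition_formula} by constructing $\varphi$ from a mollified version of $u$. The inclusion ``$\supseteq$'' is immediate. If $u=e^{i\varphi}+w$ with $\partial_x\varphi\in L^2$ and $w\in H^1$, then $e^{i\varphi}\in\mathcal{E}$, since $|\partial_x e^{i\varphi}|=|\partial_x\varphi|$ and $1-|e^{i\varphi}|^2=0$. Lemma \ref{bozza_lemma_E}(i) then gives $u\in\mathcal{E}$.

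For the converse, fix $u\in\mathcal{E}$ and a nonnegative mollifier $\chi$ with $\int\chi=1$. Set $u_\varepsilon=\chi_\varepsilon*u$.
\begin{itemize}
\item Since $\partial_xu\in L^2$, we have $\|u-u_\varepsilon\|_{L^2}\lesssim\varepsilon\|\partial_xu\|_{L^2}$ and $\|u-u_\varepsilon\|_{L^\infty}\lesssim\varepsilon^{1/2}\|\partial_xu\|_{L^2}$, so $u-u_\varepsilon\in H^1$.
\item All derivatives of $u_\varepsilon$ lie in $L^2$, because $\partial_x^{\alpha}u_\varepsilon=(\partial_x^{\alpha-1}\chi_\varepsilon)*\partial_xu$.
\item By Lemma \ref{bozza_L_infty}, $|u|\to1$ at infinity, and $u_\varepsilon$ is uniformly close to $u$. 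Hence for $\varepsilon$ small there is $R>0$ with $|u_\varepsilon|\ge 1/2$ on $\{|x|\ge R\}$.
\end{itemize}

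On each half-line $\{\pm x\ge R\}$ we lift $u_\varepsilon/|u_\varepsilon|=e^{i\varphi}$ with $\varphi$ smooth. Differentiating gives $\partial_x\varphi=\Im\bigl(\overline{u_\varepsilon}\,\partial_xu_\varepsilon\bigr)/|u_\varepsilon|^2$, which lies in $L^2$ together with all its derivatives. On $[-R,R]$ we join the two branches by any smooth function, which is possible because the interval is compact.

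It remains to show $w:=u-e^{i\varphi}\in H^1$. On $[-R,R]$ this is trivial. Outside, we write
\begin{equation*}
u-e^{i\varphi}=(u-u_\varepsilon)+u_\varepsilon\Bigl(1-\frac{1}{|u_\varepsilon|}\Bigr).
\end{equation*}
The first term is already in $H^1$. For the second, note that $|1-|u_\varepsilon||\le|1-|u_\varepsilon|^2|$, and
\begin{equation*}
\bigl|1-|u_\varepsilon|^2\bigr|\le\bigl|1-|u|^2\bigr|+\bigl||u|^2-|u_\varepsilon|^2\bigr|\lesssim\bigl|1-|u|^2\bigr|+|u-u_\varepsilon|,
\end{equation*}
using that $u$ and $u_\varepsilon$ are bounded. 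The right-hand side is in $L^2$. Since $|u_\varepsilon|\ge1/2$ there, the second term is in $L^2$, and its derivative is controlled by $\partial_xu_\varepsilon\in L^2$. Hence $w\in H^1$. This step, which controls $1-|u_\varepsilon|$ in $L^2$ from the energy bound, is the main obstacle.

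For the uniqueness statement, suppose $e^{i\varphi_1}+w_1=e^{i\varphi_2}+w_2$. Then $e^{i\varphi_1}(1-e^{i\psi})\in H^1$ with $\psi=\varphi_2-\varphi_1$ and $\partial_x\psi\in L^2$. Thus $1-e^{i\psi}\in L^2$, and it is uniformly continuous because $\psi$ is Hölder continuous. Therefore $e^{i\psi}\to1$ at $\pm\infty$.

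This convergence forces $\psi$ to approach $2\pi\mathbf{Z}$ at infinity. Since $\psi$ is continuous, it approaches a single value $2\pi k_\pm$ on each half-line. Finally $|\psi-2\pi k_\pm|\lesssim|1-e^{i\psi}|$ once $\psi$ is close to $2\pi k_\pm$, so $\psi-2\pi k_\pm\in L^2(\mathbf{R}_\pm)$. The converse direction is direct: if $\psi$ has these properties, then $1-e^{i\psi}\in H^1$, so adding $\psi$ to $\varphi$ preserves the form of the decomposition.
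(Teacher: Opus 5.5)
The paper gives no proof of this statement; it simply quotes it from Gérard. Your argument is a correct and complete version of the standard proof: mollify, lift $u_\varepsilon/|u_\varepsilon|$ on $\{|x|\ge R\}$ where $|u_\varepsilon|\ge 1/2$, glue the phase smoothly across $[-R,R]$, show $u-e^{i\varphi}\in H^1$ via $\bigl|1-|u_\varepsilon|\bigr|\le\bigl|1-|u_\varepsilon|^2\bigr|$, and get uniqueness of the phase from $|1-e^{i\psi}|\to 0$, connectedness of each half-line, and $\mathrm{dist}(\psi,2\pi\mathbf{Z})\lesssim|1-e^{i\psi}|$.
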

We conclude this section with some estimates for the nonlinear term appearing in \eqref{bozza_NLS}, which we denote as
\begin{equation}
    F(u) := (1-|u|^2)u.
    \label{bozza_F_def}
\end{equation}
In particular, the nonlinearity maps element of $\mathcal{E}$ into the space $H^1(\mathbf{R})$.
\begin{lemma}[\cite{LeCozIR}]
    The function $F$ maps $\mathcal{E}$ to $H^1(\mathbf{R})$. Moreover, for any $R>0$, there exists $C_{R}>0$ such that, if $u, u_1,u_2 \in \mathcal{E}$ and $w,w_1,w_2 \in H^1(\mathbf{R})$ satisfy 
    \begin{equation*}
    \max(|u|_{\mathcal{E}},|u_1|_{\mathcal{E}},|u_2|_{\mathcal{E}}, ||w||_{H^1(\mathbf{R})},||w_1||_{H^1(\mathbf{R})},||w_2||_{H^1(\mathbf{R})}) \leq R
    \end{equation*}
    then
    \begin{align*}
        ||F(u+w)||_{H^1(\mathbf{R})} \leq C_{R} \\
        ||F(u_1+w_1)-F(u_2+w_2)||_{H^1(\mathbf{R})} \leq C_R (d_{\infty}(u_1,u_2)+||w_1-w_2||_{H^1(\mathbf{R})}).
    \end{align*}
    \label{bozza_lemma_F}
\end{lemma}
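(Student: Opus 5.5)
The plan is to prove both estimates by expanding $F(u+w)=(1-|u+w|^2)(u+w)$ and bounding each piece with the $L^\infty$ control from Lemma \ref{bozza_L_infty} together with the $\mathcal{E}$-type bounds from Lemma \ref{bozza_lemma_E}.

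\textbf{First estimate.} By Lemma \ref{bozza_lemma_E}(i)--(ii), $U:=u+w\in\mathcal{E}$ with $|U|_{\mathcal{E}}\le C(1+R)(1+R^2)$. Hence it suffices to show that $F$ maps $\mathcal{E}$ into $H^1$ with a bound depending only on $|U|_{\mathcal{E}}$.

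Lemma \ref{bozza_L_infty} gives $\|U\|_{L^\infty}\le C(1+|U|_{\mathcal{E}}^{2/3})$. From this:
\begin{itemize}
\item $\|F(U)\|_{L^2}\le \|U\|_{L^\infty}\|1-|U|^2\|_{L^2}\lesssim \|U\|_{L^\infty}|U|_{\mathcal{E}}$.
\item $\partial_xF(U)=(1-|U|^2)\partial_xU-2\Re(\overline U\partial_xU)\,U$.
\item The first term in $\partial_xF(U)$ is bounded in $L^2$ by $(1+\|U\|_{L^\infty}^2)\|\partial_xU\|_{L^2}$.
\item The second term is bounded by $2\|U\|_{L^\infty}^2\|\partial_xU\|_{L^2}$.
\end{itemize}
All of these are controlled by a constant $C_R$.

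\textbf{Lipschitz estimate.} Set $U_j=u_j+w_j$. Lemma \ref{bozza_lemma_E}(iv) gives $d_\infty(U_1,U_2)\le C_R(d_\infty(u_1,u_2)+\|w_1-w_2\|_{H^1})$. So it is enough to prove
\[
\|F(U_1)-F(U_2)\|_{H^1}\le C_R\,d_\infty(U_1,U_2)
\]
for $U_1,U_2\in\mathcal{E}$ of bounded $\mathcal{E}$-norm. Write
\[
F(U_1)-F(U_2)=(1-|U_1|^2)(U_1-U_2)-(|U_1|^2-|U_2|^2)U_2 .
\]

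\emph{$L^2$ part.}
\begin{itemize}
\item For the first term, use $\|1-|U_1|^2\|_{L^2}\|U_1-U_2\|_{L^\infty}$.
\item For the second term, use $\||U_1|^2-|U_2|^2\|_{L^2}\|U_2\|_{L^\infty}$.
\end{itemize}
Both are bounded by $C_R\,d_\infty(U_1,U_2)$. The $L^\infty$ norm on $U_1-U_2$ is essential here, since $U_1-U_2$ need not lie in $L^2$.

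\emph{Derivative part.} Differentiate the two terms above. The resulting pieces are handled as follows:
\begin{itemize}
\item Pieces of the form (bounded function)$\times(\partial_xU_1-\partial_xU_2)$ are bounded directly by $\|\partial_x(U_1-U_2)\|_{L^2}$.
\item Pieces of the form $\partial_x|U_1|^2\,(U_1-U_2)$ are bounded by $\|U_1-U_2\|_{L^\infty}$ times an $L^2$ norm of $\partial_xU_1$.
\item For the piece $\partial_x(|U_1|^2-|U_2|^2)\,U_2$, expand $\partial_x|U|^2=2\Re(\overline U\partial_xU)$. The difference then splits into $2\Re(\overline{(U_1-U_2)}\partial_xU_1)$ and $2\Re(\overline U_2\,\partial_x(U_1-U_2))$.
\item The last two are again controlled by $\|U_1-U_2\|_{L^\infty}$ and $\|\partial_x(U_1-U_2)\|_{L^2}$ times $C_R$.
\end{itemize}

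\textbf{Main obstacle.} The only real subtlety is that none of $U_j$, $U_1-U_2$ belong to $L^2$. Every term therefore has to be arranged so that exactly one factor is an $L^2$ quantity: $1-|U_1|^2$, $|U_1|^2-|U_2|^2$, $\partial_xU_j$, or $\partial_x(U_1-U_2)$. All remaining factors must be placed in $L^\infty$. I expect the splitting of $\partial_x(|U_1|^2-|U_2|^2)$ to need this care. With that arrangement, every bound closes using only the three components of $d_\infty$.
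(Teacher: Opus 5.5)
Your proposal is correct. The paper gives no proof of its own and cites \cite[Lemma 2.5]{LeCozIR}. Your argument is the standard direct one behind that citation: reduce to $U=u+w\in\mathcal{E}$ with Lemma~\ref{bozza_lemma_E}, then in each term place one factor in $L^2$ and the others in $L^\infty$ via Lemma~\ref{bozza_L_infty}.

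One piece is missing from your list. Differentiating $(|U_1|^2-|U_2|^2)U_2$ also produces $(|U_1|^2-|U_2|^2)\,\partial_xU_2$, which has two $L^2$-type factors. It closes the same way, using
\[
\||U_1|^2-|U_2|^2\|_{L^\infty(\mathbf{R})}\le\bigl(\|U_1\|_{L^\infty(\mathbf{R})}+\|U_2\|_{L^\infty(\mathbf{R})}\bigr)\|U_1-U_2\|_{L^\infty(\mathbf{R})} .
\]
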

The proof of the lemma above can be found in \cite[Lemma 2.5]{LeCozIR}.
\section{\texorpdfstring{Definition of $-\partial_x^2+iv\partial_x+\gamma\delta(x)$ as a self-adjoint operator}{Definition of -partial x 2+iv partial x+ gamma delta(x) as a self-adjoint operator}}
\label{section:self-adjoint}
This section is devoted to a rigorous definition of the operator $H_{\gamma}:=-\partial_x^2+iv\partial_x+\gamma\delta(x)$ appearing in (\ref{bozza_NLS}). By means of the theory of self-adjoint extensions (\cite{albeverio_solvable,albeverio1995fundamental, teta2018primer}), we define $H_{\gamma}$ as a singular perturbation of $-\partial_x^2+iv\partial_x$ in $H^2(\mathbf{R})$, and we study some of its properties. The results we obtain are analogous to those of  Posilicano \cite{Posilicano}, who studied a similar problem in dimension three.\\
We begin by recalling some properties of the unperturbed operator \begin{equation}
    H_0:=-\partial_x^2+iv\partial_x
    \label{bozza_def_H0}
\end{equation}
defined in $H^2(\mathbf{R})$. 
\begin{lemma}
    For any $v \in \mathbf{R}$, the operator $H_0$ with domain $D(H_0) = H^2(\mathbf{R})$ is a self-adjoint operator in $L^2(\mathbf{R})$. Its spectrum consists only of the continuous part and $\sigma(H_0) =\sigma_{c}(H_0) = [-\frac{v^2}{4},+\infty)$. For $z \in \mathbf{C}\backslash\sigma(H_0)$, the integral kernel of the resolvent operator $(H_0-z)^{-1}$, i.e. the Green's function, reads
    \begin{equation}
        G_v^{-z}(x-y):= (H_0-z)^{-1}(x-y) = \frac{e^{-k|x-y|}}{2k} e^{i\frac{v}{2}(x-y)}.
        \label{bozza_green}
    \end{equation}
    where $k^2 = -z-\frac{v^2}{4}$, with $\Re k >0$.
    \label{bozza_lemma_H0}
\end{lemma}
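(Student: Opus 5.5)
The natural route is to conjugate $H_0$ by a unitary gauge transformation that removes the first-order term, and then read off all the claims from the free Laplacian. Define the multiplication operator $(Mf)(x) = e^{i\frac{v}{2}x}f(x)$. It is unitary on $L^2(\mathbf{R})$, and it maps $H^2(\mathbf{R})$ bijectively onto itself, since $e^{\pm i\frac{v}{2}x}$ is smooth with all derivatives bounded. A direct computation gives $\partial_x(Mf) = M(\partial_x + i\tfrac{v}{2})f$. Hence
\begin{equation*}
    M^{-1}H_0M = -(\partial_x+i\tfrac{v}{2})^2 + iv(\partial_x+i\tfrac{v}{2}) = -\partial_x^2 - iv\partial_x + \tfrac{v^2}{4} + iv\partial_x - \tfrac{v^2}{2} = -\partial_x^2 - \tfrac{v^2}{4}.
\end{equation*}
Thus $H_0 = M\bigl(-\partial_x^2 - \tfrac{v^2}{4}\bigr)M^{-1}$ on $H^2(\mathbf{R})$.

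Alternatively, one can work directly in Fourier variables. There $H_0$ is multiplication by the real symbol $p^2 - vp = (p-\tfrac{v}{2})^2 - \tfrac{v^2}{4}$. On $H^2$ this symbol is comparable to $\langle p\rangle^2$ for large $|p|$, so the maximal domain of the multiplication operator is exactly $\widehat{H^2}$. A real multiplication operator on its maximal domain is self-adjoint, and its spectrum is the essential range of the symbol, namely $[-\tfrac{v^2}{4},\infty)$. The spectrum is purely absolutely continuous because the level sets of the polynomial symbol have measure zero, so there are no eigenvalues and no singular part. Either route gives self-adjointness and the spectral statement at once, since unitary conjugation preserves both. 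I would present the conjugation argument and mention the Fourier picture as a check.

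For the Green's function, I would use the conjugation identity
\begin{equation*}
    (H_0-z)^{-1} = M\bigl(-\partial_x^2 - (z+\tfrac{v^2}{4})\bigr)^{-1}M^{-1}.
\end{equation*}
Set $k^2 = -z - \tfrac{v^2}{4}$ and choose the root with $\Re k > 0$. This root exists precisely because $z \notin [-\tfrac{v^2}{4},\infty)$, which is the same as $-z-\tfrac{v^2}{4} \notin (-\infty,0]$. With this choice the middle operator is $(-\partial_x^2 + k^2)^{-1}$, whose kernel is the classical $\frac{e^{-k|x-y|}}{2k}$. One can verify this directly: the function is integrable because $\Re k>0$, it satisfies $(-\partial_x^2+k^2)G = \delta$ thanks to the derivative jump $-1$ at the origin, and its Fourier transform is $(2\pi)^{-1/2}(p^2+k^2)^{-1}$. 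Conjugating by $M$ multiplies the kernel by $e^{i\frac{v}{2}x}e^{-i\frac{v}{2}y}$. This yields exactly \eqref{bozza_green}.

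The main obstacle is only a matter of bookkeeping: the sign conventions in the gauge factor and the choice of branch for $k$. I would double-check the sign by applying $H_0 - z$ to the proposed kernel on $x \neq y$. Writing $G = e^{i\frac{v}{2}s}g(s)$ with $s = x - y$, one finds $(H_0 - z)G = e^{i\frac{v}{2}s}\bigl(-g'' + k^2 g\bigr)$. The right-hand side vanishes away from $s=0$, and at $s=0$ it produces $\delta$ from the jump of $g'$.
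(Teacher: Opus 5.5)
Your argument is correct. It differs from the paper's mainly in how self-adjointness is obtained.

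The paper treats $iv\partial_x$ as a perturbation of $-\partial_x^2$ and invokes Kato--Rellich, with relative bound zero coming from $\|\partial_x\psi\|_2^2\le\|\partial_x^2\psi\|_2\|\psi\|_2$. It then passes to Fourier space, where $H_0$ becomes multiplication by $p^2-vp=(p-\frac{v}{2})^2-\frac{v^2}{4}$. There it reads off the spectrum from the resolvent symbol and uses the absence of eigenvalues to get $\sigma=\sigma_c$. The kernel comes from inverting the Fourier transform after the shift $p=q+\frac{v}{2}$.

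You instead use the exact unitary equivalence $H_0=M(-\partial_x^2-\frac{v^2}{4})M^{-1}$. Self-adjointness on $H^2(\mathbf{R})$, the spectrum and the resolvent kernel then all follow at once from the free Laplacian. The paper's momentum shift is precisely the Fourier-side image of your gauge factor $e^{i\frac{v}{2}x}$, so the two Green's function computations are the same at heart. Your version is more economical and makes explicit that a constant drift in one dimension is a pure gauge. Kato--Rellich is the more general perturbative tool, since it does not rely on an explicit gauge removing the first-order term. Here, though, it is superfluous, because a real multiplication operator on its maximal domain is already self-adjoint.

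One small inaccuracy in your Fourier cross-check: measure-zero level sets of the symbol only rule out eigenvalues, not a singular continuous part. For that you would use that $2p-v$ vanishes at a single point, via a change of variables. This does not affect the lemma, which only needs the absence of eigenvalues, and your conjugation argument gives pure absolute continuity anyway.
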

\begin{proof}
    We consider $iv\partial_x$ as a perturbation of the self-adjoint operator $-\partial_x^2$ in $H^2(\mathbf{R})$. Since for any $\varepsilon>0$ and any $\psi \in H^2(\mathbf{R})$ we have
    \begin{equation*}
        ||iv\partial_x\psi||^2_{2}  \leq |v|^2 ||\partial_x^2\psi||_2||\psi||_2 \leq \frac{|v|^2}{2}\Bigl(\varepsilon||\partial_x^2\psi||^2_2+\frac{1}{\varepsilon}||\psi||^2_2\Bigr),
    \end{equation*}
    the Kato-Rellich's theorem \cite[Proposition 4.8]{teta2018primer} implies that $(H_0, D(H_0))$ is self-adjoint. In Fourier space, the operator $(H_0,D(H_0))$ reads $(\hat{H}_0,D(\hat{H}_0))$ where
    \begin{equation}
        \hat{H}_0 := \mathcal{F}H_0\mathcal{F}^{-1}=p^2-vp \qquad \text{and} \qquad D(\hat{H}_0) = \{\hat{f} \in L^2(\mathbf{R}) \  |  \ \int_{\mathbf{R}}|p^2\hat{f}(p)|^2dp < \infty\} 
        \label{bozza_H_0_free}
    \end{equation}
    By the unitarity of the Fourier transform, the operator $(\hat{H}_0,D(\hat{H}_0))$ is still self-adjoint and has the same spectrum of $(H_0,D(H_0))$ (see \cite[Proposition 4.17]{teta2018primer}). The resolvent operator of $\hat{H}_0$ reads
    \begin{equation}
        (\hat{H}_0-z)^{-1}(p) = \frac{1}{p^2-vp-z} = \frac{1}{(p-\frac{v}{2})^2-z-\frac{v^2}{4}}. 
        \label{bozza_resolvent_fourier}
    \end{equation}
    and we conclude that $\sigma(\hat{H}_0) = \sigma(H_0) = [-\frac{v^2}{4}, +\infty)$.  Since $H_0$ admits no eigenvalues, we conclude $\sigma(H_0) = \sigma_c(H_0)$. We denote the resolvent set as $\rho(H_0) = \mathbf{C}\backslash\sigma(H_0)$ and, for $z \in \rho(H_0)$, we set $k^2=-z-\frac{v^2}{4}$, with $\Re k>0$. By the unitarity of the Fourier transform, we can compute the integral kernel of the resolvent $(H_0-z)^{-1}$ as (see \cite[Proposition 4.24]{teta2018primer})
    \begin{equation}
        \begin{split}
            \Bigl[(H_0-z)^{-1}f\Bigr](x) &= \Bigl[\mathcal{F}^{-1}(\hat{H}_0-z)^{-1}\mathcal{F}f\Bigr](x) = \frac{1}{\sqrt{2\pi}}\int_{\mathbf{R}}e^{ipx}\frac{1}{p^2-vp-z}\hat{f}(p)dp \\ &= \int_{\mathbf{R}}(H_0-z)^{-1}(x-y)f(y)dy
        \end{split}
    \end{equation}
    where
    \begin{equation}
       \begin{split}
           (H_0-z)^{-1}(x-y) &= \frac{1}{2\pi}\int_{\mathbf{R}}e^{ip(x-y)} (\hat{H}_0 - z)^{-1}(p)dp = e^{i\frac{v}{2}(x-y)}\frac{1}{2\pi}\int_{\mathbf{R}}e^{iq(x-y)} \frac{1}{q^2+k^2}dq  \\ & = e^{i\frac{v}{2}(x-y)} \frac{e^{-k|x-y|}}{2k}.  
       \end{split}
    \end{equation}
\end{proof}
Of particular importance in the following will be the Green's function (\ref{bozza_green}) evaluated at $-z = \lambda > \frac{v^2}{4}$, which reads
\begin{equation}
    G_v^{\lambda}(x) = \frac{e^{-\sqrt{\lambda-\frac{v^2}{4}}|x|}}{2\sqrt{\lambda-\frac{v^2}{4}}}e^{i\frac{v}{2}x}.
    \label{bozza_green_function}
\end{equation}
If we take the Fourier transform of \eqref{bozza_green_function}, we obtain
\begin{equation}
    \hat{G}^{\lambda}_v(p) = \frac{1}{\sqrt{2\pi}}\frac{1}{p^2-vp+\lambda}.
    \label{bozza_green_function_Fourier}
\end{equation}
We notice that $G_v^{\lambda} \in H^1(\mathbf{R})$, but $G_v^{\lambda} \notin H^2(\mathbf{R})$.\\ 
In order to rigorously define the operator $-\partial_x^2+iv\partial_x+\gamma\delta(x)$, we first introduce the symmetric operator $(A,D(A))$ defined as 
\begin{equation}
    A = -\partial_x^2+iv\partial_x, \qquad   \qquad D(A) = \{f \in H^2(\mathbf{R}) \ | \ f(0) =0\}.
\end{equation}
The operator $(A,D(A))$ is symmetric, but not self-adjoint in $L^2(\mathbf{R})$, since $D(A)$ does not contain the domain of the adjoint operator. The operator $-\partial_x^2+iv\partial_x+\gamma\delta(x)$ is then defined as the self-adjoint extension of $(A,D(A))$ in $L^2(\mathbf{R})$, associated with the parameter $\gamma \in \mathbf{R}\backslash\{0\}$.
\begin{proposition}
    The self-adjoint extensions of $(A,D(A))$,  different from $(H_0,D(H_0))$ itself, are labeled in a bijective way by a real parameter $\gamma \in \mathbf{R}\backslash\{0\}$. They are given by operators
    \begin{equation*}
        H_{\gamma}: D(H_{\gamma}) \to L^2(\mathbf{R}),
    \end{equation*}
    where
    \begin{equation}
            D(H_{\gamma}) = \{u \in L^2(\mathbf{R}) \ | \ u = w^{\lambda} + qG_v^{\lambda}, \ w^{\lambda} \in H^2(\mathbf{R}), \ q \in \mathbf{C}, \  \ \text{with} \ \  w^{\lambda}(0) = -  (\gamma^{-1}+G_v^{\lambda}(0))q  \}
            \label{bozza_domain_1}
    \end{equation}  
    and where the action is defined by
    \begin{equation*}
        (H_{\gamma}+\lambda)u = (H_0+\lambda)w^{\lambda},
    \end{equation*}
    the definition being $\lambda$-independent. For $\gamma >0$ the spectrum is $\sigma(H_{\gamma}) = \sigma_c(H_{\gamma}) = [-\frac{v^2}{4}, +\infty)$. If we denote $\rho(H_{\gamma}) =  \mathbf{C}\backslash\sigma(H_{\gamma})$, then for $z \in \rho(H_{\gamma})$ the integral kernel of the resolvent of $H_{\gamma}$ is
    \begin{equation}
        (H_{\gamma}-z)^{-1}(x,y) = (H_0-z)^{-1}(x-y) - \frac{\gamma}{1+\gamma/2k}G_{-v}^{-z}(y)G_v^{-z}(x)
        \label{bozza_resolvent}
    \end{equation}
    where $k^2 = -z-\frac{v^2}{4}$ with $\Re k >0$.
    \label{bozza_prop_sa_extension}
\end{proposition}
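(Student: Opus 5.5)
The plan is the standard Krein/von Neumann route for rank-one singular perturbations, following \cite{albeverio_solvable,teta2018primer} and adapting the computation to the non-symmetric kernel $G_v^\lambda$ of Lemma \ref{bozza_lemma_H0}.

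\textbf{Deficiency subspaces and extensions.} First I compute the deficiency subspaces of $(A,D(A))$. Since $D(A)$ is the kernel of the bounded functional $f\mapsto f(0)$ on $H^2(\mathbf{R})$, and $f(0)=\langle \overline{G_{v}^{\bar z}}\ldots\rangle$ can be written via the resolvent kernel, we get $\ker(A^*-z)=\mathrm{span}\{G_v^{-z}(\cdot)\}$ for $z\notin\sigma(H_0)$. Indeed, for $f\in D(A)$, $\langle G_v^{-\bar z},(A-z)f\rangle$ equals a multiple of $f(0)$, which vanishes. Hence the deficiency indices are $(1,1)$, and $A^*$ acts on $D(A^*)=H^2(\mathbf{R})\oplus\mathrm{span}\{G_v^{\lambda}\}$ by $(A^*+\lambda)(w+qG_v^\lambda)=(H_0+\lambda)w$. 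Von Neumann's theory then gives a one-parameter ($U(1)$) family of self-adjoint extensions. I will parametrize them instead through the boundary form. For $u_j=w_j+q_jG_v^\lambda$, integration by parts (using $(H_0+\lambda)G_v^\lambda=\delta$, i.e. $\langle g,(H_0+\lambda) G_v^\lambda\rangle=\overline{g(0)}$ for continuous $g$, together with the self-adjointness of $H_0$) gives
\begin{equation*}
\langle u_1,A^*u_2\rangle-\langle A^*u_1,u_2\rangle=\overline{w_1(0)}\,q_2-\overline{q_1}\,w_2(0)+\Bigl(\overline{q_1}q_2\bigl(G_v^\lambda(0)-\overline{G_v^\lambda(0)}\bigr)\Bigr).
\end{equation*}
The last term vanishes since $G_v^\lambda(0)=1/(2\sqrt{\lambda-v^2/4})$ is real. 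The Lagrangian subspaces of this form are $\{w(0)=\alpha q\}$ with $\alpha\in\mathbf{R}$, plus $\{q=0\}$, which gives $H_0$. Writing $\alpha=-(\gamma^{-1}+G_v^\lambda(0))$ yields \eqref{bozza_domain_1}. Changing $\lambda$ to $\lambda'$ shifts $w$ by $q(G_v^{\lambda}-G_v^{\lambda'})\in H^2$, with value at $0$ exactly $q(G_v^\lambda(0)-G_v^{\lambda'}(0))$. This shows that both the domain and the action are $\lambda$-independent, the latter via the resolvent identity. The exceptional value $\alpha=-G_v^\lambda(0)$, i.e. $\gamma=\infty$, is $\lambda$-dependent, so it is absorbed as well; I will double-check that it corresponds to the Dirichlet decoupled case. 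The labeling by $\gamma\in\mathbf{R}\setminus\{0\}$ is then bijective.

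\textbf{Resolvent.} To solve $(H_\gamma-z)u=f$, I set $\lambda=-z$, $w=(H_0-z)^{-1}f$, and $u=w+qG_v^{-z}$. The boundary condition $w(0)=-(\gamma^{-1}+1/2k)q$ gives $q=-\gamma w(0)/(1+\gamma/2k)$. Since $w(0)=\int G_v^{-z}(-y)f(y)\,dy=\int G_{-v}^{-z}(y)f(y)\,dy$, this is exactly \eqref{bozza_resolvent}. I first do this for real $\lambda>v^2/4$ and then extend by analytic continuation in $k$.

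\textbf{Spectrum.} For $\gamma>0$, the factor $1+\gamma/2k$ never vanishes when $\Re k>0$. Thus the resolvent is analytic off $[-v^2/4,\infty)$, and there are no eigenvalues below the threshold. Krein's formula gives a rank-one difference of resolvents, so Weyl's theorem preserves the essential spectrum $[-v^2/4,\infty)$. It remains to exclude embedded eigenvalues, and I expect this to be the main delicate point. An eigenfunction would solve $-u''+ivu'=Eu$ away from $0$, with $E\ge -v^2/4$. The solutions are combinations of $e^{i(v/2\pm\sqrt{E+v^2/4})x}$, which are oscillatory or of constant modulus (or linear times oscillatory at the threshold). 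None of these is in $L^2$ on a half-line unless it is zero. So the point spectrum is empty, and continuity of the remaining spectrum follows, giving $\sigma=\sigma_c$.
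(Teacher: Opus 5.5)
\textbf{The classification step.} The step that fails is the one you flagged for double-checking. In the coordinates $(w^\lambda(0),q)$, the Lagrangian line $\alpha=-G_v^\lambda(0)$ is $\{w^\lambda(0)=-G_v^\lambda(0)q\}=\{u(0)=0\}$, because $u(0)=w^\lambda(0)+qG_v^\lambda(0)$.

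\begin{itemize}
\item This condition is $\lambda$-independent. What depends on $\lambda$ is the value of $\alpha$ that represents a given extension, and that holds for every extension, not just this one.
\item It defines a genuine self-adjoint extension of $A$. This is the direct sum of the Dirichlet realizations of $-\partial_x^2+iv\partial_x$ on $\mathbf{R}_\pm$, with domain $\{u\in H^1(\mathbf{R})\cap H^2(\mathbf{R}\backslash\{0\}):\ u(0)=0\}$ and arbitrary derivative jump $-q$.
\item It equals neither $H_0$ nor any $H_\gamma$ with $\gamma\in\mathbf{R}\backslash\{0\}$. Every $D(H_\gamma)$ contains functions with $u(0)=-\gamma^{-1}q\neq0$.
\end{itemize}

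So this extension cannot be ``absorbed''. Your own boundary-form computation shows that the extensions other than $H_0$ are in bijection with $\gamma^{-1}\in\mathbf{R}$, i.e.\ with $\gamma\in(\mathbf{R}\backslash\{0\})\cup\{\infty\}$. This matches your $U(1)$ count: the parameter is the circle $\mathbf{R}\cup\{\infty\}$ in $\gamma$, and $\gamma=0$ gives $H_0$. The bijectivity claim is therefore false as literally stated. You should either include the Dirichlet point $\gamma=\infty$, or prove only that each $H_\gamma$ is a self-adjoint extension. The paper's appendix does the latter and never addresses completeness of the family.

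\textbf{The rest of your argument.} Apart from this, your argument is correct and takes a different route from the paper.

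The paper works on the Fourier side:
\begin{itemize}
\item it identifies $\mathrm{Ran}(B+\lambda)^\perp=\mathrm{span}\{\hat G_v^\lambda\}$ by a step-function/density argument;
\item it computes $B^*$;
\item it defines $B_\gamma$ by $q=-\frac{\gamma}{\sqrt{2\pi}}\int u\,dp$ (that is, $q=-\gamma u(0)$) and checks symmetry;
\item it obtains self-adjointness and the resolvent in one stroke by explicitly solving $(B_\gamma-z)u=f$ for $\Im z\neq0$ (range criterion), then transforms back.
\end{itemize}

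Comparing the two routes:
\begin{itemize}
\item Your codimension-one argument for the deficiency space is cleaner and more rigorous than the paper's density argument. It rests on the fact that $\mathrm{Ran}(A-z)$ is the kernel of the bounded nonzero functional $g\mapsto((H_0-z)^{-1}g)(0)$ on $L^2$.
\item Your boundary-form route is what actually yields a classification. The price is that you must compute the resolvent separately, which you do correctly.
\item Your boundary form has the opposite overall sign, and no $\overline{q_1}q_2$ term actually arises in the $(w(0),q)$ coordinates. Both points are harmless.
\end{itemize}

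On the spectrum:
\begin{itemize}
\item Below $-\frac{v^2}{4}$ you rely on $1+\gamma/2k\neq0$ for $\gamma>0$, $\Re k>0$. To conclude $z\in\rho(H_\gamma)$ for real $z<-\frac{v^2}{4}$, add one line: either check directly that the formula inverts $H_\gamma-z$ there, or use $\|(H_\gamma-z)^{-1}\|=\mathrm{dist}(z,\sigma(H_\gamma))^{-1}$.
\item The paper instead uses the quadratic-form bound $\langle u,H_\gamma u\rangle\geq-\frac{v^2}{4}$.
\item The Weyl argument and the exclusion of eigenvalues $E\geq-\frac{v^2}{4}$ are the same as in the paper.
\end{itemize}
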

In order to prove Proposition \ref{bozza_prop_sa_extension} we follow the same steps as for the construction of the singular perturbations of the Laplace operator $-\partial_x^2$ in \cite[Chapter 8]{teta2018primer}. We postpone the proof to Appendix \ref{bozza_appendix} and we just collect some remarks below. Recall $H_0$ has been defined in \eqref{bozza_def_H0}.
\begin{remark}
\hfill
    \begin{itemize}
    \item[i)] Each function $u$ in $D(H_{\gamma})$ is made of a regular component $w^{\lambda} \in H^2(\mathbf{R})$ plus a singular component $G^{\lambda}_v \in H^1(\mathbf{R})$. The constraint $w^{\lambda}(0)=(-\gamma+G^{\lambda}(0))q$, relates the variable $w^{\lambda}(0)$ and $q$, and represents a boundary condition for $u$ in zero. This condition can be formulated as
    \begin{equation*}
        \partial_xu(0^+)-\partial_xu(0^-) = \gamma u(0),
    \end{equation*}
    and the domain can be equivalently written as
    \begin{equation}
        D(H_{\gamma}) = \{u \in H^1(\mathbf{R}) \cap H^2(\mathbf{R}\backslash\{0\})\ \ | \ \partial_xu(0^+)-\partial_xu(0^-) = \gamma u(0)\};
        \label{bozza_domain_2}
    \end{equation}
    \item [ii)]if $u(0)=0$, then $q=0$. This implies $u \in H^2(\mathbf{R})$ and $H_{\gamma}u=H_0u$;
    \\
    \item [iii)]since $(H_0+\lambda)G_v^{\lambda}=0$ in $\mathbf{R}\backslash\{0\}$, we have    $$\int_{\mathbf{R}}\overline{\phi}(x)(H_{\gamma}u)(x)dx = \int_{\mathbf{R}}(\overline{-\partial_x^2\phi(x)+iv\partial_x\phi(x)})u(x)dx \qquad \forall\phi \in C_0^{\infty}(\mathbf{R}\backslash\{0\}),$$
    or, equivalently, $H_{\gamma}u = -\partial_x^2u+iv\partial_xu$ a.e. in $\mathbf{R}\backslash\{0\}$;
    \\
    \item [iv)]using the point above we have, for any $g \in H^1(\mathbf{R})$,
    \begin{equation*}
        \begin{split}
            \int_{\mathbf{R}}\overline{g}(x)(H_{\gamma}u)(x)dx &= \lim_{\varepsilon \to 0}\int_{|x|\geq \varepsilon}\overline{g}(x)\bigl(-\partial_x^2u(x)+iv\partial_xu(x)\bigr)dx \\ & = \int_{\mathbf{R} }\bigl[\partial_x\overline{g}(x)\partial_xu(x)+iv\overline{g}(x)\partial_xu(x)\bigr] \ dx + \lim_{\varepsilon \to 0} \ \bigl(\overline{g}(\varepsilon)\partial_xu(\varepsilon)-\overline{g}(-\varepsilon)\partial_xu(-\varepsilon)\bigr) \\ & = \int_{\mathbf{R}}\bigl[\partial_x\overline{g}(x)\partial_xu(x) +iv\overline{g}(x)\partial_xu(x)\bigr] \ dx + \gamma \overline{g}(0)u(0).
        \end{split}
    \end{equation*} 
\end{itemize}
\label{bozza_remark_sa}
\end{remark}
We conclude with the following observation. In the case $v=0$, the operator $(H_{\gamma},D(H_{\gamma}))$ reduces to the singularly perturbed Laplace operator, which we denote as $(H^0_{\gamma},D(H^0_{\gamma}))$ (see \cite{albeverio_solvable, teta2018primer}). The latter is defined by its domain 
\begin{equation*}
    D(H^0_{\gamma}) := D(H_{\gamma}) = \{u \in H^1(\mathbf{R}) \cap H^2(\mathbf{R}\backslash\{0\})\ \ | \ \partial_xu(0^+)-\partial_xu(0^-) = \gamma u(0)\},
\end{equation*}
and its action on the elements $u \in D(H_{\gamma}^0)$: $$H^0_{\gamma}u := -\partial_x^2u  \quad \text{a.e. in} \quad   \mathbf{R}\backslash\{0\}.$$ Using point iv) of Remark \ref{bozza_remark_sa}, we can study the relation between the operator $H^{0}_{\gamma}$ and $H_{\gamma}$ as follows. Given $u \in D(H_{\gamma})$ we have, for any $g \in H^1(\mathbf{R})$, 
    \begin{equation*}
    \int_{\mathbf{R}}\overline{g}(x)(H_{\gamma}u)(x)dx =  \int_{\mathbf{R}}\Bigl[\partial_x\overline{g}(x)\partial_xu(x) +iv\overline{g}(x)\partial_xu(x)\Bigr]dx + \gamma \overline{g}(0)u(0) =  \int_{\mathbf{R}}\overline{g}(x)(H^0_{\gamma}u+iv\partial_xu)(x)\ dx.
\end{equation*}
We deduce that the identity $H_{\gamma}u = H^0_{\gamma}u + iv\partial_x u$ in $L^2(\mathbf{R})$ holds for any $u \in D(H_{\gamma})$.
\section{\texorpdfstring{The unitary group generated by $H_{\gamma}$}{The unitary group generated by H gamma}}
\label{section:propagator}
In this section we study the unitary group generated by the self-adjoint operator $H_{\gamma}$ in $L^2(\mathbf{R})$. In particular, we determine its explicit expression and we study some of its properties. These results will be useful in Section \ref{section:linear}, where we extend the action of the unitary group to elements of $\mathcal{E}$, and in Section \ref{section:cauchy_problem}, where we study the Duhamel formulation of \eqref{bozza_NLS}.\\

The self-adjoint operator $H_{\gamma}$, defined in Proposition \ref{bozza_prop_sa_extension}, generates a unitary group in $L^2(\mathbf{R})$, which we denote $e^{-itH_{\gamma}}$. If $u_0 \in D(H_{\gamma})$, then the function $u: t \to e^{-itH_{\gamma}}u_0$ belongs to $C^0(\mathbf{R}, D(H_{\gamma}))\cap C^1(\mathbf{R}, L^2(\mathbf{R}))$ and it is the unique solution to
\begin{equation*}
    \begin{cases}
        i\partial_tu=H_{\gamma}u, \quad \forall t \in \mathbf{R}\\
        u(0) = u_0.
    \end{cases}
\end{equation*}
Following \cite{albeverio1995fundamental}, we can compute the integral kernel of the operator $e^{-itH_{\gamma}}$ from the resolvent kernel in \eqref{bozza_resolvent}. Firstly, we compute the inverse Laplace transform of the map $\omega \to(H_{\gamma}+\omega)^{-1}(x,y)$, for $\Re \omega > \frac{v^2}{4}$. In this way we obtain $P^{\gamma}(\xi;x,y)$, defined for $\xi=t > 0$, which corresponds to the integral kernel of the semigroup $e^{-tH_{\gamma}}$, $t\geq 0$. Then, we analytically continue $P^{\gamma}(\xi;x,y)$ to $\xi=it$, $t>0$ and we obtain in this way the integral kernel of the unitary operator $e^{-itH_{\gamma}}$, $t\in\mathbf{R}$. \\\\
We begin by noticing that the integral kernel of the resolvent operator $(H_{\gamma}+\omega)^{-1}$, defined in (\ref{bozza_resolvent}) is the sum of two components. The first component is given by the kernel of free resolvent $(H_{0}+\omega)^{-1}$, while the second component is a rank one perturbation. By linearity of the inverse Laplace transform, we compute the two contributions separately. For the first component we proceed as follows. We consider the \textit{heat kernel}, which we denote as $$K(t;x) = \frac{1}{\sqrt{4\pi t}}e^{-\frac{|x|^2}{4t}}.$$ The Laplace transform $\mathcal{L}$ of the map $(0,+\infty) \ni t \to K(t;x)$ is the integral kernel of the resolvent $(-\partial_x^2+\omega)^{-1}$, i.e. 
\begin{equation*}
    \mathcal{L}\Bigl(\frac{1}{\sqrt{4\pi t}}e^{-\frac{|x|^2}{4t}}\Bigr)(\omega) = \int_{0}^{\infty}e^{-\omega t} \frac{1}{\sqrt{4\pi t}}e^{-\frac{|x|^2}{4t}} dt = \frac{e^{-\sqrt{\omega}|x|}}{2\sqrt{\omega}} = (-\partial_x^2+\omega)^{-1}(x) \qquad \text{for} \quad \Re \omega >0.
\end{equation*}
By the linearity and frequency-shift properties of the Laplace transform, we have
\begin{equation*}
    \mathcal{L}\Bigl(e^{\frac{v^2}{4}t}e^{i\frac{v}{2}x}\frac{1}{\sqrt{4\pi t}}e^{-\frac{|x|^2}{4t}}\Bigr)(\omega) = \frac{e^{-\sqrt{\omega-\frac{v^2}{4}}|x|}}{2\sqrt{\omega-\frac{v^2}{4}}} e^{i\frac{v}{2}x} = (H_0+\omega)^{-1}(x) \qquad \text{for} \quad \Re \omega >\frac{v^2}{4}, 
\end{equation*}
where $(H_0+\omega)^{-1}(x)$ is defined in \eqref{bozza_green}. We conclude that the inverse Laplace transform of the map $\omega \to (H_0+\omega)^{-1}(x)$ is
\begin{equation*}
    (0,+\infty) \ni t \to  e^{\frac{v^2}{4}t}e^{i\frac{v}{2}x} K(t;x).
\end{equation*}
We are now left with the second component in \eqref{bozza_resolvent}.
We use following identity
\begin{equation*}
    -\frac{\gamma}{2k(2k+\gamma)}e^{i\frac{v}{2}(x-y)}e^{-k|x|}e^{-k|y|} = -\frac{\gamma}{2} e^{i\frac{v}{2}(x-y)}\int_{0}^{\infty}e^{-\gamma s/2}\frac{1}{2k}e^{-k(s+|x|+|y|)} ds,
\end{equation*}
where $k = \sqrt{\omega-\frac{v^2}{4}}$, $\Re k >0$. Using the expression above, we obtain as the inverse Laplace transform of resolvent kernel in (\ref{bozza_resolvent}) the map
\begin{equation}
    P^{\gamma}(t;x,y) = e^{\frac{v^2}{4}t}e^{i\frac{v}{2}(x-y)} K(t;x-y) - \frac{\gamma}{2} e^{\frac{v^2}{4}t} e^{i\frac{v}{2}(x-y)}\int_{0}^{\infty}e^{-\gamma s/2} K(t;s+|x|+|y|)ds, \qquad \text{for} \quad t \in (0,+\infty).
    \label{bozza_real_time_prop}
\end{equation}
When $\gamma >0$, the function $P^{\gamma}(\xi;x,y)$ with $\xi=t>0$, can be analytically continued to $\xi=it$, for $t>0$, since the integral on the right-hand side of (\ref{bozza_real_time_prop}) is absolutely convergent. This is due to the term $e^{-\gamma s/2}$, which dominates the oscillatory term in $K(it;s+|x|+|y|)$). This continuation provides the integral kernel of $e^{-itH_{\gamma}}$, which reads
\begin{equation*}
    P^{\gamma}(it;x,y) = e^{i\frac{v^2}{4}t}e^{i\frac{v}{2}(x-y)} K(it;x-y) - \frac{\gamma}{2} e^{i\frac{v^2}{4}t} e^{i\frac{v}{2}(x-y)}\int_{0}^{\infty}e^{-\gamma s/2} K(it;s+|x|+|y|)ds, \qquad \text{for} \quad t \in \mathbf{R}.
\end{equation*}
\begin{proposition}
Let $\gamma \geq 0$. The unitary group $e^{-itH_{\gamma}} $ in $ L^2(\mathbf{R})$, generated by the self-adjoint operator $(H_{\gamma},D(H_{\gamma}))$, can be written in the following way
    \begin{equation}
        e^{-itH_{\gamma}} = e^{-itH_{0}} + \Gamma_v(t).
        \label{bozza_propagator_decomposition}
    \end{equation}
    Here $e^{-itH_{0}}$ is the unitary group generated by $H_0 = -\partial_x^2+iv\partial_x$, and its integral kernel is
    \begin{equation}
        P^0(it;x,y) = e^{i\frac{v^2}{4}t}e^{i\frac{v}{2}(x-y)} K(it;x-y).
    \end{equation}
    The operator $\Gamma_v(t)$, instead, is defined by the integral kernel
    \begin{equation}
        \Gamma_v(t,x,y)=-\frac{\gamma}{2} e^{i\frac{v^2}{4}t} e^{i\frac{v}{2}(x-y)}\int_{0}^{\infty}e^{-\gamma s/2} K(it;s+|x|+|y|)ds.
    \end{equation}
    \label{bozza_prop_propagator}
\end{proposition}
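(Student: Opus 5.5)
The identification of the kernel of $e^{-itH_\gamma}$ will go through the resolvent, as in \cite{albeverio1995fundamental}, by making the heuristic computation above rigorous. First, I will show that $P^\gamma(t;x,y)$ in \eqref{bozza_real_time_prop} is the kernel of the semigroup $e^{-tH_\gamma}$, $t>0$. Since $H_\gamma\geq -v^2/4$ by Proposition \ref{bozza_prop_sa_extension}, the operator $-H_\gamma$ generates a holomorphic contraction-type semigroup $e^{-\xi H_\gamma}$ for $\Re\xi>0$, with $\|e^{-\xi H_\gamma}\|\le e^{\frac{v^2}{4}\Re \xi}$. For $\Re\omega>v^2/4$ we have the Laplace representation $(H_\gamma+\omega)^{-1}=\int_0^\infty e^{-\omega t}e^{-tH_\gamma}\,dt$. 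I will check directly, using Fubini and the two Laplace identities already recorded (the heat kernel identity with frequency shift, and the $s$-integral identity for $\frac{\gamma}{2k(2k+\gamma)}$), that for $f,g\in C_0^\infty(\mathbf R)$
\[
\int_0^\infty e^{-\omega t}\iint \overline{g(x)}P^\gamma(t;x,y)f(y)\,dy\,dx\,dt=\langle g,(H_\gamma+\omega)^{-1}f\rangle ,
\]
with the right-hand side given by \eqref{bozza_resolvent}. Absolute convergence is clear because for $t>0$ and $\gamma\ge0$ everything is Gaussian and positive. By uniqueness of the Laplace transform, applied to the continuous functions $t\mapsto\langle g,e^{-tH_\gamma}f\rangle$ and $t\mapsto\iint\bar g P^\gamma f$, the two coincide for every $t>0$.

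Second, I will pass to imaginary time by analytic continuation. Both $\xi\mapsto\langle g,e^{-\xi H_\gamma}f\rangle$ and
\[
\xi\mapsto \iint\overline{g(x)}\Bigl[e^{\frac{v^2}{4}\xi}e^{i\frac v2(x-y)}K(\xi;x-y)-\tfrac\gamma2e^{\frac{v^2}{4}\xi}e^{i\frac v2(x-y)}\int_0^\infty e^{-\gamma s/2}K(\xi;s+|x|+|y|)\,ds\Bigr]f(y)\,dy\,dx
\]
are holomorphic in $\Re\xi>0$ and continuous up to $\Re\xi=0$, $\xi\ne0$, and they agree on the positive real axis. Hence they agree on the closed right half-plane minus $0$, and setting $\xi=it$ gives the kernel $P^\gamma(it;x,y)$. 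The free part is the standard fact that $e^{-itH_0}$ has kernel $P^0(it;x,y)$, which can equally be read off in Fourier from $\hat H_0=p^2-vp$. The remainder defines $\Gamma_v(t)$, which yields \eqref{bozza_propagator_decomposition}. For $t<0$ I will either repeat the argument in $\Re\xi<0$ after conjugation, or use $e^{-itH_\gamma}=(e^{itH_\gamma})^*$ together with the symmetry of the kernel under $x\leftrightarrow y$ combined with complex conjugation. The case $\gamma=0$ is trivial.

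The main obstacle is the boundary behaviour of the rank-one term as $\Re\xi\downarrow0$. There $|K(\xi;r)|\le |4\pi\xi|^{-1/2}e^{-\Re(1/\xi)r^2/4}$ loses its Gaussian decay, and convergence rests entirely on $e^{-\gamma s/2}$ with $\gamma>0$. I will bound the $s$-integral by $|\xi|^{-1/2}\int_0^\infty e^{-\gamma s/2}ds=2/(\gamma|\xi|^{1/2})$, uniformly in $x,y$ and locally uniformly in $\xi\ne0$ on the closed half-plane. Dominated convergence on compactly supported $f,g$ then gives holomorphy and continuity up to the boundary. The free term is handled the same way on test functions, where the local integrability of $|\xi|^{-1/2}$ and compact supports suffice. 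Finally, density of $C_0^\infty$ in $L^2$ identifies the operators, since $e^{-itH_\gamma}$ is bounded and, being the difference of two bounded operators, so is $\Gamma_v(t)$.
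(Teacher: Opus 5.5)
Your proposal is correct and takes the same route as the paper. The paper obtains $P^\gamma(t;x,y)$ as the inverse Laplace transform of the resolvent kernel \eqref{bozza_resolvent}, using exactly the frequency-shifted heat-kernel identity and the $s$-integral representation of $\frac{\gamma}{2k(2k+\gamma)}$, and then continues analytically to $\xi=it$, justifying this only by the absolute convergence that the factor $e^{-\gamma s/2}$ provides. You supply the details the paper leaves implicit (Laplace uniqueness, holomorphy and boundary continuity of the weak pairings on $C_0^\infty$ test functions, and the final density argument); since your continuity statement already covers the whole imaginary axis minus the origin, the separate treatment of $t<0$ is unnecessary.
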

In the following, we will write $\Gamma_v(t,x,y) = e^{i\frac{v^2}{4}t} e^{i\frac{v}{2}(x-y)}\Gamma(t,x,y),$ where
\begin{equation}
    \Gamma(t,x,y) := -\frac{\gamma}{2}\int_{0}^{\infty}e^{-\gamma s/2} K(it;s+|x|+|y|)ds.
    \label{bozza_gamma_def}
\end{equation}
\subsection{Some properties of the unitary group} 
In this subsection we study some properties of the unitary group $e^{-itH_{\gamma}}$ and, in particular, its action on elements of $H^1(\mathbf{R})$. By using the decomposition in Proposition \ref{bozza_prop_propagator}, we study the contributions of $e^{-itH_{0}}$ and $\Gamma_v(t)$ separately.\\
We begin by introducing the following operators:
\begin{equation*}
   \begin{split}
        &L_x(t,s,x,y)= -\frac{2it\, \sign(x)}{(s+|x|+|y|)} \frac{\partial}{\partial x} \\ &
         L_y(t,s,x,y)= -\frac{2it\, \sign(x)}{(s+|x|+|y|)} \frac{\partial}{\partial y} \\ & 
          L_s(t,s,x,y)= -\frac{2it}{(s+|x|+|y|)} \frac{\partial}{\partial s}.
   \end{split}
\end{equation*}
defined for $ x,y \neq 0$ and $s \geq 0$. For these operators the following property holds 
\begin{equation}
    L_xK(it;s+|x|+|y|) = K(it;s+|x|+|y|),
    \label{bozza_L_x}
\end{equation}
and similarly for $L_y$ and $L_s$. They will be useful in obtaining powers of $t$ and negative powers of $x,y$ in the estimates on the unitary group. Another useful identity is the following
\begin{equation}
    \partial_xK(it;s+|x|+|y|) = \sign(x) \sign(y) \partial_yK(it;s+|x|+|y|)  \qquad \text{for} \quad x,y \neq 0.
    \label{bozza_sign}
\end{equation}

The following Lemma studies the action of the operator $\Gamma_v(t) = e^{-itH_{\gamma}}-e^{-itH_0}$ in $H^1(\mathbf{R})$.
\begin{lemma}
    Let $T>0$. There exists $C>0$ such that for $t \in [-T,T]$ and $w \in H^1(\mathbf{R})$ we have $\Gamma_v(t)w \in H^1(\mathbf{R})$ and
    \begin{equation}
        ||\Gamma_v(t)w||_{H^1(\mathbf{R})} \leq C ||w||_{H^1(\mathbf{R})}.
    \end{equation}
    Moreover, the map $t \to \Gamma_v(t)w$ is continuous from $\mathbf{R}$ to $H^1(\mathbf{R})$.
    \label{bozza_H1_Gamma}
\end{lemma}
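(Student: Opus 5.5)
The plan avoids explicit kernel estimates on $\Gamma(t,x,y)$. Instead I will use that $H^1(\mathbf{R})$ is the form domain of both $H_0$ and $H_\gamma$, with a form norm equivalent to the $H^1$ norm. Then $\Gamma_v(t)=e^{-itH_\gamma}-e^{-itH_0}$ is the difference of two groups, each isometric for an $H^1$-equivalent norm. The step I expect to need the most care is identifying the form domain of $H_\gamma$ with $H^1(\mathbf{R})$, i.e.\ checking that the operator associated with the closed form below is exactly the $H_\gamma$ of Proposition \ref{bozza_prop_sa_extension}.

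\emph{Step 1: the quadratic form.} Take $g=u\in D(H_\gamma)$ in point iv) of Remark \ref{bozza_remark_sa}. This gives
\begin{equation*}
\langle u,H_\gamma u\rangle = q_\gamma(u):=\int_{\mathbf{R}}|\partial_xu|^2dx + iv\int_{\mathbf{R}}\overline{u}\,\partial_xu\,dx+\gamma|u(0)|^2 ,
\end{equation*}
where the middle term is real. We have the bound $\bigl|v\int\overline{u}\partial_xu\bigr|\le \frac12\|\partial_xu\|_2^2+\frac{v^2}{2}\|u\|_2^2$. We also have $|u(0)|^2\le\|u\|_{2}\|\partial_xu\|_{2}$, and $\gamma>0$ (for $\gamma=0$ there is nothing to prove). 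Together these give, for a fixed $\lambda>v^2/2$, constants $c,C>0$ with
\begin{equation*}
c\|u\|_{H^1}^2\le q_\gamma(u)+\lambda\|u\|_2^2\le C\|u\|_{H^1}^2, \qquad u\in H^1(\mathbf{R}).
\end{equation*}
Hence $q_\gamma$ extends to a closed, lower-bounded form on $H^1(\mathbf{R})$.

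\emph{Step 2: identifying the associated operator.} The self-adjoint operator associated with this closed form satisfies $\langle g,\tilde H u\rangle=q_\gamma(g,u)$ for all $g\in H^1$. By Remark \ref{bozza_remark_sa} iv), $H_\gamma$ satisfies the same identity on $D(H_\gamma)$, so $H_\gamma\subset\tilde H$. Both operators are self-adjoint, so $\tilde H=H_\gamma$. Consequently $D\bigl((H_\gamma+\lambda)^{1/2}\bigr)=H^1(\mathbf{R})$ with
\begin{equation*}
\|(H_\gamma+\lambda)^{1/2}u\|_2^2=q_\gamma(u)+\lambda\|u\|_2^2\simeq\|u\|_{H^1}^2 .
\end{equation*}
The same holds for $H_0$, which is the case $\gamma=0$ and is immediate in Fourier variables via \eqref{bozza_H_0_free}.

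\emph{Step 3: the bound and continuity.} By functional calculus, $e^{-itH_\gamma}$ commutes with $(H_\gamma+\lambda)^{1/2}$. Hence for $w\in H^1$ we get $e^{-itH_\gamma}w\in H^1$ and
\begin{equation*}
\|e^{-itH_\gamma}w\|_{H^1}\le c^{-1/2}\|(H_\gamma+\lambda)^{1/2}e^{-itH_\gamma}w\|_2=c^{-1/2}\|(H_\gamma+\lambda)^{1/2}w\|_2\le (C/c)^{1/2}\|w\|_{H^1},
\end{equation*}
and likewise for $e^{-itH_0}$. Subtracting yields the estimate for $\Gamma_v(t)$, uniformly in $t\in\mathbf{R}$, which is stronger than the claim on $[-T,T]$.

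For continuity, $t\mapsto(H_\gamma+\lambda)^{1/2}e^{-itH_\gamma}w=e^{-itH_\gamma}(H_\gamma+\lambda)^{1/2}w$ is continuous in $L^2$ by strong continuity of the unitary group. By the norm equivalence, $t\mapsto e^{-itH_\gamma}w$ is then continuous into $H^1$. The same argument applies to $e^{-itH_0}w$, so $t\mapsto\Gamma_v(t)w$ is continuous from $\mathbf{R}$ to $H^1(\mathbf{R})$.

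If one prefers to stay with the explicit kernel of Proposition \ref{bozza_prop_propagator}, a fallback is to differentiate $\Gamma(t,x,y)$ in $x$. One would use \eqref{bozza_sign} to move the derivative onto $y$, integrate by parts onto $w$, and control the boundary term at $y=0$ through $|w(0)|\lesssim\|w\|_{H^1}$ together with the damping $e^{-\gamma s/2}$. I would only pursue this if the form argument in Step 2 met an unexpected obstruction.
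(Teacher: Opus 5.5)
Your argument is correct, but it follows a different route from the paper.

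\textbf{The paper's route.} The paper works with the explicit kernel. For $\phi\in C^{\infty}_0(\mathbf{R})$ it differentiates $\Gamma_v(t,x,y)$ in $x$, uses \eqref{bozza_sign} to move the derivative onto $y$, and integrates by parts. This leaves $\Gamma_v(t)$ acting on $\sign(y)\partial_y\phi$ and on $\sign(y)\phi$, both controlled by $\|\Gamma_v(t)\phi\|_{L^2(\mathbf{R})}\leq 2\|\phi\|_{L^2(\mathbf{R})}$, plus the boundary term $-2\sign(x)\Gamma_v(t,x,0)\phi(0)$. The real work is proving that $t\mapsto\Gamma_v(t,\cdot,0)$ is continuous into $L^2(\mathbf{R})$. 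This uses the $L_s$ integration by parts and the pointwise bound $|\Gamma_v(t,x,0)|\lesssim \beta/\sqrt{t}+\sqrt{t}/(\beta+|x|)$. Only afterwards does the paper obtain Corollary \ref{bozza_corollary_H1_propag}, from the $H^1$-isometry of $e^{-itH_0}$.

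\textbf{Your route.} You reverse the logic. From Remark \ref{bozza_remark_sa} iv) and Kato's representation theorems you identify $H^1(\mathbf{R})$ as the form domain of $H_\gamma$, with $\|(H_\gamma+\lambda)^{1/2}u\|_{L^2(\mathbf{R})}\simeq\|u\|_{H^1(\mathbf{R})}$. Both groups are then bounded and strongly continuous on $H^1(\mathbf{R})$, and the lemma follows by subtraction. Your individual checks are sound:
\begin{itemize}
\item $iv\int\overline{u}\,\partial_xu\,dx$ is real.
\item The two-sided bound with $\lambda>v^2/2$ holds, using $|u(0)|^2\leq\|u\|_{L^2}\|\partial_xu\|_{L^2}$.
\item $H_\gamma\subset\tilde H$ together with self-adjointness of both operators forces equality.
\item Bounded Borel functions of $H_\gamma$ commute with $(H_\gamma+\lambda)^{1/2}$ on its domain.
\end{itemize}
The only input you need from the paper's construction is the weak identity in Remark \ref{bozza_remark_sa} iv), i.e.\ the domain characterization \eqref{bozza_domain_2}.

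\textbf{What each buys.} Your approach is shorter and needs no oscillatory-integral estimates. It also gives a bound uniform in $t\in\mathbf{R}$ rather than on $[-T,T]$, and it yields Corollary \ref{bozza_corollary_H1_propag} at the same time. The paper's kernel manipulations are not wasted, however: moving $\partial_x$ to $\partial_y$ via \eqref{bozza_sign} and the $L_x$, $L_s$ integrations by parts are reused in Proposition \ref{bozza_Gamma_dot_H1}. There $\Gamma_v(t)$ must be defined by duality on $\dot{H}^1(\mathbf{R})$, whose elements are not in $L^2(\mathbf{R})$, so a form-domain and functional-calculus argument is unavailable. Your fallback sketch is essentially the paper's proof.
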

\begin{proof}
    By a density argument, consider $\phi \in C^{\infty}_0(\mathbf{R})$. Using the decomposition in \eqref{bozza_propagator_decomposition}, we see that the map $t \to \Gamma_v(t)\phi$ is continuous from $\mathbf{R}$ to $L^2(\mathbf{R})$ and we have $||\Gamma_v(t)\phi||_{L^2(\mathbf{R})}\leq 2 ||\phi||_{L^2(\mathbf{R})}$.\\ For $x \in \mathbf{R} \backslash\{0\}$, we write
    \begin{equation}
        \partial_x(\Gamma_v(t)\phi)(x) = \int_{\mathbf{R}}\partial_x\Gamma_v(t,x,y)\phi(y)dy =   \int_{\mathbf{R}}\Bigl[e^{i\frac{v^2}{4}t}e^{i\frac{v}{2}(x-y)}\partial_x\Gamma(t,x,y)\phi(y)+i\frac{v}{2}\Gamma_v(t,x,y)\phi(y) \Bigr]dy. 
        \label{bozza_proof_prop_H1}
    \end{equation}
   The map $t \to i\frac{v}{2}\Gamma_v(t)\phi$ is again a continuous function from $\mathbf{R}$ to $L^2(\mathbf{R})$, bounded by $|v|\bigl|\bigl|\phi\bigl|\bigl|_{L^2(\mathbf{R})}$. Then we have, using \eqref{bozza_gamma_def} and \eqref{bozza_sign},
   \begin{equation*}
       \begin{split}
           &\int_{\mathbf{R}}e^{i\frac{v^2}{4}t}e^{i\frac{v}{2}(x-y)}\partial_x\Gamma(t,x,y)\phi(y)dy  =  \sign(x)\int_{\mathbf{R}}e^{i\frac{v^2}{4}t}e^{i\frac{v}{2}(x-y)}\sign(y)\partial_y\Gamma(t,x,y)\phi(y)dy \\ & = -\sign(x)[\Gamma_v(t)(\sign(y)\partial_y\phi)](x) + i\frac{v}{2}\sign(x)[\Gamma_v(t)\sign(y)\phi](x) - 2\sign(x)\Gamma_v(t,x,0)\phi(0)
       \end{split} 
   \end{equation*}
    The first two terms in the line above are continuous functions from $t \in \mathbf{R}$ to $L^2(\mathbf{R})$. Their $L^2(\mathbf{R})$-norm is bounded by $2||\partial_y\phi||_{L^2(\mathbf{R})}$ and $|v|||\phi||_{L^2(\mathbf{R})}$, respectively. For the last term we use the bound  $|\phi(0)|\lesssim ||\phi||_{H^1(\mathbf{R})}$. To conclude the proof, we show that the map $t \to \Gamma_v(t,\cdot,0)$ is continuous from $\mathbf{R}$ to $L^2(\mathbf{R})$. This has been essentially proven in \cite[Proposition 3.5]{LeCozIR} , and we report the proof for the sake of completeness.\\ Using the identity $\Gamma_v(-t,\cdot,0) = \overline{\Gamma_{-v}(t,\cdot,0)}$, it's sufficient to show the continuity of $t \to \Gamma_v(t,\cdot,0)$ in $[0,+\infty)$. Set $t>0$ and consider $\beta \geq 0$. Applying the operator $L_s(t,s,x,0)$ and integrating by parts, we have the identity
    \begin{equation*}
        \frac{\gamma}{2}\int_{\beta}^{+\infty}e^{-\frac{\gamma s}{2}}K(it;s+|x|) ds = \frac{i\gamma t}{|x|+\beta}e^{-\frac{\gamma \beta}{2}}K(it;\beta+|x|) + i \gamma t \int_{\beta}^{+\infty}K(it;s+|x|)\partial_s\Bigl(\frac{e^{-\frac{\gamma s}{2}}}{s+|x|}\Bigr)ds.
    \end{equation*}
    We conclude 
    \begin{equation}
        |\Gamma_v(t,x,0)| \lesssim \frac{\beta}{\sqrt{t}} + \frac{\sqrt{t}}{\beta + |x|}
        \label{bozza_bound_Gamma}
    \end{equation}
    If we apply the inequality above with $\beta =1$ for $|x| <1$ and with $\beta = 0$ for $|x|\geq 1$, we obtain $\Gamma_v(t,\cdot,0) \in L^2(\mathbf{R})$ for any $t>0$. Moreover, for any $x \in \mathbf{R}$, the map $t \to \Gamma_v(t,x,0)$ is continuous on $(0,+\infty)$, and the bound (\ref{bozza_bound_Gamma}) is uniform for $t$ in a compact subset of $(0,+\infty)$. By the dominated convergence theorem, $t \to \Gamma_v(t,\cdot, 0)$ is continuous from $(0,+\infty)$ to $L^2(\mathbf{R})$. Finally, we write $$||\Gamma_v(t, \cdot, 0)||^2_{L^2(\mathbf{R})} = \int_{|x|\leq \sqrt{t}}|\Gamma_v(t,x,0)|^2dx+\int_{|x|\geq \sqrt{t}}|\Gamma_v(t,x,0)|^2dx.$$
    If we use the bound (\ref{bozza_bound_Gamma}), with $\beta = \sqrt{t}$ in the first integral, and with $\beta =0 $ in the second, we obtain
    \begin{equation*}
        ||\Gamma_v(t, \cdot, 0)||^2_{L^2(\mathbf{R})} \lesssim \sqrt{t} + t \int_{|x|\geq \sqrt{t}}\frac{1}{|x|^2}dx \lesssim \sqrt{t} \to 0, \qquad \text{as} \quad t\to 0, 
    \end{equation*}
    which implies the continuity of $t \to \Gamma_v(t,\cdot,0)$ from $[0,+\infty)$ to $L^2(\mathbf{R})$.
\end{proof}
Using the fact that the free propagator $e^{-itH_0}$ is an isometry on $H^1(\mathbf{R})$, we can conclude the following Corollary.
\begin{corollary} The following assertions hold.
\begin{itemize}
    \item [i)] Let $w \in H^1(\mathbf{R})$. Then the map $t \to e^{-itH_{\gamma}}w$ is continuous from $\mathbf{R}$ to $H^1(\mathbf{R})$;
    \item [ii)] Let $T>0$. There exists $C >0$ such that for any $w \in H^1(\mathbf{R})$ and any $t \in [-T,T]$ it holds
    \begin{equation*}
        ||e^{-itH_{\gamma}}w||_{H^1(\mathbf{R})} \leq C ||w||_{H^1(\mathbf{R})}.
    \end{equation*}
\end{itemize}
\label{bozza_corollary_H1_propag}
\end{corollary}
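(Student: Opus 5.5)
The plan is to combine the decomposition \eqref{bozza_propagator_decomposition}, $e^{-itH_{\gamma}} = e^{-itH_0} + \Gamma_v(t)$, with Lemma \ref{bozza_H1_Gamma}. We first treat the free part $e^{-itH_0}$, then add the perturbation $\Gamma_v(t)$.

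For the free part we work in Fourier space. By \eqref{bozza_H_0_free}, $e^{-itH_0}$ acts as multiplication by $e^{-it(p^2-vp)}$. This multiplier has modulus one and commutes with multiplication by $\langle p\rangle$. By Plancherel, $e^{-itH_0}$ is therefore an isometry of $H^1(\mathbf{R})$ for every $t\in\mathbf{R}$.

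Continuity of $t\mapsto e^{-itH_0}w$ in $H^1(\mathbf{R})$ follows from dominated convergence. We write
\begin{equation*}
\|e^{-itH_0}w-e^{-isH_0}w\|^2_{H^1(\mathbf{R})}=\int_{\mathbf{R}}\bigl|e^{-it(p^2-vp)}-e^{-is(p^2-vp)}\bigr|^2\langle p\rangle^2|\hat w(p)|^2\,dp .
\end{equation*}
The integrand tends to $0$ pointwise as $s\to t$. It is dominated by $4\langle p\rangle^2|\hat w|^2$, which lies in $L^1(\mathbf{R})$.

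Next we add the perturbation. Lemma \ref{bozza_H1_Gamma} gives, for $t\in[-T,T]$, both $\|\Gamma_v(t)w\|_{H^1(\mathbf{R})}\le C\|w\|_{H^1(\mathbf{R})}$ and continuity of $t\mapsto \Gamma_v(t)w$ in $H^1(\mathbf{R})$. Summing with the free part yields (ii), with constant $1+C$, by the triangle inequality. It also yields (i), since a sum of two continuous maps into $H^1(\mathbf{R})$ is continuous, and continuity is local in $t$.

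The only subtle point is that Lemma \ref{bozza_H1_Gamma} was proved first for $\phi\in C_0^\infty(\mathbf{R})$ and then extended by density. We should check that this extension is consistent with the $L^2$ action of $\Gamma_v(t)=e^{-itH_{\gamma}}-e^{-itH_0}$. Take $\phi_n\to w$ in $H^1(\mathbf{R})$. By the uniform bound, $\Gamma_v(t)\phi_n$ is Cauchy in $H^1(\mathbf{R})$. It also converges in $L^2(\mathbf{R})$ to $\Gamma_v(t)w$, because $\Gamma_v(t)$ is bounded on $L^2(\mathbf{R})$. So the two limits coincide.

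Continuity for general $w$ then follows from an $\varepsilon/3$ argument, using the uniform bound on $[-T,T]$. We expect no real obstacle, since all the substantive estimates are already contained in Lemma \ref{bozza_H1_Gamma}.
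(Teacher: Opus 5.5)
Your proposal is correct and follows the paper's own argument. It combines the decomposition $e^{-itH_{\gamma}}=e^{-itH_0}+\Gamma_v(t)$, the fact that $e^{-itH_0}$ is an isometry of $H^1(\mathbf{R})$ (a unimodular Fourier multiplier), and Lemma~\ref{bozza_H1_Gamma} for the perturbation $\Gamma_v(t)$. Your extra check that the density extension of $\Gamma_v(t)$ agrees with its $L^2$ action is a welcome detail that the paper leaves implicit.
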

We conclude this section with the following pointwise estimate on $\Gamma_v(t)$, which will be useful in extending the action of the unitary group $e^{-itH_{\gamma}}$ to elements of $\mathcal{E}$.
\begin{lemma}
    Let $\phi \in \mathcal{S}$ and $t \in \mathbf{R}$. There exists $C>0$, which only depends on $t$ and on some seminorm of $\phi$ in $\mathcal{S}$, such that
    \begin{equation*}
        |(\Gamma_v(t)\phi)(x)| \leq C \langle x\rangle^{-2}
    \end{equation*}
    \label{bozza_lemma_decay}
\end{lemma}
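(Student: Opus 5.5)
We want the bound $|(\Gamma_v(t)\phi)(x)|\le C\langle x\rangle^{-2}$, and the plan is to integrate by parts twice in the kernel representation using the operators $L_s$ and $L_y$. Since $\Gamma_v(t,x,y)=e^{i\frac{v^2}{4}t}e^{i\frac v2(x-y)}\Gamma(t,x,y)$, the unimodular prefactor in $x$ plays no role. Hence it suffices to bound
\begin{equation*}
I(x):=\int_{\mathbf{R}}\Gamma(t,x,y)\,e^{-i\frac v2 y}\phi(y)\,dy=-\frac{\gamma}{2}\int_{\mathbf{R}}\int_0^\infty e^{-\gamma s/2}K(it;s+|x|+|y|)\,ds\,\tilde\phi(y)\,dy,
\end{equation*}
where $\tilde\phi(y)=e^{-i\frac v2 y}\phi(y)\in\mathcal{S}$. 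The case $t=0$ is trivial, since $\Gamma_v(0)=0$ by unitarity of both groups at $t=0$. For $t<0$ we use $\Gamma_v(-t)=\overline{\Gamma_{-v}(t)}$ as in the proof of Lemma \ref{bozza_H1_Gamma}, so we may assume $t>0$. For $|x|\le 1$ it is enough to show that $I$ is bounded. This follows from the same argument specialized (it gives $|x|$-independent bounds), or directly from $|K(it;\cdot)|\lesssim t^{-1/2}$ together with $\int_0^\infty e^{-\gamma s/2}ds<\infty$ and $\tilde\phi\in L^1$. So the content is the decay for $|x|\ge 1$.

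For $|x|\ge1$, the idea is to trade the oscillation of $K(it;\cdot)$ for powers of $(s+|x|+|y|)^{-1}\le |x|^{-1}$, using the identity $L K=K$ from \eqref{bozza_L_x}. First, we apply $L_s$ once in the $s$-integral, exactly as in the proof of Lemma \ref{bozza_H1_Gamma} with $\beta=0$:
\begin{equation*}
\int_0^\infty e^{-\gamma s/2}K(it;s+a)\,ds=\frac{2it}{a}K(it;a)+2it\int_0^\infty K(it;s+a)\,\partial_s\Bigl(\frac{e^{-\gamma s/2}}{s+a}\Bigr)ds,\qquad a=|x|+|y|.
\end{equation*}
Each term carries a factor $t/a$ and is bounded by $t^{1/2}/a$, which gives only $|x|^{-1}$ decay. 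To gain the second power, we apply the integration by parts once more to the remaining integral; its integrand is $O(a^{-1})$ and has integrable $s$-derivatives of order $(s+a)^{-2}$. The boundary term $\frac{2it}{a}K(it;a)$, integrated against $\tilde\phi(y)$, is treated differently. We write $K(it;|x|+|y|)=L_yK$ on each half-line $y\gtrless0$ and integrate by parts in $y$. This produces a factor $t/(|x|+|y|)$ again, together with $\partial_y\bigl(\tilde\phi(y)/(|x|+|y|)\bigr)$, which is controlled by Schwartz seminorms of $\phi$. It also produces a boundary term at $y=0^\pm$ of the form $\frac{t^2}{|x|^2}K(it;|x|)\tilde\phi(0)$, which is $O(|x|^{-2})$. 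Altogether each contribution is $\lesssim C(t)\,|x|^{-2}\bigl(\|\tilde\phi\|_{L^1}+\|\partial_y\tilde\phi\|_{L^1}+\|\tilde\phi\|_{L^\infty}\bigr)$, with $C(t)$ polynomial in $t,t^{-1/2}$. This yields the claim with a constant depending only on $t$ and finitely many seminorms of $\phi$.

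The main obstacle is bookkeeping the second integration by parts so that every resulting term really has two inverse powers of $a\ge|x|$ and remains absolutely integrable in $(s,y)$. In particular, the $s$-integral must converge absolutely after differentiation, which relies on $e^{-\gamma s/2}$ and $(s+a)^{-2}$. The sign kinks at $y=0$ must also be handled by splitting into $y>0$ and $y<0$, and the boundary terms at $y=0$ are what the assumption on $\phi(0)$ (through seminorms) must control. An alternative I would try if this becomes messy is to apply $L_y$ twice from the start to the full double integral, using $\partial_s K=\sign(y)\partial_y K$ to move derivatives freely between $s$ and $y$.
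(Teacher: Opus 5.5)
Your proposal is correct and follows the paper's route. The paper also reduces to $v=0$ via $|(\Gamma_v(t)\phi)(x)| = |(\Gamma(t)(e^{-i\frac{v}{2}y}\phi))(x)|$ and then simply cites Lemma 3.3 of Ianni et al., whereas you re-derive that estimate yourself: two integrations by parts in $s$, plus one in $y$ on each half-line for the first boundary term, whose $y=0$ boundary contributions produce the $|\phi(0)|$ dependence; this gives a constant controlled by $\|\phi\|_{L^1}$, $\|\partial_y\phi\|_{L^1}$ and $|\phi(0)|$, the same dependence as in the cited lemma.
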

\begin{proof}
It is a direct consequence of Lemma 3.3 in \cite{LeCozIR}. There it is proven that, given $\phi \in \mathcal{S}$ and $t \in \mathbf{R}$, it holds $$|(\Gamma(t)\phi)(x)| \leq C \langle x \rangle^{-2},$$ where $C>0$ depends only on $t$ and on $||\phi||_{L^1(\mathbf{R})}, |\phi(0)|$ and $||\phi||_{W^{1,1}(\mathbf{R})}$. Since it holds
\begin{equation*}
    |(\Gamma_v(t)\phi)(x)| = |(\Gamma(t)(e^{-i\frac{v}{2}y}\phi))(x)|, 
\end{equation*} 
we obtain the desired result.
\end{proof}
\section{\texorpdfstring{The linear evolution of \eqref{bozza_NLS} in the space $\mathcal{E}$}{The linear evolution of (bozza\_NLS) in the space mathcal E}}
\label{section:linear}
In this section we extend the action of $e^{-itH_{\gamma}}$ to elements of $\dot{H}^1(\mathbf{R})$, hence to elements of $\mathcal{E}$, following the approach in \cite{gerard,LeCozIR}. As we will discuss, this extension defines the linear evolution of \eqref{bozza_NLS} in $\mathcal{E}$.\\

We begin by recalling that for any $u \in \dot{H}^1(\mathbf{R})$ there exists $C>0$ such that
$$|u(x)| \leq C \langle x \rangle^{1/2}.$$
For $u \in \dot{H}^1(\mathbf{R})$ and $t \in \mathbf{R}$, we define the temperate distribution $\Gamma_v(t)u \in \mathcal{S}'$ by 
\begin{equation*}
    \langle\Gamma_v(t)u, \phi \rangle_{\mathcal{S}',\mathcal{S}} = \langle u, \Gamma_v(-t)\phi\rangle \qquad \forall \phi \in \mathcal{S}.
\end{equation*}
Notice that, thanks to Lemma \ref{bozza_lemma_decay}, the pairing on the right-hand side of the expression above is well defined. Similarly, we define the linear propagator $T_{\gamma}(t)u \in \mathcal{S}'$ by\footnote{We notice the use of the complex conjugation in the definition of the distribution, as in \cite[Section 1.13]{tao}. In particular, to every function $w \in \dot{H}^1(\mathbf{R})$ we associate the tempered distribution defined by $\langle w,\phi\rangle_{\mathcal{S',\mathcal{S}}}= \int_{\mathbf{R}}\overline{w}(x)\phi(x)dx$, for all $\phi \in \mathcal{S}$. For consistency with the Plancherel's theorem, the Fourier transform  $\mathcal{F}$ of the tempered distribution $w$ is defined by $\langle\mathcal{F}w, \phi\rangle_{\mathcal{S'},\mathcal{S}} = \langle w, \mathcal{F}^{-1}\phi\rangle$, for all $ \phi \in \mathcal{S}$.}
\begin{equation*}
    \langle T_{\gamma}(t)u, \phi\rangle_{\mathcal{S}',\mathcal{S}} =  \langle u, e^{itH_{\gamma}}\phi\rangle \qquad \forall \phi \in \mathcal{S}.
\end{equation*}
If $u \in H^1(\mathbf{R})$, then $T_{\gamma}(t)u = e^{-itH_{\gamma}}u \in L^2(\mathbf{R})$ for all $t \in \mathbf{R}$. For clarity, we use a different notation for the unitary group $e^{-itH_{\gamma}}$ defined in $L^2(\mathbf{R})$ and the map $T_{\gamma}(t)$, as in \cite{LeCozIR}.\\
An immediate property that follows from these definitions is that, by duality, the following composition rule holds for $\gamma \geq 0$:
\begin{equation*}
    T_{\gamma}(t)\circ T_{\gamma}(s) = T_{\gamma}(t+s) \qquad \forall s,t \in \mathbf{R}.
\end{equation*}
Moreover, it holds the identity
\begin{equation*}
    T_{\gamma}(t) = T_0(t) + \Gamma_v(t), \qquad \forall t \in \mathbf{R},
\end{equation*}
where we denote by $T_0(t)$ the distribution which is dual to the free propagator $e^{-itH_0}$, i.e. the distribution $T_{\gamma}(t)$ evaluated at $\gamma=0$.\\
Our goal now is to show that, for any $u \in \mathcal{E}$, the map $t \to T_{\gamma}(t)u$ is continuous from $\mathbf{R}$ to $\mathcal{E}$. In order to do that, we first consider $\gamma =0$ and we study the action of $T_0(t)$ on $\dot{H}^1(\mathbf{R})$.
\begin{lemma}
        Let $T>0$. There exists $C>0$ such that for any $u \in \dot{H}^1(\mathbf{R})$ we have 
        \begin{equation}
            ||T_{0}(t)u-u||_{H^1(\mathbf{R})} \leq C ||\partial_xu||_{L^2(\mathbf{R})}, \qquad \forall t \in [ -T,T]. 
            \label{bozza_lemma_T_0_H1}
        \end{equation}
       Moreover, the map $t \to T_{0}(t)u - u$ is continuous from $\mathbf{R}$ to $H^1(\mathbf{R})$.\\
        If, in particular, $u \in \mathcal{E}$, the following properties hold.
        \begin{itemize}
        \item [i)] For all $t \in \mathbf{R}$, the distribution $T_{0}(t)u$ belongs to $\mathcal{E}$;
        \item [ii)] the map $t \to T_{0}(t)u$ is continuous from $\mathbf{R}$ to $\mathcal{E}$;
         \item [iii)] let $R\geq 0$ and $T \geq 0$. There exists $C_R>0$ such that for any $u, \tilde{u} \in \mathcal{E}$ with $|u|_{\mathcal{E}}<R$ and $|\tilde{u}|_{\mathcal{E}} < R$ we have
         \begin{equation*}
            d_{\infty}(T_{0}(t)u,T_{0}(t)\tilde{u}) \leq C_R d_{\infty}(u,\tilde{u}), \qquad  \forall t \in [-T,T].
         \end{equation*}
    \end{itemize}
    \label{bozza_lemma_T0}
\end{lemma}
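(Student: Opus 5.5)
Everything goes through the Fourier side, where $T_0(t)$ is the multiplier $e^{-it(p^2-vp)}$, as in \eqref{bozza_H_0_free}, extended by duality. For $u\in\dot H^1(\mathbf{R})$ the distribution $\hat u$ is not a function near $p=0$. However, $\widehat{\partial_x u}=ip\hat u$ lies in $L^2$. The plan is to show that
\begin{equation*}
\mathcal{F}\bigl(T_0(t)u-u\bigr)(p)=\frac{e^{-it(p^2-vp)}-1}{ip}\,\widehat{\partial_xu}(p)=:m_t(p)\,\widehat{\partial_xu}(p).
\end{equation*}
This identity holds in $\mathcal{S}'$. To get it, I will first take $u\in\mathcal{S}$, where it is trivial, and then pass to general $u$ by duality. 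The pairing $\langle u,(e^{itH_0}-1)\phi\rangle$ can be written as $\langle \partial_x u,\psi\rangle$, where $\psi$ is the antiderivative of $(e^{itH_0}-1)\phi$ that vanishes at $-\infty$. This antiderivative is Schwartz, because $(e^{itH_0}-1)\phi$ has Fourier transform vanishing at $p=0$, so its total integral is zero.

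The multiplier satisfies $|m_t(p)|\le |t|\,|p-v|$ for $|p|\le1$, using $|e^{i\alpha}-1|\le|\alpha|$. It also satisfies $|m_t(p)|\le 2/|p|$ for $|p|\ge1$. Hence $|m_t(p)|\lesssim_{T,v}\langle p\rangle^{-1}$ uniformly for $|t|\le T$. It follows that
\begin{equation*}
\|T_0(t)u-u\|_{H^1}^2=\int\langle p\rangle^2|m_t|^2|\widehat{\partial_xu}|^2\,dp\le C\|\partial_xu\|_{L^2}^2 ,
\end{equation*}
which is \eqref{bozza_lemma_T_0_H1}. For continuity of $t\mapsto T_0(t)u-u$ in $H^1$, I use that $\langle p\rangle m_t(p)$ is continuous in $t$ pointwise and dominated by a constant. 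Dominated convergence then gives the claim.

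Now let $u\in\mathcal{E}$. Since $u\in H^1_{loc}$ with $\partial_x u\in L^2$, the argument above applies. The growth bound $|u(x)|\lesssim\langle x\rangle^{1/2}$ makes $u$ tempered, and the duality step only uses $\partial_xu\in L^2$. So $w(t):=T_0(t)u-u\in H^1$, and Lemma \ref{bozza_lemma_E}(i) gives $T_0(t)u=u+w(t)\in\mathcal{E}$, which is i). For ii), continuity of $t\mapsto w(t)$ in $H^1$ combined with Lemma \ref{bozza_lemma_E}(iv) (with $u_1=u_2=u$) gives continuity in $d_\infty$. Here I use that $\|w(t)\|_{H^1}\le C|u|_{\mathcal{E}}$ is bounded on $[-T,T]$.

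For iii), write $T_0(t)u=u+w$ and $T_0(t)\tilde u=\tilde u+\tilde w$. Then
\begin{equation*}
\|w-\tilde w\|_{H^1}=\|(T_0(t)-1)(u-\tilde u)\|_{H^1}\le C\|\partial_x(u-\tilde u)\|_{L^2}\le C\,d_\infty(u,\tilde u),
\end{equation*}
by linearity and \eqref{bozza_lemma_T_0_H1} applied to $u-\tilde u\in\dot H^1$. Moreover $\|w\|_{H^1},\|\tilde w\|_{H^1}\le CR$. Lemma \ref{bozza_lemma_E}(iv) then yields $d_\infty(T_0u,T_0\tilde u)\le C_R\bigl(d_\infty(u,\tilde u)+\|w-\tilde w\|_{H^1}\bigr)\le C_R\,d_\infty(u,\tilde u)$.

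The main obstacle is the first step: justifying the Fourier identity in $\mathcal{S}'$ for $u\in\dot H^1$, which is not $L^2$. This is where the zero-mean antiderivative construction is needed. A density argument via truncations is delicate, since $\dot H^1$ functions need not converge in $L^2$, so I would rely on the antiderivative construction instead.
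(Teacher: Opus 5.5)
Your proposal is correct and follows the paper's proof: the paper also writes $T_0(t)u-u=\mathcal{F}^{-1}\bigl(\frac{e^{-it(p^2-vp)}-1}{ip}\,\widehat{\partial_xu}\bigr)$, bounds this multiplier uniformly for $t$ in compact sets, and deduces i)--iii) from Lemma \ref{bozza_lemma_E} exactly as you do. Your departures are minor and valid: you bound $\langle p\rangle m_t$ in one step rather than treating $\partial_x(T_0(t)u-u)=(T_0(t)-1)\partial_xu$ separately, and you get continuity by dominated convergence rather than from continuity at $t=0$ plus the group law; you also justify the $\mathcal{S}'$ identity with the zero-mean antiderivative, which the paper only asserts, though there the integration by parts gives $-\langle\partial_xu,\psi\rangle$, a harmless sign.
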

\begin{proof}
    Consider $u \in \dot{H}^1(\mathbf{R})$. We have $\partial_x(T_0(t)u-u) = (T_0(t)-1)\partial_xu \in L^2(\mathbf{R})$, with
    \begin{equation}
        ||\partial_x(T_0(t)u-u)||_{L^2(\mathbf{R})} \leq C ||\partial_xu||_{L^2(\mathbf{R})} \quad \text{and} \quad ||\partial_x(T_0(t)u-u)||_{L^2(\mathbf{R})} \to 0
        \label{bozza_der_T_0}
    \end{equation}
    as $t \to 0$. We conclude that the map $t \to \partial_x(T_0(t)u-u)$ is continuous from $\mathbf{R}$ to $L^2(\mathbf{R})$. We now want to show that $T_0(t)u-u \in L^2(\mathbf{R})$. We have the following identity
    \begin{equation*}
        \begin{split}
            T_0(t)u-u  =  \mathcal{F}^{-1}(e^{-it(p^2-vp)}\hat{u}-\hat{u}) \qquad \text{in} \quad \mathcal{S}',
        \end{split}
    \end{equation*}
    which can be verified by testing against any $\phi \in \mathcal{S}$. We write 
    \begin{equation}
        T_{0}(t)u-u = \mathcal{F}^{-1}\Bigl(\frac{e^{-it(p^2-vp)}-1}{ip}\widehat{\partial_xu}\Bigr).
        \label{bozza_Tu0-u0}
    \end{equation}
    and we want to show that the map  $ p \to g_t(p):= \frac{1}{ip} (e^{-it(p^2-vp)}-1)$ belongs to $L^{\infty}(\mathbf{R})$ for any $t \in \mathbf{R}$. Following \cite{gerard}, we consider a smooth cut-off $\chi \in C_0^{\infty}(\mathbf{R})$ such that $\chi=1$ near the origin. If we write
    \begin{equation*}
        g_t(p) = - it\frac{(p^2-vp)}{ip}\chi\bigl(t(p^2-vp)\bigr) \int_{0}^1e^{-it(p^2-vp)s}ds + \frac{1-\chi(t(p^2-vp))}{ip}\bigl(e^{-it(p^2-vp)}-1\bigr)
    \end{equation*}
    we can conclude that, for any $t \in \mathbf{R}$, $g_t \in L^{\infty}(\mathbf{R})$. Moreover, one can show that $g_t(p) = \mathcal{O}(|t|^{1/2})$ for $t$ small enough. This implies the continuity of the map $t \to T_0(t)u-u$  in $L^2(\mathbf{R})$ at $t=0$. Given $t,s \in \mathbf{R}$ we have
    \begin{equation*}
        ||T_{0}(t)u-T_{0}(s)u||_{L^2(\mathbf{R})} = ||T_{0}(t)(u-T_{0}(-s)u) + T_{0}(t-s)u - T_{0}(s)u||_{L^2(\mathbf{R})} \to 0
    \end{equation*}
    as $t \to s$, where we used the continuity at zero and the continuity of the map $t \to T_{0}(t)f \in L^2(\mathbf{R})$ for $f \in L^2(\mathbf{R})$. For inequality (\ref{bozza_lemma_T_0_H1}) we use (\ref{bozza_der_T_0}) and (\ref{bozza_Tu0-u0}), and the fact that $p \to g_t(p)$ is uniformly bounded for $t$ in compact sets. Then, point i) and ii) follow from Lemma \ref{bozza_lemma_E}. Point iii) is a consequence of Lemma \ref{bozza_lemma_E} and inequality (\ref{bozza_lemma_T_0_H1}).
\end{proof}
We turn now to the study of the operator $\Gamma_v(t)$ acting on functions in $\dot{H}^1(\mathbf{R})$.
\begin{proposition}
    Let $T>0$. There exists $C>0$ such that for any $t \in [-T,T]$ and for any $u \in \dot{H}^1(\mathbf{R})$ we have $\Gamma_v(t)u \in H^1(\mathbf{R})$ with
    \begin{equation*}
        ||\Gamma_v(t)u||_{H^1(\mathbf{R})} \leq C(||\partial_xu||_{L^2(\mathbf{R})}+|u(0)|).
    \end{equation*}
    Moreover, the map $t \to \Gamma_v(t)u$ is continuous from $\mathbf{R}$ to $H^1(\mathbf{R})$.
    \label{bozza_Gamma_dot_H1}
\end{proposition}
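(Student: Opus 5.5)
The plan is to reduce to Lemma \ref{bozza_H1_Gamma} by splitting $u\in\dot H^1(\mathbf{R})$ into a part lying in $H^1(\mathbf{R})$ and a simple explicit profile that carries the growth at infinity. Concretely, fix a cut-off $\chi\in C_0^\infty(\mathbf{R})$ with $\chi=1$ near $0$ and write
\begin{equation*}
    u = u(0)\chi + w + r, \qquad w:=\chi\,(u-u(0)),\quad r:=(1-\chi)(u-u(0)).
\end{equation*}
The piece $w$ is in $H^1(\mathbf{R})$, and $\|w\|_{H^1}\lesssim \|\partial_xu\|_{L^2}$, because $|u(x)-u(0)|\le |x|^{1/2}\|\partial_xu\|_{L^2}$ on the support of $\chi$. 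Likewise $\|u(0)\chi\|_{H^1}\lesssim |u(0)|$. For these two pieces, Lemma \ref{bozza_H1_Gamma} gives the $H^1$ bound and the continuity in $t$ directly, with the right-hand side $C(\|\partial_xu\|_{L^2}+|u(0)|)$.

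The remaining piece $r$ belongs to $\dot H^1$, vanishes near the origin, and satisfies $\|\partial_x r\|_{L^2}\lesssim\|\partial_xu\|_{L^2}$. Here I would use the decomposition $\Gamma_v(t)=T_\gamma(t)-T_0(t)$ together with the identity $\Gamma_v(t,x,y)=e^{i\frac{v^2}{4}t}e^{i\frac v2(x-y)}\Gamma(t,x,y)$, whose kernel is a rank-one-like object concentrated through $K(it;s+|x|+|y|)$. Testing against $\phi\in\mathcal S$ and using \eqref{bozza_sign}, I would integrate by parts in $y$, exactly as in the proof of Lemma \ref{bozza_H1_Gamma}. The boundary term at $y=0$ vanishes because $r(0)=0$ (indeed $r\equiv0$ near $0$), and the boundary terms at infinity vanish thanks to the $L_y$ identity \eqref{bozza_L_x}, which produces decay $|y|^{-1}$ against the growth $|r(y)|\lesssim\langle y\rangle^{1/2}$. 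This expresses $\partial_x(\Gamma_v(t)r)$ as $\sign(x)\Gamma_v(t)(\sign(y)\partial_yr)$ plus $\frac{iv}{2}$-type terms involving $\Gamma_v(t)r$ itself. The first is controlled in $L^2$ by Lemma \ref{bozza_H1_Gamma} applied to the $L^2$ function $\sign\,\partial_yr$; only the $L^2$ boundedness of $\Gamma_v(t)$ is needed here, which follows from unitarity of both propagators.

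The main obstacle is the $L^2$ bound on $\Gamma_v(t)r$ itself, including the $v$-term, since $r$ is not in $L^2$. The trick I would use is to write $r$ as an antiderivative of an $L^2$ function: $r(y)=\int_0^y\partial_yr$. Then $\Gamma_v(t)r$ can be rewritten, after integrating by parts in $y$ and absorbing the phase $e^{-i\frac v2 y}$ (that is, passing to $\Gamma$ via $e^{-i\frac v2y}r$, whose derivative is $e^{-i\frac v2 y}(\partial_yr-\tfrac{iv}{2}r)$), as an operator whose kernel is $\int_{|y|}^\infty\partial_{y'}$-type integrals of $K$. This reduces to a Hardy-type estimate of the form
\begin{equation*}
    \|\Gamma(t)\,\partial_y^{-1}g\|_{L^2}\lesssim_T \|g\|_{L^2}.
\end{equation*}
I would prove it using the explicit bound \eqref{bozza_bound_Gamma}-style estimates $|\int_0^\infty e^{-\gamma s/2}K(it;s+a)\,ds|\lesssim \sqrt t/(a+\beta)+\beta/\sqrt t$, followed by Schur's test. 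This is essentially the argument of \cite{LeCozIR} for $v=0$, and I would follow it.

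Finally, continuity in $t$ comes in two steps. First, the bounds just obtained are uniform on $[-T,T]$. Second, continuity holds on a dense class, namely $r$ with $\partial_xr\in C_0^\infty$, by dominated convergence, with continuity at $t=0$ as in Lemma \ref{bozza_H1_Gamma}. Combining the two gives continuity for every $u\in\dot H^1(\mathbf{R})$.
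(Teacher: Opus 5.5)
\textbf{Overall structure.} Your architecture matches the paper's. You split off a neighbourhood of the origin with a cut-off, treat the local piece with Lemma~\ref{bozza_H1_Gamma}, and use \eqref{bozza_sign} to express $\partial_x(\Gamma_v(t)r)$ through $\Gamma_v(t)(\sign\,\partial_yr)$, $\Gamma_v(t)(\sign\,r)$ and $\Gamma_v(t)r$.

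There is a small slip in the decomposition: $u(0)\chi+w+r=u-u(0)(1-\chi)$, so the piece $u(0)(1-\chi)$ is missing. It is in $\dot H^1(\mathbf{R})$ and vanishes near $0$, so it can be treated like $r$ and produces the $|u(0)|$ term. But it is not in $H^1(\mathbf{R})$, so it must go through the core estimate. In the paper, the piece vanishing near $0$ is simply $(1-\chi)u$, whose derivative norm is $\lesssim\|\partial_xu\|_{L^2}+|u(0)|$.

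\textbf{The gap.} Everything therefore rests on $\|\Gamma_v(t)r\|_{L^2}\lesssim\|\partial_yr\|_{L^2}$ for $r\in\dot H^1(\mathbf{R})$ vanishing near the origin, and here your argument has a genuine gap. With $\tilde r=e^{-i\frac v2y}r$ you have $\Gamma_v(t)r=e^{i\frac{v^2}{4}t}e^{i\frac v2x}\,\Gamma(t)\tilde r$. But $\partial_y\tilde r=e^{-i\frac v2y}(\partial_yr-\frac{iv}{2}r)\notin L^2(\mathbf{R})$, precisely because $r\notin L^2(\mathbf{R})$. The bound $\|\Gamma(t)\partial_y^{-1}g\|_{L^2}\lesssim\|g\|_{L^2}$ therefore only disposes of the $\partial_yr$ part. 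It leaves $\frac v2\,\Gamma(t)\partial_y^{-1}(e^{-i\frac v2y}r)$, which is no easier than the original quantity: rewriting $e^{-i\frac v2y}r=\frac{2i}{v}\partial_y\tilde r-\frac{2i}{v}e^{-i\frac v2y}\partial_yr$ simply gives back $\Gamma(t)\tilde r$, so the reduction is circular.

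This $v$-dependent term is exactly what is new with respect to \cite{LeCozIR}, so deferring to the $v=0$ argument does not cover it. To keep a Schur-type route, you would have to leave the phase in the kernel and estimate $\int_{y'}^{\pm\infty}\Gamma_v(t,x,y)\,dy$ directly. Its phase $\frac{(s+|x|+|y|)^2}{4t}-\frac v2y$ can be stationary (at $s+|x|+|y|=v|t|$), so the $v=0$ bounds you quote do not apply as they stand.

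\textbf{How the paper closes it.} The paper never moves the phase onto $u$. It works with $\langle u,\Gamma_v(-t)\phi\rangle$, where $\phi\in C_0^\infty(\mathbf{R}\setminus\{0\})$ and $u$ vanishes on $[-1,1]$. It applies $-L_x$ to $K(-it;s+|x|+|y|)$ and integrates by parts in the variable of $u$.

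If the derivative falls on $\overline u$ or on $(s+|x|+|y|)^{-1}$, one more $-L_s$ integration by parts gives kernels bounded by $(|x|+|y|)^{-2}$ and $(|x|+|y|)^{-3}$ respectively. These close by Cauchy--Schwarz and by Hardy's inequality $\int_{\mathbf{R}}|u|^2x^{-2}\,dx\lesssim\|\partial_xu\|_{L^2}^2$. If instead it falls on the phase $e^{i\frac v2x}$ (the term $A_3$), only one factor $t(s+|x|+|y|)^{-1}$ has been gained against the growth of $u$. So $-L_x$ is applied a second time, and $-L_s$ to the remaining piece, after which the same two mechanisms close.

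The result is $|\langle u,\Gamma_v(-t)\phi\rangle|\lesssim(|t|^{3/2}+|t|^{5/2})\|\partial_xu\|_{L^2}\|\phi\|_{L^2}$, and the powers of $t$ matter: they give $\|\Gamma_v(t)u\|_{L^2}\to0$ as $t\to0$.

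\textbf{Continuity.} Your continuity step needs exactly this. A bound merely uniform on $[-T,T]$ does not provide it. Moreover, for your dense class (functions tending to nonzero constants at $\pm\infty$), continuity at $t=0$ does not follow as in Lemma~\ref{bozza_H1_Gamma}, whose argument uses that the function is in $H^1(\mathbf{R})$. The paper then gets continuity at every $t$ from the group laws of $T_\gamma$ and $T_0$, and continuity of the derivative from the identity for $\partial_x(\Gamma_v(t)u)$.
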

\begin{proof}
    First we consider $u \in \dot{H}^1(\mathbf{R})$ such that $u$ vanishes on $[-1,1]$. Take $t \in \mathbf{R} \backslash \{0\}$ and $\phi \in C^{\infty}_0(\mathbf{R}\backslash\{0\})$. We compute
    \begin{equation}
       \begin{split}
            \langle u, \Gamma_v(-t)\phi \rangle &= \int_{|x| \geq 1} \overline{u}(x) \int_{\mathbf{R}}\Gamma_v(-t,x,y)\phi(y)dy dx =\int_{|x| \geq 1} \overline{u}(x) \int_{\mathbf{R}}e^{-i\frac{v^2}{4}t}e^{i\frac{v}{2}(x-y)}\Gamma(-t,x,y)\phi(y)dy dx \\ & = -\frac{\gamma}{2}\int_{|x| \geq 1} \overline{u}(x) \int_{\mathbf{R}} \int_{0}^{+\infty} e^{-i\frac{v^2}{4}t}e^{i\frac{v}{2}(x-y)} e^{-\frac{\gamma}{2}s} K(-it;s+|x|+|y|)\phi(y)dsdydx
       \end{split}
       \label{bozza_A1+A2+A3}
    \end{equation}
    We apply the operator $-L_x$ to $K(-it;s+|x|+|y|)$ using \eqref{bozza_L_x} and we integrate by parts. We obtain the sum of three terms $A_1(t)+A_2(t)+A_3(t)$. The first one is obtained when $\partial_x$ acts on $u$ and it reads
    \begin{equation*}
        A_1(t) = (2it)\frac{\gamma}{2}\int_{|x| \geq 1} \int_{\mathbf{R}} \int_{0}^{+\infty} \frac{\partial_x\overline{u}(x)\sign(x)}{s+|x|+|y|}e^{-i\frac{v^2}{4}t}e^{i\frac{v}{2}(x-y)} e^{-\frac{\gamma}{2}s} K(-it;s+|x|+|y|)\phi(y)dsdydx.
    \end{equation*}
    If we apply $-L_s$ to $K(-it;s+|x|+|y|)$ and we integrate by parts, we obtain
    \begin{equation*}
       \begin{split}
            A_1(t) = &+2(2it)^2\frac{\gamma}{2}\int_{|x| \geq 1} \int_{\mathbf{R}} \int_{0}^{+\infty} \frac{\partial_x\overline{u}(x)\sign(x)}{(s+|x|+|y|)^3}e^{-i\frac{v^2}{4}t}e^{i\frac{v}{2}(x-y)} e^{-\frac{\gamma}{2}s} K(-it;s+|x|+|y|)\phi(y)dsdydx \\ & + (2it)^2\frac{\gamma^2}{4}\int_{|x| \geq 1} \int_{\mathbf{R}} \int_{0}^{+\infty} \frac{\partial_x\overline{u}(x)\sign(x)}{(s+|x|+|y|)^2}e^{-i\frac{v^2}{4}t}e^{i\frac{v}{2}(x-y)} e^{-\frac{\gamma}{2}s} K(-it;s+|x|+|y|)\phi(y)dsdydx \\ & -(2it)^2\frac{\gamma}{2}\int_{|x| \geq 1} \int_{\mathbf{R}} \frac{\partial_x\overline{u}(x)\sign(x)}{(|x|+|y|)^2}e^{-i\frac{v^2}{4}t}e^{i\frac{v}{2}(x-y)}K(-it;|x|+|y|)\phi(y)dydx.
       \end{split}
    \end{equation*}
    We have $$|A_1(t)| \lesssim t^{3/2}\int_{|x|\geq 1} \int_{\mathbf{R}}\frac{|\partial_xu(x)||\phi(y)|}{(|x|+|y|)^2}dydx \lesssim t^{3/2}||\partial_xu||_{L^2(\mathbf{R})}||\phi||_{L^2(\mathbf{R})}.$$  
    The term $A_2(t)$, instead, arises from (\ref{bozza_A1+A2+A3}) when $\partial_x$ acts on $(s+|x|+|y|)^{-1}$. It reads
    \begin{equation*}
        A_2(t)= -(2it)\frac{\gamma}{2}\int_{|x| \geq 1} \int_{\mathbf{R}} \int_{0}^{+\infty} \frac{\overline{u}(x)\sign(x)}{(s+|x|+|y|)^2}e^{-i\frac{v^2}{4}t}e^{i\frac{v}{2}(x-y)} e^{-\frac{\gamma}{2}s} K(-it;s+|x|+|y|)\phi(y)dsdydx.
    \end{equation*}
    If we apply $-L_s$ to $K(-it;s+|x|+|y|)$ and we integrate by parts, we obtain
    \begin{equation*}
        \begin{split}
            A_2(t) = &-3(2it)^2\frac{\gamma}{2}\int_{|x| \geq 1} \int_{\mathbf{R}} \int_{0}^{+\infty} \frac{\overline{u}(x)\sign(x)}{(s+|x|+|y|)^4}e^{-i\frac{v^2}{4}t}e^{i\frac{v}{2}(x-y)} e^{-\frac{\gamma}{2}s} K(-it;s+|x|+|y|)\phi(y)dsdydx \\ & - (2it)^2\frac{\gamma^2}{4}\int_{|x| \geq 1} \int_{\mathbf{R}} \int_{0}^{+\infty} \frac{\overline{u}(x)\sign(x)}{(s+|x|+|y|)^3}e^{-i\frac{v^2}{4}t}e^{i\frac{v}{2}(x-y)} e^{-\frac{\gamma}{2}s} K(-it;s+|x|+|y|)\phi(y)dsdydx \\ & + (2it)^2\frac{\gamma}{2}\int_{|x| \geq 1} \int_{\mathbf{R}} \frac{\overline{u}(x)\sign(x)}{(|x|+|y|)^3}e^{-i\frac{v^2}{4}t}e^{i\frac{v}{2}(x-y)} K(-it;|x|+|y|)\phi(y)dydx.
        \end{split}
    \end{equation*}
    We conclude \begin{equation*}
        \begin{split}
            |A_2(t)| &\lesssim t^{3/2} \int_{|x| \geq 1} \int_{\mathbf{R}} \frac{|u(x)||\phi(y)|}{(|x|+|y|)^3} dy dx\\ & \lesssim t^{3/2} \Bigl(\int_{|x| \geq 1} \frac{|u(x)|^2}{|x|^2}dx\int_{\mathbf{R}}\frac{1}{(1+|y|)^2}dy\Bigr)^{1/2}\Bigl(\int_{|x| \geq 1} \frac{1}{|x|^2}dx\int_{\mathbf{R}}|\phi|^2dy\Bigr)^{1/2}  \\ & \lesssim t^{3/2} ||\partial_xu||_{L^2(\mathbf{R})} ||\phi||_{L^2(\mathbf{R})},
        \end{split}
    \end{equation*}
    where in the last step we used Hardy's inequality 
    $$\int_{\mathbf{R}} \frac{|u(x)|^2}{|x|^2}dx  \lesssim ||\partial_xu||^2_{L^2(\mathbf{R})} $$
    which holds for $u$ absolutely continuous and vanishing in $x =0$ (see \cite{laptev}, Theorem 2.64).\\
    The term $A_3(t)$ arises from (\ref{bozza_A1+A2+A3}) when $\partial_x$ acts on $e^{i\frac{v}{2}(y-x)}$. It reads
    \begin{equation*}
        A_3(t) = \frac{iv}{2}(2it)\frac{\gamma}{2}\int_{|x|\geq 1} \int_{\mathbf{R}} \int_{0}^{+\infty} \frac{\overline{u}(x) \sign(x)}{s+|x|+|y|}e^{i\frac{v}{2}(x-y)}e^{-i\frac{v^2}{4}t}e^{-\frac{\gamma}{2}s}K(-it;s+|x|+|y|)\phi(y)dsdydx.
    \end{equation*}
    In this case, we have only gained one power of $x,y$ at the denominator. Thus, we apply again the operator $-L_x$ to $K(-it;s+|x|+|y|)$ and we integrate by parts. We obtain the sum of three terms $A_3(t) = A_{3a}(t)+A_{3b}(t)+A_{3c}(t)$, which we treat as before. We have
    \begin{equation*}
        A_{3a}(t) = -\frac{iv}{2}(2it)^2\frac{\gamma}{2} \int_{|x|\geq 1} \int_{\mathbf{R}} \int_{0}^{+\infty} \frac{\partial_x\overline{u}(x)}{(s+|x|+|y|)^2} e^{i\frac{v}{2}(x-y)}e^{-i\frac{v^2}{4}t}e^{-\frac{\gamma}{2}s}K(-it;s+|x|+|y|)\phi(y)dsdydx,
    \end{equation*}
    and
    \begin{equation*}
        A_{3b}(t) = 2\frac{iv}{2}(2it)^2\frac{\gamma}{2} \int_{|x|\geq 1} \int_{\mathbf{R}} \int_{0}^{+\infty} \frac{\overline{u}(x)\sign(x)}{(s+|x|+|y|)^3} e^{i\frac{v}{2}(x-y)}e^{-i\frac{v^2}{4}t}e^{-\frac{\gamma}{2}s}K(-it;s+|x|+|y|)\phi(y)dsdydx,
    \end{equation*}
    and
    \begin{equation*}
        A_{3c}(t) = -(vt)^2\frac{\gamma}{2} \int_{|x|\geq 1} \int_{\mathbf{R}} \int_{0}^{+\infty} \frac{\overline{u}(x)}{(s+|x|+|y|)^2} e^{i\frac{v}{2}(x-y)}e^{-i\frac{v^2}{4}t}e^{-\frac{\gamma}{2}s}K(-it;s+|x|+|y|)\phi(y)dsdydx.
    \end{equation*}
    We estimate $A_{3a}(t)$ as we did for $A_1(t)$, and $A_{3b}(t)$ as for $A_2(t)$. For the term $A_{3c}(t)$ we apply $-L_s$ to $K(-it;s+|x|+|y|)$ and we proceed as for $A_2(t)$. We obtain
    \begin{equation*}
    \begin{split}
        |A_{3a}(t)| \lesssim t^{3/2}||\partial_xu||&_{L^2(\mathbf{R})} ||\phi||_{L^2(\mathbf{R})}, \qquad |A_{3b}(t)| \lesssim t^{3/2}||\partial_xu||_{L^2(\mathbf{R})} ||\phi||_{L^2(\mathbf{R})},\\ & |A_{3c}(t)| \lesssim t^{5/2}||\partial_xu||_{L^2(\mathbf{R})} ||\phi||_{L^2(\mathbf{R})}.
    \end{split}
    \end{equation*}
    We conclude that $\langle u, \Gamma_v(-t)\phi \rangle \lesssim ||\partial_xu||_{L^2(\mathbf{R})} ||\phi||_{L^2(\mathbf{R})}$. By means of Riesz theorem and the fact that $ C^{\infty}_0(\mathbf{R}\backslash\{0\})$ is dense in $L^2(\mathbf{R})$, the distribution $\Gamma_v(t)u$ defines an element in $L^2(\mathbf{R})$, whose norm satisfies $||\Gamma_v(t)u||_{L^2(\mathbf{R})}\lesssim||\partial_xu||_{L^2(\mathbf{R})}$. In particular, we have $||\Gamma_v(t)u||_{L^2(\mathbf{R})} \to 0$ as $t \to 0$, which implies the continuity of the map $t \to \Gamma_v(t)u$ in $L^2(\mathbf{R})$ at $t=0$.\\
    Now we want to show the continuity of $t \to \Gamma_v(t)u$ in $L^2(\mathbf{R})$ for $\forall t \in \mathbf{R}$. Recall that the group composition property doesn't hold for $\Gamma_v(t)$, as it is the difference of two propagators, i.e. $\Gamma_v(t) = T_{\gamma}(t)-T_{0}(t)$. We begin with the following observation. Using Lemma \ref{bozza_lemma_T0} and the calculations above, we have that $T_{\gamma}(t)u -u = T_{0}(t)u - u +\Gamma_v(t)u \in L^2(\mathbf{R})$ for any $t \in \mathbf{R}$. Moreover, for $f \in L^2(\mathbf{R})$ and $\gamma \geq 0$, the map $t \to T_{\gamma}(t)f \in L^2(\mathbf{R})$ is continuous $\forall t \in \mathbf{R}$. Consider now $t,s \in \mathbf{R}$. We have
    \begin{equation*}
        \begin{split}
            ||\Gamma_v(t)u-\Gamma_v(s)u||_{L^2(\mathbf{R})}& = ||T_{\gamma}(t)u-T_{0}(t)u -\Gamma_v(s)u||_{L^2(\mathbf{R})} \\ & = ||T_{\gamma}(t)\bigl(u-T_{\gamma}(-s)u\bigr) - T_{0}(t)\bigl(u-T_{0}(-s)u\bigr) + \Gamma_v(t-s)u -\Gamma_v(s)u ||_{L^2(\mathbf{R})}\\ & 
            \to ||T_{\gamma}(s)u - T_0(s)u - \Gamma_v(s)u||_{L^2(\mathbf{R})} = 0, \qquad \text{as} \quad t \to s 
        \end{split} 
    \end{equation*}
    where in the limit above we used the continuity of $t \to \Gamma_v(t)u$ at $t=0$.\\
    Finally, we want to show that the distribution $\partial_x(\Gamma_v(t)u)$ belongs to $L^2(\mathbf{R})$. We consider again $t \in \mathbf{R}\backslash\{0\}$ and $\phi \in C^{\infty}_0(\mathbf{R}\backslash\{0\})$, and we compute
    \begin{equation*}
        \begin{split}
           & \langle u, \Gamma_v(-t)\partial_y\phi \rangle = \int_{|x| \geq 1} \overline{u}(x) \int_{\mathbf{R}}e^{-i\frac{v^2}{4}t}e^{i\frac{v}{2}(x-y)}\Gamma(-t,x,y)\partial_y\phi(y)dy dx \\ &  = - \int_{|x| \geq 1} \overline{u}(x) \int_{\mathbf{R}} \Bigl[e^{-i\frac{v^2}{4}t}e^{i\frac{v}{2}(x-y)}\partial_y\Gamma(-t,x,y)\phi(y) - i\frac{v}{2} e^{-i\frac{v^2}{4}t}e^{i\frac{v}{2}(x-y)}\Gamma(-t,x,y)\phi(y) \Bigr]dy dx \\ & = -\int_{|x| \geq 1} \overline{u}(x) \int_{\mathbf{R}} \Bigl[e^{-i\frac{v^2}{4}t}e^{i\frac{v}{2}(x-y)}\sign(x)\sign(y)\partial_x\Gamma(-t,x,y)\phi(y)dydx + i\frac{v}{2} \langle u, \Gamma_v(-t)\phi \rangle \\ & = \langle\Gamma_v(t)(\sign(x)\partial_xu),\sign(y)\phi \rangle + i\frac{v}{2} \langle \Gamma_v(t)(\sign(x)u), \sign(y)\phi \rangle + i\frac{v}{2} \langle\Gamma_v(t) u,\phi \rangle.
        \end{split}
    \end{equation*}
    Since $\sign(x)u \in \dot{H}^1(\mathbf{R})$ and vanishes on $[-1,1]$, we conclude that the map $t\to \partial_x(\Gamma_v(t)u)$ is continuous from $\mathbf{R}$ to $L^2(\mathbf{R})$ and that 
    \begin{equation*}
        ||\partial_x(\Gamma_v(t)u)||_{L^2(\mathbf{R})} \lesssim ||\partial_xu||_{L^2(\mathbf{R})}.
    \end{equation*}
    This proves the proposition for $u$ vanishing in $[-1,1]$. For $u$ generic in $\dot{H}^1(\mathbf{R})$, we consider $\chi$ a smooth cut-off, supported in $(-2,2)$ and such that $\chi(x)=1$ for $x \in [-1,1]$. Then $\chi u \in H^1(\mathbf{R})$, and $(1-\chi)u \in \dot{H}^1(\mathbf{R})$ vanishes on [-1,1]. Using Lemma \ref{bozza_H1_Gamma}, we have that the map $t \to \Gamma_v(t)\chi u+ \Gamma_v(t)(1-\chi)u$ is continuous from $\mathbf{R}$ to $H^1(\mathbf{R})$. Then, we have
    \begin{equation*}
       \begin{split}
            ||\Gamma_v(t)u||_{H^1(\mathbf{R})} & \leq ||\Gamma_v(t)\chi u||_{H^1(\mathbf{R})} + ||\Gamma_v(t)(1-\chi)u||_{H^1(\mathbf{R})} \lesssim ||\chi u||_{H^1(\mathbf{R})} + ||\partial_x((1-\chi)u)||_{L^2(\mathbf{R})} \\ & \lesssim  ||\partial_xu||_{L^2(\mathbf{R})} + ||u||_{L^{\infty}([-2,2])}\lesssim ||\partial_xu||_{L^2(\mathbf{R})} + |u(0)|
       \end{split}
    \end{equation*}
\end{proof}
We conclude with the following Corollary, which lists some properties of the propagator $T_{\gamma}(t)$ in $\mathcal{E}$.
\begin{corollary}
    Given $u \in \mathcal{E}$, the following properties hold.
    \begin{itemize}
        \item [i)] the map $t \to T_{\gamma}(t)u - u$ is continuous from $\mathbf{R}$ to $H^1(\mathbf{R})$;
        \item [ii)] let $R>0$ and $T>0$. There exists $C_R>0$ such that for any $u_0 \in \mathcal{E}$ with $|u|_{\mathcal{E}} < R$ we have
        \begin{equation*}
            ||T_{\gamma}(t)u-u||_{H^1(\mathbf{R})} \leq C_R, \qquad \forall t \in [ -T,T]; 
        \end{equation*}
        \item [iii)] For all $t \in \mathbf{R}$, the distribution $T_{\gamma}(t)u$ belongs to $\mathcal{E}$;
        \item [iv)] the map $t \to T_{\gamma}(t)u$ is continuous from $\mathbf{R}$ to $\mathcal{E}$;
         \item [v)] let $R\geq 0$ and $T \geq 0$. There exists $C_R$ such that for any $u, \tilde{u} \in \mathcal{E}$ with $|u|_{\mathcal{E}}<R$ and $|\tilde{u}|_{\mathcal{E}} < R$ we have
         \begin{equation*}
             d_{\infty}(T_{\gamma}(t)u,T_{\gamma}(t)\tilde{u}) \leq C_R d_{\infty}(u,\tilde{u}), \qquad \forall t \in [-T,T].
         \end{equation*}
    \end{itemize}
    \label{bozza_prop_Tgamma}
\end{corollary}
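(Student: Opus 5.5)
The plan is to combine the decomposition $T_{\gamma}(t) = T_0(t) + \Gamma_v(t)$ with Lemma \ref{bozza_lemma_T0}, Proposition \ref{bozza_Gamma_dot_H1} and Lemma \ref{bozza_lemma_E}. Since every $u \in \mathcal{E}$ belongs to $\dot{H}^1(\mathbf{R})$, we may write
\begin{equation*}
T_{\gamma}(t)u - u = \bigl(T_0(t)u - u\bigr) + \Gamma_v(t)u .
\end{equation*}
By Lemma \ref{bozza_lemma_T0}, the first term is continuous from $\mathbf{R}$ to $H^1(\mathbf{R})$. By Proposition \ref{bozza_Gamma_dot_H1}, the second term is also continuous from $\mathbf{R}$ to $H^1(\mathbf{R})$. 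This proves i).

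For ii), I would add the two bounds, each valid for $t\in[-T,T]$. They give
\begin{equation*}
\|T_{\gamma}(t)u-u\|_{H^1(\mathbf{R})} \lesssim \|\partial_x u\|_{L^2(\mathbf{R})} + |u(0)| \lesssim |u|_{\mathcal{E}} + \|u\|_{L^{\infty}(\mathbf{R})}.
\end{equation*}
By Lemma \ref{bozza_L_infty}, the right-hand side is at most $C(1+R+R^{2/3})=:C_R$.

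For iii), I would write $T_{\gamma}(t)u = u + w$ with $w\in H^1(\mathbf{R})$; point (i) of Lemma \ref{bozza_lemma_E} then gives $T_{\gamma}(t)u\in\mathcal{E}$. For iv), I would use point iv) of Lemma \ref{bozza_lemma_E} with $u_1=u_2=u$, $w_1 = T_\gamma(t)u-u$ and $w_2 = T_\gamma(s)u - u$. The norms of $w_1,w_2$ are controlled on $[-T,T]$ by ii). This yields
\begin{equation*}
d_{\infty}(T_\gamma(t)u,T_\gamma(s)u)\le C_R\,\|w_1-w_2\|_{H^1(\mathbf{R})},
\end{equation*}
and the right-hand side tends to $0$ as $t\to s$ by i).

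Point v) is the main obstacle. Lemma \ref{bozza_lemma_E}(iv) gives
\begin{equation*}
d_\infty(T_\gamma u, T_\gamma\tilde u)\le C_R\bigl(d_\infty(u,\tilde u) + \|\Gamma_v(t)u - \Gamma_v(t)\tilde u + (T_0(t)-1)(u-\tilde u)\|_{H^1(\mathbf{R})}\bigr).
\end{equation*}
(Alternatively, one can start from Lemma \ref{bozza_lemma_T0} iii) and handle only the $\Gamma_v$ part.) By linearity, $\Gamma_v(t)u-\Gamma_v(t)\tilde u = \Gamma_v(t)(u-\tilde u)$, and $u-\tilde u\in\dot{H}^1(\mathbf{R})$. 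Proposition \ref{bozza_Gamma_dot_H1} together with Lemma \ref{bozza_lemma_T0} then bound the remaining term by
\begin{equation*}
\|\partial_x(u-\tilde u)\|_{L^2(\mathbf{R})} + |u(0)-\tilde u(0)| \le 2\, d_\infty(u,\tilde u),
\end{equation*}
because $|u(0)-\tilde u(0)|\le\|u-\tilde u\|_{L^\infty(\mathbf{R})}$. The point to check carefully is that Proposition \ref{bozza_Gamma_dot_H1} applies to differences $u-\tilde u$, which lie in $\dot{H}^1(\mathbf{R})$ but have no decay. Its proof only uses $\partial_x u\in L^2(\mathbf{R})$, Hardy's inequality away from the origin, and the value of $u$ near $0$, so it should go through. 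A similar check is needed for the bound \eqref{bozza_lemma_T_0_H1} of Lemma \ref{bozza_lemma_T0} applied to $u-\tilde u$.
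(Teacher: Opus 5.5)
Your proposal is correct and follows the paper's proof. Points i)--ii) come from the decomposition $T_\gamma(t)u-u=(T_0(t)u-u)+\Gamma_v(t)u$ together with Lemma \ref{bozza_lemma_T0} and Proposition \ref{bozza_Gamma_dot_H1}, and points iii)--v) come from Lemma \ref{bozza_lemma_E} combined with those same bounds applied to $u-\tilde u$. The check you flag at the end is not actually an obstacle: both Lemma \ref{bozza_lemma_T0} and Proposition \ref{bozza_Gamma_dot_H1} are stated and proved for arbitrary elements of $\dot{H}^1(\mathbf{R})$, and $u-\tilde u$ is such an element.
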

\begin{proof} 
    Writing $T_{\gamma}(t)u-u = T_{0}(t)u-u + \Gamma_v(t)u$, and using Lemma \ref{bozza_lemma_T0} and Proposition \ref{bozza_Gamma_dot_H1} we prove point i) and ii). Using Lemma \ref{bozza_lemma_E}, point iii) and iv) follow. Point v) also follows from Lemma \ref{bozza_lemma_E}, combined with Lemma \ref{bozza_lemma_T0} and Proposition \ref{bozza_Gamma_dot_H1}.
\end{proof}

\subsection{Linear evolution in higher regularity spaces.} In this subsection we introduce the space $X^2_{\gamma}$ of functions with higher regularity. This space is locally similar to the domain $D(H_{\gamma})$, defined in \eqref{bozza_domain_2}, since its elements have the same regularity and satisfy the same jump condition as the elements of $D(H_{\gamma})$. On the other hand, functions in $X^2_{\gamma}$ have a non-vanishing behavior at infinity; in particular, the intersection $X^2_{\gamma}\cap \mathcal{E}$ is non-empty, and it is in fact dense in $\mathcal{E}$. Following \cite{LeCozIR}, we study the linear dynamics of \eqref{bozza_NLS} in $X^2_{\gamma}$, and we show that, for $u \in X^2_{\gamma}$, the map $t \to T_{\gamma}(t)u$ is differentiable for every $t \in \mathbf{R}$. In Section \ref{section:cauchy_problem} we will use these results to prove the persistence of regularity for \eqref{bozza_NLS} in $X^2_{\gamma}\cap \mathcal{E}$. Eventually, this will lead to the proof of the conservation of energy in Section \ref{section:energy}. \\

We begin by introducing the following Zhidkov space \cite{zhidkov} $$X^2_0 = L^{\infty}(\mathbf{R}) \cap \dot{H}^1(\mathbf{R})\cap\dot{H}^2(\mathbf{R}).$$ In this space we can define the operator $\tilde{H}_0$, which consists in the natural extension of the operator $(H_0,D(H_0))$ defined in Lemma \ref{bozza_lemma_H0}. In particular, for $u \in X^2_0$ we define $$\tilde{H}_0 u := -\partial_x^2u+iv\partial_xu \in L^2(\mathbf{R}).$$
As in \cite{LeCozIR}, we define the space $X^2_{\gamma}$ as 
\begin{equation*}
    X^2_{\gamma} = \{ u \in L^{\infty}(\mathbf{R}) \cap \dot{H}^1(\mathbf{R})\cap\dot{H}^2(\mathbf{R}\backslash\{0\}), \quad  \text{s.t.} \quad \partial_xu(0^+)-\partial_xu(0^-) = \gamma u(0)\}.
\end{equation*}
In this space we define the extension $\tilde{H}_{\gamma}$ of the operator $(H_{\gamma}, D(H_{\gamma}))$ as follows. Given $u \in X^2_{\gamma}$, we define $\tilde{H}_{\gamma}u$ as the unique element in $L^2(\mathbf{R})$ such that $$\langle \tilde{H}_{\gamma}u, \phi \rangle = \langle u, -\partial_x^2\phi+iv\partial_x\phi\rangle, \qquad \forall\phi \in \mathbf{C}^{\infty}_{0}(\mathbf{R}\backslash\{0\}).$$
The definition of $X^2_{\gamma}$ can be equivalently given by the approach in Section \ref{section:self-adjoint}. More precisely, $X^2_{\gamma}$ can be defined as  
\begin{equation*}
    X^2_{\gamma} = \{ u \in L^{\infty}(\mathbf{R})\quad \text{s.t.} \quad u = w^{\lambda} + qG_v^{\lambda}, \quad \text{with} \ \  w^{\lambda} \in X^2_0, \ q \in \mathbf{C}, \ w^{\lambda}(0) = -(\gamma^{-1}+G_v^{\lambda}(0))q\},
\end{equation*}
where $G^{\lambda}_v$ is the Green's function defined in (\ref{bozza_green_function}), which satisfies  $(H_0+\lambda)G_v^{\lambda} = 0$ in $\mathbf{R} \backslash\{0\}$. With this definition, we see that each element $u \in X^2_{\gamma}$ consists in a regular part $w^{\lambda} \in X^2_0$ and a singular part $G^{\lambda}_v \in H^1(\mathbf{R})$. The constraint $w^{\lambda}(0) = -(\gamma^{-1}+G_v^{\lambda}(0))q$ imposes the usual jump condition at $x=0$ to $u$. Similarly, we can equivalently define the action of the operator $\tilde{H}_{\gamma}$ on $u = w^{\lambda}+q G_v^{\lambda} \in  X^2_{\gamma}$ by the identity
\begin{equation*}
    (\tilde{H}_{\gamma}+\lambda)u = (\tilde{H}_0+\lambda)w^{\lambda}.
\end{equation*}
\begin{remark}
   In the case $v=0$, the operator $\tilde{H}_{\gamma}$ acting on elements in $X^2_{\gamma}$, reduces to the operator $\tilde{H}^0_{\gamma}$ studied in \cite{LeCozIR}. The latter is defined as follows: for any $u \in X^2_{\gamma}$, the function $\tilde{H}^0_{\gamma}u$ is the unique element in $L^2(\mathbf{R})$ such that $\langle\tilde{H}^0_{\gamma}u, \phi \rangle = \langle u,-\partial_x^2\phi \rangle$ for any $\phi \in C^{\infty}_{0}(\mathbf{R} \backslash\{0\})$. It follows that, for every $u \in X^2_{\gamma}$, it holds
\begin{equation}
    \tilde{H}_{\gamma}u = \tilde{H}^0_{\gamma}u + iv\partial_xu \quad \text{in} \quad L^2(\mathbf{R})
    \label{bozza_H0+der}
\end{equation}
\end{remark}
\begin{remark}
    If $\tilde{H}_{\gamma}^0$ is the operator defined in the remark above and $u \in X^2_{\gamma}$, then the pairing $\langle u, \tilde{H}^0_{\gamma}u\rangle$ is well-defined. Indeed, for any $\varepsilon>0$, the following identity holds
    \begin{equation*}
        \overline{u}(x)(\tilde{H}_{\gamma}^0u)(x) = -\overline{u}(x)\partial_x^2u(x) = \partial_x\overline{u}(x)\partial_xu(x) - \partial_x\bigl(\overline{u}(x)\partial_xu(x)\bigr), \qquad \ a.e. \quad \text{in} \quad \mathbf{R}\backslash(-\varepsilon,\varepsilon).
    \end{equation*}
    We notice that $\partial_xu \in L^{\infty}(\mathbf{R})$, since $\partial_xu \in H^1(\mathbf{R}\backslash\{0\})$ with $\partial_xu(0^+)-\partial_xu(0^-) = \gamma u(0)$. In particular, $\overline{u}\partial_xu \in H^1(\mathbf{R}\backslash\{0\})$ and, by the Fundamental Theorem of Calculus (\cite{brezis}, Th. 8.2), we have
\begin{equation*}
    \int_{\mathbf{R}}\overline{u}(x)(\tilde{H}^0_{\gamma}u)(x)dx = -\lim_{\varepsilon \to 0}\int_{|x|\geq \varepsilon}\overline{u}(x)\partial_x^2u(x) dx = \int_{\mathbf{R}}\partial_x\overline{u}(x)\partial_xu(x)dx +\gamma |u(0)|^2.
\end{equation*}
    On the other hand, the the pairing $\langle u,\tilde{H}_{\gamma}u\rangle$ is not in general well defined for $u \in X^2_{\gamma}$, nor for $u \in \mathcal{E}$. This is due to the identity in \eqref{bozza_H0+der} and the fact that the function $\Im(\overline{u}\partial_xu)$ may not be an element of $L^1(\mathbf{R})$. Naturally, this issue is related to problem of defining the linear momentum of a field with non-vanishing behavior at infinity, and will be further discussed in Section \ref{section:energy}.
\label{bozza_remark_energy}
\end{remark}
In the next lemma we study a density property of the space $X^2_{\gamma}\cap \mathcal{E}$ in $\mathcal{E}$.
\begin{lemma}
    For every $u \in \mathcal{E}$, there exists a sequence $\{u_n\}_{n \in \mathbf{N}} \subset X^2_{\gamma}\cap\mathcal{E}$ such that $u-u_n \in H^1(\mathbf{R})$ for all $n \in \mathbf{N}$ and $$||u-u_n||_{H^1(\mathbf{R})} \to 0, \qquad \text{as} \quad n \to \infty.$$
    Moreover, the space $X^2_{\gamma}\cap\mathcal{E}$ is dense in $\mathcal{E}$ for the distance $d_{\infty}$.
    \label{bozza_lemma_X^2}
\end{lemma}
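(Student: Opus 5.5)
The plan is to approximate $u$ by elements of $X^2_0\cap\mathcal{E}$ and then correct them near the origin so that they satisfy the jump condition, with every correction lying in $H^1(\mathbf{R})$.

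First, use the decomposition of Theorem \ref{theorem_gerard} to write $u=e^{i\varphi}+w$, with $w\in H^1(\mathbf{R})$ and $\varphi$ chosen so that $\partial_x^\alpha\varphi\in L^2(\mathbf{R})$ for every $\alpha\geq 1$. Then $e^{i\varphi}$ is bounded, and $\partial_x e^{i\varphi}=i\partial_x\varphi\, e^{i\varphi}\in L^2$. Since $\partial_x\varphi\in H^1\subset L^\infty$, we also get $\partial_x^2e^{i\varphi}=(i\partial_x^2\varphi-(\partial_x\varphi)^2)e^{i\varphi}\in L^2$. Hence $e^{i\varphi}\in X^2_0$. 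Next choose $w_n\in C_0^\infty(\mathbf{R})$ with $w_n\to w$ in $H^1$, and set $\tilde u_n=e^{i\varphi}+w_n\in X^2_0$. By Lemma \ref{bozza_lemma_E}, $\tilde u_n\in\mathcal{E}$ and $u-\tilde u_n=w-w_n\to 0$ in $H^1$.

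Second, impose the jump condition. Fix $\chi\in C_0^\infty(\mathbf{R})$ with $\chi=1$ near $0$, and define
$$u_n=\tilde u_n+c_n\,\chi(x)|x|,\qquad c_n=\tfrac{\gamma}{2}\tilde u_n(0).$$
Since $\partial_x(\chi|x|)$ jumps by $2$ at the origin and $\tilde u_n$ is $C^1$, this gives $\partial_xu_n(0^+)-\partial_xu_n(0^-)=\gamma u_n(0)$ (note $u_n(0)=\tilde u_n(0)$). The correction $\chi|x|$ lies in $H^1\cap \dot H^2(\mathbf{R}\backslash\{0\})\cap L^\infty$, so $u_n\in X^2_\gamma\cap\mathcal{E}$.

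The main obstacle is that the correction does not vanish: $c_n\to \tfrac{\gamma}{2}u(0)$, so $c_n\chi|x|$ stays bounded away from zero in $H^1$. The fix is to shrink the support of the correction. Replace $\chi(x)|x|$ by $\eta_n(x)=\chi(x/\epsilon_n)|x|$ with $\epsilon_n\to0$. This function still has derivative jump $2$ at the origin, while $\|\eta_n\|_{L^2}\lesssim \epsilon_n^{3/2}$ and $\|\partial_x\eta_n\|_{L^2}\lesssim \epsilon_n^{1/2}$. Since $|c_n|$ is bounded by $\|\tilde u_n\|_{L^\infty}$, which is uniformly bounded, we get $\|u-u_n\|_{H^1}\leq \|w-w_n\|_{H^1}+C\epsilon_n^{1/2}\to0$. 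This proves the first claim.

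Finally, density in $(\mathcal{E},d_\infty)$ follows from Lemma \ref{bozza_lemma_E} (iii) applied with base point $u$ and perturbation $u_n-u\in H^1$:
$$d_\infty(u,u_n)\leq C_R\bigl(1+\|u_n-u\|_{H^1}\bigr)\|u_n-u\|_{H^1}\to 0.$$
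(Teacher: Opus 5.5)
Your proof is correct and follows essentially the paper's route: Gérard's decomposition $u=e^{i\varphi}+w$, then a kink $\frac{\gamma}{2}|x|$ localized by a shrinking cutoff to enforce the jump condition (the paper multiplies $e^{i\varphi}$ by $\xi_n=1+\frac{\gamma}{2}|x|e^{-nx^2}$), then Lemma \ref{bozza_lemma_E}(iii) for $d_\infty$-density. The only difference is in how $w$ is handled: the paper approximates $w$ by elements of $D(H_\gamma)$, citing density of $D(H_\gamma)$ in $H^1(\mathbf{R})$, whereas you approximate $w$ by $C_0^\infty$ functions and let a single correction $c_n\chi(x/\epsilon_n)|x|$ enforce the jump for the whole approximant, which makes your argument slightly more self-contained.
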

\begin{proof}
    Consider $u \in \mathcal{E}$ and consider $R>0$ with $R>|u|_{\mathcal{E}}$. By Theorem \ref{theorem_gerard}, there exists a real valued function $\varphi$, satisfying $\partial^{\alpha}\varphi \in L^2(\mathbf{R})$ for $\alpha=1,2$, and $w \in H^1(\mathbf{R})$ such that
    $$u = e^{i\varphi}+w.$$
    For $n \in \mathbf{N}\backslash\{0\}$, consider the function $\xi_n(x) = 1+\frac{\gamma}{2}|x|e^{-n x^2}$. We have
    \begin{equation*}
        \partial_x(\xi_ne^{i\varphi})(0^+)-\partial_x(\xi_ne^{i\varphi})(0^-) = e^{i\varphi(0)}\bigl(\partial_x\xi_n(0^+)-\partial_x\xi_n(0^-)\bigr) = \gamma (\xi_ne^{i\varphi})(0),
    \end{equation*}
    which implies that $\xi_n e^{i\varphi} \in X^2_{\gamma}$. Moreover, using the fact that $$||\partial_x\xi_n||_{L^2(\mathbf{R})} \to 0, \qquad ||\xi_n-1||_{L^{\infty}}\to 0, \qquad ||\xi_n-1||_{L^2(\mathbf{R})} \to 0, \qquad \text{as} \quad n \to \infty,$$
    we can show that $||\xi_ne^{i\varphi}-e^{i\varphi}||_{H^1(\mathbf{R})} \to 0$ as $n \to \infty$. By density of $D(H_{\gamma})$ in $H^1(\mathbf{R})$, we can find a sequence $\{w_n\}\subset D(H_{\gamma})$ such that $||w-w_n||_{H^1(\mathbf{R})} \to 0$ as $n \to \infty$. If we define $u_n = \xi_n e^{i\varphi}+w_n$ for every $n>0$, then we have $u_n \in X^2_{\gamma}\cap\mathcal{E}$ and $u-u_n \in H^1(\mathbf{R})$ for every $n >0$. Moreover, we have $||u-u_n||_{H^1(\mathbf{R})} \to 0$ as $n \to \infty$.\\ Finally, by means of Lemma \ref{bozza_lemma_E} point iii), there exists $C_{R}>0$ such that $$d_{\infty}(u,u_n) \leq C_{R}\Bigl(1+||u-u_n||_{H^1(\mathbf{R})}\Bigr)||u-u_n||_{H^1(\mathbf{R})} \to 0, \qquad \text{as} \quad n \to \infty.$$ 
\end{proof}
In the following proposition we investigate the linear dynamics generated by $T_{\gamma}(t)$ in the space $X^2_{\gamma}$.
\begin{proposition}
    Let $t \in \mathbf{R}$ and $u \in X^2_{\gamma}$. Then the following properties hold.
    \begin{itemize}
        \item [i)] $T_{\gamma}(t)u \in X^2_{\gamma}$;
        \item[ii)] $\tilde{H}_{\gamma}T_{\gamma}(t)u = e^{-itH_{\gamma}}\tilde{H}_{\gamma}u$;
        \item [iii)] we have $$ T_{\gamma}(t)u = u - i\int_{0}^{t} e^{-isH_{\gamma}}\tilde{H}_{\gamma}uds.$$
        In particular, the map $t \to T_{\gamma}(t)u$ is differentiable on $\mathbf{R}$ and for every $t \in \mathbf{R}$ we have
        $$\frac{d}{dt}T_{\gamma}(t)u = -ie^{-itH_{\gamma}}\tilde{H}_{\gamma}u \in L^2(\mathbf{R}).$$
    \end{itemize}
    \label{bozza_prop_X2}
\end{proposition}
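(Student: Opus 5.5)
The plan is to reduce everything to the $L^2$ theory of the self-adjoint operator $H_{\gamma}$ by splitting $u\in X^2_{\gamma}$ into a piece living in $D(H_{\gamma})$ and a "phase-at-infinity" piece on which $\tilde H_{\gamma}$ is easy to control.

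\textbf{Step 1: the key identity (iii) first, from which (ii) and (i) will follow.} Fix $\phi \in \mathcal{S}$ and define the candidate
\[
w(t) := u - i\int_0^t e^{-isH_{\gamma}}\tilde H_{\gamma}u\,ds,
\]
which makes sense since $\tilde H_{\gamma}u\in L^2(\mathbf{R})$. We want $\langle T_{\gamma}(t)u,\phi\rangle_{\mathcal{S}',\mathcal{S}}=\langle w(t),\phi\rangle$, i.e.
\[
\langle u, e^{itH_{\gamma}}\phi\rangle - \langle u,\phi\rangle = -i\int_0^t\langle \tilde H_{\gamma}u, e^{isH_{\gamma}}\phi\rangle ds .
\]
Both sides vanish at $t=0$, so it suffices to compare $t$-derivatives. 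By density, it suffices to take $\phi$ in a dense class $\mathcal{D}\subset\mathcal{S}$ for which $e^{isH_{\gamma}}\phi\in D(H_{\gamma})$ with the decay of Lemma \ref{bozza_lemma_decay}. For instance, we can take $\phi \in \mathcal{S}\cap D(H_\gamma)$, or $\phi$ vanishing near $0$ and approximated. Then $\frac{d}{ds}e^{isH_{\gamma}}\phi = i e^{isH_{\gamma}}H_{\gamma}\phi$. The claim therefore reduces to the symmetry relation
\[
\langle u, H_{\gamma} g\rangle = \langle \tilde H_{\gamma}u, g\rangle,\qquad u\in X^2_{\gamma},\ g\in D(H_{\gamma}) \text{ with enough decay}.
\]
I would prove this by integrating by parts on $\{|x|\ge\varepsilon\}\cap\{|x|\le R\}$. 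The boundary terms at $\pm\varepsilon$ cancel because $u$ and $g$ satisfy the same jump condition with real $\gamma$, and the drift terms $iv\partial_x$ produce boundary terms $iv\,\overline{u}g$ at $0$ that cancel by continuity. The terms at $\pm R$ vanish because $u,\partial_xu$ are bounded and $g,\partial_x g\to0$.

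\textbf{Main obstacle.} The difficulty is precisely this decay: $u\notin L^2$, so we need $e^{isH_\gamma}\phi$ and its derivative to decay (locally uniformly in $s$), not merely to be $L^2$. I would handle it by writing $e^{isH_\gamma}\phi=e^{isH_0}\phi+\Gamma_v(-s)\phi$. The free part is Schwartz, and $\Gamma_v(-s)\phi=O(\langle x\rangle^{-2})$ by Lemma \ref{bozza_lemma_decay}. For the derivative I would repeat the $L_x$ integration-by-parts trick from the proofs in Section \ref{section:propagator} to get an $O(\langle x\rangle^{-2})$ bound as well. Alternatively, I would avoid pointwise decay entirely by splitting $u=\chi u+(1-\chi)u$ and using the duality definition with $\dot H^1$ estimates from Proposition \ref{bozza_Gamma_dot_H1}.

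\textbf{Step 2: consequences.} Once (iii) holds, differentiability follows from strong continuity of $e^{-isH_\gamma}$ on $L^2$, which gives $\frac{d}{dt}T_\gamma(t)u=-ie^{-itH_\gamma}\tilde H_\gamma u$. For (i), note that $T_\gamma(t)u-u\in L^2$ by (iii). Moreover $T_{\gamma}(t)u-u=\int_0^t(\ldots)$ can be shown to lie in $D(H_{\gamma})$, with
\[
H_{\gamma}\int_0^t e^{-isH_\gamma}f\,ds = i(e^{-itH_\gamma}f-f),\qquad f=\tilde H_\gamma u .
\]
This is the standard semigroup fact that integrals of orbits lie in the domain. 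Since $D(H_{\gamma})+X^2_{\gamma}\subset X^2_{\gamma}$ (using $L^\infty\cap\dot H^1$ for $H^1$ functions), we get $T_\gamma(t)u\in X^2_\gamma$. Finally,
\[
\tilde H_\gamma T_\gamma(t)u=\tilde H_\gamma u + (e^{-itH_\gamma}\tilde H_\gamma u-\tilde H_\gamma u)=e^{-itH_\gamma}\tilde H_\gamma u,
\]
because $\tilde H_\gamma$ extends $H_\gamma$. This gives (ii).
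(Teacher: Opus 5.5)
Your argument is correct in substance, but it follows a genuinely different route from the paper.

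The paper proves (i)--(ii) first, by hand. It splits $T_\gamma(t)u=T_0(t)u+\Gamma_v(t)u$ and pairs with $\partial_y^2\phi-iv\partial_y\phi$ for $\phi\in C_0^\infty(\mathbf{R}\backslash\{0\})$. It then integrates by parts inside the kernel of $\Gamma$, using $\partial_xK=\sign(x)\sign(y)\partial_yK$. The jump condition is checked separately, by testing against approximate Dirac masses supported in $\mathbf{R}^*_\pm$. Only afterwards is (iii) obtained by duality, with (ii) used to move $\tilde H_\gamma$ across the pairing.

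You reverse the order. You get (iii) from the $x$-space symmetry identity $\langle u,H_\gamma g\rangle=\langle\tilde H_\gamma u,g\rangle$, which is essentially what the paper's kernel computation verifies for $g=e^{itH_\gamma}\phi$. Then (i)--(ii) follow from the abstract fact $\int_0^t e^{-isH_\gamma}f\,ds\in D(H_\gamma)$. Your route is shorter, gives the stronger conclusion $T_\gamma(t)u-u\in D(H_\gamma)$, and makes the jump condition automatic. The paper's route is heavier but yields explicit formulas for the one-sided derivatives of $\Gamma_v(t)u$ at the origin.

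One step needs repair. Your test class $\mathcal{S}\cap D(H_\gamma)=\{\phi\in\mathcal{S}:\ \phi(0)=0\}$ (and likewise functions vanishing near $0$) is the kernel of $\delta_0$, a closed hyperplane of $\mathcal{S}$. So it is \emph{not} dense in $\mathcal{S}$, and testing on it identifies $T_\gamma(t)u$ with $w(t)$ only modulo $\mathbf{C}\delta_0$. For the same reason, deducing in Step 2 that $T_\gamma(t)u-u\in L^2(\mathbf{R})$ ``by (iii)'' is circular.

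The fix is to use information available before (iii). For every $u\in\dot H^1(\mathbf{R})$, hence for $u\in X^2_\gamma$, Lemma \ref{bozza_lemma_T0} and Proposition \ref{bozza_Gamma_dot_H1} give $T_\gamma(t)u-u\in H^1(\mathbf{R})$. Then $T_\gamma(t)u-w(t)\in L^2(\mathbf{R})$, and density of your test class in $L^2(\mathbf{R})$ suffices. This is exactly how the paper concludes its proof of (iii).

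Two smaller points.

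First, the obstacle you identify is milder than you think. For $g=e^{isH_\gamma}\phi\in D(H_\gamma)$ you already have $g\in H^1(\mathbf{R})$ and $\partial_xg\in H^1(\mathbf{R}\backslash\{0\})$, so $g,\partial_xg\to0$ at $\pm\infty$. Since $u$ and $\partial_xu$ are bounded, the boundary terms at $\pm R$ vanish with no rate needed, and no estimate on $\partial_x e^{isH_\gamma}\phi$ is required. What you do need is a bound $|(e^{isH_\gamma}H_0\phi)(x)|\lesssim\langle x\rangle^{-2}$, locally uniform in $s$. The free part is Schwartz, and Lemma \ref{bozza_lemma_decay} applies to $H_\gamma\phi=H_0\phi\in\mathcal{S}$, provided you check that its constant is locally bounded in $s$. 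This bound makes $\langle u,H_\gamma g\rangle$ absolutely convergent and lets Fubini justify differentiating or integrating in $s$ under the pairing with $u\in L^\infty(\mathbf{R})$.

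Second, with the paper's convention (inner product antilinear in the first slot), the identity to prove is $\langle u,e^{itH_\gamma}\phi\rangle-\langle u,\phi\rangle=+i\int_0^t\langle\tilde H_\gamma u,e^{isH_\gamma}\phi\rangle\,ds$. With your $-i$, the derivative comparison would not match the symmetry relation.
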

\begin{proof}
    We follow the same strategy as in \cite[Proposition 3.9]{LeCozIR}. Consider $t \in \mathbf{R}$ and $u \in X^2_{\gamma}$, and denote $z(t) := T_{\gamma}(t)u = T_0(t)u + \Gamma_v(t)u$. Proving point i) means showing that $z(t) \in X^2_{\gamma}$. By means of Corollary \ref{bozza_prop_Tgamma}, we know that $z(t)-u \in H^1(\mathbf{R})$, so that $z(t) \in L^{\infty}(\mathbf{R}) \cap \dot{H}^1(\mathbf{R}).$ Thus, we need to prove that $z(t) \in \dot{H}^2(\mathbf{R} \backslash\{0\})$, which we do by a density argument. More precisely, we prove that the following identity holds
    \begin{equation}
        \langle z(t), \partial_y^2\phi-iv\partial_y\phi\rangle = -\langle e^{-itH_{\gamma}}\tilde{H}_{\gamma}u, \phi \rangle, \qquad \forall \phi \in C^{\infty}_0(\mathbf{R}\backslash\{0\}).
        \label{bozza_z(t)}
    \end{equation} Using the fact that $e^{-itH_{\gamma}}\tilde{H}_{\gamma}u$ is an element of $L^2(\mathbf{R})$ and the fact that $z(t) \in \dot{H}^1(\mathbf{R})$, the identity above implies $z(t) \in \dot{H}^2(\mathbf{R} \backslash\{0\})$. Subsequently, we show that $z(t)$ satisfies the correct jump condition at $x=0$. This will complete the proof of point i). By means of the identity in \eqref{bozza_z(t)}, point ii) will follow as well.\\
    We begin by showing \eqref{bozza_z(t)}. We consider $\phi \in C^{\infty}_0(\mathbf{R}\backslash\{0\})$ and we define $z_0(t) := T_0(t)u$ and $z_{\Gamma}(t) := \Gamma_v(t)u$, so that $z(t)= z_0(t) +z_{\Gamma}(t)$. The contribution of $z_0(t)$ to \eqref{bozza_z(t)} gives
    \begin{equation}
        \begin{split}
            \langle z_0(t), \partial_y^2\phi-iv\partial_y\phi \rangle & = \langle u, e^{itH_0}(\partial_y^2\phi-iv\partial_y\phi) \rangle = \langle u, \partial_x^2(e^{itH_0}\phi)-iv\partial_x(e^{itH_0}\phi) \rangle \\ &  = -\langle \partial_xu, \partial_x(e^{itH_0}\phi)\rangle -\langle iv \partial_xu , e^{itH_0}\phi \rangle = -\langle \tilde{H}_{\gamma}u, e^{itH_0}\phi\rangle  +\gamma \overline{u}(0)(e^{itH_0}\phi)(0) 
        \end{split}
        \label{bozza_z0}
    \end{equation}
    On the other hand, the contribution of $z_{\Gamma}(t)$ gives
    \begin{equation*}
        \begin{split}
            \langle z_{\Gamma}(t), \partial_y^2\phi-iv\partial_y\phi  \rangle  &= \int_{\mathbf{R}} \int_{\mathbf{R}}\overline{u}(x) \Gamma(-t,x,y)e^{-i\frac{v^2}{4}t}e^{i\frac{v}{2}(x-y)}\bigl[\partial_y^2\phi(y)-iv\partial_y\phi(y)\bigr]dydx \\ & =\int_{\mathbf{R}} \int_{\mathbf{R}}\overline{u}(x) \Gamma(-t,x,y)e^{-i\frac{v^2}{4}t}e^{i\frac{v}{2}x}\bigl[ \partial_y^2(e^{-i\frac{v}{2}y}\phi(y))+\frac{v^2}{4}e^{-i\frac{v}{2}y}\phi(y)  \bigr]dydx
        \end{split}
    \end{equation*}
    We now integrate by parts the first term in the square parenthesis. Its contribution can be written as
    \begin{equation}
        \begin{split}
            &-\int_{\mathbf{R}} \int_{\mathbf{R}} \overline{u}(x)\partial_y\Gamma(-t,x,y)e^{-i\frac{v^2}{4}t}e^{i\frac{v}{2}x}\partial_y(e^{-i\frac{v}{2}y}\phi(y))dydx  \\ & 
            = \int_{\mathbf{R}} \int_{\mathbf{R}} \partial_x(e^{i\frac{v}{2}x}\overline{u}(x))\sign(x)\sign(y)\Gamma(-t,x,y)e^{-i\frac{v^2}{4}t} \partial_y(e^{-i\frac{v}{2}y}\phi(y))dydx \\ & + 2\overline{u}(0) \int_{\mathbf{R}}\sign(y)\Gamma(-t,0,y)e^{-i\frac{v^2}{4}t} \partial_y(e^{-i\frac{v}{2}y}\phi(y))dy =: I_1+I_2,
        \end{split}
        \label{bozza_interm_comp}
    \end{equation}
    where we used the property in \eqref{bozza_sign}.
    If we integrate by parts in $I_1$ we obtain
\begin{equation*}
    \begin{split}  I_1&=-\int_{\mathbf{R}}\int_{\mathbf{R}}\partial_x(e^{i\frac{v}{2}x}\overline{u}(x))\partial_x\Gamma(-t,x,y)e^{-i\frac{v^2}{4}t}e^{-i\frac{v}{2}y}\phi(y)dydx \\ & =\lim_{\varepsilon \to 0}\int_{|x|\geq \varepsilon}\int_{\mathbf{R}}\partial^2_x(e^{i\frac{v}{2}x}\overline{u}(x))\Gamma(-t,x,y)e^{-i\frac{v^2}{4}t}e^{-i\frac{v}{2}y}\phi(y)dydx  \\& + \gamma \overline{u}(0) \int_{\mathbf{R}}\Gamma(-t,0,y)e^{-i\frac{v^2}{4}t}e^{-i\frac{v}{2}y}\phi(y)dy.
    \end{split}
\end{equation*}
The term $I_2$ can be written as
\begin{equation*}
    \begin{split}
        I_2&=-2\overline{u}(0)\int_{\mathbf{R}}\sign(y)\partial_y\Gamma(-t,0,y)e^{-i\frac{v^2}{4}t}e^{-i\frac{v}{2}y}\phi(y)dy  \\ &= \gamma \overline{u}(0) \int_{\mathbf{R}} \int_{0}^{+\infty}e^{-\frac{\gamma}{2}s}\partial_sK(-it;s+|y|)e^{-i\frac{v^2}{4}t} e^{-i\frac{v}{2}y}\phi(y) dsdy \\ & = -\gamma \overline{u}(0) \int_{\mathbf{R}}\Gamma(-t,0,y)e^{-i\frac{v^2}{4}t}e^{-i\frac{v}{2}y}\phi(y)dy - \gamma \overline{u}(0)(e^{itH_0}\phi)(0).
    \end{split}
\end{equation*}
Overall, writing $\partial_x^2(e^{i\frac{v}{2}x}\overline{u}) = e^{i\frac{v}{2}x}(\partial_x^2\overline{u}+iv\partial_x\overline{u}-\frac{v^2}{4}\overline{u})$ for $x \neq 0$, we obtain
\begin{equation*}
    \langle z_{\Gamma}(t), \partial_y^2\phi-iv\partial_y\phi\rangle = -\langle \Gamma_v(t)\tilde{H}_{\gamma}u, \phi \rangle - \gamma \overline{u}(0)(e^{itH_0}\phi)(0). 
\end{equation*}
Together with (\ref{bozza_z0}), this implies that $\langle z(t), \partial_y^2\phi-iv\partial_y\phi\rangle = -\langle e^{-itH_{\gamma}}\tilde{H}_{\gamma}u, \phi \rangle$, and that $z(t) \in \dot{H}^2(\mathbf{R}\backslash\{0\})$.\\ We now show the jump condition for $z(t)$. Consider $\phi_{\pm} \in \mathcal{S}$ supported $\mathbf{R}^*_{\pm}$. We have, 
\begin{equation*}
    \begin{split}
        -\langle z_{\Gamma}(t), \partial_y\phi_{\pm} \rangle  &= -\int_{\mathbf{R}} \int_{\mathbf{R}}\overline{u}(x)e^{i\frac{v}{2}(x-y)}e^{-i\frac{v^2}{4}t}\Gamma(-t,x,y)\partial_y\phi_{\pm}(y)dydx  \\ & = \int_{\mathbf{R}} \int_{\mathbf{R}}\overline{u}(x)e^{i\frac{v}{2}(x-y)}e^{-i\frac{v^2}{4}t}(\partial_y-i\frac{v}{2})\Gamma(-t,x,y)\phi_{\pm}(y)dydx
    \end{split}
\end{equation*}
In the line above, the term containing $\partial_y$ can be written as
\begin{equation*}
    \begin{split}
         &-\frac{\gamma}{2}\int_{\mathbf{R}}\int_{\mathbf{R}}\int_0^{\infty} \overline{u}(x) e^{-\frac{\gamma}{2}s}e^{i\frac{v}{2}(x-y)}e^{-i\frac{v^2}{4}t} \sign(y)\partial_s K(-it;s+|x|+|y|)\phi_{\pm}(y)dsdydx \\ & = \pm \frac{\gamma}{2} \int_{\mathbf{R}} \int_{\mathbf{R}}\overline{u}(x)e^{i\frac{v}{2}(x-y)}e^{-i\frac{v^2}{4}t}\Gamma(-t,x,y)\phi_{\pm}(y)dydx \\&\pm \frac{\gamma}{2} \int_{\mathbf{R}}\int_{\mathbf{R}}\overline{u}(x)K(-it;|x|+|y|)e^{i\frac{v}{2}(x-y)}e^{-i\frac{v^2}{4}t}\phi_{\pm}(y)dydx.
    \end{split}
\end{equation*}
If we take a sequence $\{\phi^n_{\pm}\}_{n \in \mathbf{N}} \subset \mathcal{S}$ of functions supported in $\mathbf{R}^*_{\pm}$ and that approximate a Dirac distribution, we obtain
\begin{equation*}
    \begin{split}
        \partial_yz_{\Gamma}(t, 0^{\pm}) &=-\lim_{n \to \infty}\langle z_{\Gamma}(t), \partial_y(\phi^n_{\pm}) \rangle = \pm \frac{\gamma}{2} z_{\Gamma}(t,0) \pm \frac{\gamma}{2}z_0(t,0) -i\frac{v}{2}z_{\Gamma}(t,0).
    \end{split}
\end{equation*}
We conclude that $\partial_yz(t,0^+)-\partial_yz(t,0^-) = \gamma z(t,0)$, and hence $z(t) \in X^2_{\gamma}$. With \eqref{bozza_z(t)}, this also shows point ii).\\
We now prove point iii). For $t \in \mathbf{R}$, we set
\begin{equation*}
    w(t) = T_{\gamma}(t)u - u + i \int_{0}^{t}e^{-isH_{\gamma}}\tilde{H}_{\gamma}u\ ds.
\end{equation*}
Thanks to Corollary \ref{bozza_prop_Tgamma}, $w(t)$ belongs to $L^2(\mathbf{R})$ for any $t \in \mathbf{R}$. For any $\psi_0 \in C^{\infty}_0(\mathbf{R}\backslash\{0\})$ we have
\begin{equation*}
    \langle w(t), \psi_0 \rangle = \Bigl\langle  u, e^{itH_{\gamma}}\psi_0 - \psi_0 -i \int_{0}^te^{isH_{\gamma}}H_{\gamma}\psi_0  \Bigr\rangle  =0.
\end{equation*}
By density of $C^{\infty}_0(\mathbf{R}\backslash\{0\})$ in $L^2(\mathbf{R})$, we have $w(t) = 0$ for $\forall t \in \mathbf{R}$.
\end{proof}
\section{The Cauchy problem}
\label{section:cauchy_problem}
In this section we formulate the Cauchy problem associated to equation (\ref{bozza_NLS}). We first introduce the notion of solution. Recall $F(u) = (1-|u|^2)u$, as defined in \eqref{bozza_F_def}.
\begin{definition}
    Let $u_0 \in \mathcal{E}$ and $T \in (0,+\infty]$. We say that $u: (-T,T) \to \mathcal{E}$ is a solution to (\ref{bozza_NLS}) if the following properties are satisfied.
    \begin{itemize}
        \item [i)] the map $t \to u(t)$ is continuous from $(-T,T)$ to $(\mathcal{E},d_{\infty})$;
        \item [ii)] we have $u(0) = u_0$;
        \item [iii)] for $\phi \in \mathcal{S}(\mathbf{R})$ the following identity holds in the sense of distribution in $(-T,T)$
        \begin{equation*}
            \frac{d}{dt}\langle iu(t), \phi \rangle -\langle \partial_xu, \partial_x\phi\rangle -\langle iv\partial_x u, \phi \rangle  - \gamma \overline{u}(t,0)\phi(0) + \langle F(u), \phi\rangle =0.  
        \end{equation*}
    \end{itemize}
    \label{bozza_def_solution}
\end{definition}
We have the following proposition from \cite[Proposition 4.2]{LeCozIR}.
\begin{proposition}[\cite{LeCozIR}, Duhamel formula]
    Let $u_0 \in \mathcal{E}$ and $u \in C^0((-T,T), \mathcal{E})$ for some $T \in (0,+\infty]$. Then $u$ is a solution of (\ref{bozza_NLS}) with $u(0) = u_0$ if and only if 
    \begin{equation*}
        u(t) = T_{\gamma}(t)u_0 +i \int_{0}^te^{-i(t-s)H_{\gamma}}F(u(s))ds.
    \end{equation*}
    \label{bozza_duhamel}
\end{proposition}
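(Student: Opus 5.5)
The plan is to reduce the statement to a duality computation, proving first the two implications for regular data and then passing to the limit. Fix $\phi \in \mathcal{S}(\mathbf{R})$; by density it suffices to test against $\phi \in D(H_{\gamma})$, or even against $\phi$ in a core such as $\{\psi \in \mathcal{S}(\mathbf{R}\backslash\{0\})\} + \mathrm{span}\{G^{\lambda}_v\}$ suitably combined with the jump condition. For such $\phi$, Remark \ref{bozza_remark_sa} iv) gives
\[
\langle \partial_x u,\partial_x\phi\rangle + \langle iv\partial_x u,\phi\rangle + \gamma \overline{u}(0)\phi(0) = \langle u, H_{\gamma}\phi\rangle .
\]
Some care is needed here: the term $\langle iv\partial_xu,\phi\rangle$ must be integrated by parts onto $\phi$, which is legitimate since $\phi$ decays and $u \in L^\infty$. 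Hence the weak formulation iii) in Definition \ref{bozza_def_solution} becomes
\[
\frac{d}{dt}\langle iu(t),\phi\rangle = \langle u(t),H_{\gamma}\phi\rangle - \langle F(u(t)),\phi\rangle
\]
in $\mathcal{D}'(-T,T)$. This is the key reformulation, and it holds for $\phi$ in the domain, where everything is well defined because $u(t) \in \mathcal{E}\subset L^\infty$ and $H_\gamma\phi \in L^1$ for nice $\phi$. I will need to check that $D(H_\gamma)\cap\{$rapid decay$\}$ contains enough test functions, for instance $e^{itH_\gamma}\psi$ with $\psi \in \mathcal{S}$, which have the decay guaranteed by Lemma \ref{bozza_lemma_decay}.

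For the direction Duhamel $\Rightarrow$ weak solution, I pair the Duhamel formula with $\phi$. By the definition of $T_\gamma(t)$ through duality, $\langle T_\gamma(t)u_0,\phi\rangle = \langle u_0,e^{itH_\gamma}\phi\rangle$. Differentiating in $t$ for $\phi \in D(H_\gamma)$ gives $\langle u_0, ie^{itH_\gamma}H_\gamma\phi\rangle$, which equals $-i\langle T_\gamma(t)u_0, H_\gamma\phi\rangle$ after justifying the commutation by duality. The inhomogeneous term $\int_0^t \langle F(u(s)), e^{i(t-s)H_\gamma}\phi\rangle ds$ lives in $L^2$, since $F(u)\in H^1$ by Lemma \ref{bozza_lemma_F} and is continuous in time. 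Its derivative is therefore the standard $L^2$ computation, producing the boundary term $\langle F(u(t)),\phi\rangle$ together with $-i\langle \int_0^t\cdots, H_\gamma\phi\rangle$. Summing the pieces yields the reformulated identity.

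For the converse, given a weak solution $u$, I fix $t$ and set $w(s) = T_\gamma(-s)u(s)$, that is, I test $u(s)$ against $e^{-i(t-s)H_\gamma}\phi$-type functions. I then compute $\frac{d}{ds}\langle u(s), e^{i(s-t)\cdots}\phi\rangle$ using the weak equation together with the product rule for distributions. The $H_\gamma$ terms cancel and only the $F$ term survives. Integrating from $0$ to $t$ gives $\langle u(t),\phi\rangle = \langle T_\gamma(t)u_0 + i\int_0^t e^{-i(t-s)H_\gamma}F(u(s))ds, \phi\rangle$, and density of $\phi$ concludes.

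I expect the main obstacle to be justifying the product rule when the test function itself depends on time. Since $u(s)\notin L^2$, I cannot use the $L^2$ group directly. The approach I would try is to write $u(s) = u_0 + (u(s)-u_0)$ with $u(s)-u_0 \in H^1$, using the continuity in $d_\infty$ and Corollary \ref{bozza_prop_Tgamma}. The $H^1$ part is then handled by standard unitary group arguments, while the constant-in-time $u_0$ part is handled directly by the duality definition of $T_\gamma$. The argument should follow \cite[Proposition 4.2]{LeCozIR} essentially verbatim, with $H^0_\gamma$ replaced by $H_\gamma$ and the additional $iv\partial_x$ term absorbed as described above.
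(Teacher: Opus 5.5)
The paper does not prove this statement; it imports it from \cite[Proposition 4.2]{LeCozIR}. Your overall scheme is the standard one: rewrite (iii) of Definition \ref{bozza_def_solution} as $\frac{d}{dt}\langle iu,\phi\rangle=\langle u,H_\gamma\phi\rangle-\langle F(u),\phi\rangle$ for decaying $\phi\in D(H_\gamma)$, then differentiate $s\mapsto\langle u(s),e^{i(t-s)H_\gamma}\psi\rangle$. However, the step you single out as the main obstacle is closed by an invalid claim. In the direction ``weak solution $\Rightarrow$ Duhamel'' you only know $u\in C^0((-T,T),\mathcal{E})$. This does not give $u(s)-u_0\in H^1$, nor even $u(s)-u_0\in L^2$. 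For instance, the curve $u(s)=e^{is\arctan x}$ with $u_0=1$ satisfies $d_\infty(u(s),u_0)\lesssim|s|$, while $u(s)-1$ has nonzero limits at $\pm\infty$. Corollary \ref{bozza_prop_Tgamma} only gives $T_\gamma(s)u_0-u_0\in H^1$, which tells you something about $u(s)$ only once the Duhamel formula is known. Indeed, in the paper $u(t)-u_0\in H^1$ (Proposition \ref{bozza_u-u0_prop}) is \emph{deduced from} the Duhamel formula. So your argument is circular exactly where the difficulty lies.

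There are two ways to repair this. (a) Extract the missing information from the equation itself. Since $d_\infty$ contains the $L^\infty$ norm, $t\mapsto\langle u(t),\phi\rangle$ is continuous for $\phi\in\mathcal{S}$. Integrating (iii) then gives $|\langle u(t)-u_0,\phi\rangle|\le C|t|\,\|\phi\|_{H^1(\mathbf{R})}$, locally uniformly in $t$, so $u(t)-u_0\in H^{-1}$. Combined with $\partial_x(u(t)-u_0)\in L^2$ (split $|p|\le1$ and $|p|\ge1$ on the Fourier side), this yields $u-u_0\in C^0((-T,T),H^1(\mathbf{R}))$, after which your splitting is legitimate. (b) Alternatively, drop the splitting and run the product rule in $L^\infty$--$L^1$ duality. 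For $\psi\in C^\infty_0(\mathbf{R}\backslash\{0\})$, the function $s\mapsto\Psi(s)=e^{i(t-s)H_\gamma}\psi$ lies in $C^0(H^1)\cap C^1(L^1)$ with $\partial_s\Psi=-ie^{i(t-s)H_\gamma}H_\gamma\psi$. This follows from Lemma \ref{bozza_lemma_decay} applied to $\psi$ and to $H_\gamma\psi=-\psi''+iv\psi'$, provided the constants are locally uniform in time.

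Three smaller corrections:
\begin{itemize}
\item Your family $e^{itH_\gamma}\psi$ with $\psi\in\mathcal{S}$ is not contained in $D(H_\gamma)$ unless $\psi(0)=0$. Restrict to $\psi\in C^\infty_0(\mathbf{R}\backslash\{0\})$, which suffices to identify $u(t)$.
\item In the direction Duhamel $\Rightarrow$ (iii), checking (iii) on such functions does not give it for all $\phi\in\mathcal{S}$, because $C^\infty_0(\mathbf{R}\backslash\{0\})$ is not dense in $H^1$. Lemma \ref{bozza_lemma_decay} gives you no control on $e^{itH_\gamma}\phi$ for $\phi\in D(H_\gamma)$ with $\phi(0)\neq0$. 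For the linear part it is simpler to verify (iii) for $T_\gamma(t)u_0$ with $u_0\in X^2_\gamma\cap\mathcal{E}$ via Proposition \ref{bozza_prop_X2}. Then pass to the limit using the $d_\infty$-density of $X^2_\gamma\cap\mathcal{E}$ and Corollary \ref{bozza_prop_Tgamma} v).
\item The sign should be $\frac{d}{dt}\langle u_0,e^{itH_\gamma}\phi\rangle=+i\langle T_\gamma(t)u_0,H_\gamma\phi\rangle$, since the pairing is antilinear in the first slot; this is the sign that matches (iii).
\end{itemize}
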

We can now prove the local well-posedness of (\ref{bozza_NLS}) in the space $\mathcal{E}$. The proof makes use of the standard fixed point argument as in \cite{LeCozIR}, see also \cite{Gallo, gerard, zhidkov}. 
\begin{proposition}
    Let $R >0$. Then there exists $T>0$ such that for all $u_0 \in \mathcal{E}$ with $|u_0|_{\mathcal{E}} \leq R$, equation (\ref{bozza_NLS}) has a unique solution $u \in C^0((-T,T),\mathcal{E})$ with initial datum $u(0) =u_0$. Moreover, there exist two constants $C_R >0$ and $C'_R>0$ such that, if $u_0,\tilde{u}_0 \in \mathcal{E}$ satisfy $|u_0|_{\mathcal{E}} \leq R$ and $|\tilde{u}_0|_{\mathcal{E}} \leq R$, the corresponding solutions $u$ and $\tilde{u}$ satisfy
    \begin{equation*}
        d_{\infty}(u(t),\tilde{u}(t)) \leq C_Rd_{\infty}(u_0,\tilde{u}_0), \qquad \forall t \in (-T,T).
    \end{equation*}
    If, in addition, $u_0$ and $\tilde{u}_0$ satisfy $u_0-\tilde{u}_0 \in H^1(\mathbf{R})$, then the corresponding solutions $u, \tilde{u}$ satisfy $u(t)-\tilde{u}(t) \in C^0((-T,T), H^1(\mathbf{R}))$ and
    \begin{equation}
        ||u(t)-\tilde{u}(t)||_{H^1(\mathbf{R})} \leq C'_{R}\Bigl(1+||u_0-\tilde{u}_0||_{H^1(\mathbf{R})}\Bigr)||u_0-\tilde{u}_0||_{H^1(\mathbf{R})}, \qquad \forall t \in (-T,T).
        \label{bozza_cont_in_data}
    \end{equation}
    \label{bozza_lwp}
\end{proposition}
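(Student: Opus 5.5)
The plan is the standard fixed-point argument on the Duhamel formula of Proposition \ref{bozza_duhamel}. The key is to work in the affine space $u_0+H^1(\mathbf{R})$, using $T_{\gamma}(t)u_0$ as the reference. Write the unknown as
\[
u(t)=T_{\gamma}(t)u_0+w(t),
\]
and look for $w\in C^0([-T,T],H^1(\mathbf{R}))$ solving
\[
w(t)=\Phi(w)(t):=i\int_0^t e^{-i(t-s)H_{\gamma}}F\bigl(T_{\gamma}(s)u_0+w(s)\bigr)\,ds .
\]

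By Corollary \ref{bozza_prop_Tgamma} ii), for $|u_0|_{\mathcal{E}}\le R$ and $|t|\le 1$ we have $\|T_{\gamma}(t)u_0-u_0\|_{H^1}\le C_R$. Lemma \ref{bozza_lemma_E} ii) then gives $|T_{\gamma}(t)u_0|_{\mathcal{E}}\le K_R$, uniformly for $|t|\le1$. Next, Lemma \ref{bozza_lemma_F} (with $u=T_{\gamma}(s)u_0$) and the uniform $H^1$ bound of the propagator in Corollary \ref{bozza_corollary_H1_propag} yield, on the ball $B=\{\sup_{|t|\le T}\|w(t)\|_{H^1}\le 1\}$,
\[
\|\Phi(w)(t)\|_{H^1}\le C\,T\,C_{K_R,1},\qquad \|\Phi(w_1)-\Phi(w_2)\|_{L^\infty_tH^1}\le C\,T\,C_{K_R,1}\,\|w_1-w_2\|_{L^\infty_tH^1}.
\]
Choosing $T=T(R)$ small makes $\Phi$ a contraction of $B$. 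Continuity in time of $\Phi(w)$ in $H^1$ follows from strong continuity of $e^{-itH_{\gamma}}$ on $H^1$ (Corollary \ref{bozza_corollary_H1_propag} i)) and dominated convergence. The resulting $u$ is in $C^0((-T,T),\mathcal{E})$ by Corollary \ref{bozza_prop_Tgamma} iv) together with Lemma \ref{bozza_lemma_E} iii)--iv). It is a solution by Proposition \ref{bozza_duhamel}.

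Uniqueness among all solutions in $C^0((-T,T),\mathcal{E})$, not just those in $B$, comes from a bootstrap. Any solution $\tilde u$ gives $\tilde w=\tilde u-T_{\gamma}(\cdot)u_0\in C^0H^1$ with $\tilde w(0)=0$, hence $\tilde w\in B$ on a short interval. The contraction estimate then shows $\tilde w=w$ there, and a continuity argument extends this to all of $(-T,T)$.

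For the $d_\infty$-continuity estimate, take two data $u_0,\tilde u_0$ with energy at most $R$. Write
\[
d_\infty(u(t),\tilde u(t))\le C_{R'}\Bigl(d_\infty\bigl(T_{\gamma}(t)u_0,T_{\gamma}(t)\tilde u_0\bigr)+\|w(t)-\tilde w(t)\|_{H^1}\Bigr),
\]
using Lemma \ref{bozza_lemma_E} iv). The first term is bounded by Corollary \ref{bozza_prop_Tgamma} v). For the second, the Lipschitz part of Lemma \ref{bozza_lemma_F} gives
\[
\|w-\tilde w\|_{L^\infty_tH^1}\le CT\,C_{K_R,1}\Bigl(\sup_t d_\infty\bigl(T_{\gamma}(t)u_0,T_{\gamma}(t)\tilde u_0\bigr)+\|w-\tilde w\|_{L^\infty_tH^1}\Bigr).
\]
The last term is absorbed for $T$ small, which gives the bound.

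When $u_0-\tilde u_0\in H^1$, the difference can be written as
\[
u(t)-\tilde u(t)=e^{-itH_{\gamma}}(u_0-\tilde u_0)+\bigl(w(t)-\tilde w(t)\bigr),
\]
which is continuous in $H^1$. Lemma \ref{bozza_lemma_E} iii) gives $d_\infty(u_0,\tilde u_0)\le C_R(1+\|u_0-\tilde u_0\|_{H^1})\|u_0-\tilde u_0\|_{H^1}$. Combining this with Corollary \ref{bozza_corollary_H1_propag} ii) and the estimate on $w-\tilde w$ yields \eqref{bozza_cont_in_data}.

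The main obstacle is bookkeeping rather than analysis: the constants in Lemma \ref{bozza_lemma_F} depend on $|T_{\gamma}(s)u_0|_{\mathcal{E}}$, not on $|u_0|_{\mathcal{E}}$. This is exactly why the uniform bound $K_R$ from Corollary \ref{bozza_prop_Tgamma} ii) combined with Lemma \ref{bozza_lemma_E} ii) is essential: it is what makes $T$ depend only on $R$.
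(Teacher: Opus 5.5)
Your proposal is correct and follows the paper's proof essentially step by step: the same contraction for $w=u-T_{\gamma}(t)u_0$ in $C^0((-T,T),H^1(\mathbf{R}))$, the same continuation argument for uniqueness, and the same absorption argument combined with Lemma~\ref{bozza_lemma_E} iii) for the two continuity estimates. Your bookkeeping is slightly cleaner than the paper's. You obtain the uniform bound on $|T_{\gamma}(s)u_0|_{\mathcal{E}}$ explicitly from Corollary~\ref{bozza_prop_Tgamma} ii) and Lemma~\ref{bozza_lemma_E} ii), and you make visible the use of Corollary~\ref{bozza_prop_Tgamma} v), which the paper leaves implicit when it bounds $\|w-\tilde w\|$ directly by $d_{\infty}(u_0,\tilde u_0)$.
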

\begin{proof}
    Let $u_0 \in \mathcal{E}$ and consider $T >0$. For $w \in C^0((-T,T), H^1(\mathbf{R}))$, we consider the map 
    \begin{equation}
        w \to \Phi_{T,u_0}(w):= i\int_{0}^{t}e^{-i(t-s)H_{\gamma}}F(w(s)+T_{\gamma}(s)u_0)ds.
        \label{bozza_Phi}
    \end{equation}
    Notice that $\Phi_{T,u_0}(w) \in C^0((-T,T), H^1(\mathbf{R}))$. If we can find a particular $w$ such that $w = \Phi_{T,u_0}(w)$, then the function $u(t) := w(t) + T_{\gamma}(t)u_0$ is a solution of the Duhamel formula, and hence of (\ref{bozza_NLS}).\\
    The strategy to solve \eqref{bozza_Phi} is to use a contraction argument on the following space. Consider $R >0$ such that $|u_0|_{\mathcal{E}} < R$; we set
    \begin{equation*}
        W_R(T) = \{ w \in C^0((-T,T), H^1(\mathbf{R})): \ ||w(t)||_{H^1(\mathbf{R})} \leq R, \quad \forall t \in (-T,T)\}.
    \end{equation*}
    Using Lemma \ref{bozza_lemma_E} point ii), Corollary \ref{bozza_prop_Tgamma} point v) and Lemma \ref{bozza_lemma_F}, we can show that there exists a constant $C'_R>0$, depending only on $R$, such that for any $w \in W_R(T)$ it holds
    \begin{equation*}
        ||F(w(s)+T_{\gamma}(s)u_0)||_{H^1(\mathbf{R})} \leq C'_R, \qquad \forall s \in (-T,T).
    \end{equation*}
    Using Corollary \ref{bozza_corollary_H1_propag}, we have
    \begin{equation}
        ||\Phi_{T,u_0}(w)(t)||_{H^1(\mathbf{R})} \lesssim  T \sup_{s \in (-T,T)}||F(w(s)+T_{\gamma}(s)u_0)||_{H^1(\mathbf{R})} \lesssim T C'_R, \qquad \forall t \in (-T,T).
        \label{bozza_contraction_0}
    \end{equation}
    By choosing $T$ small enough (depending only on $R$), we have $\Phi_{T,u_0}(w) \in W_R(T)$. We now show that the map $\Phi_{T,u_0}(w)$ from $W_R(T)$ to itself is a contraction, for  $T$ possibly smaller. Using Lemma \ref{bozza_lemma_F}, we can similarly prove that, for $w, \tilde{w} \in W_R(T)$, 
    \begin{equation}
        \begin{split}
            ||\Phi_{T,u_0}(w)(t)-\Phi_{T,u_0}(\tilde{w})(t)||_{H^1(\mathbf{R})} &\lesssim T \sup_{s \in (-T,T)}||F(w(s)+T_{\gamma}(s)u_0)-F(\tilde{w}(s)+T_{\gamma}(s)u_0)||_{H^1(\mathbf{R})} \\ &  \lesssim T C'_R \sup_{s \in (-T,T)}||w(s)-\tilde{w}(s)||_{H^1(\mathbf{R})}, \qquad \forall t \in (-T,T)
        \end{split}
        \label{bozza_contraction}
    \end{equation}
    By choosing $T$ small enough (depending only on $R$), we have that $w \to\Phi_{T,u_0}(w)$ is a contraction on $W_R(T)$. Using the fixed point theorem, we conclude the existence of a unique $w \in W_R(T)$ such that $w = \Phi_{T,u_0}(w)$, which gives a solution $u(t)$ to \eqref{bozza_NLS} with $u(0) = u_0$. Conversely, for the same choice of $R,T>0$, suppose $u_1$ is a solution of \eqref{bozza_NLS} for $t\in I \subset \mathbf{R}$, with $u_1(0)=u_0$. Then $w_1(t) := u_1(t) - T_{\gamma}(t)u_0$ belongs $C^0(I,H^1(\mathbf{R}))$ with $w_1(0) = 0$. By uniqueness of the fixed point, we have that $w_1 = w$ for a small time interval, which can be then extended to $(-T,T)$. This shows existence and uniqueness. \\
    To prove the continuity with respect to the initial data, we consider $u_0, \tilde{u}_0 \in \mathcal{E}$ and $R>0$ such that $|u_0|_{\mathcal{E}}\leq R$ and $|\tilde{u}_0|_{\mathcal{E}}\leq R$. Let $w, \tilde{w} \in W_R(T)$ be the fixed points of the maps $\Phi_{T,u_0}$ and $\Phi_{T,\tilde{u}_0}$ for $T>0$ small enough. Proceeding as for \eqref{bozza_contraction_0} and \eqref{bozza_contraction}, we have for $T$ small enough,
    \begin{equation*}
        \begin{split}
            ||w-\tilde{w}||_{(L^{\infty}(-T,T),H^1(\mathbf{R}))} &= ||\Phi_{T,u_0}(w)(t)-\Phi_{T,\tilde{u}_0}(\tilde{w})(t)||_{(L^{\infty}(-T,T),H^1(\mathbf{R}))} \\ & \leq \frac{1}{2}(d_{\infty}(u_0,\tilde{u}_0)+||w-\tilde{w}||_{(L^{\infty}(-T,T),H^1(\mathbf{R}))}).
        \end{split} 
    \end{equation*}
    We deduce 
    \begin{equation}
        ||w-\tilde{w}||_{(L^{\infty}(-T,T),H^1(\mathbf{R}))} \leq d_{\infty}(u_0,\tilde{u}_0)
        \label{bozza_eq_w_lwp}
    \end{equation}
    Now call $u, \tilde{u}$ the solutions to (\ref{bozza_NLS}) with initial data $u_0,\tilde{u}_0$, respectively. Using \eqref{bozza_eq_w_lwp} and Corollary \ref{bozza_prop_Tgamma}, we conclude that there exists $C_R>0$ such that  
    \begin{equation*}
        d_{\infty}(u(t), \tilde{u}(t)) \leq C_R d_{\infty}(u_0, \tilde{u}_0), \qquad \forall t \in (-T,T).
    \end{equation*}
     We now want to prove \eqref{bozza_cont_in_data}. Suppose the initial data satisfy $u_0-\tilde{u}_0 \in {H^1(\mathbf{R})}$. By Corollary \ref{bozza_corollary_H1_propag}, there exists a constant $C>0$ such that
        \begin{equation*}
            \begin{split}
                ||u(t)-\tilde{u}(t)||_{H^1(\mathbf{R})} &\leq ||w(t)-\tilde{w}(t)||_{H^1(\mathbf{R})} + ||T_{\gamma}(t)u_0-T_{\gamma}(t)\tilde{u}_0||_{H^1(\mathbf{R})} \\ & \leq d_{\infty}(u_0,\tilde{u}_0) + C ||u_0-\tilde{u}_0||_{H^1(\mathbf{R})}, \qquad \forall t \in (-T,T),
            \end{split}
        \end{equation*}
        where we use the identity of $T_{\gamma}(t) = e^{-itH_{\gamma}}$ in $H^1(\mathbf{R})$. By Lemma \ref{bozza_lemma_E} point iii), there exists $C'_R>0$ such that
        \begin{equation*}
            ||u(t)-\tilde{u}(t)||_{H^1(\mathbf{R})} \leq C'_{R}\Bigl(1+||u_0-\tilde{u}_0||_{H^1(\mathbf{R})}\Bigr)||u_0-\tilde{u}_0||_{H^1(\mathbf{R})}, \qquad \forall t \in (-T,T).
        \end{equation*}
\end{proof}
\begin{corollary}[Blow-up alternative]
    Let $u_0 \in \mathcal{E}$. The problem (\ref{bozza_NLS}) admits a unique maximal solution $u$ with $u(0) =u_0$, defined on $(-T_-,T_+)$, with $T_{\pm} \in (0,+\infty]$. Moreover, if $T_{\pm} < +\infty$, then 
    \begin{equation*}
        \lim_{t \to \pm T_{\pm}}|u(t)|_{\mathcal{E}} = +\infty.
    \end{equation*}
    \label{bozza_corollary}
\end{corollary}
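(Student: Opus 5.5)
The plan is the standard continuation argument built on Proposition \ref{bozza_lwp}, whose key feature is that the existence time $T$ depends only on a bound $R$ for $|u_0|_{\mathcal{E}}$. First we prove uniqueness of solutions on any common interval. Let $u_1,u_2$ be two solutions on an interval $I\ni 0$ with the same initial datum, and let $J\subset I$ be the set of times where they coincide. By continuity in $d_\infty$, $J$ is closed in $I$. To show that $J$ is open, take $t_0\in J$ and note that \eqref{bozza_NLS} is autonomous, so $u_j(t_0+\cdot)$ are again solutions in the sense of Definition \ref{bozza_def_solution}, with common datum $u_1(t_0)$. The uniqueness part of Proposition \ref{bozza_lwp}, applied with $R=|u_1(t_0)|_{\mathcal{E}}+1$, then shows that they agree on a neighbourhood of $t_0$. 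Since $I$ is connected, $J=I$.

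Next we build the maximal solution. Let $T_+$ be the supremum of those $T>0$ for which a solution exists on $[0,T)$, and define $T_-$ analogously for negative times. By uniqueness, solutions on different intervals agree on their overlap, so they glue into a single solution on $(-T_-,T_+)$. Continuity in $d_\infty$ and the Duhamel identity of Proposition \ref{bozza_duhamel} are both local in time, so the glued function is still a solution; Proposition \ref{bozza_lwp} guarantees $T_\pm>0$.

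The main obstacle is the blow-up alternative. Assume $T_+<\infty$ and, for contradiction, that $|u(t)|_{\mathcal{E}}\not\to+\infty$ as $t\to T_+$. Then there exist $R>0$ and times $t_n\uparrow T_+$ with $|u(t_n)|_{\mathcal{E}}\le R$. Let $T(R)$ be the existence time from Proposition \ref{bozza_lwp} and choose $n$ with $t_n>T_+-T(R)/2$. Solving from the datum $u(t_n)$, again using that the equation is autonomous, gives a solution on $(t_n-T(R),\,t_n+T(R))$. By uniqueness it coincides with $u$ on the overlap, so concatenating the two yields a solution on $(-T_-,\,t_n+T(R))$. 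Since $t_n+T(R)>T_+$, this contradicts maximality, and the case of $T_-$ is symmetric.

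A further point needs checking: the concatenation must remain a solution in the sense of Definition \ref{bozza_def_solution}. This is clear for the distributional formulation, which is local in time. For the Duhamel form it follows from the group law $T_\gamma(t)\circ T_\gamma(s)=T_\gamma(t+s)$ together with the identity $T_\gamma(t)=e^{-itH_\gamma}$ on $H^1(\mathbf{R})$. Using these, one rewrites $T_\gamma(t-t_n)u(t_n)+i\int_{t_n}^t e^{-i(t-s)H_\gamma}F(u(s))\,ds$ as the Duhamel formula based at time $0$.

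Finally, the argument yields $\liminf_{t\to T_+}|u(t)|_{\mathcal{E}}=+\infty$, which is the same statement as the limit asserted in Corollary \ref{bozza_corollary}.
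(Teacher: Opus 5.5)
Your proposal is correct and follows the paper's intended argument. The paper states this corollary without proof, as an immediate consequence of Proposition \ref{bozza_lwp}: the existence time depends only on a bound for $|u_0|_{\mathcal{E}}$, there is fixed-point uniqueness, and \eqref{bozza_NLS} is autonomous in the co-moving frame. Your open--closed uniqueness argument and your gluing via the time-local distributional formulation make explicit exactly the steps the paper leaves implicit.
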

We now state the following proposition, which will central to our analysis.
\begin{proposition}
    Let $u_0 \in \mathcal{E}$, and consider $u \in C^0((-T_-,T_+), \mathcal{E})$ be the corresponding maximal solution, defined for $T_{\pm}  \in (0,+\infty]$. Then, it holds
    \begin{equation*}
        u - u_0 \in C^0((-T_-,T_+), H^1(\mathbf{R})). 
    \end{equation*}
    \label{bozza_u-u0_prop}
\end{proposition}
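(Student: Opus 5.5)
The plan is to use the Duhamel formula (Proposition \ref{bozza_duhamel}) and write the maximal solution as
\begin{equation*}
    u(t) - u_0 = \bigl(T_{\gamma}(t)u_0 - u_0\bigr) + i\int_0^t e^{-i(t-s)H_{\gamma}}F(u(s))\,ds,
\end{equation*}
and then show that each of the two terms on the right-hand side is continuous from $(-T_-,T_+)$ to $H^1(\mathbf{R})$.

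The first term is handled directly by Corollary \ref{bozza_prop_Tgamma} point i). That result gives $t \mapsto T_{\gamma}(t)u_0 - u_0 \in C^0(\mathbf{R},H^1(\mathbf{R}))$, and since it holds on all of $\mathbf{R}$ it holds in particular on the maximal interval.

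For the Duhamel integral, fix a compact subinterval $[-T_1,T_1] \subset (-T_-,T_+)$. The map $t \mapsto u(t)$ is continuous into $(\mathcal{E},d_\infty)$, so by Lemma \ref{bozza_cont_norm} the quantity $|u(t)|_{\mathcal{E}}$ is continuous and hence bounded by some $R$ on $[-T_1,T_1]$. Writing $u(s) = u(s) + 0$ and using Lemma \ref{bozza_lemma_F} with $w=0$, we get $\sup_{|s|\le T_1}\|F(u(s))\|_{H^1(\mathbf{R})} \le C_R$. The second estimate of the same lemma, applied with $w_1=w_2=0$, gives $\|F(u(s))-F(u(s'))\|_{H^1} \le C_R\, d_\infty(u(s),u(s'))$. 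Hence $s \mapsto F(u(s))$ is continuous into $H^1(\mathbf{R})$. Corollary \ref{bozza_corollary_H1_propag} then shows that $e^{-itH_{\gamma}}$ is uniformly bounded on $H^1$ for $|t|\le 2T_1$ and strongly continuous on $H^1$. So the integrand $s \mapsto e^{-i(t-s)H_{\gamma}}F(u(s))$ is continuous and bounded in $H^1$, and the integral is a well-defined Bochner integral in $H^1$.

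The only step requiring a little care is the continuity in $t$ of the integral. I would write it as $e^{-itH_\gamma}\int_0^t e^{isH_\gamma}F(u(s))\,ds$, which is legitimate because the group property holds on $H^1 \subset L^2$. The inner integral is continuous in $t$ with values in $H^1$, since its integrand is bounded in $H^1$. The outer operator family is uniformly bounded and strongly continuous on $H^1$. A standard $\varepsilon/2$ argument, splitting $e^{-itH}g(t) - e^{-it'H}g(t') = e^{-itH}(g(t)-g(t')) + (e^{-itH}-e^{-it'H})g(t')$, then gives continuity. Since $T_1$ is arbitrary, we conclude $u - u_0 \in C^0((-T_-,T_+),H^1(\mathbf{R}))$.

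An alternative route would be to cover the maximal interval by short intervals coming from Proposition \ref{bozza_lwp}, which give $u(t)-u(t_0)\in H^1$ locally, and chain these. The direct Duhamel argument above avoids that bookkeeping, so I would use it.
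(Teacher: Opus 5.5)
Your proposal is correct and follows the paper's proof. The ingredients are the same: the Duhamel decomposition, Corollary~\ref{bozza_prop_Tgamma}~i) for $T_{\gamma}(t)u_0-u_0$, and the $H^1$ bounds and strong continuity of $e^{-itH_{\gamma}}$ from Corollary~\ref{bozza_corollary_H1_propag}, combined with Lemma~\ref{bozza_lemma_F}, for the integral term. The paper only asserts the integral step, while you spell out the details. The one cosmetic fix is to work on an arbitrary compact interval $[-T_1,T_2]\subset(-T_-,T_+)$ rather than a symmetric one, since $T_-$ and $T_+$ need not coincide.
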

\begin{proof}
    The proof follows by considering the Duhamel formula. We have
    \begin{equation*}
        u(t) - u_0 = T_{\gamma}(t)u_0 - u_0 +i \int_{0}^te^{-i(t-s)H_{\gamma}}F(u(s))ds.
    \end{equation*}
    From the property i) in Corollary \ref{bozza_prop_Tgamma} we know that $T_{\gamma}(t)u_0 - u_0$ belongs to $C^0(\mathbf{R}, H^1(\mathbf{R}))$. Using the properties of the propagator $e^{-itH_{\gamma}}$ in $H^1(\mathbf{R})$ from Corollary \ref{bozza_corollary_H1_propag}, the proposition follows.
\end{proof}

In the following proposition we state a persistence of regularity result. More precisely, we show that if $u_0 \in X^2_{\gamma} \cap \mathcal{E}$, then the corresponding solution $u(t)$ belongs to $X^2_{\gamma}\cap\mathcal{E}$ for all times of existence. Moreover, the solution $u(t)$ is differentiable in time.
\begin{proposition}
    Let $u_0 \in X^2_{\gamma}\cap \mathcal{E}$, and let $u \in C^0((-T_-,T_+), \mathcal{E})$ be the maximal solution to (\ref{bozza_NLS}) with $u(0) = u_0$ given by Proposition \ref{bozza_corollary}, defined for  $T_{\pm} \in (0,+\infty]$. We have $u(t) \in X^2_{\gamma}$ for all $t \in (-T_-,T_+)$. Moreover, $\partial_t u \in C^0((-T_-,T_+),L^2(\mathbf{R}))$, and
    \begin{equation}
        \partial_t u(t) = -i\tilde{H}_{\gamma}u(t) +iF(u(t)), \qquad \forall t \in (-T_-,T_+).
        \label{bozza_eq_regular}
    \end{equation}
    \label{bozza_persistence_regularity}
\end{proposition}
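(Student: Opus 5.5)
The plan is to work with the Duhamel formulation of Proposition \ref{bozza_duhamel}, following the persistence-of-regularity argument of \cite[Proposition 4.5]{LeCozIR}. Write
\begin{equation*}
u(t) = T_{\gamma}(t)u_0 + i\int_0^t e^{-i(t-s)H_{\gamma}}F(u(s))\,ds .
\end{equation*}
The linear part is handled by Proposition \ref{bozza_prop_X2}. It gives $T_{\gamma}(t)u_0\in X^2_{\gamma}$, it is differentiable in $L^2(\mathbf{R})$, and its derivative is $-ie^{-itH_{\gamma}}\tilde{H}_{\gamma}u_0 = -i\tilde{H}_{\gamma}T_{\gamma}(t)u_0$. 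It therefore remains to show that the Duhamel integral $D(t)$ lies in $D(H_{\gamma})$, is $C^1$ in $L^2$, and satisfies $\partial_tD = -i\tilde{H}_{\gamma}D + iF(u)$.

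The standard route is to first prove that $t\mapsto F(u(t))$ is Lipschitz, hence a.e.\ differentiable, from $[-T,T]$ into $L^2(\mathbf{R})$ on every compact subinterval. Once this is known, the classical semigroup regularity result (Pazy-type) applies: for an inhomogeneous term $f\in W^{1,1}(I,L^2)$, the integral $\int_0^t e^{-i(t-s)H_{\gamma}}f(s)\,ds$ belongs to $C^0(I,D(H_{\gamma}))\cap C^1(I,L^2)$ and solves $\partial_tD = -iH_{\gamma}D + if$. Since $D(H_{\gamma})\subset H^1$ has the jump condition, we get $u(t)=T_{\gamma}(t)u_0 + D(t)\in X^2_{\gamma}$, and on $D(H_{\gamma})$ the operator $\tilde H_\gamma$ coincides with $H_\gamma$. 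Adding the two derivatives then gives \eqref{bozza_eq_regular}. Continuity of $\partial_tu$ in $L^2$ follows from the continuity of each piece together with continuity of $F(u(t))$ in $H^1$ (Lemma \ref{bozza_lemma_F} and Proposition \ref{bozza_u-u0_prop}).

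The main obstacle is the Lipschitz bound on $F(u)$, because $u$ itself has no time derivative a priori. I would handle it by a bootstrap on difference quotients. For small $h$, $u(t+h)$ is the solution with datum $u(h)$, and $u(h)-u_0 = (T_{\gamma}(h)u_0-u_0) + D(h)$. By Proposition \ref{bozza_prop_X2} iii), $\|T_{\gamma}(h)u_0-u_0\|_{L^2}\le |h|\,\|\tilde H_{\gamma}u_0\|_{L^2}$, and trivially $\|D(h)\|_{L^2}\lesssim |h|$.

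Next I would run the Duhamel difference estimate in $L^2$ rather than $H^1$. Since $u$ is bounded in $L^\infty$ on compact time intervals (Lemma \ref{bozza_L_infty}), $F$ is Lipschitz in $L^2$ for differences, so
\begin{equation*}
\|u(t+h)-u(t)\|_{L^2}\le \|T_{\gamma}(t)(u(h)-u_0)\|_{L^2} + C\int_0^t\|u(s+h)-u(s)\|_{L^2}\,ds .
\end{equation*}
This uses that $T_\gamma(t)$ acts unitarily on the $L^2$ difference $u(h)-u_0$. Gr\"onwall then gives $\|u(t+h)-u(t)\|_{L^2}\le C|h|$ uniformly on compacts, and hence the same bound for $F(u)$, since $F(u(t+h))-F(u(t))$ is pointwise controlled by $|u(t+h)-u(t)|$.

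A few details need checking along the way. One is the identification of $\int_0^t$ of $e^{-isH_\gamma}\tilde H_\gamma u_0$ with $\tilde H_\gamma$ applied to the integral. Another is that the abstract theorem needs reflexivity of $L^2$ to pass from Lipschitz to $W^{1,1}$, which holds. Finally, the result extends to the whole maximal interval by covering it with compact subintervals.
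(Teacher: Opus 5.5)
Your argument is correct, but it takes a different route from the paper. The paper runs a second contraction argument for $\Phi_{\tilde T,u_0}$ in the smaller space of $\tilde w\in C^0((-\tilde T,\tilde T),H^1(\mathbf{R}))\cap C^1((-\tilde T,\tilde T),L^2(\mathbf{R}))$, with bounds on both $\|\tilde w\|_{H^1}$ and $\|\partial_t\tilde w\|_{L^2}$. It computes $\partial_t\Phi_{\tilde T,u_0}(\tilde w)$ by differentiating the Duhamel formula, using the explicit expression for $\partial_t(F\circ u)$. It then gets $\tilde w(t)\in D(H_\gamma)$ from the limit of $\tau^{-1}(e^{-i\tau H_\gamma}-1)\tilde w(t)$. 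Finally, it extends the regularity to $(-T_-,T_+)$ via a blow-up alternative for $|u|_{\mathcal{E}}+\|\tilde{H}_{\gamma}u\|_{L^2}$ combined with a Gr\"onwall bound on $\|\partial_t u\|_{L^2}$.

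You avoid the second fixed point altogether. One Gr\"onwall estimate on $L^2$ difference quotients gives Lipschitz continuity of $t\mapsto F(u(t))$ in $L^2(\mathbf{R})$ on every compact subinterval, and the abstract regularity result for forcing in $W^{1,1}(I,L^2(\mathbf{R}))$ does the rest. The ingredients of your estimate all hold:
\begin{itemize}
\item the shifted Duhamel formula for $u(t+h)$ follows from $T_\gamma(t)\circ T_\gamma(h)=T_\gamma(t+h)$;
\item $u(h)-u_0\in H^1(\mathbf{R})$ by Proposition~\ref{bozza_u-u0_prop}, so $T_\gamma(t)$ acts on it as the unitary $e^{-itH_\gamma}$;
\item Proposition~\ref{bozza_prop_X2} iii) gives $\|T_\gamma(h)u_0-u_0\|_{L^2}\le |h|\,\|\tilde{H}_{\gamma}u_0\|_{L^2}$, which is exactly where $u_0\in X^2_\gamma$ is used.
\end{itemize}

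What your route buys is economy. There is no second contraction, no second blow-up alternative, and no need to write down $\partial_t(F\circ u)$. Your Gr\"onwall step handles local regularity and the extension to the whole maximal interval at once, playing the role of the paper's final Gr\"onwall on $\|\partial_t u\|_{L^2}$. The paper's route is more self-contained, relying only on the unitary group and the machinery of Proposition~\ref{bozza_lwp}.

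One point of care: the abstract result gives
\[
\partial_t\Bigl(i\int_0^t e^{-i(t-s)H_\gamma}F(u(s))\,ds\Bigr)=ie^{-itH_\gamma}F(u_0)+i\int_0^t e^{-i(t-s)H_\gamma}\partial_sF(u(s))\,ds,
\]
which coincides with the paper's formula for $\partial_t\Phi_{\tilde T,u_0}(\tilde w)$. Its continuity is what yields a $C^1$ solution rather than an a.e.\ strong one. So cite the $W^{1,1}$ version (as you do), not the Lipschitz-forcing corollary that only asserts a strong solution.
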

The proof of Proposition \ref{bozza_persistence_regularity} closely follows the steps of the proof of Proposition 4.5 in the work of Ianni et al. \cite{LeCozIR}, and it is postponed to Appendix \ref{section: appendix_II}.

\section{Renormalized momentum and conserved energy}
\label{section:energy}
In order to prove the global well-posedness of \eqref{bozza_NLS}, we need an a priori bound on the functional $|u|^2_{\mathcal{E}}$, which we derive from the conserved energy. In order to define the latter, we first need a well-defined notion of linear momentum for fields in $\mathcal{E}$. It is the goal of this section to introduce the notion of renormalized momentum and of conserved energy associated to \eqref{bozza_NLS}.\\ 

We begin by considering the case $v=0$, i.e. the problem with a static impurity. In this case, \eqref{bozza_NLS} conserves the energy
\begin{equation*}
    E_{\gamma}(u) = \frac{1}{2}\int_{\mathbf{R}}|\partial_xu|^2dx + \frac{\gamma}{2}|u(0)|^2 + \frac{1}{4} \int_{\mathbf{R}}(1-|u|^2)^2dx,
\end{equation*}
as shown by Ianni et al. in \cite{LeCozIR}.\\
If $v \neq 0$, instead, the functional $E_{\gamma}$ is not conserved by the dynamics. The definition of conserved energy, in this case, can be obtained by adding to $E_{\gamma}$ a linear momentum term. More precisely, the conserved energy for \eqref{bozza_NLS} takes the form
\begin{equation*}
    K(u) = E_{\gamma}(u) - vP(u).
\end{equation*}
where $P(u)$ is a linear momentum, i.e. it is a functional that, given $u \in \mathcal{E}$, satisfies $$\lim_{t \to 0}\frac{P(u+tw)-P(u)}{t}= \Im\int_{\mathbf{R}} \partial_x u \  \overline{w} \ dx,$$
for any $w \in H^1(\mathbf{R})$ with compact support.\\
A natural choice would be to define the momentum as $P(u) = \frac{1}{2}\Im\int_{\mathbf{R}}\partial_x u \  \overline{u} \ dx$. In fact, this notion of momentum fails to be well defined on $\mathcal{E}$, due to the behavior of the fields at infinity (in particular, the function  $\Im(\partial_x u \overline{u})$ may not be an element of $ L^1(\mathbf{R})$).\\
On the other hand, a rigorous definition of linear momentum for fields in $\mathcal{E}$ has been given by Bethuel et al. in \cite[Lemma 3]{bethuel} and by Mari\c{s} and Mur in \cite[Section 3]{maris_mur}. However, employing it in the definition of $K$ may lead to complications, since this notion of momentum is defined modulo\footnote{The definition of linear momentum in \cite{bethuel,maris_mur} relies on the non-unique decomposition (\ref{bozza_decomposition_formula}), which, a priori, entails a non-unique notion of momentum for each element $u \in \mathcal{E}$, up to multiples of $\pi$. Therefore, this momentum has to be defined modulo $\pi\mathbf{Z}$ in order to be single valued in $\mathcal{E}$.} $\pi \mathbf{Z}$.\\
In the following we introduce a notion of linear momentum which relies on the same approach of \cite{bethuel, maris_mur}, but it is more oriented towards the definition of the conserved energy $K$.\\
Given a reference function $u_0 \in \mathcal{E}$, we consider the affine subspace $u_0+H^1(\mathbf{R}) \subset \mathcal{E}$. In this set, we define the linear momentum to be the map $P_{u_0}: u_0+H^1(\mathbf{R}) \to \mathbf{R}$ such that
\begin{equation}
    P_{u_0}(u) := \frac{1}{2} \Im\int_{\mathbf{R}}(\overline{u}-\overline{u}_0)\partial_x(u+u_0)dx.
    \label{bozza_def_momentum}
\end{equation}
and we interpret it as the relative momentum of $u$ with respect to $u_0$. Clearly, for a given $u \in \mathcal{E}$, we have different notions of momentum, depending on the choice of the reference function. The following lemma states the relation between these notions.
\begin{lemma}
    Consider two reference functions $u_0,\tilde{u}_0 \in \mathcal{E}$, satisfying $u_0-\tilde{u}_0 \in H^1(\mathbf{R})$. Then the sets $u_0 +H^1(\mathbf{R})$ and $\tilde{u}_0+H^1(\mathbf{R})$ coincide, and, for any $u \in u_0 + H^1(\mathbf{R})$, it holds
    \begin{equation}
        P_{u_0}(u)-P_{\tilde{u}_0}(u) = P_{u_0}(\tilde{u}_0).
    \end{equation}
    \label{bozza_mom_difference_subscript}
\end{lemma}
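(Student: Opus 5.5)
The plan is a direct algebraic computation from the definition \eqref{bozza_def_momentum}. First, the set equality is immediate. If $u \in u_0 + H^1(\mathbf{R})$, then
\[
u - \tilde{u}_0 = (u-u_0) + (u_0 - \tilde{u}_0) \in H^1(\mathbf{R}),
\]
and the converse inclusion follows symmetrically. Hence $P_{u_0}(u)$, $P_{\tilde{u}_0}(u)$ and $P_{u_0}(\tilde{u}_0)$ are all defined. Each integrand is a product of an $H^1$ function with an $L^2$ function, since $\partial_x u, \partial_x u_0, \partial_x\tilde u_0 \in L^2(\mathbf{R})$ because all three functions lie in $\mathcal{E}$. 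So each integral converges absolutely.

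Next, I set $a = u - u_0$ and $b = u_0 - \tilde{u}_0$, both in $H^1(\mathbf{R})$, so that $u - \tilde{u}_0 = a + b$. I rewrite the three momenta using $u+u_0 = a + 2u_0$, $u + \tilde{u}_0 = a + b + 2\tilde{u}_0$ and $\tilde{u}_0 + u_0 = 2u_0 - b$. Expanding gives
\[
2P_{u_0}(u) = \Im\int \overline{a}\,\partial_x a + 2\Im\int \overline{a}\,\partial_x u_0,
\qquad
2P_{u_0}(\tilde u_0) = \Im\int \overline{b}\,\partial_x b - 2\Im\int \overline{b}\,\partial_x u_0,
\]
where the second identity uses $\overline{\tilde u_0 - u_0} = -\overline{b}$. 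For the remaining momentum, $2P_{\tilde u_0}(u) = \Im\int(\overline a+\overline b)\,\partial_x(a+b+2\tilde u_0)$, and I substitute $\tilde u_0 = u_0 - b$ to get
\[
2P_{\tilde u_0}(u) = \Im\int(\overline a+\overline b)\,\partial_x(a - b + 2u_0).
\]

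The key step is to expand $P_{u_0}(u) - P_{\tilde u_0}(u) - P_{u_0}(\tilde u_0)$ and check that everything cancels. After expansion, the terms involving $u_0$ cancel exactly. What remains is
\[
\Im\int\bigl(\overline a\,\partial_x b - \overline b\,\partial_x a\bigr).
\]
This is the only nontrivial point, and I handle it by integrating by parts, which is legitimate since $a, b \in H^1(\mathbf{R})$ and $\overline a b$ vanishes at infinity. Integration by parts gives $\int \overline b\,\partial_x a = -\int \partial_x\overline b\, a$, and since $\Im(\partial_x\overline b\, a) = -\Im(\overline a\,\partial_x b)$, it follows that
\[
\Im\int \overline b\,\partial_x a = \Im\int \overline a\,\partial_x b.
\]
Thus the remainder vanishes, which proves the identity.

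I expect no real obstacle here. The only care needed is in tracking signs and conjugations in the bilinear expansion, and in justifying that each separate term is integrable. The terms involving $u_0$ pair an $H^1$ function with $\partial_x u_0 \in L^2(\mathbf{R})$, so they are integrable, which allows the integrals to be split.
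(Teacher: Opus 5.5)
Your proof is correct and takes essentially the same route as the paper. You expand the three momenta bilinearly, the terms pairing an $H^1$ function with $\partial_x u_0$ cancel outright, and the remaining cross term between two $H^1$ functions vanishes after one integration by parts. The paper only arranges the bookkeeping differently, writing $u_0=\tilde u_0+w$ and expanding $P_{u_0}(u)$ around $P_{\tilde u_0}(u)$, but its key step is identical to yours.
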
 
\begin{proof}
We write $u_0=\tilde{u}_0+w$, where $w \in H^1(\mathbf{R})$. We have
    \begin{equation*}
        \begin{split}
            P_{u_0}(u) &= \frac{1}{2}\Im\int_{\mathbf{R}}(\overline{u}-\overline{u}_0)\partial_x(u+u_0)dx = \frac{1}{2}\Im\int_{\mathbf{R}}(\overline{u}-\overline{\tilde{u}}_0-\overline{w})\partial_x(u+\tilde{u}_0+w)dx \\ & = P_{\tilde{u}_0}(u) + 
            \frac{1}{2}\Im\int_{\mathbf{R}}\Bigl((\overline{u}-\overline{\tilde{u}}_0)\partial_xw-\overline{w}\partial_x(u+\tilde{u}_0)-\overline{w}\partial_xw \Bigr)dx. 
        \end{split}
    \end{equation*}
    If we take the complex conjugate and we integrate by parts the first integrand above, we obtain
    \begin{equation*}
        P_{u_0}(u) = P_{\tilde{u}_0}(u) - \frac{1}{2}\Im\int_{\mathbf{R}}\overline{w}\partial_x(2\tilde{u}_0+w) = P_{\tilde{u}_0}(u)+P_{u_0}(\tilde{u}_0).
    \end{equation*}
\end{proof}
We can now define the conserved energy associated to \eqref{bozza_NLS}. For $u_0 \in \mathcal{E}$, we define the conserved energy $K_{u_0}: u_0+H^1(\mathbf{R}) \to \mathbf{R}$ as
\begin{equation}
        K_{u_0}(u) := E_{\gamma}(u) - vP_{u_0}(u).
        \label{bozza_K_u0}
    \end{equation}
In the following, we will take $u_0$ to be the initial datum of the Cauchy problem. In this case, the corresponding solution $u(t)$ will belong to the set $u_0+H^1(\mathbf{R})$ for all times of existence, thanks to Proposition \ref{bozza_u-u0_prop}. In particular, the values $P_{u_0}(u(t))$ and $K_{u_0}(u(t))$ will be well defined for all times of existence of the solution $u(t)$.\\
We now formulate two lemmas. Recall the definition of the distance $d_A$ in \eqref{bozza_d_A}.
\begin{lemma}[\cite{LeCozIR}]
    Set $\gamma >0$. The functional $ E_{\gamma}$ is continuous on $(\mathcal{E}, d_A)$, for any $A>0$, and hence on $(\mathcal{E},d_{\infty})$. In particular, for $R>0$ the functional $E_{\gamma}$ is Lipschitz continuous on $\{u \in \mathcal{E}, \ |u|_{\mathcal{E}} < R\}$.
    \label{bozza_lemma_energy0}
\end{lemma}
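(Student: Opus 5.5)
We split $E_\gamma(u)=E_0(u)+\frac{\gamma}{2}|u(0)|^2$ and treat the two pieces separately. Continuity of $u\mapsto\frac{\gamma}{2}|u(0)|^2$ for $d_A$ is immediate, because $d_A$ contains $\|u-w\|_{L^\infty([-A,A])}$ and $0\in[-A,A]$. We get
\begin{equation*}
\bigl||u(0)|^2-|w(0)|^2\bigr|\leq |u(0)-w(0)|\bigl(2|w(0)|+|u(0)-w(0)|\bigr)\leq d_A(u,w)\bigl(2\|w\|_{L^\infty(\mathbf{R})}+d_A(u,w)\bigr).
\end{equation*}
We then control $\|w\|_{L^\infty(\mathbf{R})}\leq C(1+|w|_{\mathcal{E}}^{2/3})$ by Lemma \ref{bozza_L_infty}.

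For $E_0=|\cdot|_{\mathcal{E}}^2$ we rerun the argument of Lemma \ref{bozza_cont_norm}. That proof only used the terms $\|\partial_xu_1-\partial_xu_2\|_{L^2}$ and $\||u_1|^2-|u_2|^2\|_{L^2}$ of $d_\infty$, and never the $L^\infty(\mathbf{R})$ term. Both of these terms also appear in $d_A$, so the same computation gives
\begin{equation*}
\bigl||u_1|_{\mathcal{E}}^2-|u_2|_{\mathcal{E}}^2\bigr|\leq C\,d_A(u_1,u_2)\bigl(1+|u_2|_{\mathcal{E}}^2+d_A(u_1,u_2)\bigr).
\end{equation*}
Adding the two bounds yields
\begin{equation*}
|E_\gamma(u)-E_\gamma(w)|\leq C\,d_A(u,w)\bigl(1+|w|_{\mathcal{E}}^2+d_A(u,w)\bigr).
\end{equation*}
This is continuity on $(\mathcal{E},d_A)$. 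Since $d_A\leq d_\infty$, continuity on $(\mathcal{E},d_\infty)$ follows.

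For the Lipschitz statement, take $|u|_{\mathcal{E}},|w|_{\mathcal{E}}<R$. The only point that needs care is that the factor $d_A(u,w)$ inside the parentheses must be bounded by a constant depending only on $R$, and not merely be small. The derivative term gives $\|\partial_xu-\partial_xw\|_{L^2}\leq 2\sqrt2R$. The density term gives $\||u|^2-|w|^2\|_{L^2}\leq\|1-|u|^2\|_{L^2}+\|1-|w|^2\|_{L^2}\leq 4R$. For the local sup-norm term we use Lemma \ref{bozza_L_infty} to get $\|u-w\|_{L^\infty([-A,A])}\leq\|u\|_{L^\infty}+\|w\|_{L^\infty}\leq 2C(1+R^{2/3})$. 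Hence $d_A(u,w)\leq C_R$, and therefore $|E_\gamma(u)-E_\gamma(w)|\leq C_R\,d_A(u,w)\leq C_R\,d_\infty(u,w)$.

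I expect no serious obstacle. The one point to check is that the earlier estimate really never needed a global $L^\infty$ control of $u-w$, which is why $d_A$ suffices in place of $d_\infty$.
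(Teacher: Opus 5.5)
Your proof is correct. The paper gives no argument of its own and simply cites \cite[Lemma 2.2]{LeCozIR}; your route is the natural direct one: rerun the computation of Lemma \ref{bozza_cont_norm}, which only uses the $\partial_x$ and $|u|^2$ components of the distance, control the point term $\frac{\gamma}{2}|u(0)|^2$ through the $L^{\infty}([-A,A])$ component of $d_A$ and Lemma \ref{bozza_L_infty}, and use the uniform bound $d_A(u,w)\leq C_R$ on the ball to get the Lipschitz statement.
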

The proof of the lemma above can be found in \cite[Lemma 2.2]{LeCozIR}
\begin{lemma}
    Set $\gamma>0$ and $u_0 \in \mathcal{E}$. The energy $K_{u_0}$ defined in \eqref{bozza_K_u0} is continuous on $u_0+H^1(\mathbf{R})$. In particular, it is Lipschitz continuous on $\{u \in u_0+H^1(\mathbf{R}), \ ||u-u_0||_{H^1(\mathbf{R})} < R\}$.
    \label{bozza_energy_cont}
\end{lemma}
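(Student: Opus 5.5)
Since $K_{u_0}=E_\gamma-vP_{u_0}$, I will prove the two properties separately for $E_\gamma$ and $P_{u_0}$ on $u_0+H^1(\mathbf{R})$, using the topology induced by $\|u-u_0\|_{H^1(\mathbf{R})}$.

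\emph{The term $E_\gamma$.} Take $u=u_0+w_1$ and $\tilde u=u_0+w_2$ with $\|w_j\|_{H^1(\mathbf{R})}<R$. By Lemma \ref{bozza_lemma_E} point ii) we have $|u|_{\mathcal{E}},|\tilde u|_{\mathcal{E}}\le C(1+|u_0|_{\mathcal{E}})(1+R^2)=:R'$, and by point iv) (with $u_1=u_2=u_0$) we have $d_\infty(u,\tilde u)\le C_{R}\|w_1-w_2\|_{H^1(\mathbf{R})}$. Lemma \ref{bozza_lemma_energy0} then gives $|E_\gamma(u)-E_\gamma(\tilde u)|\le C_{R'}\, d_\infty(u,\tilde u)\le C\|u-\tilde u\|_{H^1(\mathbf{R})}$. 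If one prefers to be explicit, Lemma \ref{bozza_cont_norm} handles $E_0$, and the delta term is controlled by $\bigl||u(0)|^2-|\tilde u(0)|^2\bigr|\le (\|u\|_\infty+\|\tilde u\|_\infty)\|u-\tilde u\|_{L^\infty(\mathbf{R})}$, using Lemma \ref{bozza_L_infty} together with the Sobolev embedding $H^1\subset L^\infty$.

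\emph{The term $P_{u_0}$.} Write $u=u_0+w$. Then
\[
P_{u_0}(u)=\tfrac12\Im\int_{\mathbf{R}}\overline{w}\,\partial_x(2u_0+w)\,dx=\Im\int_{\mathbf{R}}\overline{w}\,\partial_xu_0\,dx+\tfrac12\Im\int_{\mathbf{R}}\overline{w}\,\partial_xw\,dx .
\]
This expression is well defined because $\partial_x u_0\in L^2(\mathbf{R})$ and $w\in H^1(\mathbf{R})$. It is a continuous linear functional of $w$ plus a bounded quadratic form on $H^1(\mathbf{R})$. For the difference we get
\[
|P_{u_0}(u_0+w_1)-P_{u_0}(u_0+w_2)|\le \|w_1-w_2\|_{L^2}\,\|\partial_xu_0\|_{L^2}+\tfrac12\Bigl|\Im\!\int_{\mathbf{R}}(\overline{w_1}\partial_xw_1-\overline{w_2}\partial_xw_2)\,dx\Bigr|.
\]
The last term is bounded by $\tfrac12\|w_1-w_2\|_{L^2}\|\partial_x w_1\|_{L^2}+\tfrac12\|w_2\|_{L^2}\|\partial_x(w_1-w_2)\|_{L^2}\le R\|w_1-w_2\|_{H^1(\mathbf{R})}$. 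Altogether,
\[
|P_{u_0}(u)-P_{u_0}(\tilde u)|\le \bigl(\sqrt{2}\,|u_0|_{\mathcal{E}}+R\bigr)\|u-\tilde u\|_{H^1(\mathbf{R})}.
\]

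Adding the two estimates yields Lipschitz continuity of $K_{u_0}$ on the ball of radius $R$ centered at $u_0$. Continuity on all of $u_0+H^1(\mathbf{R})$ follows because every point lies in such a ball. Everything here is routine.

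The only step that needs care is the choice of topology. Continuity must be understood for $\|u-\tilde u\|_{H^1(\mathbf{R})}$: the momentum $P_{u_0}$ is \emph{not} continuous for $d_\infty$, since $d_\infty$ does not control $\|w\|_{L^2}$. For this reason the constants in the $E_\gamma$ estimate have to be routed through Lemma \ref{bozza_lemma_E}, which converts the $H^1$ distance into a $d_\infty$ bound with constants uniform on the ball.
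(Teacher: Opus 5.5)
Your proof is correct and is essentially the paper's own argument: the $E_\gamma$ part goes through Lemma~\ref{bozza_lemma_E}~iv) and the $d_\infty$-Lipschitz bound of Lemma~\ref{bozza_lemma_energy0}. The $P_{u_0}$ part is the same bilinear Cauchy--Schwarz estimate, since your splitting $\Im\int\overline{w}\,\partial_xu_0+\tfrac12\Im\int\overline{w}\,\partial_xw$ is only a reorganization of the paper's $(\overline{u}_1-\overline{u}_2)\partial_x(u_1+u_0)-(\overline{u}_2-\overline{u}_0)\partial_x(u_2-u_1)$.

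Drop your closing remark, though. It is not needed, because the lemma concerns the $H^1$ topology. Moreover, the claim that $P_{u_0}$ is not $d_\infty$-continuous is false. A far-away $2\pi$ phase slip does make it discontinuous for $d_A$. Under $d_\infty$, however, uniform closeness rules out relative winding, so the part of $w$ not controlled in $L^2$ (the tangential part) only produces small boundary terms.
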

\begin{proof}
    Set $R>0$. Consider $u_0 \in \mathcal{E}$ and  $u_1,u_2 \in u_0+H^1(\mathbf{R})$ such that $||u_1-u_0||_{H^1(\mathbf{R})}<R$ and $||u_2-u_0||_{H^1(\mathbf{R})} < R$. 
    By point iv) of Lemma \ref{bozza_lemma_E}, Lemma \ref{bozza_cont_norm} and the Lipschitz continuity of $E_{\gamma}$ in $d_{\infty}$, there exists $C>0$ depending only on $R$ and $|u_0|_{\mathcal{E}}$ such that 
    \begin{equation*}
        |E_{\gamma}(u_1)-E_{\gamma}(u_2)| \leq C  ||u_1-u_2||_{H^1(\mathbf{R})}.
    \end{equation*}
    Similarly, we have
    \begin{equation*}
        |P_{u_0}(u_1)-P_{u_0}(u_2)| \leq \frac{1}{2}\Bigl|\int_{\mathbf{R}}(\overline{u}_1-\overline{u}_2)\partial_x(u_1+u_0)  - (\overline{u}_2-\overline{u}_0)\partial_x(u_2-u_1)dx \Bigr| \leq C' ||u_1-u_2||_{H^1(\mathbf{R})},
        \end{equation*}
        where $C'>0$ only depends on $R$ and $|u_0|_{\mathcal{E}}$.
\end{proof}
We can now prove the conservation of the energy $K_{u_0}$ for the solution with initial datum $u_0 \in X^2_{\gamma} \cap \mathcal{E}$.
\begin{proposition}
    Let $u_0 \in X^2_{\gamma}\cap\mathcal{E}$ and let $u \in C^0((-T_-,T_+), \mathcal{E})$ with $\partial_t u \in C^0 ((-T_-,T_+), L^2(\mathbf{R}))$ be the corresponding maximal solution to (\ref{bozza_NLS}), with satisfying $u(0) = u_0$ and defined for $T_{\pm} \in [0,+\infty)$. It holds
    \begin{equation*}
        K_{u_0}(u(t)) = K_{u_0}(u_0), \qquad \forall t \in (-T_-,T_+).
    \end{equation*}
    \label{bozza_cons_energy_X^2}
\end{proposition}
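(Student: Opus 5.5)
The plan is to show that $t\mapsto K_{u_0}(u(t))$ is differentiable on $(-T_-,T_+)$ with zero derivative, using the regularity from Proposition \ref{bozza_persistence_regularity}. There we have $u(t)\in X^2_\gamma$, $\partial_t u\in C^0((-T_-,T_+),L^2(\mathbf{R}))$ and $\partial_t u=-i\tilde H_\gamma u+iF(u)$. By Proposition \ref{bozza_u-u0_prop} we also know $u(t)-u_0\in C^0(H^1)$. Writing $u(t)=u_0+w(t)$ with $w\in C^0(H^1)\cap C^1(L^2)$, the momentum becomes
\[P_{u_0}(u)=\tfrac12\Im\int\overline{w}\,\partial_x(2u_0+w)\,dx.\]
Each piece of the energy is differentiated separately.

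\textbf{Momentum.} Formally $\frac{d}{dt}P_{u_0}(u)=\Im\int \overline{\partial_t w}\,\partial_x u\,dx$, obtained by integrating by parts the term $\overline{w}\partial_x\partial_t w$. To make this rigorous, I would compute difference quotients. The quotient $(P(t+h)-P(t))/h$ equals $\frac12\Im\int \overline{\delta_h w}\,\partial_x(2u_0+w(t+h))+\overline{w(t)}\,\partial_x\delta_h w$ with $\delta_h w=(w(t+h)-w(t))/h$. In the second term I move the derivative onto $\overline{w}$ by integration by parts, which is legitimate since $w\in H^1$ and $\delta_h w\in H^1$. Then $\delta_h w\to\partial_t u$ in $L^2$, $\partial_x w(t+h)\to\partial_x w(t)$ in $L^2$, and $\partial_x u_0\in L^2$. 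This gives
\[\frac{d}{dt}P_{u_0}(u)=\Im\int\overline{\partial_t u}\,\partial_x u\,dx.\]

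\textbf{$E_\gamma$.} Similarly, $\frac{d}{dt}E_\gamma(u)=\Re\langle \tilde H^0_\gamma u-F(u),\partial_t u\rangle$. Formally this comes from Remark \ref{bozza_remark_energy}: one integrates by parts on $|x|\ge\varepsilon$ and uses the jump condition to absorb the $\gamma$-term $\gamma\Re(\overline{u(0)}\partial_t u(0))$. For the potential term one uses $\partial_t(1-|u|^2)^2/4=-\Re(\overline{F(u)}\partial_t u)$. The rigorous version is again via difference quotients, now using the $H^1$-continuity of $w$ and the $L^2$-continuity of $\partial_t u$. The delicate point is the gradient term. Since $\partial_x u(t)$ is not differentiable in $L^2$, I would write
\[\tfrac12\|\partial_x u(t+h)\|^2-\tfrac12\|\partial_x u(t)\|^2=\Re\langle\partial_x(u(t+h)-u(t)),\partial_x u(t)\rangle+\tfrac12\|\partial_x(u(t+h)-u(t))\|^2.\]
In the first term I integrate by parts against $u(t)\in X^2_\gamma$, which produces $\Re\langle u(t+h)-u(t),\tilde H^0_\gamma u(t)\rangle$ together with the boundary term at $0$, which cancels with the corresponding $\gamma|u(0)|^2$ difference up to $o(h)$. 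The quadratic remainder is $o(h)$ only if $\partial_x u$ is Lipschitz-type in time in $L^2$. This is where I expect the main obstacle. To handle it, I would symmetrize: use $\Re\langle\partial_x(u(t+h)-u(t)),\partial_x(u(t+h)+u(t))\rangle/2$, integrate by parts against both endpoints, and use continuity of $\tilde H^0_\gamma u(t)$ in $L^2$. That continuity follows from \eqref{bozza_eq_regular}, since $\tilde H_\gamma u=i\partial_t u+F(u)$ and $\partial_x u\in C^0(L^2)$. As a fallback, one can prove the identity in integrated form, i.e. $E(t)-E(0)=\int_0^t(\dots)$, by the same symmetrization.

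\textbf{Cancellation.} Combining the two derivatives, and using $\tilde H_\gamma u=\tilde H^0_\gamma u+iv\partial_x u$ from \eqref{bozza_H0+der}, we get
\[\frac{d}{dt}K_{u_0}(u)=\Re\langle\tilde H^0_\gamma u+iv\partial_x u-F(u),\partial_t u\rangle=\Re\langle \tilde H_\gamma u-F(u),\partial_t u\rangle.\]
Here the momentum contribution enters through the identity $\Re\langle iv\partial_x u,\partial_t u\rangle=-v\Im\int\overline{\partial_t u}\partial_x u$. By \eqref{bozza_eq_regular}, $\tilde H_\gamma u-F(u)=i\partial_t u$, so the derivative equals $\Re(i\|\partial_t u\|^2_{L^2})=0$. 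Hence $K_{u_0}(u(t))=K_{u_0}(u_0)$ on $(-T_-,T_+)$.
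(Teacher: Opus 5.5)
Your proposal is correct and follows essentially the paper's argument. Both use difference quotients, integrate by parts symmetrically against $\tilde H^0_\gamma u$ at both time endpoints (so the boundary term at $0$ cancels the $\frac{\gamma}{2}|u(0)|^2$ difference exactly), use the $L^2$-continuity of $\tilde H_\gamma u = i\partial_t u + F(u)$, and recombine via $\tilde H_\gamma = \tilde H^0_\gamma + iv\partial_x$ to reach $\Re\langle i\partial_t u,\partial_t u\rangle = 0$. The only cosmetic difference is that the paper merges the momentum and gradient difference quotients into a single $\tilde H_\gamma$ expression before taking $\tau\to 0$, whereas you differentiate $P_{u_0}$ and $E_\gamma$ separately and combine at the end.
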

\begin{proof}
    By means of Remark \ref{bozza_remark_energy}, we can write
    \begin{equation*}
        K_{u_0}(u) = \frac{1}{2}\int_{\mathbf{R}}\overline{u}(\tilde{H}^0_{\gamma}u) dx + \frac{1}{4}\int_{\mathbf{R}}(1-|u|^2)^2 dx -\frac{v}{2}\Im \int_{\mathbf{R}}(\overline{u}-\overline{u}_0)\partial_x(u+u_0)dx
    \end{equation*}
    Our goal is to show that the map $t \to K_{u_0}(u(t))$ is differentiable on $(-T_-,T_+)$. Then, the identity $\frac{d}{dt}K_{u_0}(u(t)) = 0$ follows by direct computation. As in \cite{adami_noja}, we compute $\frac{1}{\tau}\Bigl(K_{u_0}(u(t+\tau))-K_{u_0}(u(t))\Bigr)$. The first term we obtain reads
    \begin{equation}
        \begin{split}
            &\frac{1}{2\tau}\int_{\mathbf{R}}\overline{u}(t+\tau)\tilde{H}^0_{\gamma}u(t+\tau)-\overline{u}(t)\tilde{H}^0_{\gamma}u(t) \ dx \\ &= \frac{1}{2}\int_{\mathbf{R}}  \Bigl[\frac{\overline{u}(t+\tau)-\overline{u}(t)}{\tau}\Bigr] \tilde{H}^0_{\gamma}u(t+\tau) -(\tilde{H}^0_{\gamma}\overline{u}(t))\Bigl[\frac{u(t)-u(t+\tau)}{\tau}\Bigr] dx.
        \end{split}
        \label{bozza_der_2}
    \end{equation}
    The momentum term in $K_{u_0}$ contributes with
    \begin{equation}
        \begin{split}
            &-\frac{v}{2\tau}\Im \int_{\mathbf{R}}(\overline{u}-\overline{u}_0)\partial_x(u+u_0)\Bigr|_{t+\tau}dx + \frac{v}{2\tau}\Im \int_{\mathbf{R}}(\overline{u}-\overline{u}_0)\partial_x(u+u_0)\Bigr|_{t}dx \\ &  = - \frac{v}{2} \Im\int_{\mathbf{R}}  \Bigl[\frac{\overline{u}(t+\tau)-\overline{u}(t)}{\tau}\Bigr] \partial_x u(t+\tau) + (\partial_x\overline{u}(t))\Bigl[\frac{u(t)-u(t+\tau)}{\tau}\Bigr] dx.
        \end{split}
        \label{bozza_der_1}
    \end{equation}
    Using (\ref{bozza_H0+der}), the sum of (\ref{bozza_der_2}) and (\ref{bozza_der_1}) gives
    \begin{equation*}
        \begin{split}
            \frac{1}{2}\Re\int_{\mathbf{R}}  \Bigl[\frac{\overline{u}(t+\tau)-\overline{u}(t)}{\tau}\Bigr] \tilde{H}_{\gamma}u(t+\tau) &-(\overline{\tilde{H}_{\gamma}u(t)})\Bigl[\frac{u(t)-u(t+\tau)}{\tau}\Bigr] dx \to \Re\int_{\mathbf{R}}  \partial_t \overline{u} \tilde{H}_{\gamma}u(t) \  dx.
        \end{split}
    \end{equation*}
    as $\tau \to 0$. A similar computation with the quartic contribution in $K_{u_0}$ shows the conservation of energy.
\end{proof}
\begin{corollary}
    Let $u_0 \in X^2_{\gamma}\cap\mathcal{E}$ and let $u \in C^0((-T_-,T_+), \mathcal{E})$ with $\partial_t u \in C^0 ((-T_-,T_+), L^2(\mathbf{R}))$ be the corresponding maximal solution to (\ref{bozza_NLS}), with satisfying $u(0) = u_0$ and defined for $T_{\pm} \in [0,+\infty)$. Consider a reference function $\tilde{u} \in u_0+H^1(\mathbf{R})$. It holds the conservation law
    \begin{equation*}
        K_{\tilde{u}}(u(t)) = K_{\tilde{u}}(u_0), \qquad \forall t \in (-T_-,T_+).
    \end{equation*}
    \label{bozza_corollary_reference_field_K}
\end{corollary}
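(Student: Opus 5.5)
The plan is to reduce the statement to Proposition \ref{bozza_cons_energy_X^2}, using Lemma \ref{bozza_mom_difference_subscript} to change the reference function. First we check that everything is well defined. Since $\tilde{u}\in u_0+H^1(\mathbf{R})$, we have $\tilde{u}-u_0\in H^1(\mathbf{R})$, so by Lemma \ref{bozza_mom_difference_subscript} the affine sets $u_0+H^1(\mathbf{R})$ and $\tilde{u}+H^1(\mathbf{R})$ coincide. By Proposition \ref{bozza_u-u0_prop}, $u(t)\in u_0+H^1(\mathbf{R})$ for every $t\in(-T_-,T_+)$. Hence $P_{\tilde{u}}(u(t))$ and $K_{\tilde{u}}(u(t))$ are defined for all times of existence.

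Next we apply Lemma \ref{bozza_mom_difference_subscript} with the pair $(u_0,\tilde{u})$. For every $u\in u_0+H^1(\mathbf{R})$ it gives
\begin{equation*}
P_{\tilde{u}}(u) = P_{u_0}(u) - P_{u_0}(\tilde{u}).
\end{equation*}
The correction term $P_{u_0}(\tilde{u})$ depends only on the two fixed reference functions, not on $u$. Consequently
\begin{equation*}
K_{\tilde{u}}(u) = E_{\gamma}(u) - vP_{\tilde{u}}(u) = K_{u_0}(u) + vP_{u_0}(\tilde{u}),
\end{equation*}
so $K_{\tilde{u}}$ and $K_{u_0}$ differ by a constant on $u_0+H^1(\mathbf{R})$.

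We evaluate this identity at $u=u(t)$ and at $u=u_0$ and subtract, which cancels the constant. Then Proposition \ref{bozza_cons_energy_X^2} gives $K_{u_0}(u(t))=K_{u_0}(u_0)$, and therefore $K_{\tilde{u}}(u(t))=K_{\tilde{u}}(u_0)$ for all $t\in(-T_-,T_+)$.

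There is no real obstacle here. The only point that needs care is the sign convention in Lemma \ref{bozza_mom_difference_subscript}, and that sign is irrelevant because the offset is independent of $u$ and cancels in the difference.
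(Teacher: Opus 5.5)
Your proof is correct and follows the same route as the paper. You use Lemma \ref{bozza_mom_difference_subscript} to show that $K_{\tilde{u}}$ and $K_{u_0}$ differ on $u_0+H^1(\mathbf{R})$ by the $u$-independent constant $vP_{u_0}(\tilde{u})$, and you then conclude from Proposition \ref{bozza_cons_energy_X^2}. Your check via Proposition \ref{bozza_u-u0_prop} that $K_{\tilde{u}}(u(t))$ is well defined for all times is a detail the paper leaves implicit.
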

\begin{proof}
    By Lemma \ref{bozza_mom_difference_subscript}, the identity $K_{u_0}(u(t)) = K_{\tilde{u}}(u(t)) - vP_{u_0}(\tilde{u})$ holds for all $t \in (-T_-,T_+)$. By the conservation of the energy $K_{u_0}$ in \ref{bozza_cons_energy_X^2}, we have $K_{\tilde{u}}(u(t)) - K_{\tilde{u}}(u_0) = 0$, for all $t \in (-T_-,T_+)$.
\end{proof}
\subsection{The case of nowhere vanishing fields} In this section we introduce another notion of linear momentum, which is defined for functions $u \in \mathcal{E}$ which are nowhere vanishing. This leads to an alternative definition of the total energy for such fields. The latter is globally defined in the set of nowhere vanishing fields, and will be useful later on, in the study of the orbital stability of stationary states.\\
Consider a field $u \in \mathcal{E}$ that satisfies $\inf_{x \in \mathbf{R}}|u(x)|>0$. Then, there exist two functions $\rho: \mathbf{R} \to \mathbf{R}$ and $\theta: \mathbf{R} \to \mathbf{R}$, with $\rho^2-1 \in L^2(\mathbf{R})$ and $\partial_x\rho, \partial_x\theta \in L^2(\mathbf{R})$, such that $u(x) = \rho(x)^{i\theta(x)}$ for all $x \in \mathbf{R}$. We define the linear momentum of $u$, as
\begin{equation}
    \mathcal{P}(u) = \frac{1}{2}\Im\int_{\mathbf{R}}(\overline{u}-e^{-i\theta})\partial_x(u+e^{i\theta})dx = \frac{1}{2}\int_{\mathbf{R}}(\rho^2-1)\partial_x\theta dx.
    \label{bozza_def_mom_nonvanishing}
\end{equation}
Similarly, we define the energy of $u$ as
\begin{equation}
    \mathcal{K}(u) = E_{\gamma}(u) - v\mathcal{P}(u).
    \label{bozza_energy_non_vanishing}
\end{equation}
We can interpret $\mathcal{P}(u)$ as the relative momentum of $u \in U_0$ with respect to its own phase $e^{i\theta} \in U_0$.\\
The following lemma from \cite[Lemma A.3]{de_laire} provides the continuity of the momentum $\mathcal{P}(u)$ in the set of nowhere vanishing fields, with respect to the distance $d_A$ (defined in \eqref{bozza_d_A}).
\begin{lemma}
    Set $U_0 = \{u \in \mathcal{E}| \ \inf_{x \in \mathbf{R}}|u(x)| >0\}$. The functional $\mathcal{P}:U_0 \to \mathbf{R}$ is continuous for the distance $d_A$ for any $A >0$.
    \label{bozza_mom_continuity}
\end{lemma}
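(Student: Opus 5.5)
Fix $A>0$, $u\in U_0$ with $\inf|u|=\varepsilon>0$, and a sequence $u_n\in\mathcal{E}$ with $d_A(u_n,u)\to0$. The plan is to show $u_n\in U_0$ for large $n$ and $\mathcal{P}(u_n)\to\mathcal{P}(u)$, working with the polar form
\[
\mathcal{P}(u)=\tfrac12\int_{\mathbf{R}}(\rho^2-1)\partial_x\theta\,dx,\qquad \rho^2\partial_x\theta=\Im(\overline{u}\,\partial_xu),
\]
so that
\[
\mathcal{P}(u)=\tfrac12\int_{\mathbf{R}}\Bigl(1-\frac{1}{\rho^2}\Bigr)\Im(\overline{u}\,\partial_xu)\,dx .
\]
This expression involves only $u$, $\partial_xu$ and $|u|$, which removes the phase ambiguity and any dependence on the lift $\theta$.

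\emph{Step 1 (uniform lower bound).} By Lemma \ref{bozza_lemma_modulus}, $\bigl\||u_n|-|u|\bigr\|_{L^\infty(\mathbf{R})}\to0$, so for $n$ large $\inf|u_n|\ge\varepsilon/2$, hence $u_n\in U_0$. Write $\rho_n=|u_n|$. We also have $\|\partial_xu_n\|_{L^2}$ bounded and $\rho_n$ bounded uniformly in $L^\infty$ (by Lemma \ref{bozza_L_infty}, since $|u_n|_{\mathcal{E}}$ is bounded along a $d_A$-convergent sequence).

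\emph{Step 2 (convergence of the pieces).} Set $f_n=1-\rho_n^{-2}=(\rho_n^2-1)/\rho_n^2$. Since $\rho_n^2-1\to\rho^2-1$ in $L^2$ (this is part of $d_A$) and $\rho_n^{-2}\to\rho^{-2}$ uniformly (Step 1 together with the lower bound $\varepsilon/2$), we get $f_n\to f$ in $L^2(\mathbf{R})$. Next let $g_n=\Im(\overline{u}_n\partial_xu_n)$. Because $\partial_xu_n\to\partial_xu$ in $L^2$ and $\overline{u}_n$ is bounded in $L^\infty$, it remains to control $\overline{u}_n-\overline{u}$, which converges only locally uniformly; this is the main obstacle. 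To bypass it I would use the gauge-invariant identity
\[
g_n=\rho_n^2\,\partial_x\theta_n,\qquad |\partial_x\theta_n|\le|\partial_xu_n|/\rho_n .
\]
On $[-A,A]$, $u_n\to u$ uniformly and $\partial_xu_n\to\partial_xu$ in $L^2$, so $g_n\to g$ in $L^2([-A,A])$.

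\emph{Step 3 (tails).} For $|x|>A$ uniform convergence of $u_n$ is not available, so I would instead argue by weak convergence. On $|x|>A$ we have a uniform bound $\|g_n\|_{L^2}\le C$. I claim $g_n\rightharpoonup g$ weakly in $L^2$. Indeed, for compactly supported test functions, local uniform convergence follows by propagating from $[-A,A]$: $\partial_xu_n\to\partial_xu$ in $L^2$ and $u_n(x)-u(x)=u_n(x_0)-u(x_0)+\int_{x_0}^x\partial_x(u_n-u)$ give uniform convergence on every compact set. This limit is identified and the sequence is bounded, so weak convergence holds. Combined with the strong convergence $f_n\to f$ in $L^2$, this yields $\int f_ng_n\to\int fg$, i.e.\ $\mathcal{P}(u_n)\to\mathcal{P}(u)$. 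Arguing along arbitrary subsequences gives continuity.

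The delicate point I expect is making sure that $d_A$-convergence really gives local uniform convergence on all compacts, as used above, and not merely on $[-A,A]$. The worry is that on unbounded regions the phase could drift, but this only affects the overall phase, and the identity for $\mathcal{P}$ above is insensitive to it. Since only weak convergence of $g_n$ is needed in the tails, the bounded $L^2$ norms suffice.
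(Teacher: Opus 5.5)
Your argument is correct, but it handles the tails differently from the paper.

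\textbf{The paper's route.} It proves that $u\mapsto\rho^2-1$ and $u\mapsto\partial_x\theta$ are both continuous into $L^2(\mathbf{R})$ in the \emph{strong} sense. It does this with a three-term decomposition of $\partial_x\theta_1-\partial_x\theta_2$. The only term not directly controlled by $d_A$ is $\Re\bigl(i(u_1-u_2)\partial_x\overline{u}_1\bigr)/\rho_1^2$. The point there is that $\partial_x u_1$ is the derivative of the \emph{fixed} limit function, so its $L^2$ tail outside $[-R_*,R_*]$ is small. On $[-R_*,R_*]$ one uses $|u_1-u_2|\le (R_*^{1/2}+1)\,d_A(u_1,u_2)$.

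\textbf{Your route.} You write $\mathcal{P}=\frac12\int f\,g$ with $f=1-\rho^{-2}$ and $g=\Im(\overline{u}\,\partial_x u)$. You get strong $L^2$ convergence of $f_n$ and only weak $L^2$ convergence of $g_n$, and you conclude by weak--strong pairing. Each step is sound:
\begin{itemize}
\item the propagation estimate giving locally uniform convergence;
\item the $L^2_{loc}$ convergence of $g_n$;
\item the uniform $L^2$ bound on $g_n$;
\item density of compactly supported test functions.
\end{itemize}

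\textbf{The obstacle is not real.} The ``main obstacle'' you identify goes away with the paper's observation. Write
\[
g_n-g=\Im\bigl(\overline{u}_n\,\partial_x(u_n-u)\bigr)+\Im\bigl((\overline{u}_n-\overline{u})\,\partial_x u\bigr).
\]
The second term carries the fixed $\partial_x u$, so the same tail-splitting gives $g_n\to g$ strongly in $L^2(\mathbf{R})$. The weak-convergence detour is therefore valid but unnecessary. The bound $|\partial_x\theta_n|\le|\partial_x u_n|/\rho_n$ and the ``phase drift'' worry play no role in the argument.

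\textbf{What each buys.} The paper's route gives the stronger, quantitative ($\varepsilon$--$\delta$) statement that $u\mapsto\partial_x\theta$ is continuous into $L^2$. Your route is shorter and purely sequential, and it yields exactly the continuity of $\mathcal{P}$.
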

\begin{proof}
    Set $A>0$. In order to show the continuity of the momentum $\mathcal{P}$, it is sufficient to show the continuity, with respect to the distance $d_A$, of the maps $u \to \rho^2-1$ and $u \to \partial_x\theta$ from $U_0$ to $L^2(\mathbf{R})$, where $u(x) = \rho(x) e^{i\theta(x)}$ for all $x \in \mathbf{R}$. The continuity of the first map is a consequence of the definition of $d_A$; in the following we focus on the second map.
    Given $u(x) = \rho(x) e^{i\theta}(x) \in U_0$, we write
    \begin{equation*}
        \partial_x\theta = \frac{\Re(iu\partial_x\overline{u})}{\rho^2}.
    \end{equation*}
    Let's fix $u_1 = \rho_1e^{i\theta_1} \in U_0$ with $b_1 := \inf_{x \in \mathbf{R}}|u_1(x)|>0$, and choose any $\varepsilon >0$. We want to show that there exists $\delta >0$ such that if $u_2 = \rho_2 e^{i\theta_2} \in U_0$ satisfies $d_A(u_1,u_2) \leq \delta$, then $||\partial_x\theta_1-\partial_x\theta_2||_{L^2(\mathbf{R})} \leq \varepsilon$. Set $b_2 := \inf_{x \in \mathbf{R}} |u_2(x)|>0$. By choosing $\delta>0$ small enough, we have $b_2 \geq b_1/2$, by Lemma \ref{bozza_lemma_modulus}. Then, we compute
    \begin{equation}
        \partial_x\theta_1-\partial_x\theta_2 = \Re(iu_2\partial_x\overline{u}_2) \frac{\rho_2^2-\rho_1^2}{\rho_1^2\rho_2^2} -  \frac{\Re\bigl(iu_2\partial_x(\overline{u}_2-\overline{u}_1)\bigr)}{\rho_1^2}  +\frac{\Re\bigl(i(u_1-u_2)\partial_x\overline{u}_1\bigr)}{\rho_1^2}
    \end{equation}
    Hence, we obtain
    \begin{equation}
    \begin{split}
        ||\partial_x\theta_1-\partial_x\theta_2||_{L^2(\mathbf{R})} &\leq \frac{2}{b_1^3} ||\partial_x u_2||_{L^2(\mathbf{R})}||\rho^2_1-\rho^2_2||_{L^{\infty}(\mathbf{R})} + \frac{||\rho_2||_{L^{\infty}(\mathbf{R})}}{b_1^2}||\partial_xu_1-\partial_xu_2||_{L^2(\mathbf{R})} \\ &+ \frac{1}{b_1^2}||(u_1-u_2)\partial_xu_1||_{L^2(\mathbf{R})}.
    \end{split}
        \label{bozza_ineq_theta}
    \end{equation}
    By means of Lemma \ref{bozza_lemma_modulus} and the definition of $d_A$, each of the first two terms on the right hand side of the inequality above can be made smaller than $\varepsilon/3$ by choosing $\delta>0$ small enough. For the last term, we proceed as follows. We write, for $R >0$,
    \begin{equation*}
        \int_{\mathbf{R}}|u_1-u_2|^2|\partial_xu_1|^2 dx =  \int_{\mathbf{R}\backslash[-R,R]}|u_1-u_2|^2|\partial_xu_1|^2 dx + \int_{[-R,R]}|u_1-u_2|^2|\partial_xu_1|^2 dx.
    \end{equation*}
    We can choose $R_*>0$ big enough, so that, for $\delta>0$ small enough, we have
    \begin{equation*}
         \int_{\mathbf{R}\backslash[-R_*,R_*]}|u_1-u_2|^2|\partial_xu_1|^2 dx \leq (3||u_1||_{L^{\infty}(\mathbf{R})})^2\int_{\mathbf{R}\backslash[-R_*,R_*]}|\partial_xu_1|^2 dx \leq \frac{\varepsilon^2}{8}b_1^4.
    \end{equation*}
    Finally, for any $x \in [-R_*,R_*]$, we have
    \begin{equation*}
        |u_1(x)-u_2(x)| \leq \int_{0}^x|\partial_xu_1(s)-\partial_xu_2(s)|ds + |u_1(0)-u_2(0)| \leq (R_*^{\frac{1}{2}}+1) d_A(u_1,u_2).
    \end{equation*}
    By choosing $\delta>0$ even smaller we have
    \begin{equation*}
         \int_{[-R,R]}|u_1-u_2|^2|\partial_xu_1|^2 dx \leq ||\partial_xu_1||^2_{L^2(\mathbf{R})}||u_1-u_2||_{L^{\infty}[-R_*,R_*]} \leq \frac{\varepsilon^2}{8}b_1^4.
    \end{equation*}
    We conclude that $||(u_1-u_2)\partial_xu_1||_{L^2(\mathbf{R})} \leq \frac{1}{2}b_1^2\varepsilon$ for $\delta >0$ small enough. By (\ref{bozza_ineq_theta}), we conclude that $||\partial_x\theta_1-\partial_x\theta_2||_{L^2(\mathbf{R})} \leq \varepsilon$ for $\delta> 0$ small enough.
\end{proof}
From the continuity of the energy $E_{\gamma}$ with respect to the distance $d_A$ (Lemma \ref{bozza_lemma_energy0}), we deduce the following corollary.
\begin{corollary}
    Set $U_0 = \{u \in \mathcal{E}| \ \inf_{x \in \mathbf{R}}|u(x)| >0\}$. Then, the functional $\mathcal{K}:U_0 \to \mathbf{R}$ is continuous for the distance $d_A$, for any $A >0$.
    \label{bozza_en_K_continuity}
\end{corollary}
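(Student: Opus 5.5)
The statement is an immediate consequence of Lemma \ref{bozza_lemma_energy0} and Lemma \ref{bozza_mom_continuity}. The plan is to write $\mathcal{K}=E_{\gamma}-v\mathcal{P}$ on $U_0$ and check that each piece is continuous for $d_A$ when restricted to $U_0$.

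Fix $A>0$, a point $u_1\in U_0$, and a sequence (or net) $u_n\in U_0$ with $d_A(u_n,u_1)\to 0$.

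First, Lemma \ref{bozza_lemma_energy0} says that $E_{\gamma}$ is continuous on the whole of $(\mathcal{E},d_A)$ for $\gamma>0$. Since $U_0\subset\mathcal{E}$ carries the induced distance, $E_{\gamma}(u_n)\to E_{\gamma}(u_1)$. The term $\frac{\gamma}{2}|u(0)|^2$ needs no separate treatment: $0\in[-A,A]$, so $d_A$ controls $|u_n(0)-u_1(0)|$ directly.

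Second, Lemma \ref{bozza_mom_continuity} gives $\mathcal{P}(u_n)\to\mathcal{P}(u_1)$. Here one should note that $\mathcal{P}$ is single-valued on $U_0$. The expression $\frac12\int(\rho^2-1)\partial_x\theta$ depends only on $\partial_x\theta$, which is determined by $u$ via $\partial_x\theta=\Re(iu\partial_x\overline{u})/\rho^2$. So the choice of lift $\theta$ causes no ambiguity.

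Finally, since $v$ is a fixed real constant, $\mathcal{K}(u_n)=E_{\gamma}(u_n)-v\mathcal{P}(u_n)\to\mathcal{K}(u_1)$, and $\mathcal{K}$ is continuous on $U_0$ for $d_A$.

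There is no real obstacle here. The only subtle point is the continuity of $\mathcal{P}$, namely controlling $\partial_x\theta$ in $L^2$ when $u_n$ is close to $u_1$ only in the local sup norm. That is exactly what Lemma \ref{bozza_mom_continuity} handles, through the lower bound on $|u_n|$ from Lemma \ref{bozza_lemma_modulus} and the splitting into $[-R_*,R_*]$ and its complement.
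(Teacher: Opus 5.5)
Your proposal is correct and follows the paper's own argument: the corollary is obtained by writing $\mathcal{K}=E_{\gamma}-v\mathcal{P}$ and combining the $d_A$-continuity of $E_{\gamma}$ (Lemma \ref{bozza_lemma_energy0}) with the $d_A$-continuity of $\mathcal{P}$ on $U_0$ (Lemma \ref{bozza_mom_continuity}). Your remarks that $d_A$ controls $|u(0)|$ and that $\mathcal{P}$ does not depend on the choice of lift $\theta$ are correct, though the paper leaves them implicit.
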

\subsection{A relation between the two notions of energy}
We now establish a useful relation between the two notions of energy we have encountered so far. In particular, this relation will allow us to prove the conservation law for the energy $\mathcal{K}$.
\begin{lemma}
        Let $u_0 \in \mathcal{E}$. Consider two fields $u_1,u_2 \in u_0+H^1(\mathbf{R})$ which are nowhere vanishing. Then, there exists $k \in \mathbf{Z}$ such that
        $$\mathcal{K}(u_1) - \mathcal{K}(u_2)  =  K_{u_0}(u_1) - K_{u_0}(u_2) + v\pi k$$
        \label{bozza_rel_energies_1}
    \end{lemma}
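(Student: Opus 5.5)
Since $E_\gamma$ appears in both $\mathcal{K}$ and $K_{u_0}$, the claim reduces to a statement about momenta: we must show that
\begin{equation*}
\mathcal{P}(u_1)-\mathcal{P}(u_2) - \bigl(P_{u_0}(u_1)-P_{u_0}(u_2)\bigr) \in -\pi\mathbf{Z}.
\end{equation*}
By Lemma \ref{bozza_mom_difference_subscript}, $P_{u_0}(u_1)-P_{u_0}(u_2) = P_{u_2}(u_1)$, because $u_1,u_2\in u_0+H^1(\mathbf{R})$. The plan is therefore to compare $P_{u_2}(u_1)$ with $\mathcal{P}(u_1)-\mathcal{P}(u_2)$ directly, using the polar decompositions $u_j=\rho_je^{i\theta_j}$ with $\rho_j\ge b>0$, $\rho_j^2-1\in L^2$, $\partial_x\rho_j,\partial_x\theta_j\in L^2$.

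The first step is to note that $\mathcal{P}(u_j)=P_{e^{i\theta_j}}(u_j)$ by definition \eqref{bozza_def_mom_nonvanishing}. This requires $u_j-e^{i\theta_j}=(\rho_j-1)e^{i\theta_j}\in H^1$, which holds since $\rho_j-1=(\rho_j^2-1)/(\rho_j+1)\in L^2$ and $\partial_x\rho_j\in L^2$. Next, $u_1-u_2\in H^1$ gives $e^{i\theta_1}-e^{i\theta_2}\in H^1$, so all four functions $u_1,u_2,e^{i\theta_1},e^{i\theta_2}$ lie in the same affine space $u_0+H^1(\mathbf{R})$. Applying Lemma \ref{bozza_mom_difference_subscript} twice then yields
\begin{equation*}
P_{u_2}(u_1)=P_{e^{i\theta_1}}(u_1)-P_{e^{i\theta_1}}(u_2)=\mathcal{P}(u_1)-\mathcal{P}(u_2)-P_{e^{i\theta_2}}(e^{i\theta_1}),
\end{equation*}
where for the last equality we use $P_{e^{i\theta_1}}(u_2)=P_{e^{i\theta_2}}(u_2)+P_{e^{i\theta_1}}(e^{i\theta_2})$ together with the antisymmetry $P_a(b)=-P_b(a)$, which is itself a special case of the lemma. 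The claim thus reduces to showing that $P_{e^{i\theta_2}}(e^{i\theta_1})\in \pi\mathbf{Z}$. Once this is established, the identity follows with $k$ the corresponding integer, up to sign conventions.

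The main step is to compute $P_{e^{i\theta_2}}(e^{i\theta_1})$. Set $\psi=\theta_1-\theta_2$. Since $u_1-u_2\in L^2$ and $|u_j|\to1$, we have $e^{i\psi}-1\in L^2$ near $\pm\infty$. Combined with $\partial_x\psi\in L^2$, this forces $\psi\to 2\pi k_\pm$ at $\pm\infty$ (compare the uniqueness statement in Theorem \ref{theorem_gerard}). We then compute
\begin{equation*}
\frac12\Im\bigl[(e^{-i\theta_1}-e^{-i\theta_2})\,i(\partial_x\theta_1e^{i\theta_1}+\partial_x\theta_2e^{i\theta_2})\bigr]=\tfrac12(1-\cos\psi)(\partial_x\theta_1-\partial_x\theta_2)=\tfrac12(1-\cos\psi)\,\partial_x\psi .
\end{equation*}
Integrating gives $\tfrac12[\psi-\sin\psi]_{-\infty}^{+\infty}=\pi(k_+-k_-)\in\pi\mathbf{Z}$. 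Before doing so one must check integrability, which follows from $(1-\cos\psi)=\tfrac12|e^{i\psi}-1|^2$, with $e^{i\psi}-1\in L^2$ and $\partial_x\psi\in L^2$ (bounded times $L^2$ times $L^2$). One must also justify the limits of $\psi$: the function $\psi$ is uniformly continuous with $\partial_x\psi\in L^2$, and $e^{i\psi}-1\in L^2$ forces $\psi$ to approach $2\pi\mathbf{Z}$ in an $L^2$ sense; together with uniform continuity this pins down the limits.

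I expect this last step, namely the justification that $\psi$ has limits in $2\pi\mathbf{Z}$ and the careful bookkeeping of signs, to be the only real obstacle. The remaining steps are algebra with Lemma \ref{bozza_mom_difference_subscript}.
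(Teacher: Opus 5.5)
Your proof is correct, but it takes a somewhat different route from the paper's.

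\textbf{The paper's route.} The paper fixes a G\'erard decomposition $u_0=e^{i\varphi}+w_0$ (Theorem \ref{theorem_gerard}). It then proves a single-field relation (Lemma \ref{bozza_lemma_rel_momenta}): for every nowhere vanishing $u\in u_0+H^1(\mathbf{R})$,
\[
\mathcal{P}(u)-P_{u_0}(u)\in P_{e^{i\varphi}}(u_0)+\pi\mathbf{Z}.
\]
The limits $\theta-\varphi\to 2\pi k_\pm$ come from the uniqueness clause of Theorem \ref{theorem_gerard}. Subtracting the relations for $u_1$ and $u_2$ cancels the constant $P_{e^{i\varphi}}(u_0)$.

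\textbf{Your route.} You use the fields' own phases $e^{i\theta_1},e^{i\theta_2}\in u_0+H^1(\mathbf{R})$ as reference functions. Lemma \ref{bozza_mom_difference_subscript} then does all the bookkeeping, and everything reduces to
\[
P_{e^{i\theta_2}}(e^{i\theta_1})=\tfrac12\int_{\mathbf{R}}(1-\cos\psi)\,\partial_x\psi\,dx .
\]
This is the same core computation the paper does by hand, since its integral is exactly $-P_{e^{i\varphi}}(e^{i\theta})$.

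\textbf{Comparison.} Your version needs no decomposition of $u_0$. The integer also comes out intrinsically as the relative winding $k_+-k_-$ of $\theta_1-\theta_2$, so the paper's remark that $k$ is independent of the decomposition becomes automatic. What you give up is the single-field statement of Lemma \ref{bozza_lemma_rel_momenta}, which is not needed for this lemma.

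\textbf{Two small corrections.}
\begin{itemize}
\item The correct identity is $P_{u_2}(u_1)=\mathcal{P}(u_1)-\mathcal{P}(u_2)+P_{e^{i\theta_2}}(e^{i\theta_1})$, with a plus sign rather than the minus in your display. This is harmless, since it only flips the sign of the integer; with the correct sign you get $k=k_+-k_-$.
\item The limits of $\psi$ follow most cleanly from $e^{i\psi}-1\in H^1(\mathbf{R})$, which holds because $\partial_x e^{i\psi}=i\,\partial_x\psi\,e^{i\psi}\in L^2(\mathbf{R})$. Hence $e^{i\psi(x)}\to1$ as $|x|\to\infty$, and continuity of $\psi$ forces $\psi\to 2\pi k_\pm$. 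Integrating $\tfrac12\partial_x(\psi-\sin\psi)$ over $[-R,R]$ and letting $R\to\infty$ then gives $\pi(k_+-k_-)$.
\end{itemize}
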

In order to prove Lemma \ref{bozza_rel_energies_1} it is useful to determine first a relation between the two notions of momentum in \eqref{bozza_def_momentum} and \eqref{bozza_def_mom_nonvanishing}. To this purpose, we state the following lemma.
\begin{lemma}
    Let $u_0 \in \mathcal{E}$, and consider the decomposition $u_0 = e^{i\varphi} + w_0$, where $\varphi$ is real valued and satisfying $\partial_x\varphi \in L^2(\mathbf{R})$ and $w_0 \in H^1(\mathbf{R})$, as given by Theorem \ref{theorem_gerard}. For any $u \in u_0 + H^1(\mathbf{R})$ nowhere vanishing, there exists $k \in \mathbf{Z}$ such that
    \begin{equation}
        \mathcal{P}(u) - P_{u_0}(u) = \pi k + P_{e^{i\varphi}}(u_0)
    \end{equation}
    \label{bozza_lemma_rel_momenta}
\end{lemma}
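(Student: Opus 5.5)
The plan is to reduce the statement to the change-of-reference identity of Lemma \ref{bozza_mom_difference_subscript}, applied with a reference field that carries the phase of $u$. Write $u=\rho e^{i\theta}$, with $\inf\rho>0$, $\rho^2-1\in L^2(\mathbf{R})$ and $\partial_x\theta\in L^2(\mathbf{R})$. Then $u-e^{i\theta}=(\rho-1)e^{i\theta}\in H^1(\mathbf{R})$, because $\rho-1=(\rho^2-1)/(\rho+1)\in L^2(\mathbf{R})$ and $\partial_x\rho\in L^2(\mathbf{R})$. The definition \eqref{bozza_def_mom_nonvanishing} of $\mathcal{P}$ is exactly the formula \eqref{bozza_def_momentum} with reference field $e^{i\theta}$, so
\[
\mathcal{P}(u)=P_{e^{i\theta}}(u).
\]
We also have $u\in u_0+H^1(\mathbf{R})=e^{i\varphi}+H^1(\mathbf{R})$. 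Hence $e^{i\theta}$, $e^{i\varphi}$, $u_0$ and $u$ all lie in the same affine class, and Lemma \ref{bozza_mom_difference_subscript} may be applied freely among these references.

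First, apply Lemma \ref{bozza_mom_difference_subscript} with the references $e^{i\theta}$ and $u_0$:
\[
\mathcal{P}(u)-P_{u_0}(u)=P_{e^{i\theta}}(u)-P_{u_0}(u)=-P_{u_0}(e^{i\theta}).
\]
Second, insert $e^{i\varphi}$ as an intermediate reference, using Lemma \ref{bozza_mom_difference_subscript} once more (with $u=e^{i\theta}$):
\[
P_{u_0}(e^{i\theta})=P_{e^{i\varphi}}(e^{i\theta})+P_{u_0}(e^{i\varphi}).
\]
Applying the same lemma with $u=e^{i\varphi}$ and noting $P_{e^{i\varphi}}(e^{i\varphi})=0$ gives $P_{u_0}(e^{i\varphi})=-P_{e^{i\varphi}}(u_0)$. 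Combining these relations,
\[
\mathcal{P}(u)-P_{u_0}(u)=P_{e^{i\varphi}}(u_0)-P_{e^{i\varphi}}(e^{i\theta}).
\]
The statement therefore reduces to showing that $P_{e^{i\varphi}}(e^{i\theta})\in\pi\mathbf{Z}$.

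This is the main step. Since $e^{i\theta}-e^{i\varphi}\in H^1(\mathbf{R})$ and both phases have $L^2$ derivatives, the uniqueness part of Theorem \ref{theorem_gerard} should give $\psi:=\theta-\varphi$ with $\partial_x\psi\in L^2(\mathbf{R})$ and $\psi-2\pi k_\pm\in L^2(\mathbf{R}_\pm)$. Alternatively, one can argue directly: $|e^{i\psi}-1|\in L^2(\mathbf{R})$ and $\psi$ is uniformly continuous, so $\psi\to2\pi k_\pm$ at $\pm\infty$.

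Next, compute the momentum explicitly:
\[
\frac12\Im\bigl[(e^{-i\theta}-e^{-i\varphi})\,i(\partial_x\theta\, e^{i\theta}+\partial_x\varphi\, e^{i\varphi})\bigr]
=\frac12\bigl(\partial_x\theta-\partial_x\varphi\bigr)\bigl(1-\cos\psi\bigr).
\]
The integrand equals $\frac12\partial_x(\psi-\sin\psi)$. Its integral over $[-R,R]$ tends to
\[
\tfrac12\bigl(2\pi k_+-2\pi k_-\bigr)=\pi(k_+-k_-)\in\pi\mathbf{Z}.
\]

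The delicate point is justifying the passage to the limit. The integrand is in $L^1(\mathbf{R})$, since $1-\cos\psi\le\frac12|e^{i\psi}-1|^2$, which lies in $L^2(\mathbf{R})$ and is bounded, while $\partial_x\psi\in L^2(\mathbf{R})$. So the improper integral coincides with the Lebesgue integral. The limits of $\psi$ at $\pm\infty$ follow from the uniform continuity of $\psi$ together with $e^{i\psi}-1\in L^2(\mathbf{R})$. Setting $k=k_--k_+$ then yields the claimed identity.
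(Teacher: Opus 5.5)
Your proof is correct and follows essentially the same route as the paper: after the change-of-reference identities, everything reduces to $\tfrac12\int_{\mathbf{R}}\partial_x\psi\,(1-\cos\psi)\,dx$ with $\psi=\theta-\varphi$, evaluated through the limits $\psi\to 2\pi k_\pm$ supplied by Theorem \ref{theorem_gerard}, which gives the same $k=k_--k_+$. The differences are minor. You obtain $\mathcal{P}(u)-P_{e^{i\varphi}}(u)=-P_{e^{i\varphi}}(e^{i\theta})$ by reusing Lemma \ref{bozza_mom_difference_subscript} with reference $e^{i\theta}$, where the paper redoes the integration by parts by hand. You also check that the combined integrand lies in $L^1(\mathbf{R})$, which is slightly cleaner than the paper's splitting into $\int\partial_x\psi$ and $\int\partial_x e^{i\psi}$, neither of which is absolutely convergent in general.
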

\begin{proof} We begin with the following observation: since $u \in u_0 + H^1(\mathbf{R})$ and $u_0 = e^{i\varphi} + w_0$, there exists $w \in H^1(\mathbf{R})$ such that $u = e^{i\varphi}+w$. This implies that the momentum $P_{e^{i\varphi}}(u)$ is well defined. Now the proof consists of two steps. First we evaluate the difference $P_{e^{i\varphi}}(u) - P_{u_0}(u)$; second, we evaluate the difference $\mathcal{P}(u) - P_{e^{i\varphi}}(u)$. The sum of these two terms will give us the desired identity. \\
For the first step, it is sufficient to use Lemma \ref{bozza_mom_difference_subscript} to obtain
\begin{equation}
    P_{e^{i\varphi}}(u) - P_{u_0}(u) = P_{e^{i\varphi}}(u_0).
    \label{bozza_rel_mom_2}
\end{equation}
For the second step, we compute
\begin{equation*}
        \begin{split}
            \mathcal{P}(u)-P_{e^{i\varphi}}(u) &= \frac{1}{2}\Im\int_{\mathbf{R}}(\overline{u}-e^{-i\theta})\partial_x(u+e^{i\theta})dx - \frac{1}{2}\Im \int_{\mathbf{R}}(\overline{u}-e^{-i\varphi})\partial_x(u+e^{i\varphi})dx  \\ & = \frac{1}{2}\Im\int_{\mathbf{R}} (\overline{u}-e^{-i\theta})\partial_x(e^{i\theta}-e^{i\varphi})-(e^{-i\theta}-e^{-i\varphi})\partial_x(u+e^{i\varphi})dx \\ &  =\frac{1}{2} \Im\int_{\mathbf{R}} \partial_x(u-e^{i\theta})(e^{-i\theta}-e^{-i\varphi})-(e^{-i\theta}-e^{-i\varphi})\partial_x(u+e^{i\varphi})dx,
        \end{split}
    \end{equation*}
    where in the integration by parts we used the fact that $\overline{u}-e^{-i\theta} = (\rho-1)e^{-i\theta} \in H^1(\mathbf{R})$, and that $e^{i\theta}-e^{i\varphi} = w-(1-\rho)e^{i\theta} \in H^1(\mathbf{R})$. We obtain
    \begin{equation*}
        \mathcal{P}(u)-P_{e^{i\varphi}}(u) = -\frac{1}{2}\Im\int_{\mathbf{R}} (e^{-i\theta}-e^{-i\varphi})\partial_x(e^{i\theta}+e^{i\varphi})dx = -\frac{1}{2}\Im\int_{\mathbf{R}}i\partial_x(\theta-\varphi) - \partial_x(e^{i(\theta-\varphi)})dx.
    \end{equation*}
    Since $e^{i(\theta -\varphi)} -1 = (\rho-1) - \overline{w}e^{i\theta} \in H^1(\mathbf{R})$, the second integrand gives no contribution. Then, writing $u= e^{i\theta}+(\rho-1)e^{i\theta}$, we conclude by Theorem \ref{theorem_gerard} that there exist $k_{\pm}\in \mathbf{Z}$ such that $\theta-\varphi-2\pi k_{\pm} \in L^2(\mathbf{R_{\pm}})$, hence $\theta-\varphi \to 2k_{\pm}\pi$ as $x \to \pm \infty$. We conclude 
    \begin{equation}
        \mathcal{P}(u) - P_{e^{i\varphi}}(u) = -(k_+-k_-)\pi
        \label{bozza_rel_mom_1}
    \end{equation}
    Taking the sum of the expressions in (\ref{bozza_rel_mom_2}) and (\ref{bozza_rel_mom_1}) we can conclude the proof.
    \end{proof}
    We can now prove Lemma \ref{bozza_rel_energies_1}.
    \begin{proof}[Proof of Lemma \ref{bozza_rel_energies_1}]
        Given $u_0 \in \mathcal{E}$, consider again the decomposition $u_0 = e^{i\varphi} + w_0$, where $\varphi$ is real valued and satisfying $\partial_x\varphi \in L^2(\mathbf{R})$ and $w_0 \in H^1(\mathbf{R})$, as given by Theorem \ref{theorem_gerard}. We now apply Lemma \ref{bozza_lemma_rel_momenta} to the nowhere vanishing fields $u_1,u_2 \in u_0+H^1(\mathbf{R})$. We have that there exist $k_1,k_2 \in \mathbf{Z}$ such that
        \begin{equation*}
            \mathcal{P}(u_j) - P_{u_0}(u_j) = \pi k_j + P_{e^{i\varphi}}(u_0), \qquad j=1,2.
        \end{equation*}
    We deduce that 
        $$\mathcal{P}(u_1)- \mathcal{P}(u_2) = P_{u_0}(u_1) - P_{u_0}(u_2) - \pi (k_1-k_2),$$
        which allows us to the conclude the proof.
    \end{proof}
    \begin{remark}
        Notice that the parameter $k \in \mathbf{Z}$ in Lemma \ref{bozza_lemma_rel_momenta} depends on the (non unique) decomposition $u_0=e^{i\varphi}+w_0$. On the other hand, the parameter $k \in \mathbf{Z} $ in Lemma \ref{bozza_rel_energies_1} does not change if we change the decomposition of $u_0$.
    \end{remark}

\section{Global well-posedness}
\label{section: nbu}
In this section we prove Proposition \ref{bozza_intro_prop_cauchy}. More precisely, we prove the global well-posedness of the Cauchy problem associated to \eqref{bozza_NLS} and the conservation of energy in the space $\mathcal{E}$. In order to do that, we first consider the Cauchy problem in the space $X^2_{\gamma}\cap \mathcal{E}$ and we prove that it is globally well-posed. Then, by means of a density argument, we extend the global well-posedness to $\mathcal{E}$.\\

By means of Proposition \ref{bozza_persistence_regularity}, we know that the Cauchy problem associated to \eqref{bozza_NLS} is locally well-posed in $X^2_{\gamma}\cap\mathcal{E}$. Using the conservation of the energy $K_{u_0}$ given in Proposition \ref{bozza_cons_energy_X^2} we can extend this local result to a global one.
\begin{proposition}
    Let $u_0 \in X^2_{\gamma}\cap\mathcal{E}$. The unique maximal solution $u$ to (\ref{bozza_NLS}) with $u(0)=u_0$ given by Corollary \ref{bozza_corollary} is global, i.e. it satisfies $T_{\pm} = +\infty$.
    \label{bozza_nbu_X^2}
\end{proposition}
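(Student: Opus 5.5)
By the blow-up alternative (Corollary \ref{bozza_corollary}), it suffices to show that $|u(t)|_{\mathcal{E}}^2=E_0(u(t))$ stays bounded on bounded subintervals of $(-T_-,T_+)$. The conservation law of Proposition \ref{bozza_cons_energy_X^2} gives
\begin{equation*}
E_0(u(t))\le E_\gamma(u(t)) = K_{u_0}(u_0) + vP_{u_0}(u(t)),
\end{equation*}
so the task is to control the relative momentum $P_{u_0}(u(t))$. This term is not sign-definite and not conserved. I will derive a differential inequality for it and close with Gr\"onwall.

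\textbf{Step 1: a formula for $\frac{d}{dt}P_{u_0}(u(t))$.} By Proposition \ref{bozza_persistence_regularity}, $u(t)\in X^2_\gamma$ and $\partial_tu=-i\tilde H_\gamma u+iF(u)\in C^0L^2$. I would differentiate $P_{u_0}(u(t))=\frac12\Im\int(\overline u-\overline u_0)\partial_x(u+u_0)$ using difference quotients, as in the proof of Proposition \ref{bozza_cons_energy_X^2}. Integrating by parts gives
\begin{equation*}
\frac{d}{dt}P_{u_0}(u)=\Im\int \partial_t\overline u\,\partial_xu\,dx .
\end{equation*}
The boundary terms vanish because $u-u_0\in H^1$. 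Inserting the equation, the contributions $iv\partial_xu$ and the part $-\partial_x^2u$ away from $0$ are total derivatives, or give real quantities. The term $\Im\int \overline{F(u)}\partial_x u$ equals $\frac12\int\partial_x(\cdot)$ of a function of $|u|^2$, and this function vanishes at infinity since $|u|\to1$. The only surviving contribution is the boundary term at $x=0$ produced by the jump condition. Formally, $\frac{d}{dt}P=-\frac{\gamma}{2}\partial_x|u|^2$ at $0$, averaged over the two sides; concretely,
\begin{equation*}
\frac{d}{dt}P_{u_0}(u(t)) = -\frac{\gamma}{2}\bigl(\Re(\overline u\partial_xu)(0^+)+\Re(\overline u\partial_xu)(0^-)\bigr),
\end{equation*}
which is the force exerted by the impurity.

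\textbf{Step 2: bounding the force.} This is the main obstacle, because $\partial_xu(0^\pm)$ is not controlled by $E_0$. I would avoid pointwise derivatives by testing against a cutoff. Take $\chi\in C_0^\infty$ with $\chi=1$ near $0$, and compute $\frac{d}{dt}\Im\int\chi(\overline u-\overline u_0)\partial_x(u+u_0)$ instead. Equivalently, write the force as a localized integral: integrate the local momentum conservation law for $x\neq0$ against $\chi$. The boundary term at $0$ then equals $\frac{d}{dt}\Im\int\chi\,\overline u\partial_xu$ plus integrals of $\partial_x\chi$ times quadratic and quartic densities, namely $|\partial_xu|^2$, $(1-|u|^2)^2$, $v\,\Im(\overline u\partial_xu)$ and $|u|^2$, all of which are bounded by $C(1+E_0(u))$ thanks to the $L^\infty$ bound of Lemma \ref{bozza_L_infty}. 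The time-derivative term, once integrated in time, is itself bounded by $\|u\|_{L^\infty}\|\partial_xu\|_{L^2}\lesssim 1+E_0$. Integrating on $[0,t]$ therefore gives
\begin{equation*}
|P_{u_0}(u(t))|\le C(u_0)+C\bigl(1+E_0(u(t))\bigr)^{\alpha}+C\int_0^t\bigl(1+E_0(u(s))\bigr)ds .
\end{equation*}
The local term with $\alpha<1$ comes from $\|u\|_\infty\|\partial_xu\|_2\lesssim(1+E_0^{1/3})E_0^{1/2}$.

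\textbf{Step 3: Gr\"onwall.} Combining Step 2 with the energy identity, and absorbing $E_0^{\alpha}$ with $\alpha=5/6<1$ by Young's inequality, I get
\begin{equation*}
E_0(u(t))\le C_1+C_2|v|\int_0^{|t|}E_0(u(s))\,ds .
\end{equation*}
Hence $E_0(u(t))\le C_1e^{C_2|v||t|}$, which is finite for every finite $t$. This excludes blow-up, so $T_\pm=+\infty$.
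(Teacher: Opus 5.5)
Your argument is correct, but it takes a genuinely different route from the paper's. The paper never differentiates the momentum. It uses $|P_{u_0}(u(t))|\le\frac12\|u(t)-u_0\|_{L^2(\mathbf{R})}\|\partial_x(u(t)+u_0)\|_{L^2(\mathbf{R})}$ and Young's inequality to get $|u(t)|_{\mathcal{E}}^2\le C\bigl(E_\gamma(u_0)+\|u(t)-u_0\|_{L^2(\mathbf{R})}^2\bigr)$. It then runs Gr\"onwall on $\|u(t)-u_0\|_{L^2(\mathbf{R})}^2$, whose derivative $2\Re\int(\overline{u}-\overline{u}_0)\partial_tu\,dx$ only requires pairing $\tilde H_\gamma u$ with $u-u_0\in H^1(\mathbf{R})$. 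Thus only $u(0)$ appears, never $\partial_xu(0^\pm)$, and no local conservation law is needed.

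You instead close Gr\"onwall directly on $E_0$ through a momentum balance. You trade the impurity force, which is not energy-controlled, for the time derivative of a localized momentum plus flux terms on $\mathrm{supp}\,\chi'$. The paper's route is shorter and cheaper. Yours costs a local momentum identity but gives a sharper, more physical statement: a drag-force bound whose exponential rate depends only on $v$ (and $\chi$) and vanishes as $v\to0$. This is consistent with conservation of $E_\gamma$ for a static impurity, whereas the paper's rate $C_1(u_0)$ grows with $\|\partial_xu_0\|_{L^2(\mathbf{R})}^2$ and $\|u_0\|_{L^\infty(\mathbf{R})}$.

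A few points to fix when writing it up.

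\textbf{Sign and real part.} The force has the opposite sign:
$\frac{d}{dt}P_{u_0}(u)=\frac12\bigl(|\partial_xu(0^+)|^2-|\partial_xu(0^-)|^2\bigr)=\frac{\gamma}{2}\Re\bigl(\overline{u}(0)(\partial_xu(0^+)+\partial_xu(0^-))\bigr)$.
Also, the nonlinear contribution is $-\Re\int\overline{F(u)}\partial_xu\,dx$, not an imaginary part. Both slips are harmless.

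\textbf{Rigorous form of Step 2.} Use your first formulation: differentiate $P_{u_0}(u)-\frac12\int\chi\,\Im(\overline{u}\partial_xu)\,dx$ by difference quotients. This gives $\Im\int(1-\chi)\partial_t\overline{u}\,\partial_xu\,dx+\frac12\int\chi'\,\Im(\overline{u}\partial_tu)\,dx$, which is supported away from $0$, so Step 1 becomes unnecessary. By contrast, integrating the flux form of the conservation law against $\chi$ would involve traces of $\partial_x^2|u|^2$ at $0$, which are not defined for $u\in X^2_\gamma$.

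\textbf{Potential terms.} In the $\chi'$-terms, express the potential part through $1-|u|^2$, e.g. $|u|^2(1-|u|^2)=(1-|u|^2)-(1-|u|^2)^2$. The linear term, missing from your list, is bounded by $\|\chi'\|_{L^2(\mathbf{R})}E_0^{1/2}$. Bounding $|u|^2|1-|u|^2|$ or $|u|^4$ by powers of $\|u\|_{L^\infty(\mathbf{R})}$ would instead produce $E_0^{7/6}$ or $E_0^{4/3}$, and the Gr\"onwall argument would not close.
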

\begin{proof}
    Consider $u_0 \in X^2_{\gamma}\cap\mathcal{E}$, and $u$ the corresponding maximal solution, defined for $t \in (-T_-,T_+)$, where $T_{\pm} \in (0,+\infty]$.  By the blow-up alternative in Corollary \ref{bozza_corollary}, the solution $u$ is global if the limits $\lim_{t \to \pm T_{\pm}} |u(t)|^2$ are finite. The strategy of the proof consists in providing a bound on the map $t \to |u(t)|_{\mathcal{E}}^2$ by means of the conservation on energy and a Gr\"{o}nwall inequality. We begin with a preliminary estimate. For any $t \in (-T_-,T_+)$, we have
    \begin{equation*}
        \begin{split}
            |u(t)|^2_{\mathcal{E}} \leq E_{\gamma}(u(t)) &\leq K_{u_0}(u(t))+ \frac{|v|}{2}\int_{\mathbf{R}}|u(t)-u_0||\partial_xu(t)+\partial_xu_0|dx \\ & \leq E_{\gamma}(u_0) + \frac{|v|}{2}||u(t)-u_0||_{L^2(\mathbf{R})}\Bigl(||\partial_xu(t)||_{L^2(\mathbf{R})}+||\partial_xu_0||_{L^2(\mathbf{R})}\Bigr) \\ & 
            \leq E_{\gamma}(u_0) + \frac{|v|^2}{4\delta^2}||u(t)-u_0||^2_{L^2(\mathbf{R})}+\frac{\delta^2}{2}\Bigl(||\partial_xu(t)||^2_{L^2(\mathbf{R})}+||\partial_xu_0||^2_{L^2(\mathbf{R})}\Bigr)
        \end{split}
    \end{equation*}
    for any $\delta >0$, where we used the Cauchy-Schwarz and Young's inequalities. By choosing $\delta^2 <1$, we can find $C>0$ (depending only on $\delta$ and $|v|$) such that
    \begin{equation}
        |u(t)|^2_{\mathcal{E}} \leq C\Bigl(E_{\gamma}(u_0) +||u(t)-u_0||^2_{L^2(\mathbf{R})}\Bigr).
        \label{bozza_est_u-u_0}
    \end{equation}
    We now want to estimate the growth of $||u(t)-u_0||^2_{L^2(\mathbf{R})}$. Similarly to \cite{Gallo}, we compute 
    \begin{equation*}
        \begin{split}
            \frac{d}{dt}||u(t)-u_0||^2_{L^2(\mathbf{R})} &= 2\Re\int_{\mathbf{R}}(\overline{u}(t)-\overline{u}_0)\partial_tu(t)dx = 2\Re\int_{\mathbf{R}}(\overline{u}(t)-\overline{u}_0)\bigl(-i\tilde{H}_{\gamma}u(t)+iF(u(t))\bigr)dx \\ & = -2\Re\int_{\mathbf{R}}i (\partial_x\overline{u}(t)-\partial_x\overline{u}_0)\partial_xu(t)dx - 2\gamma\Re\Bigl(i(\overline{u}(t,0)-\overline{u}_0(0))u(t,0)\Bigr) \\ & +2v\Re\int_{\mathbf{R}}(\overline{u}(t)-\overline{u}_0)\partial_xu(t)+ 2\Re\int_{\mathbf{R}}i(\overline{u}-\overline{u}_0)u_0(1-|u(t)|^2)dx.
        \end{split}
    \end{equation*}
    We can write
    \begin{equation*}
       \begin{split}
           \frac{d}{dt}||u(t)-u_0||^2_{L^2(\mathbf{R})} &\leq ||\partial_xu_0||^2_{L^2(\mathbf{R})}+2|u(t)|^2_{\mathcal{E}} + 2C'\gamma |u_0(0)| (2+|u(t)|^2_{\mathcal{E}}) \\ & + |v| ||u(t)-u_0||^2_{L^2(\mathbf{R})}+2|v||u(t)|^2_{\mathcal{E}} +4 ||u_0||_{L^{\infty}(\mathbf{R})}|u(t)|^2_{\mathcal{E}} + ||u_0||_{L^{\infty}(\mathbf{R})}||u(t)-u_0||^2_{L^2(\mathbf{R})},
       \end{split}
    \end{equation*}
    where the constant $C'>0$ is given by Lemma \ref{bozza_L_infty}. By means of (\ref{bozza_est_u-u_0}), there exists a constant $C_1(u_0)>0$ such that
    \begin{equation*}
        \frac{d}{dt}||u(t)-u_0||^2_{L^2(\mathbf{R})} \leq C_1(u_0)(1+E_{\gamma}(u_0)+||u(t)-u_0||^2).
    \end{equation*}
    \begin{remark}
        The constant $C_1(u_0)$ is positive and it is a linear combination of $||\partial_xu_0||^2_{L^2(\mathbf{R})}$, $||u_0||_{L^{\infty}(\mathbf{R})}$ and $1+|v|$.
        \label{bozza_remark_C_1}
    \end{remark}
    By the Gr\"{o}nwall inequality we have
    \begin{equation}
        ||u(t)-u_0||^2_{L^2(\mathbf{R})} \leq (1+E_{\gamma}(u_0))(e^{C_1(u_0)t}-1), \qquad \forall t \in [0, T_+).
    \end{equation}
    Using (\ref{bozza_est_u-u_0}), we obtain
    \begin{equation}
        |u(t)|^2_{\mathcal{E}} \leq C (1+E_{\gamma}(u_0))e^{C_1(u_0)t}, \qquad \forall t \in [0, T_+),
        \label{bozza_estimate_growth}
    \end{equation}
    By the blow-up alternative, we conclude $T_{+} = +\infty$. By the time reversal symmetry of equation (\ref{bozza_NLS}), we conclude that $T_-=+\infty$.
\end{proof}
Using the fact that regular solutions are global in time, we can use a density argument to prove Proposition \ref{bozza_intro_prop_cauchy}. 
\begin{proof}[Proof of Proposition \ref{bozza_intro_prop_cauchy}]
    By Proposition \ref{bozza_lwp}, we know that the Cauchy problem associated to (\ref{bozza_NLS}) is locally well-posed in $\mathcal{E}$. Here we prove that every solution is global in time. The proof of the conservation of energy in $\mathcal{E}$ is postponed to the dedicated subsection \ref{section:en_cons}.\\ Consider $u_0 \in \mathcal{E}$ and let $u \in C^0((-T_-,T_+),\mathcal{E})$ the maximal solution to (\ref{bozza_NLS}) with $u(0)=u_0$ given by Corollary \ref{bozza_corollary}.  We want to prove that $T_{\pm} = +\infty$. Assume by contradiction $T_+ < +\infty$. Consider a sequence $\{u_{0,n}\}_{n \in \mathbf{N}} \subset X^2_{\gamma}\cap\mathcal{E}$ such that $d_{\infty}(u_0,u_{0,n}) \to 0$ as $n \to \infty$, as given by Lemma \ref{bozza_lemma_X^2}. For each $n \in \mathbf{N}$, denote by $u_n$ the unique global solution to (\ref{bozza_NLS}) with $u_n(0)=u_{0,n}$ given by Proposition \ref{bozza_nbu_X^2}.\\    
    By inequality (\ref{bozza_estimate_growth}) in Proposition \ref{bozza_nbu_X^2}, there exists $C>0$ such that $\forall n \in \mathbf{N}$ it holds
    \begin{equation*}
            |u_n(t)|^2_{\mathcal{E}}  \leq C(1+E_{\gamma}(u_{0,n}))e^{C_1(u_{0,n})t}, \qquad \forall t \geq 0
    \end{equation*}
    (recall that $C>0$ is independent of $u_{0,n}$, see \eqref{bozza_est_u-u_0}). By Remark \ref{bozza_remark_C_1}, the constant $C_1(\cdot)$ si continuous with respect to $d_{\infty}$. Together with the continuity of $E_{\gamma}$, we conclude the existence of a constant $K>0$ such that, $\forall n \in \mathbf{N}$, it holds
    \begin{equation*}
        |u_n(t)|^2_{\mathcal{E}} \leq Ke^{KT_+}, \qquad \text{for} \quad t \in [0,T_+].
    \end{equation*}
    Consider now $T>0$ with $T< T_+$. By continuity with respect to initial data, there exists $n_0 \in \mathbf{N}$ such that
    \begin{equation*}
        d_{\infty}(u(t), u_{n}(t)) \leq 1, \qquad \forall t \in [0,T].
    \end{equation*}
    for every $n \geq n_0$. By Lemma \ref{bozza_cont_norm}, there exists a constant $C'>0$ (independent of $T$) such that, for any $n \geq n_0$,
    \begin{equation*}
        |u(t)|_{\mathcal{E}}^2 \leq C' (2+|u_n(t)|^2_{\mathcal{E}}) \leq C'(2+Ke^{KT_+}), \qquad \forall t \in [0,T].
    \end{equation*}
    Being this inequality uniform in $T$, we conclude that $T_+=+\infty$. By the time reversal symmetry, we have that also $T_-=+\infty$. 
\end{proof}
\subsection{\texorpdfstring{Conservation of energy in $\mathcal{E}$}{Conservation of energy in mathcal E}}
\label{section:en_cons}
We conclude this section by showing the conservation of energy for solutions with initial data in $\mathcal{E}$. We begin by showing that a solutions with initial datum $u_0 \in \mathcal{E}$ conserves the energy $K_{u_0}$ defined in \eqref{bozza_K_u0}. Secondly, we show that, as long as a solution belongs to the set $U_0 = \{u \in \mathcal{E} \  | \ \inf_{x \in \mathbf{R}} |u(x)| >0 \}$, also the energy $\mathcal{K}$ defined in \eqref{bozza_energy_non_vanishing} is conserved.
\begin{lemma}
    Let $u_0 \in \mathcal{E}$, and let $u$ be the corresponding global solution to (\ref{bozza_NLS}), given by Proposition \ref{bozza_intro_prop_cauchy}. Then 
    $$K_{u_0}(u(t)) = K_{u_0}(u_0), \qquad \forall t \in \mathbf{R}.$$
    \label{bozza_energy_cons_law}
\end{lemma}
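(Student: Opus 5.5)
The plan is a density argument: approximate $u_0$ by regular data, for which Proposition \ref{bozza_cons_energy_X^2} already gives conservation, and pass to the limit. By Lemma \ref{bozza_lemma_X^2}, there is a sequence $\{u_{0,n}\}\subset X^2_{\gamma}\cap\mathcal{E}$ with $u_0-u_{0,n}\in H^1(\mathbf{R})$ and $\|u_0-u_{0,n}\|_{H^1(\mathbf{R})}\to 0$; in particular $d_{\infty}(u_0,u_{0,n})\to0$ and $|u_{0,n}|_{\mathcal{E}}\le R$ for some $R$ independent of $n$. Let $u_n$ be the global solutions given by Proposition \ref{bozza_nbu_X^2}. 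Since $u_{0,n}\in u_0+H^1(\mathbf{R})$, Corollary \ref{bozza_corollary_reference_field_K} with reference function $\tilde u=u_0$ yields
\begin{equation*}
K_{u_0}(u_n(t))=K_{u_0}(u_{0,n}),\qquad \forall t\in\mathbf{R}.
\end{equation*}

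Next I fix $T>0$ and pass to the limit for $t\in(-T,T)$. Proposition \ref{bozza_lwp} only gives continuous dependence on a short interval depending on $R$, so I first extend it to $(-T,T)$. By the a priori bound \eqref{bozza_estimate_growth}, whose constants depend continuously on the data, $|u(t)|_{\mathcal{E}}$ and $|u_n(t)|_{\mathcal{E}}$ are bounded by some $R_T$ uniformly in $n$ and $t\in[-T,T]$. Iterating the local estimate \eqref{bozza_cont_in_data} finitely many times on intervals of length fixed by $R_T$ then gives $u_n(t)-u(t)\in H^1(\mathbf{R})$ and
\begin{equation*}
\sup_{|t|<T}\|u_n(t)-u(t)\|_{H^1(\mathbf{R})}\to0.
\end{equation*}
This is also what the continuity-in-data statement of Theorem \ref{bozza_intro_prop_cauchy} asserts.

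To conclude, I use that $u(t)\in u_0+H^1(\mathbf{R})$ by Proposition \ref{bozza_u-u0_prop}, so $u_n(t)$ and $u(t)$ lie in a bounded set $\{\|w-u_0\|_{H^1}<R'\}$ of the affine space. The Lipschitz continuity of $K_{u_0}$ on this set (Lemma \ref{bozza_energy_cont}) gives $K_{u_0}(u_n(t))\to K_{u_0}(u(t))$ and $K_{u_0}(u_{0,n})\to K_{u_0}(u_0)$. Hence $K_{u_0}(u(t))=K_{u_0}(u_0)$ on $(-T,T)$, and since $T$ is arbitrary, for all $t\in\mathbf{R}$.

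The main obstacle is the second step: the uniform-in-$n$ $H^1$ convergence on a long interval. The bound on $\|u_n(t)-u_0\|_{H^1}$ needed for Lemma \ref{bozza_energy_cont} has to come from \eqref{bozza_cont_in_data} applied with a constant $R_T$ independent of $n$. This works because the Grönwall constant $C_1(u_{0,n})$ of Remark \ref{bozza_remark_C_1} is controlled by $d_{\infty}$-continuous quantities of the data, which converge.
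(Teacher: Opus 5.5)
Your proof is correct and follows essentially the same approach as the paper: approximate $u_0$ inside $u_0+H^1(\mathbf{R})$ by data in $X^2_{\gamma}\cap\mathcal{E}$ (Lemma \ref{bozza_lemma_X^2}), apply Corollary \ref{bozza_corollary_reference_field_K} with reference function $u_0$, and pass to the limit using $H^1$-continuity in the data together with Lemma \ref{bozza_energy_cont}. Your additional step, which extends \eqref{bozza_cont_in_data} from the short local interval to an arbitrary $(-T,T)$ using the uniform a priori bound and finitely many restarts, makes explicit something the paper uses without comment when it invokes \eqref{bozza_cont_in_data} for every $t\in\mathbf{R}$.
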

\begin{proof}
    Given $u_0 \in \mathcal{E}$, consider $u$ the solution to (\ref{bozza_NLS}) satisfying $u(0)=u_0$. Consider a sequence $\{u_{0,n}\} \subset X^2_{\gamma}\cap \bigl(u_0+H^1(\mathbf{R})\bigr)$, given by Lemma \ref{bozza_lemma_X^2}, such that $||u_0-u_{0,n}||_{H^1(\mathbf{R})} \to 0$ as $n \to \infty$. Call $u_n$ the solution to (\ref{bozza_NLS}) with $u_n(0) = u_{0,n}$. For any $t \in \mathbf{R}$, we have 
    $$K_{u_0}(u(t)) = \lim_{n \to \infty}K_{u_0}(u_{n}(t)) = \lim_{n\to \infty}K_{u_0}(u_{0,n}) = K_{u_0}(u_0),$$
    where the first equality follows from the continuity with respect to initial data (\ref{bozza_cont_in_data}) and the continuity of the energy (Lemma \ref{bozza_energy_cont}). The second equality follows from the conservation of energy in Corollary \ref{bozza_corollary_reference_field_K}; the third equality follows again from the continuity of the energy.
\end{proof}

\begin{lemma}
        Let $u_0 \in \mathcal{E}$ be such that $\inf_{x \in\mathbf{R}}|u_0(x)|>0$. Suppose $u \in C^0(\mathbf{R}, \mathcal{E})$ is the unique global solution to (\ref{bozza_NLS})  that satisfies $u(0) =u_0$, as given by Proposition \ref{bozza_intro_prop_cauchy}. Then, there exists a time interval $\{0\} \ni I \subset \mathbf{R}$, with $I \neq \{0\}$, for which $\inf_{x \in\mathbf{R}}|u(t,x)|>0$ for all $t\in I$. Given any time interval $I$ with this property, we have the conservation law
        $$\mathcal{K}(u(t)) = \mathcal{K}(u_0), \qquad \forall t \in I.$$
        \label{bozza_lemma_en_cons_nonvanishing}
    \end{lemma}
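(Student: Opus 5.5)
The plan has two parts. First we show that $u(t)$ stays nowhere vanishing for $t$ near $0$. Then we transfer the conservation of $K_{u_0}$ (Lemma \ref{bozza_energy_cons_law}) to $\mathcal{K}$ using Lemma \ref{bozza_rel_energies_1}, together with a continuity and discreteness argument that kills the integer $k$.

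\textbf{Existence of $I$.} Set $b:=\inf_x|u_0(x)|>0$. Since $u\in C^0(\mathbf{R},\mathcal{E})$ is continuous for $d_{\infty}$, hence for $d_A$ with any fixed $A>0$, Lemma \ref{bozza_lemma_modulus} with $u_1=u_0$ gives
\[
\bigl\||u(t)|-|u_0|\bigr\|_{L^\infty}^2\le C\,d_A(u(t),u_0)\bigl(1+d_A(u(t),u_0)\bigr)\to0 \quad\text{as } t\to0.
\]
Alternatively, one can use $\|u(t)-u_0\|_{L^\infty}\le d_\infty(u(t),u_0)$ directly. Either way, there is $\tau>0$ with $\inf_x|u(t,x)|\ge b/2$ for $|t|<\tau$, so $I=(-\tau,\tau)$ works.

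\textbf{Conservation on a general $I$.} Let $I\ni 0$ be any interval on which $u(t)\in U_0$. By Proposition \ref{bozza_u-u0_prop}, $u(t)\in u_0+H^1(\mathbf{R})$ for every $t$, so Lemma \ref{bozza_rel_energies_1} applies with $u_1=u(t)$ and $u_2=u_0$. It gives, for each $t\in I$, an integer $k(t)$ with
\[
\mathcal{K}(u(t))-\mathcal{K}(u_0)=K_{u_0}(u(t))-K_{u_0}(u_0)+v\pi k(t)=v\pi k(t),
\]
where the last equality is Lemma \ref{bozza_energy_cons_law}. If $v=0$ the conclusion is immediate. If $v\neq0$, it suffices to show $t\mapsto\mathcal{K}(u(t))$ is continuous on $I$. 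Then $t\mapsto v\pi k(t)$ is a continuous map from the connected set $I$ into the discrete set $v\pi\mathbf{Z}$, hence constant, and equal to $0$ since $k(0)=0$ trivially.

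\textbf{Continuity of $\mathcal{K}(u(t))$.} This is the only delicate point. Corollary \ref{bozza_en_K_continuity} states that $\mathcal{K}$ is continuous on $U_0$ for $d_A$. The map $t\mapsto u(t)$ is continuous into $(\mathcal{E},d_\infty)$, hence into $(\mathcal{E},d_A)$, and it takes values in $U_0$ for $t\in I$. The composition is therefore continuous on $I$. The argument thus reduces entirely to earlier results. I expect the main obstacle to be checking that the continuity in Corollary \ref{bozza_en_K_continuity} holds at every point of $U_0$, which the proof of Lemma \ref{bozza_mom_continuity} provides locally around each $u_1\in U_0$ with its own $\delta$. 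That local statement suffices here, since continuity of a composition is a pointwise property.
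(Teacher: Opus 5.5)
Your proposal is correct and follows essentially the same route as the paper. The paper gets $I$ from continuity of $t\mapsto\inf_x|u(t,x)|$. It then combines the relation between $\mathcal{K}$ and $K_{u_0}$ (valid since $u(t)\in u_0+H^1(\mathbf{R})$) with conservation of $K_{u_0}$ to get $\mathcal{K}(u(t))-\mathcal{K}(u_0)\in v\pi\mathbf{Z}$, and kills the integer using the $d_A$-continuity of $\mathcal{K}$ on $U_0$. You spell out more explicitly than the paper the connectedness of $I$ and the fact that the integer vanishes at $t=0$.
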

    \begin{proof}
    Since the solution map $t \to u(t)$ belongs to $C^{0}(\mathbf{R}, \mathcal{E})$, the map $t \to \inf_{x \in \mathbf{R}}|u(t,x)|$ is a continuous function from $\mathbf{R}$ to $\mathbf{R}_+$. If the initial datum $u_0 \in \mathcal{E}$ is nowhere vanishing, there exists a time interval $\{0\} \ni I \subset \mathbf{R}$, with $I \neq \{0\}$, such that $\inf_{x \in \mathbf{R}}|u(t,x)|>0$ for all $t \in I$.\\
    Given $t \in I$, we have $\inf_{x \in \mathbf{R}}|u(t,x)| >0$. Moreover, we have $u(t)-u_0 \in H^1(\mathbf{R})$, from Proposition \ref{bozza_u-u0_prop}. By Lemma \ref{bozza_rel_energies_1}, and the conservation of energy in Lemma \ref{bozza_energy_cons_law}, there exists $k \in \mathbf{Z}$ such that $\mathcal{K}(u(t))-\mathcal{K}(u_0) = vk\pi$. Since the map $t \to \mathcal{K}(u(t))$ is continuous for all $t \in I$ (see Lemma \ref{bozza_en_K_continuity}), we have $\mathcal{K}(u(t))-\mathcal{K}(u_0) = 0$ for all $t \in I$.
    \end{proof}

\section{Stationary solutions}
In this section we study the existence of stationary solutions to equation (\ref{bozza_NLS}). From now on we will suppose that the velocity $v$ is a positive real number ($v>0$). Similar results can be obtained for the case $v<0$.\\ By stationary solution we mean an element $u \in  \mathcal{E}$ that satisfies
\begin{equation}
   \int_{\mathbf{R}} \partial_xu\partial_x\overline{\phi}dx + \int_{\mathbf{R}} iv\partial_xu\overline{\phi}dx +\gamma u(0)\overline{\phi}(0) - \int_{\mathbf{R}} (1-|u|^2)u\overline{\phi}dx = 0, \qquad \forall\phi \in C^{\infty}_0(\mathbf{R}).
   \label{bozza_stationary_sol}
\end{equation}
We have the following Lemma.
\begin{lemma}
    Let $\gamma >0$ and let $u \in \mathcal{E}$ be a solution of (\ref{bozza_stationary_sol}). Then,
    \begin{equation}
        \begin{split}
            &u \in C^{\infty}(\mathbf{R}\backslash\{0\}) \cap C^0(\mathbf{R}); \\ &
        \partial_x^2u-iv\partial_xu+(1-|u|^2)u = 0 \quad  \text{on} \quad \mathbf{R}\backslash\{0\};\\ &
        \partial_xu(0^+)-\partial_xu(0^-) = \gamma u(0).
        \end{split}
        \label{bozza_lemma_stationary_sol}
    \end{equation} 
    \label{bozza_lemma_stationary_sol_2}
    \end{lemma}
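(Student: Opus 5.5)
The plan is to work with test functions supported away from the origin first, then recover the jump condition by testing against functions that do not vanish at $0$.

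\textbf{Step 1: the equation on each half-line.} Take $\phi \in C^{\infty}_0(\mathbf{R}\backslash\{0\})$ in \eqref{bozza_stationary_sol}. The delta term drops out, and we get
\begin{equation*}
\partial_x^2u = iv\partial_xu - (1-|u|^2)u \quad \text{in } \mathcal{D}'(\mathbf{R}\backslash\{0\}).
\end{equation*}
Since $u\in\mathcal{E}$, Lemma \ref{bozza_L_infty} gives $u\in L^\infty$ and uniform continuity, so $u \in C^0(\mathbf{R})$. Also $\partial_xu\in L^2$, and Lemma \ref{bozza_lemma_F} gives $F(u) \in H^1$. Hence the right-hand side lies in $L^2_{loc}$, so $u\in H^2_{loc}(\mathbf{R}\backslash\{0\})$.

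\textbf{Step 2: bootstrap.} Now $\partial_xu\in H^1_{loc}$ and $F(u)\in H^1_{loc}$, because $u\in H^2_{loc}\subset C^1$. The right-hand side is then in $H^1_{loc}$, so $u\in H^3_{loc}(\mathbf{R}\backslash\{0\})$. Iterating, $u\in H^k_{loc}(\mathbf{R}\backslash\{0\})$ for every $k$, and Sobolev embedding gives $u\in C^\infty(\mathbf{R}\backslash\{0\})$. The differential equation then holds classically on $\mathbf{R}\backslash\{0\}$.

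\textbf{Step 3: the one-sided limits $\partial_xu(0^\pm)$ exist.} Fix a bounded interval $(0,1)$. There $u$ is bounded and $\partial_xu\in L^2(0,1)$, so the equation gives $\partial_x^2u \in L^2(0,1)$. Hence $\partial_xu\in H^1(0,1)$, which is absolutely continuous up to $0$. The same argument works on $(-1,0)$.

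\textbf{Step 4: the jump condition.} This is the only delicate point. Take a general $\phi \in C^\infty_0(\mathbf{R})$ and split $\int_{\mathbf{R}} \partial_xu\,\partial_x\overline{\phi}\,dx$ into integrals over $(-\infty,-\varepsilon)$ and $(\varepsilon,\infty)$ plus a remainder that vanishes as $\varepsilon\to0$. Integrate by parts on each half-line, using the $H^2$ regularity near $0$ from Step 3. The interior terms cancel against the $iv$ and $F$ terms because the equation holds on $\mathbf{R}\backslash\{0\}$. What survives is the boundary term
\begin{equation*}
-\bigl(\partial_xu(0^+)-\partial_xu(0^-)\bigr)\overline{\phi}(0)+\gamma u(0)\overline{\phi}(0)=0.
\end{equation*}
Choosing $\phi$ with $\phi(0)=1$ gives $\partial_xu(0^+)-\partial_xu(0^-)=\gamma u(0)$.
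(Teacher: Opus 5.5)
Your proof is correct and takes the same approach as the paper. You test against functions supported away from the origin to get the equation in $\mathcal{D}'(\mathbf{R}\backslash\{0\})$, bootstrap to smoothness, and then integrate by parts against a general test function to extract the jump condition. Your version also supplies details the paper leaves implicit, most notably the existence of $\partial_xu(0^\pm)$ via $\partial_xu\in H^1(0,1)$ and $\partial_xu\in H^1(-1,0)$.
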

    \begin{proof}
        Since $u \in \mathcal{E}$ we have $u \in C^0(\mathbf{R})$. By considering $\phi \in C^{\infty}_0(\mathbf{R}\backslash\{0\})$ in (\ref{bozza_stationary_sol}), we have that $\partial_x^2u-iv\partial_xu+(1-|u|^2)u=0$ in $\mathbf{R}\backslash\{0\}$ in the sense of distribution. We conclude that $u$ is smooth and it is a classical solution of $\partial_x^2u-iv\partial_xu+(1-|u|^2)u=0$ in $\mathbf{R}\backslash\{0\}$. Integrating by parts the first term in equation in (\ref{bozza_stationary_sol}) we obtain the jump condition $\partial_xu(0^+)-\partial_xu(0^-) = \gamma u(0)$.
    \end{proof}
     Following the reasoning in the work of Mari\c{s} \cite{maris2003}, we can deduce some necessary conditions for the existence of stationary solutions, which are reported in the next proposition. 
    \begin{proposition}
        Let $\gamma>0$. If $v \geq \sqrt{2}$, then no stationary solution exists. If $u \in \mathcal{E}$ is a stationary solution for $v \in (0,\sqrt{2})$, then it is nowhere vanishing and $ |u(x)| \in \Bigl[\frac{v}{\sqrt{2}},1\Bigr]$ for all $x \in \mathbf{R}$.
        \label{bozza_prop_stat_nonvanishing}
    \end{proposition}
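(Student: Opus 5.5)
The plan is to work on each half-line separately, where by Lemma \ref{bozza_lemma_stationary_sol_2} the function $u$ is a smooth solution of \eqref{bozza_traveling_w_equation}. On each half-line I derive two first integrals. Then I reduce everything to a scalar first-order ODE for $\eta:=|u|^2$ and use the jump condition at $x=0$ to exclude the unwanted cases.

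First, I show that $\partial_x u(x)\to 0$ as $x\to\pm\infty$. Indeed, $\partial_x u\in L^2$, and on $|x|\ge 1$ the equation gives $\partial_x^2 u=iv\partial_xu-(1-|u|^2)u$. Since $u$ is bounded by Lemma \ref{bozza_L_infty} and $(1-|u|^2)u \in L^2$, we get $\partial_x^2 u\in L^2(|x|\geq 1)$. Hence $\partial_x u\in H^1$ there and it decays.

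Next come the two first integrals. Multiplying the equation by $\overline{u}$ and taking imaginary parts gives $\bigl(\Im(\overline{u}\partial_xu)\bigr)' = \frac{v}{2}(|u|^2)'$. Integrating from $\pm\infty$, and using $|u|\to 1$ and $\partial_xu\to 0$, yields
\begin{equation*}
\Im(\overline{u}\partial_xu)=\tfrac{v}{2}(\eta-1) \quad \text{on each half-line.}
\end{equation*}
Multiplying by $\partial_x\overline{u}$ and taking real parts, the $iv$ term drops out, and the same limits give
\begin{equation*}
|\partial_xu|^2=\tfrac12(1-\eta)^2 \quad \text{on each half-line.}
\end{equation*}
Now combine these with $\eta'=2\Re(\overline{u}\partial_xu)$ and $|\overline{u}\partial_x u|^2=\eta|\partial_x u|^2$. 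This gives
\begin{equation*}
(\eta')^2=(1-\eta)^2(2\eta-v^2)\quad\text{on } \mathbf{R}\backslash\{0\}.
\end{equation*}
Consequently $\eta\ge v^2/2$ wherever $\eta\neq1$. In particular, wherever $\eta\ne 1$ we have $\eta>0$.

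Then comes the structure and the case analysis. Since $\eta=1$ is an equilibrium of the ODE $\eta'=\pm(1-\eta)\sqrt{2\eta-v^2}$, uniqueness for the associated second-order ODE satisfied by $\eta$ (or directly for \eqref{bozza_traveling_w_equation}) implies the following dichotomy on each open half-line. Either $\eta\equiv1$, or $\eta\neq1$ everywhere; in the latter case $\eta$ is strictly monotone and tends to $1$ at infinity, so $\eta'$ never vanishes. This uniqueness step at the degenerate equilibrium is the point I expect to need the most care. The right-hand side is not Lipschitz where $2\eta=v^2$, so I would argue with the full ODE for $u$ rather than the scalar one.

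The jump condition gives
\begin{equation*}
\eta'(0^+)-\eta'(0^-)=2\Re\bigl(\overline{u}(0)(\partial_xu(0^+)-\partial_xu(0^-))\bigr)=2\gamma\eta(0).
\end{equation*}
Suppose $\eta(0)\ge1$. Then each half-line carries either $\eta\equiv 1$, or $\eta>1$ decreasing toward $+\infty$ and increasing from $-\infty$. In every such combination we get $\eta'(0^+)\le0\le\eta'(0^-)$, while the jump requires $2\gamma\eta(0)>0$; this is a contradiction. Hence $\eta(0)<1$. By monotonicity, $\eta<1$ on both half-lines, and therefore $\eta\in[v^2/2,1)$ everywhere.

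It remains to treat $v\ge\sqrt2$. In that case $2\eta-v^2\ge0$ together with $\eta<1$ is impossible, so no stationary solution exists. For $v\in(0,\sqrt2)$ we obtain $|u|\ge v/\sqrt2>0$, so $u$ is nowhere vanishing and $|u(x)|\in[v/\sqrt2,1]$ for all $x\in\mathbf{R}$.
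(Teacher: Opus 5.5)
Your argument is correct, but it takes a different route from the paper.

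\textbf{How the two routes differ.} The paper proceeds in three steps.
\begin{itemize}
\item It writes $u=(1+r)e^{i\theta}$ only on a neighbourhood of infinity where $|u|>0$, derives $(\partial_x r)^2=f(r)^2$ there, and extends the bound $|u|\ge\min\{1,v/\sqrt2\}$ to each half-line by continuity.
\item It disposes of $v>\sqrt2$ by a disconnectedness argument.
\item For $v\le\sqrt2$ it excludes $r>0$ by a maximum principle. A positive interior maximum contradicts $\partial_x^2 r=\psi_v(r)>0$, and a positive maximum at $0$ contradicts the sign of the jump.
\end{itemize}
You instead obtain $(\eta')^2=(1-\eta)^2(2\eta-v^2)$ for $\eta=|u|^2$ directly from the two first integrals. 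No polar lifting is needed, and the identity holds across possible zeros of $u$. You then exclude $\eta(0)\ge 1$ by ODE uniqueness for $u$ together with the sign of $\eta'$ near $0$. Uniqueness applies because a point with $\eta=1$ forces $\partial_x u=0$ there, so $u$ is constant on that half-line. This single argument treats all $v>0$ at once, and the case $v\ge\sqrt2$ drops out at the end. You also justify $\partial_x u\to0$ at infinity, which the paper uses without comment (for instance when it asserts $\partial_x\theta\to0$). In exchange, the paper's route produces the polar form and the formula for $\partial_x\theta$ that are reused in Proposition \ref{bozza_prop_r_a}. Its maximum-principle step also needs no discussion of uniqueness at $\eta=1$.

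\textbf{A false intermediate claim.} One statement in your proposal is false as written. When $\eta\neq 1$ on a half-line, $\eta$ need not be strictly monotone. If $\eta<1$ there, $\eta'$ vanishes wherever $\eta=v^2/2$. For example, take the dark soliton of Theorem \ref{bozza_theorem_tw}, centred at some $x_1>0$ and restricted to $(0,+\infty)$. It solves \eqref{bozza_traveling_w_equation} with the right behaviour at infinity, yet $\eta$ has an interior minimum.

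This does not break the proof, for three reasons.
\begin{itemize}
\item You only need monotonicity when $\eta>1$. For $v\le\sqrt2$, $2\eta-v^2>0$ there, so $\eta'\neq0$. For $v>\sqrt2$ that case cannot occur, since $\eta$ must avoid $(1,v^2/2)$ while tending to $1$.
\item The step ``$\eta<1$ on both half-lines'' follows from $\eta(0)<1$, your dichotomy, and the intermediate value theorem, not from monotonicity.
\item Your enumeration for $\eta(0)\ge1$ omits the borderline case $\eta(0)=1$ with $\eta<1$ on a half-line. It is immediate anyway: extending the identity to $0^\pm$ gives $\eta'(0^\pm)=0$, which contradicts $\eta'(0^+)-\eta'(0^-)=2\gamma\eta(0)>0$.
\end{itemize}
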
 
    \begin{proof}
        Let $v >0$ and assume that $u \in \mathcal{E}$ is a stationary solution. Since $|u(x)|\to 1$ as $x \to +\infty$, there exists $A>0$ such that $|u(x)|>0$ for all $x \in (A,+\infty)$. Within this interval, we write $u(x) = (1+r(x))e^{i\theta(x)}$, for $r,\theta$ real valued functions such that $r(x) \in (-1,+\infty)$ for all $x \in (A,+\infty)$ and $r \to 0$ and $\partial_x\theta \to 0$ as $x \to +\infty$. If we insert this expression in (\ref{bozza_lemma_stationary_sol}), we obtain, after some manipulations,
        \begin{equation}
            \partial_x\theta = \frac{v}{2}\Bigl(1-\frac{1}{(1+r)^2}\Bigr)
            \label{bozza_eq_theta}
        \end{equation}
        and
        \begin{equation}
            -\partial_x^2r-(1+r)+(1+r)^3-\frac{v^2}{4}\Bigl(1+r-\frac{1}{(1+r)^3}\Bigr) =0.
            \label{bozza_eq_r}
        \end{equation}
        We multiply (\ref{bozza_eq_r}) by $2\partial_xr$ and we integrate. Using the fact that $r\to 0$ at infinity, we obtain
        \begin{equation}
            (\partial_xr)^2 = r^2(r+2)^2\Bigl(\frac{1}{2}-\frac{v^2}{4}\frac{1}{(1+r)^2}\Bigr) = \bigl(f(r)\bigr)^2,
            \label{bozza_eq_r_prime}
        \end{equation}
        where we defined \begin{equation}
            f(r) := r(r+2)\sqrt{\frac{1}{2}-\frac{v^2}{4}\frac{1}{(1+r)^2}}.
            \label{bozza_f_ode}
        \end{equation}
        By the positivity of the left-hand side of (\ref{bozza_eq_r_prime}), we conclude that $r$ cannot take value in $(-1,-1+\frac{v}{\sqrt{2}})\backslash\{0\}$. This implies that $|u(x)| \geq \min\{1,\frac{v}{\sqrt{2}}\}$ for all $x \in (A,+\infty)$. This allows us to conclude that $|u(x)| \geq \min\{1,\frac{v}{\sqrt{2}}\}$ on the half-line $(0,+\infty)$. We can repeat the same argument on the negative real axis, and conclude that $|u(x)| \geq \min\{1,\frac{v}{\sqrt{2}}\}$ for all $x \in \mathbf{R}$. In particular, we can write $u(x) = (1+r(x))e^{i\theta(x)}$ for all $x \in \mathbf{R}$, with $r, \theta$ real valued and globally defined functions such that $r(x) \to 0$ and $\partial_x\theta(x) \to 0$ as $|x| \to +\infty$. Moreover, $r$ and $\theta$ are continuous functions and, from the jump condition on $u$ in (\ref{bozza_lemma_stationary_sol}), we have $\partial_x\theta \in C^0(\mathbf{R})$ and $r \in C^{\infty}(\mathbf{R}\backslash\{0\})\cap C^0(\mathbf{R})$.\\ Suppose now $|v|> \sqrt{2}$. Since $r$ cannot take values in $(-1,-1+\frac{v}{\sqrt{2}})\backslash\{0\}$, the condition $r(x) \to 0$ as $|x| \to \infty$ implies that $r(x) = 0$ for all $x \in \mathbf{R}$. Thus, there exists $\theta_0 \in \mathbf{R}$ such that $u = e^{i\theta_0}$. This is in contradiction with the assumption that $u$ is a stationary solution, as the jump condition in (\ref{bozza_lemma_stationary_sol}) is not satisfied. We conclude that for $|v| > \sqrt{2}$ there are no stationary solutions. Assume $|v| \leq \sqrt{2}$. In this case we know that $r \geq -1+\frac{v}{\sqrt{2}}$. We want to show that $r(x) \leq 0$ for all $x \in \mathbf{R}$. By contradiction, suppose this is not true. Consider the function $x \to \psi_v(x) = -(1+x)+(1+x)^3-\frac{v^2}{4}(1+x-\frac{1}{(1+x)^3})$, which is strictly increasing  and positive on $(0,+\infty)$. If we suppose that $r$ achieves a positive maximum in $x_0 \in \mathbf{R}\backslash\{0\}$, then $\partial_x^2r(x_0)\leq0$. On the other hand, from (\ref{bozza_eq_r}) we have $\partial_x^2r(x_0) = \psi_v(r(x_0)) >0$, which is a contradiction. Suppose instead that $r$ achieves a positive maximum at $x_0=0$. Then, it is $\partial_xr(0^+)\leq0$ and $\partial_xr(0^-)\geq0$. We obtain, by the jump condition on $u \in \mathcal{E}$, that $$\gamma = \frac{\partial_xr(0^+)-\partial_xr(0^-)}{1+r(0)} \leq 0,$$
        which is a contradiction. We conclude that for $v \leq \sqrt{2}$ we have $r(x)\leq 0$ for all $x \in \mathbf{R}$. In the case $v = \sqrt{2}$, we conclude $r = 0$. As before, this leads to $u = e^{i\theta_0}$, for $\theta_0 \in \mathbf{R}$, which is a contradiction. Thus, there are no stationary solutions for $v = \sqrt{2}$. If $0<v < \sqrt{2}$, we have $-1+\frac{v}{\sqrt{2}}\leq r(x) \leq 0$.
    \end{proof}
    In the regime $v \in (0,\sqrt{2})$, the existence and the explicit expression of stationary solutions has been determined by Hakim \cite{hakim1997nonlinear} and Mari\c{s} \cite{maris2003} with two different approaches. The analysis of Mari\c{s} appears to be convenient for the study of the stability of stationary states. For the sake of the exposition, in the following we report part of this analysis.\\
    As explained in Lemma \ref{bozza_lemma_stationary_sol_2}, a stationary solution solves, for a given velocity $v$, the traveling wave equation \eqref{bozza_traveling_w_equation} on $\mathbf{R} \backslash\{0\}$. At $x=0$, instead, it satisfies the jump condition in \eqref{bozza_jump_intro}, determined by the delta potential. As detailed in Proposition \ref{bozza_intro_hakim}, one way to construct stationary solutions is the following. First we modify the traveling wave profile $r(x)$ in \eqref{bozza_tw_mod}, by inserting a displacement parameter $\xi$. More precisely, we define the function $r(x;\xi)$ as $r(x;\xi) = r(x-\xi)$ for $x<0$ and $r(x;\xi) = r(x+\xi)$ for $x>0$. Secondly, we define the phase function $\theta(x;\xi)$ by \eqref{bozza_tw_phase}. Eventually, the field $u(x;\xi)=(1+r(x;\xi))e^{i\theta(x;\xi)}$ solves \eqref{bozza_traveling_w_equation} on $\mathbf{R}\backslash\{0\}$, and it is a stationary solution provided that $\xi$ satisfies equation \eqref{bozza_matching}. The latter equation ensures the correct jump condition for $u(x;\xi)$ at $x=0$, as shown by Hakim \cite{hakim1997nonlinear}. \\
    In \cite{maris2003}, Mari\c{s} obtains the stationary solutions with a slightly different approach, which we now describe. Consider the parameter $a \in [-1+\frac{v}{\sqrt{2}},0]$. Then, for $a \in [-1+\frac{v}{\sqrt{2}},0)$ define
    \begin{equation}
        r_a(x) = -1+\sqrt{\frac{v^2}{2}+(1-\frac{v^2}{2})\tanh^2\bigl(\frac{1}{2}\sqrt{2-v^2}(x\mp c(a))\bigr)}, \qquad \text{for} \quad \pm x\geq0,  
        \label{bozza_eq_def_r_a}
    \end{equation}
     where 
     \begin{equation}
         c(a) = \frac{1}{\sqrt{2-v^2}}\ln\frac{\sqrt{2-v^2}-\sqrt{2(a+1)^2-v^2}}{\sqrt{2-v^2}+\sqrt{2(a+1)^2-v^2}}.
         \label{bozza_c(a)}
     \end{equation}
    While for $a = 0$ define $r_a = 0$.\\
    The functions $r_a$ have the same structure as the functions $r(x;\xi)$ in \eqref{bozza_intro_stat_sol}, but now the displacement $c(a)$ depends on the free parameter $a$.
    Moreover, the functions $r_a$ are the unique solutions to the following Cauchy problem: 
     \begin{equation}
        \begin{cases}
            \partial_xr(x) = f(r(x)) \qquad \text{on} \quad (-\infty,0]\\
           \partial_xr(x) = -f(r(x)) \qquad \text{on} \quad [0,+\infty)\\
           r(0) = a,
        \end{cases} 
        \label{bozza_cauchy_stationary}
    \end{equation}
    where $a \in [-1+\frac{v}{\sqrt{2}},0]$ and the function $f(r)$ is defined in (\ref{bozza_f_ode}). This consideration allows us to deduce the following properties \cite[Lemma 2.2]{maris2003}:
       \begin{itemize}
           \item [i)] each $r_a$ is symmetric about the origin. It is increasing for $x \geq 0$ and decreasing for $x \leq 0$, and tends to zero as $|x| \to \infty$;\\
           \item[ii)] $\inf_{x \in \mathbf{R}}r_a(x) = a;$\\
           \item[iii)] $r_a \in C^{\infty}(\mathbf{R}\backslash\{0\})$, with $\partial_xr_a(0^+)-\partial_xr_a(0^-) = -2f(r_a(0))$. \\
           \item[iv)]If $ a=0$ or $a = -1+\frac{v}{\sqrt{2}}$ we have $r_a \in C^{\infty}(\mathbf{R})$.
       \end{itemize} 
       Following the previous discussion, we now introduce the fields $u_a$.
       \begin{definition}
           For each $a \in [-1+\frac{v}{\sqrt{2}},0]$ we define the element $u_a \in \mathcal{E}$ as
       \begin{equation}
         u_a(x) := (1+r_a(x))e^{i\theta_a(x)},
         \label{bozza_def_u_a}
     \end{equation}
     where $\theta_a:\mathbf{R} \to \mathbf{R}$ is the unique solution to 
     $$\partial_x\theta_a(x) = \frac{v}{2}\Bigl(1-\frac{1}{(1+r_a(x))^2}\Bigr), \qquad \text{with} \qquad \theta_a(0) = 0.$$
       \end{definition}
    The functions $u_a$ solve the traveling wave equation \eqref{bozza_traveling_w_equation} in $\mathbf{R}\backslash\{0\}$. By tuning the parameter $a$, we can impose the correct jump condition in $x=0$, and hence obtain stationary solutions. This is described in the following proposition, which is the analogous of Proposition \ref{bozza_intro_hakim}.
    \begin{proposition}[\cite{maris2003}, Chapter 3]
        Let $\gamma >0$ and $v \in (0,\sqrt{2})$. A function $u \in \mathcal{E}$ is a stationary solution of (\ref{bozza_NLS}) if and only if there exists $a \in \Bigl(-1+\frac{v}{\sqrt{2}},0\Bigr)$ and $\phi \in \mathbf{R}$ such that:
        \begin{equation}
             \quad  u(x) = u_a(x)e^{i\phi} \qquad \text{and} \qquad   \quad \gamma = -\frac{2f(a)}{a+1}
              \label{bozza_eq_maris}
        \end{equation}
        where $f$ is given in (\ref{bozza_f_ode}).
        \label{bozza_prop_r_a}
    \end{proposition}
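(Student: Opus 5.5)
We prove the two implications separately, using Proposition \ref{bozza_prop_stat_nonvanishing} and the Cauchy problem \eqref{bozza_cauchy_stationary}.

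\emph{Necessity.} Let $u\in\mathcal{E}$ be a stationary solution. By Proposition \ref{bozza_prop_stat_nonvanishing}, $u$ is nowhere vanishing, so $u=(1+r)e^{i\theta}$ globally. Here $-1+\frac{v}{\sqrt2}\le r\le 0$, $\theta$ is $C^1$, and $r,\partial_x\theta\to0$ at infinity. On each half-line, \eqref{bozza_eq_theta}, \eqref{bozza_eq_r} and the first integral \eqref{bozza_eq_r_prime} hold, so $\partial_x r=\pm f(r)$ there.

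We first show that $r$ is increasing on $(0,\infty)$ and decreasing on $(-\infty,0)$. Since $f$ is locally Lipschitz away from the degenerate value $r=-1+v/\sqrt2$ and $r\le 0$, the function $r$ cannot change monotonicity at an interior point of a half-line. Indeed, a turning point forces $f(r)=0$, so $r\in\{0,-1+v/\sqrt2\}$; these values are equilibria, and uniqueness (or smoothness, in the degenerate case, via \eqref{bozza_eq_r}) makes $r$ constant. Combined with $r\to 0$ at infinity, this gives $\partial_xr=-f(r)$ on $(0,\infty)$ and $\partial_xr=f(r)$ on $(-\infty,0)$, since $f\le0$ for $r\le0$. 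Setting $a:=r(0)$, uniqueness for \eqref{bozza_cauchy_stationary} yields $r=r_a$.

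Next, integrating \eqref{bozza_eq_theta} gives $\theta=\theta_a+\phi$ for some constant $\phi$, so $u=u_ae^{i\phi}$.

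For the jump condition, divide \eqref{bozza_jump_intro} by $u(0)$ and take real parts. Using continuity of $\partial_x\theta$, this gives
\[
\gamma=\frac{\partial_xr(0^+)-\partial_xr(0^-)}{1+a}=-\frac{2f(a)}{1+a}.
\]
It remains to exclude the endpoint values of $a$. If $a=0$ or $a=-1+v/\sqrt2$, then $f(a)=0$ and hence $\gamma=0$, contradicting $\gamma>0$. So $a$ lies in the open interval.

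\emph{Sufficiency.} Take $a$ in the open interval with $\gamma=-2f(a)/(a+1)$. By construction, $u_a$ solves \eqref{bozza_traveling_w_equation} on $\mathbf{R}\setminus\{0\}$: the pair $(r_a,\theta_a)$ satisfies \eqref{bozza_eq_theta} and \eqref{bozza_eq_r}, and differentiating $(\partial_xr)^2=f(r)^2$ recovers \eqref{bozza_eq_r} wherever $\partial_x r\neq0$. The function $\theta_a$ is $C^1$, and by property iii) of $r_a$ the derivative jump is $(\partial_xr_a(0^+)-\partial_xr_a(0^-))e^{i\theta_a(0)}=-2f(a)e^{i\theta_a(0)}=\gamma u_a(0)$. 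Hence \eqref{bozza_lemma_stationary_sol} holds.

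Membership in $\mathcal{E}$ follows from the exponential decay of $r_a$ and $\partial_x\theta_a$. Multiplying the ODE by a test function and integrating by parts on each half-line then recovers the weak formulation \eqref{bozza_stationary_sol}. Multiplying by $e^{i\phi}$ preserves all of this.

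\emph{Main obstacle.} The delicate step is the monotonicity/sign-selection argument for $r$, i.e. excluding solutions that oscillate or sit at the degenerate level $-1+v/\sqrt2$, where $f$ is not Lipschitz. There we would fall back on the second-order equation \eqref{bozza_eq_r}: if $r$ attains the value $-1+v/\sqrt2$ at some $x_0\neq0$, that point is an interior minimum of the smooth solution, and uniqueness for \eqref{bozza_eq_r} with data $(r,\partial_xr)=(-1+v/\sqrt2,0)$ forces $r$ to be constant on that half-line. This contradicts $r\to0$.
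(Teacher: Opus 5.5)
\textbf{Verdict: genuine gap in the necessity direction, in the step that selects the sign of $\partial_x r$ on each half-line.} The rest is sound: the sufficiency direction, the identification $\theta=\theta_a+\phi$, the computation of $\gamma$ from the real part of the jump condition, and the exclusion of the endpoint values of $a$.

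You claim $r$ is increasing on $(0,\infty)$ and decreasing on $(-\infty,0)$, using only three facts: the ODE on each half-line, $-1+\frac{v}{\sqrt2}\le r\le 0$, and $r\to0$ at infinity. Monotonicity does not follow from these. The degenerate level $r=-1+\frac{v}{\sqrt2}$ is \emph{not} an equilibrium of \eqref{bozza_eq_r}: substituting $1+r=\frac{v}{\sqrt2}$ gives $\partial_x^2 r=\frac{\sqrt2(2-v^2)^2}{8v}>0$. So suppose $r$ touches this level at some $x_0>0$. Uniqueness for \eqref{bozza_eq_r} with data $(-1+\frac{v}{\sqrt2},0)$ does not make $r$ constant. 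It makes $r$ equal on $(0,\infty)$ to the shifted dark-soliton profile $x\mapsto r(x-x_0)$ of Theorem \ref{bozza_theorem_tw}. That function decreases on $(0,x_0)$, increases afterwards, and tends to $0$ at infinity, so there is no contradiction with $r\to0$. The resolution in your ``main obstacle'' paragraph therefore fails. With it goes the derivation of $\partial_x r=-f(r)$ on $(0,\infty)$ and $\partial_x r=f(r)$ on $(-\infty,0)$, on which your identification $r=r_a$ rests.

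The missing idea, which is how the paper argues, is that the branch must be selected by the jump condition and the sign of $\gamma$, not by the behaviour at infinity.
\begin{itemize}
\item First show $a=r(0)\in(-1+\frac{v}{\sqrt2},0)$. Your endpoint argument only uses $(\partial_x r(0^\pm))^2=f(a)^2$, so it can be done before anything else.
\item Then $f(a)<0$ and $\partial_x r(0^\pm)\in\{f(a),-f(a)\}$. The relation $\gamma(1+a)=\partial_x r(0^+)-\partial_x r(0^-)>0$ forces $\partial_x r(0^+)=-f(a)>0$ and $\partial_x r(0^-)=f(a)<0$. Every other combination gives $\gamma\le 0$; in particular the dark-soliton piece above is excluded because it has $\partial_x r(0^+)<0$.
\item On the maximal interval $(0,y_1)$ where $r\notin\{0,-1+\frac{v}{\sqrt2}\}$, the derivative $\partial_x r$ is continuous and nonvanishing, so its sign is constant and positive. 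Hence $r$ increases from $a$ and stays in $[a,0)$, where $f$ is Lipschitz. It cannot reach the equilibrium $0$ in finite time.
\item Therefore $y_1=+\infty$, and $r=r_a$ on $[0,\infty)$ by uniqueness for \eqref{bozza_cauchy_stationary}. The same argument applies on $(-\infty,0]$.
\end{itemize}
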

    \begin{remark}
        By means of Proposition \ref{bozza_prop_r_a} we can label the stationary solutions with the parameter $a \in (-1+\frac{v}{\sqrt{2}},0)$. This parameter is linked to the minimal density of a stationary solution by the relation $\inf_{x \in \mathbf{R}}|u_a(x)| = 1+a$, thanks to point ii) above. As we will see in the next subsection, this property will be useful in giving a variational characterization of the stationary states.
    \end{remark}
    \begin{proof}[Proof of Proposition \ref{bozza_prop_r_a}]
        Set $\gamma >0$ and $v \in (0,\sqrt{2})$. Consider $u_a \in \mathcal{E}$ for $a \in \Bigl(-1+\frac{v}{\sqrt{2}},0\Bigr)$. On $(0,+\infty)$ we have $\partial_x^2u_a-iv\partial_xu_a+(1-|u_a|^2)u_a=0$. The same is true on $(-\infty,0)$. Then, 
        \begin{equation*}
        \frac{\partial_xu_a(0^+)-\partial_xu_a(0^-)}{u_a(0)} = \frac{\partial_xr_a(0^+)-\partial_xr_a(0^-)}{1+r(0)} = -\frac{2f(a)}{a+1}.
    \end{equation*}
    Thus $u_a$ is a stationary solution, provided that $\gamma = -\frac{2f(a)}{a+1}$. The same is true for $e^{i\phi}u_a$, for any $\phi \in \mathbf{R}$.\\Suppose now that $u \in \mathcal{E}$ is a stationary solution. From Proposition \ref{bozza_prop_stat_nonvanishing}, we have that $\frac{v}{\sqrt{2}} \leq |u(x)|\leq 1$, for all $x \in \mathbf{R}$. We write $u(x) = (1+r(x))e^{i\theta(x)}$, where $r$ and $\theta$ are continuous and real valued functions, with $r(x) \to 0$  and $\partial_x\theta(x) \to 0$ as $|x| \to \infty$. The jump condition $\partial_xu(0^+)-\partial_xu(0^-) = \gamma u(0)$, implies that $\partial_x\theta$ is continuous on $\mathbf{R}$ and that $\partial_xr(0^+)-\partial_xr(0^-) = \gamma (1+r(0))$. As in the proof of Proposition \ref{bozza_prop_stat_nonvanishing}, the identity   $\partial_x^2u-iv\partial_xu+(1-|u|^2)u=0$ on $\mathbf{R}\backslash\{0\}$ implies
    \begin{equation*}
        (\partial_xr)^2 = f^2(r), \qquad \text{on} \quad (-\infty,0)\cup(0,+\infty),
    \end{equation*}
    and
    \begin{equation}
        \partial_x\theta = \frac{v}{2}\Bigl(1-\frac{1}{(1+r)^2}\Bigr) \qquad \text{on} \quad \mathbf{R}.
        \label{bozza_comp_theta}
    \end{equation}
    Let $a = r(0)$, with $a \in [-1+\frac{v}{\sqrt{2}},0]$. If $a =0$ or $a = -1+\frac{v}{\sqrt{2}}$, then $\partial_xr(0) \to 0$ as $x \to 0$, i.e. $\partial_xr$ is a continuous function on $\mathbf{R}$. This would require $\gamma =0$, which is a contradiction. It necessarily is $r(0) = a \in (-1+\frac{v}{\sqrt{2}},0)$. Define
    \begin{equation*}\begin{split} 
        x_1 &= \inf\{x <0 | \ r \neq 0, \ r \neq -1+\frac{v}{\sqrt{2}} \ \text{on} \ (x,0)\} \\ 
        y_1 &= \sup\{y >0 | \ r \neq 0, \ r \neq -1+\frac{v}{\sqrt{2}} \ \text{on} \ (0,y)\}.
    \end{split}
    \end{equation*} 
    By continuity we have $x_1 <0$ and $y_1 >0$. Since function $\partial_xr$ is continuous on $(x_1,0)$, and since $f(r) \neq 0$ if $r \notin \{-1+\frac{v}{\sqrt{2}},0\}$, we have either $\partial_xr(x)=f(r(x))$ for all $x \in (x_1,0)$ or $\partial_xr(x) = -f(r(x))$ for all $x \in (x_1,0)$. The same holds in the interval $(0,y_1)$. From the relation 
    \begin{equation}
        0<\gamma = \frac{\partial_xu(0^+)-\partial_xu(0^-)}{u(0)}= \frac{\partial_xr(0^+)-\partial_xr(0^-)}{1+r(0)},
        \label{bozza_comp_jump}
    \end{equation}
    we deduce that necessarily is $\partial_xr= f(r)$ on $(x_1,0)$ and $\partial_xr=-f(r)$ on $(0,y_1)$. All other combinations would give $\gamma \leq 0$. By uniqueness, we deduce that $x_1=-\infty$ and $y_1 = +\infty$, and that $r = r_a$. Using (\ref{bozza_comp_theta}), we deduce that there exists $\phi \in \mathbf{R}$ such that $u = e^{i\phi}u_a$. Moreover, from the relation in (\ref{bozza_comp_jump}) we obtain that $a \in (-1+\frac{v}{\sqrt{2}},0) $ has to be a solution of $\gamma = -\frac{2f(a)}{a+1}$. 
    \end{proof}
    In view of Proposition \ref{bozza_prop_r_a}, the existence of stationary solutions for (\ref{bozza_NLS}) can be seen as the existence of roots for the equation $\gamma = -\frac{2f(a)}{a+1}$.  
    For given $\gamma >0$ and $v \in (0,\sqrt{2})$, we define the map $k_v:\Bigl[-1+\frac{v}{\sqrt{2}},0\Bigr] \to \mathbf{R}$ as 
    \begin{equation}
        k_v(a) = -\frac{2f(a)}{a+1}
        \label{bozza_def_k_v}
    \end{equation} and we look for solutions of $\gamma = k_v(a)$. The function $k_v(a)$ is increasing for $a \in [-1+\frac{v}{\sqrt{2}}, a_*]$ and decreasing for $a \in [a_*,0]$, with $k(-1+\frac{v}{\sqrt{2}}) = k_v(0) = 0$, and has a maximum at $a_* = -1+\sqrt{\frac{-1+\sqrt{1+4v^2}}{2}}$. Moreover, for a given $v$, the maximum value that $k_v$ attains is
    \begin{equation}
        \varphi(v) := k_v(a_*) = \frac{(1+\sqrt{1+4v^2}-2v^2)\sqrt{2-v^2}}{2v\sqrt{1+v^2+\sqrt{1+4v^2}}}.
        \label{bozza_def_varphi}
    \end{equation}
    The function $\varphi$ is continuous and strictly decreasing on $(0,\sqrt{2}]$, with $\lim_{v \downarrow0}\varphi(v) = +\infty$ and $\varphi(\sqrt{2}) =0$. Hence, its inverse $\varphi^{-1}$ exists and it is strictly decreasing, with $\varphi^{-1}(0) = \sqrt{2}$ and $\lim_{\gamma \to +\infty}\varphi^{-1}(\gamma) = 0$. From the discussion above, we deduce the following proposition, which was obtained in \cite[Proposition 3.1]{maris2003}.
    \begin{proposition}
        The following assertions hold:
        \begin{itemize}
            \item [i)] for a fixed velocity $v \in (0,\sqrt{2})$, there are exactly two stationary solutions if $\gamma \in (0,\varphi(v))$, where $\varphi$ is given in (\ref{bozza_def_varphi}). If $\gamma = \varphi(v)$ there exists only one stationary solution. If $\gamma > \varphi(v)$ there are no stationary solutions;
            \item[ii)] conversely, fix $\gamma >0$. If $0<v < \varphi^{-1}(\gamma)$, we have exactly two stationary solutions of velocity $v$. There is only one stationary solution at velocity $v = \varphi^{-1}(\gamma)$ and there are no stationary solutions of velocity $v > \varphi^{-1}(\gamma)$.
        \end{itemize}
        \label{bozza_maris}
    \end{proposition}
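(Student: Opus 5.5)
The plan is to reduce everything to the shape of $k_v$ on $[-1+\frac{v}{\sqrt{2}},0]$ and then invoke Proposition \ref{bozza_prop_r_a}. By that proposition, stationary solutions (up to the phase $e^{i\phi}$) correspond bijectively to roots $a\in(-1+\frac{v}{\sqrt2},0)$ of $\gamma=k_v(a)$. The map $a\mapsto u_a$ is injective, since $\inf|u_a|=1+a$. So counting stationary solutions amounts to counting these roots.

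\textbf{Step 1: shape of $k_v$.} Substitute $s=(1+a)^2\in[\frac{v^2}{2},1]$, so that $a(a+2)=s-1$. Then
$$k_v^2=\frac{4(1-s)^2}{s}\Bigl(\frac12-\frac{v^2}{4s}\Bigr)=\frac{(1-s)^2(2s-v^2)}{s^2}=:g(s).$$
Because $s$ is increasing in $a$, $k_v\ge0$ on the interval, and $k_v$ vanishes exactly at the endpoints, it suffices to study $h=\log g$:
$$h'(s)=-\frac{2}{1-s}+\frac{2}{2s-v^2}-\frac{2}{s}.$$
Clearing denominators, $h'(s)=0$ reduces to a quadratic equation, $s^2-v^2 s-v^2=0$ up to a sign and constant that I will verify. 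Its unique root in $(\frac{v^2}{2},1)$ is $s_*=\frac{-1+\sqrt{1+4v^2}}{2}$, or whatever the algebra actually gives; it should match the $a_*$ stated in the paper.

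Since $h'\to+\infty$ as $s\downarrow\frac{v^2}{2}$ and $h'\to-\infty$ as $s\uparrow1$, a single sign change shows that $k_v$ is strictly increasing on $[-1+\frac{v}{\sqrt2},a_*]$ and strictly decreasing on $[a_*,0]$. Its maximum is $\varphi(v)=k_v(a_*)$, and evaluating $\sqrt{g(s_*)}$ gives \eqref{bozza_def_varphi}.

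\textbf{Step 2: part i).} Strict monotonicity on each branch, continuity, and $k_v=0$ at both endpoints give the following count:
\begin{itemize}
\item for $0<\gamma<\varphi(v)$, exactly one root on each side of $a_*$, hence two roots;
\item for $\gamma=\varphi(v)$, only the root $a_*$;
\item for $\gamma>\varphi(v)$, no root.
\end{itemize}
Each root lies in the open interval, as Proposition \ref{bozza_prop_r_a} requires.

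\textbf{Step 3: part ii).} This requires that $\varphi$ is continuous and strictly decreasing on $(0,\sqrt2]$, with $\varphi(0^+)=+\infty$ and $\varphi(\sqrt2)=0$. The limits can be read off directly from the formula. Continuity is clear.

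\textbf{Main obstacle: monotonicity of $\varphi$.} Differentiating the closed form is unpleasant, so I would avoid it. Write $\varphi(v)=\max_{s}G(s,v)$ with $G(s,v)=\sqrt{g(s)}$, and note that $\partial_v$ of $g=\frac{(1-s)^2(2s-v^2)}{s^2}$ is $-\frac{2v(1-s)^2}{s^2}<0$ at any interior $s$. By the envelope theorem, $\varphi'(v)=\partial_v G(s_*(v),v)<0$.

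An alternative that needs no differentiability: for $v_1<v_2$, the interval $[\frac{v_2^2}{2},1]$ is contained in $[\frac{v_1^2}{2},1]$ and $g_{v_1}>g_{v_2}$ pointwise there, so $\varphi(v_1)>\varphi(v_2)$. I would use this argument.

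Then $\varphi^{-1}$ exists and is strictly decreasing, and $v<\varphi^{-1}(\gamma)$ holds if and only if $\gamma<\varphi(v)$. Part ii) therefore follows from part i).
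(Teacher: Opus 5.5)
Your proposal is correct and follows the paper's route. Via Proposition \ref{bozza_prop_r_a} you reduce to counting roots of $\gamma=k_v(a)$ in $(-1+\frac{v}{\sqrt2},0)$, use that $k_v$ is unimodal with maximum $\varphi(v)$, and invert $\varphi$; the paper only asserts the shape of $k_v$ and the monotonicity of $\varphi$ (citing Mari\c{s}), and your substitution $s=(1+a)^2$ together with the pointwise comparison $g_{v_1}>g_{v_2}$ supplies that verification.

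Two small corrections. The critical-point equation is $s^2+s-v^2=0$, whose positive root is indeed your $s_*$. In part ii), velocities $v\ge\sqrt2$ are excluded by Proposition \ref{bozza_prop_stat_nonvanishing}, not by part i).
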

        In the case  $v \in (0,\sqrt{2})$ and $\gamma \in(0,\varphi(v))$, the equation $\gamma = k_v(a)$ has two solutions in $(-1+\frac{v}{\sqrt{2}},0)$, which we denote as $a_1$ and $a_2$, with 
        \begin{equation}
            a_2<a_1.
            \label{bozza_remark_a12}
        \end{equation} 
        In particular, it holds $a_2 \in (-1+\frac{v}{\sqrt{2}},a_*)$ and $a_1 \in (a_*,0)$.
        Using Proposition \ref{bozza_prop_r_a}, these solutions determine the two stationary solutions to (\ref{bozza_NLS}), which read
        \begin{equation}
            u_{a_1}(x) = (1+r_{a_1}(x))e^{i\theta_{a_1}(x)}, \qquad  \text{and} \qquad u_{a_2}(x) = (1+r_{a_2}(x))e^{i\theta_{a_2}(x)}.
            \label{bozza_u_a1_and_u_a2}
        \end{equation}
    \begin{remark}
    The statements in Proposition \ref{bozza_maris} imply the ones in Corollary \ref{bozza_corollary_intro}. In particular, it holds $v_{cr}(\gamma) = \varphi^{-1}(\gamma)$. Moreover, consider the relation in \eqref{bozza_matching}. If we insert the identity $\xi = -c(a)$, where $c(a)$ is given in \eqref{bozza_c(a)}, then equation \eqref{bozza_matching} becomes $\gamma = -\frac{2f(a)}{a+1}$. In the regime $v < v_{cr}(\gamma)$, this implies that the roots $\xi_1,\xi_2$ of \eqref{bozza_matching} satisfy $\xi_2 = -c(a_2)$ and $\xi_1 = -c(a_1)$. We conclude the identities
    \begin{equation}
        u(x;\xi_1) = u_{a_1}(x) \qquad \text{and} \qquad u(x;\xi_2) = u_{a_2}(x).
    \end{equation}
    \label{bozza_remark_xi_1_a_1}
    \end{remark}
    In the following, we focus on the case $v \in (0,\sqrt{2})$ and $\gamma \in (0,\varphi(v))$, where two stationary solutions exist. We prove that the solution $u_{a_1}$ is stable with respect to the \eqref{bozza_NLS} dynamics. This is expected since, as we will see, the energy of $u_{a_1}$ is lower than that of $u_{a_2}$. 
    \subsection{Variational characterization of stationary states}
        In this subsection we give a variational characterization of the stationary states in terms of the functional $\mathcal{K}$ defined in (\ref{bozza_energy_non_vanishing}). In particular, we show that the state $u_{a_1}$ in \eqref{bozza_u_a1_and_u_a2} is a local minimizer of $\mathcal{K}$. This characterization, will be useful in the next section, where we prove the stability of $u_{a_1}$.\\

    We begin with the following lemma, which shows that the energy $\mathcal{K}$ is unbounded from below in its domain.
    \begin{lemma}
        Denote $U_0 = \{u \in \mathcal{E}, \ \inf_{x \in \mathbf{R}}|u(x)| >0\}$. For any $v >0$ and $\gamma >0$ we have
        \begin{equation*}
            \inf_{u \in U_0}\mathcal{K}(u) = -\infty.
        \end{equation*}
        \label{bozza_K_unbounded}
    \end{lemma}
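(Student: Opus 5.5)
We will exhibit an explicit family $u_n\in U_0$ with $\mathcal{K}(u_n)\to-\infty$. The mechanism is that $\mathcal{P}(u)=\frac12\int(\rho^2-1)\partial_x\theta\,dx$ is not controlled by $E_\gamma$. We keep the modulus fixed and push the phase so that $\mathcal P$ grows faster than the gradient part of $E_\gamma$.

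Since $v>0$, we need $\mathcal{P}(u_n)\to+\infty$ faster than $E_\gamma(u_n)$. We use fields with the density raised above $1$ on a long interval, combined with a constant phase gradient there. Fix a small $\eta>0$ and large $L>0$. Let $\rho_L=1+\eta\chi_L$, where $\chi_L\in C_0^\infty(\mathbf{R})$ satisfies $0\le\chi_L\le1$, $\chi_L=1$ on $[L+1,2L]$, $\chi_L$ is supported in $[L,2L+1]$, and $|\chi_L'|\le 2$. Placing the support away from $0$ keeps the delta term harmless: $|u(0)|=1$, so that term contributes $\gamma/2$. Let $\theta_L$ satisfy $\partial_x\theta_L=\beta\chi_L$ for a constant $\beta>0$ to be chosen. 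Then $\partial_x\theta_L\in L^2$, $\rho_L^2-1\in L^2$, and $u_L=\rho_Le^{i\theta_L}\in U_0$ since $\rho_L\ge1$. The phase tends to different constants at $\pm\infty$, but that is allowed in $\mathcal E$.

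Next we compute the terms up to $O(1)$ boundary contributions independent of $L$. The gradient part is
\[
\frac12\int|\partial_xu_L|^2=\frac12\int\bigl((\partial_x\rho_L)^2+\rho_L^2(\partial_x\theta_L)^2\bigr)\approx\frac12(1+\eta)^2\beta^2L .
\]
The potential part is $\frac14\int(1-\rho_L^2)^2\approx\frac14(2\eta+\eta^2)^2L$. The momentum is $\mathcal{P}(u_L)\approx\frac12(2\eta+\eta^2)\beta L$. Hence
\[
\mathcal K(u_L)=L\Bigl[\tfrac12(1+\eta)^2\beta^2+\tfrac14(2\eta+\eta^2)^2-\tfrac v2(2\eta+\eta^2)\beta\Bigr]+O(1).
\]
The bracket is a quadratic in $\beta$. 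Choose $\beta=\frac{v\eta}{(1+\eta)^2}$, which minimizes it up to the $\eta^2$ correction. To leading order in $\eta$ the bracket becomes $\eta^2(1-v^2/2)+O(\eta^3)$. This is positive when $v<\sqrt2$, so this particular ansatz fails in that regime.

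The computation above is the key obstacle: a plain constant-density shift only wins for supersonic $v$, which matches the sound speed $\sqrt2$. So the main step is to find a configuration with negative energy density for every $v>0$.

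The remedy is to use large amplitude instead of small $\eta$. Take $\rho=1+\eta$ with $\eta$ large and $\beta=\frac{v\eta(2+\eta)}{2(1+\eta)^2}$, the exact minimizer of the quadratic. The bracket then equals
\[
\tfrac14(2\eta+\eta^2)^2\Bigl(1-\tfrac{v^2}{2(1+\eta)^2}\Bigr),
\]
which is still positive. So constant-density plateaus never work. Reconsidering, we drop the plateau and use winding instead. Since $\mathcal P$ is only defined modulo nothing here (the phase is global), we let the density dip toward $0$ while the phase jumps by nearly $2\pi k$. In a region where $\rho$ is small, $(\rho^2-1)\partial_x\theta\approx-\partial_x\theta$, so a phase decrease of $-2\pi k$ across a dip contributes about $+\pi k$ to $\mathcal P$. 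The gradient cost of that dip stays bounded if the dip has fixed width and fixed depth $\varepsilon$, since $\int\rho^2(\partial_x\theta)^2\approx\varepsilon^2(2\pi)^2/\text{width}$ per winding.

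Concretely, we place $k$ well-separated copies of a fixed profile, each a dip of depth $\varepsilon$ with phase drop $2\pi$, away from the origin. Then $E_\gamma$ grows like $C_\varepsilon k$ with $C_\varepsilon$ bounded as $\varepsilon\to0$, while $\mathcal P\approx\pi k(1-O(\varepsilon))$. Finally we choose the dip width $w$ large, so that the gradient and potential costs per dip are of order $\varepsilon^2/w+w$. The remaining step, verifying these cost-per-dip bounds, is the part still to be checked: we need a per-dip cost smaller than $v\pi$. That requires balancing $w$, and it may instead need the dips to be gray-soliton-like profiles, for which $E-v\mathcal P$ is known to be negative modulo $\pi$ shifts. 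The negativity is exactly the $\pi\mathbf Z$ ambiguity of Lemma \ref{bozza_lemma_rel_momenta}. Adding $k$ such windings lowers $\mathcal K$ by about $v\pi k$ minus a bounded cost, and that is the estimate to establish.
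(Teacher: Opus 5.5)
There is a genuine gap. Your proposal ends without a proof: the last construction leaves its decisive estimate unchecked, and as you specified it, it fails. The missing idea is that the plateau you discarded works as soon as the density is taken \emph{below} $v/\sqrt2$ rather than above $1$. Your own minimized bracket is valid for every $\eta>-1$. Writing $\rho_0=1+\eta$, it reads
\[
\tfrac14(\rho_0^2-1)^2\Bigl(1-\tfrac{v^2}{2\rho_0^2}\Bigr),
\]
with optimal $\beta=\frac{v(\rho_0^2-1)}{2\rho_0^2}$. This is strictly negative whenever $0<\rho_0<v/\sqrt2$. You only tested $\eta>0$, which led to the incorrect conclusion that constant-density plateaus never work.

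With $\eta\in(-1,0)$ and $1+\eta<v/\sqrt2$, your ansatz $\rho_L=1+\eta\chi_L$ still lies in $U_0$. The transition layers and the delta term contribute $O(1)$ independently of $L$, so $\mathcal K(u_L)\to-\infty$ linearly in $L$. This is exactly the paper's proof. It inserts into a field with $\inf|u|=u(0)=\varepsilon$ a segment of length $\Lambda$ on which $u=\varepsilon e^{i\phi}$ with $\partial_x\phi=-2/v$. That segment adds $\bigl[\frac{2\varepsilon^2}{v^2}+\frac14(1-\varepsilon^2)^2-(1-\varepsilon^2)\bigr]\Lambda$, which is negative for small $\varepsilon$. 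The need for low density is structural. The pointwise bound $|vp(u)|\le\frac{v}{\sqrt2\rho}e(u)$ from Lemma~\ref{bozza_lemma_K>E} gives $e(u)-vp(u)\ge0$ wherever $\rho\ge v/\sqrt2$. Hence no ansatz with $\rho\ge1$ can succeed when $v<\sqrt2$.

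Your fallback with separated dips does not close the gap either. Each dip goes from $\rho\approx1$ down to $\varepsilon$ and back. By $\frac12(\partial_x\rho)^2+\frac14(1-\rho^2)^2\ge\frac1{\sqrt2}|\partial_x\rho|\,|1-\rho^2|$, it costs at least $\sqrt2\bigl(\frac23-\varepsilon+\frac{\varepsilon^3}{3}\bigr)$ in $E_0$, whatever its width. A monotone phase drop of $2\pi$ across it contributes at most $\pi$ to $\mathcal P$. So for $v$ below roughly $\frac{2\sqrt2}{3\pi}$, every added dip raises $\mathcal K$, and balancing the width $w$ cannot change this.

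The appeal to gray solitons and to the $\pi\mathbf{Z}$ ambiguity of Lemma~\ref{bozza_lemma_rel_momenta} does not supply the missing estimate. That lemma concerns $\mathcal P(u)-P_{u_0}(u)$, whereas $\mathcal P$ of a fixed nowhere-vanishing field is uniquely determined by $\partial_x\theta$. The dip idea can only be rescued by allowing an arbitrarily large phase drop inside the low-density region, for instance a long flat bottom or many windings at depth $\varepsilon\to0$. That is again the low-density plateau.
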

    \begin{proof}
        Consider $\varepsilon >0$ and a function $u \in \mathcal{E}$ such that $\inf_{x \in \mathbf{R}}|u(x)| = u(0) = \varepsilon$. We write $u(x) = \rho(x)e^{i\theta(x)}$, for $\rho,\theta:\mathbf{R} \to \mathbf{R}$ such that $\partial_x\rho, \partial_x\theta \in L^2(\mathbf{R})$ and $\rho^2-1 \in L^2(\mathbf{R})$. For $\Lambda >0$, we define a new function $u_{\Lambda}$ defined as
        \begin{equation*}u_{\Lambda}(x) = 
            \begin{cases}
                u(x+\frac{\Lambda}{2})e^{i\phi(-\Lambda/2)} \quad \text{for} \quad x \in (-\infty,-\frac{\Lambda}{2}],\\
                \varepsilon^{i\phi(x)} \quad \text{for} \quad x \in [-\frac{\Lambda}{2},\frac{\Lambda}{2}],\\
                u(x-\frac{\Lambda}{2})e^{i\phi(\Lambda/2)} \quad \text{for} \quad x \in [\frac{\Lambda}{2},+\infty),
            \end{cases}
        \end{equation*}
        where $\phi$ is a real valued function such that $\partial_x\phi \in L^2_{loc}(\mathbf{R})$. We choose $\phi$ such that $\partial_x\phi = -\frac{2}{v}$. Then, $u_{\Lambda} \in \mathcal{E}$ and we have
        \begin{equation*}
            \mathcal{K}(u_{\Lambda}) = \mathcal{K}(u)+\mathcal{K}_2(\varepsilon^{i\phi(x)}),
        \end{equation*}
        where
        \begin{equation*}
            \begin{split}
                \mathcal{K}_2(\varepsilon^{i\phi(x)}) &= \frac{1}{2}\int_{[-\frac{\Lambda}{2},\frac{\Lambda}{2}]}\varepsilon^2|\partial_x\phi(x)|^2dx + \frac{1}{4}\int_{[-\frac{\Lambda}{2},\frac{\Lambda}{2}]}(1-\varepsilon^2)^2dx - \frac{v}{2}\int_{[-\frac{\Lambda}{2},\frac{\Lambda}{2}]}(\varepsilon^2-1)\partial_x\phi(x) dx \\ & = \bigl[\varepsilon^2\frac{2}{v^2} + \frac{1}{4}(1-\varepsilon^2)^2 - (1-\varepsilon^2) \bigr]\Lambda.
            \end{split}
        \end{equation*}
        Choose $\varepsilon$ small enough, in particular such that $(\frac{2}{v^2}+\frac{1}{2})\varepsilon^2 + \frac{1}{4}\varepsilon^4 < 3/4$. Then, we have $\mathcal{K}(u_{\Lambda}) \to -\infty$ as $\Lambda \to +\infty$.
    \end{proof}
    In order to give a variational characterization of the stationary states, it's convenient to consider the functional $\mathcal{K}$ under a constraint. Following \cite{maris2003}, we consider $\mathcal{K}$ restricted to the set of functions with prescribed minimal density, and we minimize $\mathcal{K}$ in this set. Therefore we define, for $a \in [-1+\frac{v}{\sqrt{2}},0]$, the function 
        \begin{equation}
            h(a) := \inf\{\mathcal{K}(u)| \ u \in \mathcal{E}, \ \inf_{x\in \mathbf{R}}|u(x)| = a+1\}.
            \label{bozza_def_h(a)}
        \end{equation} 
        We have the following proposition (recall the definition of $u_a$ from \eqref{bozza_def_u_a}).
    \begin{proposition} [\cite{maris2003}]
    The map $a \to h(a)$ is differentiable from $[-1+\frac{v}{\sqrt{2}},0]$ to $\mathbf{R}$, with $h(0) = \gamma /2$ and derivative
        \begin{equation}
            h'(a) = (a+1)(\gamma -k_v(a)), \label{bozza_formula_h'}
        \end{equation}
        where $k_v(a)$ is defined in (\ref{bozza_def_k_v}). Moreover, $h(a) = \mathcal{K}(u_a)$ for all $a \in [-1+\frac{v}{\sqrt{2}},0]$ and, up to phase shifts, $u_a$ is the unique minimizer.
        \label{bozza_prop_h(a)}
    \end{proposition}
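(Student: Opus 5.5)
We argue in polar coordinates: for $u\in\mathcal{E}$ with $\inf|u|=a+1>0$ write $u=\rho e^{i\theta}$, so that
\begin{equation*}
\mathcal{K}(u)=\frac12\int_{\mathbf{R}}\Bigl[(\partial_x\rho)^2+\rho^2(\partial_x\theta)^2\Bigr]dx+\frac14\int_{\mathbf{R}}(1-\rho^2)^2dx+\frac{\gamma}{2}\rho(0)^2-\frac v2\int_{\mathbf{R}}(\rho^2-1)\partial_x\theta\, dx .
\end{equation*}
The first step is to minimize pointwise in $\partial_x\theta$. Completing the square gives $\frac12\rho^2(\partial_x\theta)^2-\frac v2(\rho^2-1)\partial_x\theta\ge -\frac{v^2}{8}\frac{(\rho^2-1)^2}{\rho^2}$, with equality exactly when $\partial_x\theta=\frac v2(1-\rho^{-2})$. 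This is precisely the phase law defining $\theta_a$. Hence $\mathcal{K}(u)\ge \mathcal{J}(\rho)$, where
\begin{equation*}
\mathcal{J}(\rho):=\frac{\gamma}{2}\rho(0)^2+\int_{\mathbf{R}}\Bigl[\frac12(\partial_x\rho)^2+W(\rho)\Bigr]dx,\qquad W(\rho)=\frac{(1-\rho^2)^2}{4}\Bigl(1-\frac{v^2}{2\rho^2}\Bigr).
\end{equation*}
Moreover $h(a)=\inf\{\mathcal{J}(\rho):\inf\rho=a+1\}$, since the optimal phase is always admissible. A direct check should give $2W(1+r)=f(r)^2$, with $f$ as in \eqref{bozza_f_ode}, and $W\ge0$ on $[v/\sqrt2,\infty)$.

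The second step is to reduce to a one-dimensional Modica--Mortola bound. For any admissible $\rho$ with minimum $a+1$ attained at some $x_0$, split $\mathbf{R}$ at $x_0$ and at $0$. On each half-line use $\frac12(\partial_x\rho)^2+W(\rho)\ge \sqrt{2W(\rho)}\,|\partial_x\rho|$ to obtain $\int\ge |G(\rho(\text{end}_1))-G(\rho(\text{end}_2))|$, where $G'=\sqrt{2W}=|f(\rho-1)|$. Together with $\rho\to1$ at $\pm\infty$, this yields
\begin{equation*}
\mathcal{J}(\rho)\ge \frac{\gamma}{2}b^2+2\bigl(G(1)-G(b)\bigr)+2\bigl(G(b)-G(a+1)\bigr),\qquad b:=\rho(0).
\end{equation*}
The last term is nonnegative and vanishes only if $x_0=0$. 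Since the minimum is $a+1\le b$, one then optimizes over $b\in[a+1,1]$. The function $\psi(b)=\frac{\gamma}{2}b^2+2G(1)-2G(b)$ has $\psi'(b)=b(\gamma-k_v(b-1))$, using $k_v(r)=-2f(r)/(1+r)$. The delicate point is to show that the full infimum, including the term $2(G(b)-G(a+1))$, equals $\frac{\gamma}{2}(a+1)^2+2(G(1)-G(a+1))$, i.e. that it is attained at $b=a+1$. The cleanest route is to compare directly: moving the minimum point to $0$ changes the potential term by $\frac\gamma2(b^2-(a+1)^2)$ but saves $4(G(b)-G(a+1))-2(G(b)-G(a+1))$. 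One must therefore verify $\frac{\gamma}{2}(b^2-(a+1)^2)\ge 0$, which holds trivially, together with the bookkeeping of the two passes through $[a+1,b]$. I expect this comparison of configurations to be the main obstacle; the monotone profiles should make it work.

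The third step is to identify the equality case. Equality in the Modica--Mortola inequality forces $\partial_x\rho=\pm f(\rho-1)$ on each side of $0$, with $\rho(0)=a+1$ the minimum. This is exactly the Cauchy problem \eqref{bozza_cauchy_stationary}, so $\rho=1+r_a$ by uniqueness. Combined with equality in the completed square, this gives $u=e^{i\phi}u_a$, so $h(a)=\mathcal{K}(u_a)=\frac{\gamma}{2}(a+1)^2+2(G(1)-G(a+1))$. In particular $h(0)=\gamma/2$, since $u_0\equiv1$. Differentiating in $a$ gives
\begin{equation*}
h'(a)=\gamma(a+1)-2|f(a)|=(a+1)\bigl(\gamma-k_v(a)\bigr),
\end{equation*}
which is \eqref{bozza_formula_h'}, using $f\le0$ on $[-1+v/\sqrt2,0]$. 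Finally, $h$ is differentiable, since $G$ is $C^1$.
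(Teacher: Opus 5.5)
Your argument is correct for $a\in(-1+\frac{v}{\sqrt2},0]$, but it takes a different route from the paper for the core step. The paper first translates the minimum point of $|u|$ to the origin. This leaves $E_0-v\mathcal{P}$ unchanged and can only lower $\frac{\gamma}{2}|u(0)|^2$. The paper then does the same completing-the-square as your Step 1 and quotes Mari\c{s} twice: Lemma 2.2 iv) for the fact that $1+r_a$ uniquely minimizes $G_2(\rho)=\int_{\mathbf{R}}\bigl[\frac12(\partial_x\rho)^2+W(\rho)\bigr]dx$ under $\rho(0)=\inf\rho=a+1$, and Remark 3.2 for the differentiability of $h$ and the formula for $h'$. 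Your Bogomol'nyi/Modica--Mortola bound replaces both citations at once. It yields the closed form $h(a)=\frac{\gamma}{2}(a+1)^2+2\bigl(G(1)-G(a+1)\bigr)$, from which $h'(a)=\gamma(a+1)-2|f(a)|=(a+1)(\gamma-k_v(a))$ follows immediately. So your proof is self-contained and more explicit than the paper's.

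Two points need repair.

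\textbf{Step 2.} The ``main obstacle'' you identify does not exist, because your own lower bound telescopes: $2(G(1)-G(b))+2(G(b)-G(a+1))=2(G(1)-G(a+1))$. Hence
\begin{equation*}
\mathcal{J}(\rho)\ge\frac{\gamma}{2}\rho(0)^2+2\bigl(G(1)-G(a+1)\bigr)\ge\frac{\gamma}{2}(a+1)^2+2\bigl(G(1)-G(a+1)\bigr),
\end{equation*}
and equality forces $\rho(0)=a+1$. It is cleaner to split only at $x_0$, so that the origin enters only through $\rho(0)\ge a+1$; this is the counterpart of the paper's translation step. Splitting only at $x_0$ also removes your unjustified restriction $b\le1$, since $\rho$ may exceed $1$. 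The detour through $\psi$ is a red herring: $\psi(b)=h(b-1)$, so minimizing it over $b\in[a+1,1]$ solves the problem with minimal density $\ge a+1$, not $=a+1$. For $a=0$ the infimum need not be attained, so argue directly: $\rho\ge1$ gives $\mathcal{J}(\rho)\ge\frac{\gamma}{2}$, with equality iff $\rho\equiv1$, since $W>0$ on $(1,\infty)$.

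\textbf{Step 3.} The signs must be derived, not assumed. They come from equality in $\int\sqrt{2W(\rho)}|\partial_x\rho|\,dx\ge\bigl|\int\partial_x G(\rho)\,dx\bigr|$, which forces $\rho$ to be monotone on each half-line. Uniqueness for \eqref{bozza_cauchy_stationary} then needs $f$ to be Lipschitz on $[a,0]$, which holds only for $a>-1+\frac{v}{\sqrt2}$. At $a=-1+\frac{v}{\sqrt2}$ one has $|f(r)|\sim c\sqrt{r-a}$, and uniqueness genuinely fails. Since $W(v/\sqrt2)=0$, inserting a plateau $\rho\equiv v/\sqrt2$ on $[-T,T]$, with the optimal phase $\partial_x\theta=\frac{v}{2}-\frac{1}{v}$, costs zero energy. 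This produces a one-parameter family of minimizers with the same value of $\mathcal{K}$. So the uniqueness claim holds only for $a\in(-1+\frac{v}{\sqrt2},0]$. This applies both to your proof and to the statement as written; the paper also asserts unique solvability of \eqref{bozza_cauchy_stationary} at that endpoint. The defect is harmless for the later use at $a_1$.
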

    \begin{proof}
        Set $a \in [-1+\frac{v}{\sqrt{2}},0)$. Being $a+1<1$, if $u \in \mathcal{E}$ satisfies $\inf_{x\in \mathbf{R}}|u(x)| = a+1$, then there exists $x_0 \in \mathbf{R}$ such that $|u(x_0)| = a+1$. For this reason, we can restrict the minimization problem to functions $u \in \mathcal{E}$ that satisfy $\inf_{x \in \mathbf{R}}|u(x)| = |u(0)|$. In other words we have, for $a \in [-1+\frac{v}{\sqrt{2}},0)$,  $$h(a) = \inf\{\mathcal{K}(u)| \ u \in \mathcal{E}, \ \inf_{x \in \mathbf{R}}|u(x)| = |u(0)| = a+1\}.$$ Consider $u \in \mathcal{E}$ such that $\inf_{x \in \mathbf{R}}|u(x)| = |u(0)| = a+1$. We write $u(x) = \rho(x)e^{i\theta(x)}$, for $\rho,\theta:\mathbf{R}\to \mathbf{R}$ such that $\partial_x\rho,\partial_x\theta \in L^2(\mathbf{R})$, $\rho^2-1 \in L^2(\mathbf{R})$ and $\inf_{x \in \mathbf{R}}\rho(x) = \rho(0) = a+1$. By using this expression for $u$, we have  $\mathcal{K}(u) = G(u)+\frac{\gamma}{2}(a+1)^2$, where
        \begin{equation*}
            G(u) = \frac{1}{2}\int_{\mathbf{R}} (\partial_x\rho)^2 + \rho^2 (\partial_x\theta)^2 dx + \frac{1}{4} \int_{\mathbf{R}}(1-\rho^2)^2dx - \frac{v}{2}\int_{\mathbf{R}}(\rho^2-1)\partial_x\theta dx.
        \end{equation*} Using Cauchy-Schwarz and Young's inequality, we have
        \begin{equation}
                \begin{split}
                    G(u) &\geq \frac{1}{2}\int_{\mathbf{R}} (\partial_x\rho)^2 + \rho^2 (\partial_x\theta)^2 dx + \frac{1}{4} \int_{\mathbf{R}}(1-\rho^2)^2dx - \frac{v}{2}\Bigl|\int_{\mathbf{R}}\frac{(\rho^2-1)}{\rho}\rho\partial_x\theta dx \Bigr| \\ & \geq  \frac{1}{2}\int_{\mathbf{R}} (\partial_x\rho)^2 dx + \frac{1}{4} \int_{\mathbf{R}}(1-\rho^2)^2dx -\frac{v^2}{8}\int_{\mathbf{R}} \frac{(\rho^2-1)^2}{\rho^2}dx =: G_2(\rho).
                \label{bozza_comp_G}
                \end{split}
        \end{equation}
        In particular, equality in (\ref{bozza_comp_G}) holds if and only if 
        \begin{equation}
            \partial_x\theta = \frac{v}{2}\bigl(1-\frac{1}{\rho^2}\bigr).
            \label{bozza_comp_theta_n}
        \end{equation}
        Assuming $u(x) = \rho(x) e^{i\theta(x)}$, with $\theta(x)$ satisfying (\ref{bozza_comp_theta_n}), we have $G(u) = G_2(\rho)$, and we are left with the problem of minimizing $G_2(\rho)$ among all positive $\rho$ that satisfy $\inf_{x \in \mathbf{R}}\rho(x)=\rho(0) = a+1$. This problem has been solved in \cite[Lemma 2.2 - iv)]{maris2003}. There, it is proven that the infimum of $G_2(\rho)$ with the constraint $\rho(0)=a+1$ is achieved uniquely by $\rho(x) = 1+r_a(x)$, where $r_a$ is defined in (\ref{bozza_eq_def_r_a}). We conclude that $\mathcal{K}(u_a) = h(a)$, for all $a \in [-1+\frac{v}{\sqrt{2}},0)$, and that $u_a$ is the unique minimizer, up to phase shifts.\\
        On the other hand, if $a =0$, then $G(u)$ has, up to phase shifts, $u = 1$ as a unique minimizer. We conclude that $\mathcal{K}(u_{a=0}) = h(0) = \gamma/2$, where we recall that $u_{a=0}=1$.\\
        Finally, the explicit expression of the map $a \to h(a)$ has been computed in \cite[Remark 3.2]{maris2003}, where\footnote{The functional $G_2$ in (\ref{bozza_comp_G}) coincides with the functional that is minimized in \cite{maris2003}, up to a normalization factor of 2. Similarly, the map $a \to h(a)$ coincides with the one defined in \cite{maris2003}, up to a normalization factor of 2 and an additional constant of value $\gamma$.} $a \to h(a)$ is shown to be a differentiable map with $h'(a)$ given by (\ref{bozza_formula_h'}).
    \end{proof}
    \begin{figure}
    \centering
    \includegraphics[width=\linewidth]{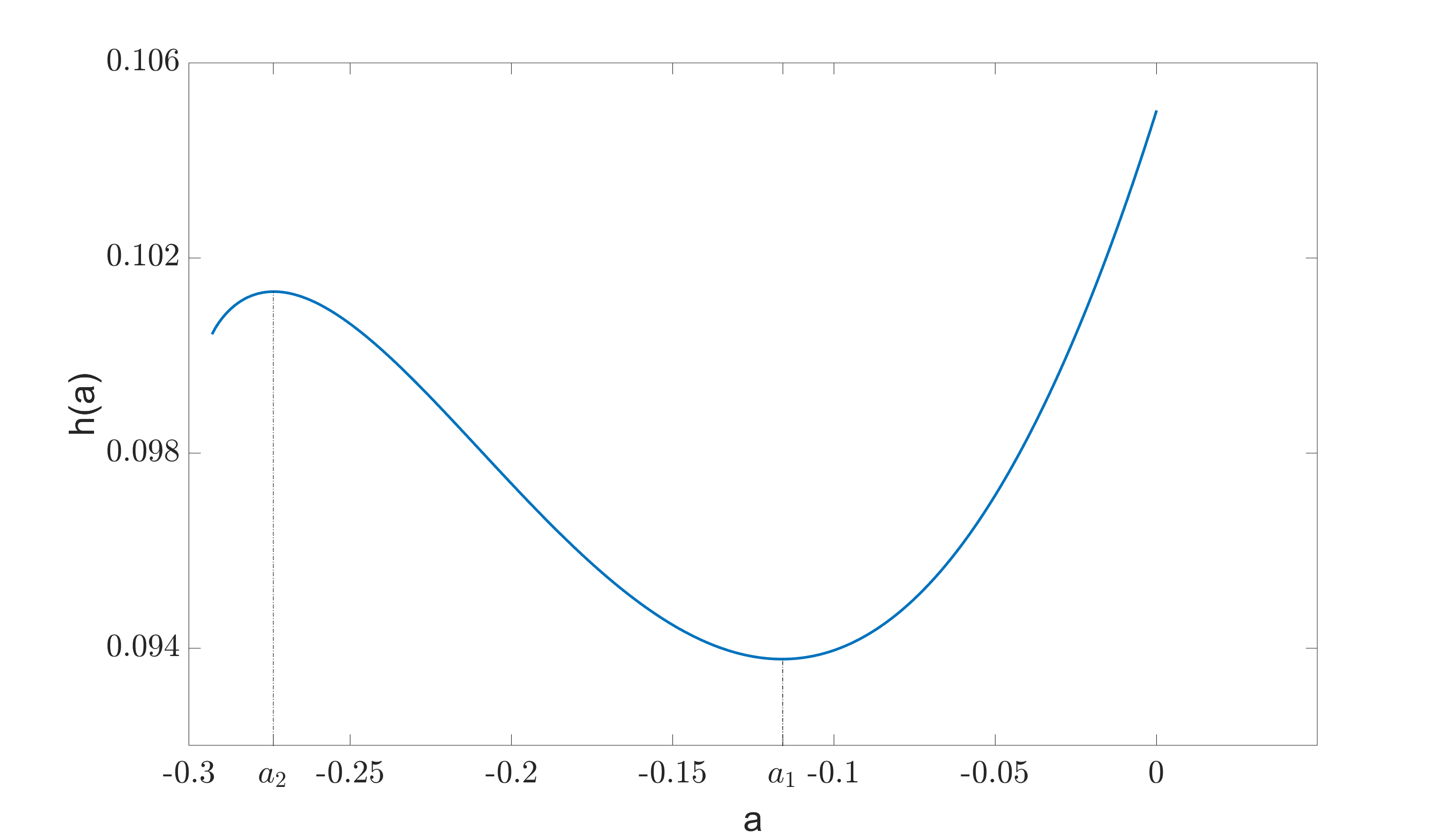}
    \caption{Plot of the function $h(a) = \mathcal{K}(u_a)$, for a velocity $v =1$ and a potential strength $\gamma = 0.21$. In this case $v_{cr}(\gamma) \sim 1.078$. We observe the presence of two critical points: the local maximum at $a_2$ and the local minimum at $a_1$.}
    \label{fig:h(a)}
\end{figure}
    Proposition \ref{bozza_prop_h(a)} tells us that, given $a \in [-1+\frac{v}{\sqrt{2}},0]$, the function $u_a$ is the unique minimizer of the problem \eqref{bozza_def_h(a)}, up to phase shifts. Moreover, it gives us an explicit expression of $h'(a)$ in \eqref{bozza_formula_h'}. From the behavior of the map $a \to k_v(a)$ in (\ref{bozza_def_k_v}), it's possible to infer the shape of the function $h(a)$. As explained in the following corollary from \cite[Remark 3.2]{maris2003}, this allows us to characterize the stationary state $u_{a_1}$ as a local minimum of $\mathcal{K}$.
    \begin{corollary}
        Let $v \in (0,\sqrt{2})$ and let $\gamma \in (0,\varphi(v))$. Let $a_1,a_2 \in (-1+\frac{v}{\sqrt{2}},0)$ be the two solutions of the equation $\gamma = k_v(a) $ with $a_2<a_1$, as defined in \eqref{bozza_remark_a12}. Then, the map $a \to h(a)$ is increasing on $[-1+\frac{v}{\sqrt{2}},a_2]$, it is decreasing on $[a_2,a_1]$, and it is increasing on $[a_1,0]$ (see Figure \ref{fig:h(a)}). In particular, we have $h(a_2) > h(a_1)$, or equivalently, $\mathcal{K}(u_{a_2})>\mathcal{K}(u_{a_1})$. Moreover, if we define
        \begin{equation}
            V_2 =\{u \in \mathcal{E}, \ \inf_{x \in \mathbf{R}}|u(x)| \geq 1+a_2\},
        \end{equation}
            then, up to phase shifts, $u_{a_1} \in \mathcal{E}$ is the unique minimizer of the problem: 
            \begin{equation}
                \inf\{\mathcal{K}(u), \ u \in V_2\}.
                \label{bozza_minimization_problem}
            \end{equation}
        \label{bozza_corollary_u_a1}
    \end{corollary}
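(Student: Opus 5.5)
The plan is to read everything off from the formula $h'(a) = (a+1)(\gamma - k_v(a))$ of Proposition \ref{bozza_prop_h(a)}, and then to convert the one-parameter minimization over level sets into a minimization over $V_2$.

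\textbf{Step 1: monotonicity of $h$.} Since $a+1>0$ on $[-1+\frac{v}{\sqrt{2}},0]$, the sign of $h'(a)$ is the sign of $\gamma-k_v(a)$. We use the behaviour of $k_v$ described before Proposition \ref{bozza_maris}: $k_v$ is strictly increasing on $[-1+\frac{v}{\sqrt{2}},a_*]$, strictly decreasing on $[a_*,0]$, vanishes at both endpoints, and attains the maximum $\varphi(v)>\gamma$. Hence the equation $\gamma=k_v(a)$ has exactly the roots $a_2<a_*<a_1$, and
\begin{itemize}
\item $k_v<\gamma$ on $[-1+\frac{v}{\sqrt{2}},a_2)\cup(a_1,0]$,
\item $k_v>\gamma$ on $(a_2,a_1)$.
\end{itemize}
So $h'>0$ on the two outer intervals and $h'<0$ on $(a_2,a_1)$. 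This gives the claimed monotonicity of $h$, with strict inequalities away from the two critical points. In particular $h(a_2)>h(a_1)$, which by Proposition \ref{bozza_prop_h(a)} is the same as $\mathcal{K}(u_{a_2})>\mathcal{K}(u_{a_1})$.

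\textbf{Step 2: minimization on $V_2$.} Take $u\in V_2$ and set $a:=\inf_x|u(x)|-1$, so that $a\ge a_2$.
\begin{itemize}
\item If $a\le 0$, then $a\in[a_2,0]$ and, by definition of $h$, $\mathcal{K}(u)\ge h(a)\ge \min_{[a_2,0]}h$. By Step 1 this minimum equals $h(a_1)$: on $[a_2,a_1]$ we have $h\ge h(a_1)$ because $h$ decreases there, and on $[a_1,0]$ we have $h\ge h(a_1)$ because $h$ increases there.
\item The case $a>0$ is impossible, since elements of $\mathcal{E}$ satisfy $|u(x)|\to1$ at infinity, so $\inf_x|u(x)|\le 1$.
\end{itemize}
Thus $\inf_{V_2}\mathcal{K}=h(a_1)=\mathcal{K}(u_{a_1})$, and the infimum is attained because $u_{a_1}\in V_2$.

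\textbf{Step 3: uniqueness.} Suppose $u\in V_2$ satisfies $\mathcal{K}(u)=h(a_1)$. Then $h(a)\le h(a_1)$. Strict monotonicity on $[a_2,a_1)$ and $(a_1,0]$ gives $h(a)>h(a_1)$ for every $a\ne a_1$ in $[a_2,0]$, so $a=a_1$. Now $u$ realizes the infimum defining $h(a_1)$, and by the uniqueness part of Proposition \ref{bozza_prop_h(a)}, $u=e^{i\phi}u_{a_1}$ for some $\phi\in\mathbf{R}$.

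\textbf{Main obstacle.} One point must be checked rather than quoted. The uniqueness in Proposition \ref{bozza_prop_h(a)} was proved after translating the minimum point to the origin, so it should be read as uniqueness up to phase (together with the normalization that the minimum of $|u|$ sits at $x=0$). There is also an apparent translation issue: $\mathcal{K}$ contains the term $\frac{\gamma}{2}|u(0)|^2$. The proof of Proposition \ref{bozza_prop_h(a)} bounds $\mathcal{K}(u)\ge G_2(\rho)+\frac{\gamma}{2}\rho(0)^2$ and uses $\rho(0)\ge a+1$. For a minimizer the inequality $\rho(0)\ge a+1$ must be an equality, since otherwise the bound would be strict. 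Equality forces the minimum point to be at the origin, which excludes translated copies of $u_{a_1}$. I would verify this equality case explicitly.
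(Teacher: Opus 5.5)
Your proof is correct and is essentially the paper's argument; the paper only sketches it by appeal to Mari\c{s}. The sign of $h'(a)=(a+1)(\gamma-k_v(a))$ gives the monotonicity, and splitting $V_2$ into the level sets $\{\inf|u|=1+a\}$ with $a\in[a_2,0]$ reduces both the minimization and its uniqueness to Proposition~\ref{bozza_prop_h(a)}. Your extra check is a valid refinement of a reduction the paper makes without comment: any minimizer for $h(a)$ must have $|u(0)|=\inf|u|$, since otherwise translating strictly lowers $\frac{\gamma}{2}|u(0)|^2$ when $\gamma>0$, and this is exactly what rules out translated copies of $u_{a_1}$.
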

    By means of Proposition \ref{bozza_prop_h(a)} and Corollary \ref{bozza_corollary_u_a1}, we can deduce the following dynamical property.
    \begin{lemma}
        Under the same assumptions of Corollary \ref{bozza_corollary_u_a1}, consider an initial datum $\psi \in \mathcal{E}$ such that $\psi \in V_2$ and $\mathcal{K}(\psi) < \mathcal{K}(u_{a_2})$. Call $u(t) \in C^0(\mathbf{R}, \mathcal{E})$ the unique solution to (\ref{bozza_NLS}) with $u(0) = \psi$. Then, it holds $$u(t) \in V_2 \quad  \text{for all} \quad t \in \mathbf{R}.$$ Consequently, we have $\mathcal{K}(u(t)) = \mathcal{K}(\psi)$ for all $t \in \mathbf{R}$.
        \label{bozza_pot_well}
    \end{lemma}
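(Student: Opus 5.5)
The argument is a standard potential-well (continuity) argument, with the conservation of $\mathcal{K}$ on nowhere-vanishing solutions (Lemma \ref{bozza_lemma_en_cons_nonvanishing}) and the variational characterization of $h$ (Proposition \ref{bozza_prop_h(a)}) as the two inputs.

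First note that $1+a_2>\frac{v}{\sqrt{2}}>0$. Hence $\psi\in V_2$ implies $\psi\in U_0$, and $\mathcal{K}(\psi)$ is well defined. Consider the continuous function $m(t):=\inf_{x\in\mathbf{R}}|u(t,x)|$. It is continuous because $t\mapsto u(t)$ is continuous for $d_\infty$, which controls $\|u(t)-u(s)\|_{L^\infty}$, and the infimum of a modulus is $1$-Lipschitz in sup norm. Define
\[
T^*:=\sup\{T>0 : m(t)>0 \text{ and } \mathcal{K}(u(t))=\mathcal{K}(\psi) \text{ for all } t\in[0,T)\}.
\]
By Lemma \ref{bozza_lemma_en_cons_nonvanishing}, $T^*>0$, and on $[0,T^*)$ the solution stays in $U_0$ with $\mathcal{K}(u(t))=\mathcal{K}(\psi)$.

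The key claim is that $m(t)\ge 1+a_2$ on $[0,T^*)$. Suppose not. Since $m(0)\ge 1+a_2$ and $m$ is continuous, there is some $t_0\in[0,T^*)$ with $m(t_0)<1+a_2$. By the intermediate value theorem there is also $t_1\in(0,t_0]$ with $m(t_1)=1+a_2$ exactly. If $m(0)=1+a_2$, take $t_1=0$ instead; this case is covered too. By the definition of $h$ in \eqref{bozza_def_h(a)} and Proposition \ref{bozza_prop_h(a)},
\[
\mathcal{K}(u(t_1))\ge h(a_2)=\mathcal{K}(u_{a_2}).
\]
This contradicts $\mathcal{K}(u(t_1))=\mathcal{K}(\psi)<\mathcal{K}(u_{a_2})$. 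So $m\ge 1+a_2$ on $[0,T^*)$.

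Next we show $T^*=+\infty$. If $T^*<\infty$, continuity gives $m(T^*)\ge 1+a_2>0$, so $u(T^*)\in U_0$. Applying Lemma \ref{bozza_lemma_en_cons_nonvanishing} again, the interval of nonvanishing and energy conservation extends past $T^*$. To be careful, one uses the continuity of $t\mapsto\mathcal{K}(u(t))$ (Corollary \ref{bozza_en_K_continuity}) to get $\mathcal{K}(u(T^*))=\mathcal{K}(\psi)$. Then one applies the lemma to $u(T^*)$ as initial datum, using uniqueness and time translation invariance of \eqref{bozza_NLS}. This contradicts the maximality of $T^*$. The same argument run backward in time, or time reversal, treats $t<0$. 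This gives $u(t)\in V_2$ and $\mathcal{K}(u(t))=\mathcal{K}(\psi)$ for all $t\in\mathbf{R}$.

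The main obstacle is the bookkeeping in the conservation of $\mathcal{K}$. That conservation is only guaranteed on a time interval where the solution does not vanish, and on each such interval Lemma \ref{bozza_lemma_en_cons_nonvanishing} gives it only up to the quantization $v\pi k$ coming from Lemma \ref{bozza_rel_energies_1}, which is removed by continuity. Restarting the lemma at later times requires this continuity together with the relation $K_{u(T^*)}$ versus $K_{u_0}$ from Lemma \ref{bozza_mom_difference_subscript}. Once that is in place, the barrier argument above closes the proof.
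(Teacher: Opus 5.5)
Your proof is correct and is essentially the paper's argument: a first-exit-time contradiction that combines the conservation of $\mathcal{K}$ on nonvanishing time intervals (Lemma \ref{bozza_lemma_en_cons_nonvanishing}) with $h(a_2)=\mathcal{K}(u_{a_2})$ from Proposition \ref{bozza_prop_h(a)}. The restart at $T^*$, and the bookkeeping you flag as the main obstacle, are unnecessary. Lemma \ref{bozza_lemma_en_cons_nonvanishing} already gives exact conservation on \emph{any} interval containing $0$ on which the solution stays nonvanishing (the $v\pi k$ ambiguity is removed inside its proof), so the paper simply applies it on $[0,T]$ with $T$ the first exit time from $V_2$.
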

    \begin{proof} The existence of a state $\psi \in V_2$ satisfying $\mathcal{K}(\psi)<\mathcal{K}(u_{a_2})$ is guaranteed by Corollary \ref{bozza_corollary_u_a1}. Moreover, by the minimization property of $u_{a_2}$, we must have $\inf_{x \in \mathbf{R}}|\psi(x)| > 1+a_2$. Consider $\psi$ as above and call $u \in C^0(\mathbf{R}, \mathcal{E})$ the unique solution to (\ref{bozza_NLS}) with $u(0) = \psi$. By contradiction, suppose that there exists a time $t^*>0$ such that $u(t^*) \notin V_2$. We define $T\geq0$ as
        \begin{equation*}
            T := \sup\{t \geq0|\ \  u(s) \in V_2, \ \  \forall s \in [0,t) \}.
        \end{equation*}
        By the contradiction hypothesis, $T < +\infty$. By continuity of the map $t \to \inf_{x \in \mathbf{R}}|u(t,x)|$ for the solution $u(t) \in C^0(\mathbf{R}, \mathcal{E})$, we have $T >0$. Moreover, we have $\inf_{x \in \mathbf{R}}|u(T,x)| = 1+a_2$, again by continuity. By the minimization property of $u_{a_2}$, this contradicts the energy conservation (Lemma \ref{bozza_lemma_en_cons_nonvanishing}), since $\mathcal{K}(\psi) < \mathcal{K}(u_{a_2})$ by hypothesis, while $\mathcal{K}(u(T))\geq \mathcal{K}(u_{a_2}) = h(a_2)$. The same contradiction is reached if we suppose the existence of $t^*<0$ such that $u(t^*) \notin V_2$. We conclude $u(t) \in V_2$ for all $t \in \mathbf{R}$. Finally, $\mathcal{K}(u(t)) = \mathcal{K}(\psi)$ for all $t \in \mathbf{R}$ follows by the conservation of energy.
    \end{proof}
    \section{\texorpdfstring{Orbital stability of $u_{a_1}$}{Orbital stability of u a 1}}
    In this section we prove the orbital stability of the stationary state $u_{a_1}$. Our analysis relies on the variational characterization in Corollary \ref{bozza_corollary_u_a1}), on the dynamical property in Lemma \ref{bozza_pot_well} and on the argument of Cazenave and Lions for the orbital stability of constrained minima \cite{cazenave_lions}. The latter requires us to show the compactness of the minimizing sequences for the problem \eqref{bozza_minimization_problem}, which we do following the approach of Bethuel et al. in \cite{bethuel_existence}.\\
    
    Throughout this section the velocity $v \in (0,\sqrt{2})$ and the potential strength $\gamma \in (0,\varphi(v))$ will be fixed. Let's recall the definition of $a_1,a_2 \in [-1+\frac{v}{\sqrt{2}},0]$ as solutions to $\gamma =k_v(a)$, with $a_2<a_1$ (see \eqref{bozza_remark_a12}), and the definition of the set $$V_2 =\{u \in \mathcal{E}, \ \inf_{x \in \mathbf{R}}|u(x)| \geq 1+a_2\},$$ from Corollary \ref{bozza_corollary_u_a1}.\\
    For a given nowhere vanishing field $u \in \mathcal{E}$, with $u(x) = \rho(x) e^{i\theta(x)}$ for all $x \in \mathbf{R}$, define the energy density
    $$e(u) := \frac{1}{2}(\partial_x\rho)^2 +  \frac{1}{2}\rho^2(\partial_x\theta)^2+\frac{1}{4}(\rho^2-1)^2$$
    and the momentum density
    $$p(u) :=\frac{1}{2} \frac{|u|^2-1}{|u|^2} \Re(iu\partial_x\overline{u}) =  \frac{1}{2}(\rho^2-1)\partial_x\theta.$$
    Then, we have the following lemma.
    \begin{lemma}
        There exists $\varepsilon \in (0,1)$ such that, for any $u \in V_2$, it holds 
            \begin{equation}
                |vp(u)| \leq (1-\varepsilon)e(u) \quad \text{in} \ \  \mathbf{R} \qquad 
            \text{and} \qquad  \varepsilon E_{\gamma}(u)\leq \mathcal{K}(u)
            \end{equation}
        \label{bozza_lemma_K>E}
    \end{lemma}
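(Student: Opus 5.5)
The plan is to prove the pointwise inequality first and then integrate it. Let $u=\rho e^{i\theta}\in V_2$, so that $\rho\ge 1+a_2>v/\sqrt{2}$ everywhere; here $a_2>-1+\frac{v}{\sqrt2}$ comes from \eqref{bozza_remark_a12}. Young's inequality gives
\begin{equation*}
|vp(u)|=\frac{v}{2}|\rho^2-1||\partial_x\theta| = \frac{v}{2}\,\frac{|\rho^2-1|}{\rho}\,\rho|\partial_x\theta| \le \frac{1}{2}\rho^2(\partial_x\theta)^2\cdot\lambda + \frac{v^2}{8\lambda}\frac{(\rho^2-1)^2}{\rho^2}
\end{equation*}
for every $\lambda>0$. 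We want the right-hand side to be bounded by $(1-\varepsilon)\bigl(\frac12\rho^2(\partial_x\theta)^2+\frac14(\rho^2-1)^2\bigr)$. Take $\lambda=1-\varepsilon$. It then suffices that
\begin{equation*}
\frac{v^2}{8(1-\varepsilon)\rho^2}\le \frac{1-\varepsilon}{4},
\end{equation*}
that is, $(1-\varepsilon)^2\ge \frac{v^2}{2\rho^2}$. Since $\rho^2\ge (1+a_2)^2>v^2/2$, we have $\frac{v^2}{2\rho^2}\le \frac{v^2}{2(1+a_2)^2}=:q<1$. Choosing $\varepsilon=1-\sqrt{q}\in(0,1)$ makes the condition hold uniformly in $x$ and in $u\in V_2$. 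The term $\frac12(\partial_x\rho)^2$ in $e(u)$ is nonnegative, so it only helps, and the pointwise bound $|vp(u)|\le(1-\varepsilon)e(u)$ follows. The constant $\varepsilon$ depends only on $v$ and $a_2$, which are fixed.

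For the second inequality, observe that for a nowhere vanishing field $|\partial_x u|^2=(\partial_x\rho)^2+\rho^2(\partial_x\theta)^2$. Hence $\int_{\mathbf{R}} e(u)\,dx = E_0(u)$, and $E_\gamma(u)=\int e(u)\,dx+\frac{\gamma}{2}|u(0)|^2$. Also $\mathcal{P}(u)=\int p(u)\,dx$ by \eqref{bozza_def_mom_nonvanishing}, and this integral is absolutely convergent because $p(u)$ is dominated by $e(u)\in L^1$. Integrating the pointwise bound gives
\begin{equation*}
\mathcal{K}(u)\ge \int e(u)\,dx+\frac{\gamma}{2}|u(0)|^2-(1-\varepsilon)\int e(u)\,dx \ge \varepsilon\Bigl(\int e(u)\,dx+\frac{\gamma}{2}|u(0)|^2\Bigr)=\varepsilon E_\gamma(u),
\end{equation*}
using $\gamma>0$ and $\varepsilon<1$ for the delta term.

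The only step needing care is the uniform strict lower bound $\rho>v/\sqrt2$ on $V_2$, which is exactly where the constraint defining $V_2$ (through $a_2$) enters. There is also a minor regularity point: one must justify writing $u=\rho e^{i\theta}$ with $\partial_x\rho,\partial_x\theta\in L^2$. Both are already available from the setup of Section \ref{section:energy}, so I expect no real obstacle.
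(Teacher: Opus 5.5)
Your proposal is correct and is essentially the paper's argument: a pointwise Young inequality using $\rho\ge 1+a_2>v/\sqrt{2}$, with the same constant $1-\varepsilon=\frac{v}{\sqrt{2}(1+a_2)}$, followed by integration over $\mathbf{R}$. The only difference is cosmetic. The paper bounds $|v\mathcal{P}(u)|\le(1-\varepsilon)E_{\gamma}(u)$ directly, whereas you keep the $\frac{\gamma}{2}|u(0)|^2$ term separate; both give $\mathcal{K}(u)\ge\varepsilon E_{\gamma}(u)$.
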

    \begin{proof}
    Since $1+a_2 \in (\frac{v}{\sqrt{2}},1)$, we can consider $\varepsilon \in (0,1)$ such that $1+a_2 = \frac{1}{1-\varepsilon}\frac{v}{\sqrt{2}}.$ Given $u \in V_2$, with $u(x) = \rho(x) e^{i\theta(x)}$ for all $x \in \mathbf{R}$, we have by means of Young's inequality
    \begin{equation}
       |vp(u)| = \bigl|\frac{v}{2}(\rho^2-1)\partial_x\theta\bigr| \leq \frac{v}{\sqrt{2}\rho}\Bigl(\frac{1}{4}(\rho^2-1)^2 + \frac{1}{2}\rho^2(\partial_x\theta)^2\Bigr) \leq \frac{v}{\sqrt{2}\rho}e(u) \leq (1-\varepsilon)e(u),
       \label{bozza_ineq_e_p}
    \end{equation}
    Integrating the expression above on the real line, we conclude $|v\mathcal{P}(u)| \leq (1-\varepsilon)E_{\gamma}(u)$, so that $\mathcal{K}(u) \geq E_{\gamma}(u) - |v\mathcal{P}(u)| \geq \varepsilon E_{\gamma}(u)$.
    \end{proof}
    We now study the properties of minimizing sequences for the problem \eqref{bozza_minimization_problem}. Notice that, by definition, a minimizing sequence $\{u_n\}_{n \in \mathbf{N}}$ satisfies $u_{n} \in V_2$ for all $n \in \mathbf{N}$. This is a necessary restriction, since the energy $\mathcal{K}$ is unbounded from below in its domain (see Lemma \ref{bozza_K_unbounded}). Moreover, by the minimization property of $u_{a_1}$ from Corollary \ref{bozza_corollary_u_a1}, a minimizing sequence satisfies $\mathcal{K}(u_n) \to \mathcal{K}(u_{a_1})$ as $n \to \infty$.\\
    We begin with the following Lemma.
    \begin{lemma}
        Consider $\{u_n\}_{n \in \mathbf{N}}$ a minimizing sequence for \eqref{bozza_minimization_problem}. There exists an element $u \in \dot{H}^1(\mathbf{R})\cap L^2_{loc}(\mathbf{R})$ and a subsequence $\{u_{\sigma(n)}\}$ that satisfy
        \begin{equation*}
            \partial_xu_{\sigma(n)} \rightharpoonup \partial_xu \quad \text{in} \quad L^2(\mathbf{R}), \qquad  u_{\sigma(n)} \rightharpoonup u \quad \text{in} \quad L^2_{loc}(\mathbf{R}), \qquad u_{\sigma(n)} \to u \quad \text{in} \quad L_{loc}^{\infty}(\mathbf{R}).
        \end{equation*}
        \label{bozza_lemma_weak_limit}
    \end{lemma}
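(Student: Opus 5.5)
The plan is to obtain uniform bounds on the minimizing sequence from Lemma \ref{bozza_lemma_K>E}, and then extract convergent subsequences by weak compactness in $L^2$ together with an Arzelà--Ascoli argument on compact intervals.

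First, the uniform energy bound. Since $\{u_n\}$ is minimizing, $\mathcal{K}(u_n)\to\mathcal{K}(u_{a_1})$, so $\mathcal{K}(u_n)\le \mathcal{K}(u_{a_1})+1$ for $n$ large. Each $u_n\in V_2$, so Lemma \ref{bozza_lemma_K>E} gives $\varepsilon E_\gamma(u_n)\le \mathcal{K}(u_n)$. Hence $|u_n|^2_{\mathcal{E}}\le E_\gamma(u_n)\le C$ uniformly in $n$. Lemma \ref{bozza_L_infty} then yields $\|u_n\|_{L^\infty(\mathbf{R})}\le C(1+C^{1/3})=:M$, and also $\|\partial_x u_n\|_{L^2(\mathbf{R})}\le \sqrt{2C}$.

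Second, the extraction. Since $\{\partial_x u_n\}$ is bounded in $L^2(\mathbf{R})$, a subsequence converges weakly in $L^2(\mathbf{R})$ to some $g$.
\begin{itemize}
\item[i)] On each interval $[-k,k]$ the functions $u_n$ are bounded by $M$. They are also uniformly Hölder-$1/2$ continuous, because $|u_n(x)-u_n(y)|\le \|\partial_xu_n\|_{L^2}|x-y|^{1/2}$.
\item[ii)] By Arzelà--Ascoli on $[-k,k]$, a subsequence converges uniformly there.
\item[iii)] A diagonal argument over $k\in\mathbf{N}$ produces a subsequence $u_{\sigma(n)}$ and a continuous function $u$ with $u_{\sigma(n)}\to u$ in $L^\infty_{loc}(\mathbf{R})$.
\end{itemize}
Local uniform convergence implies strong, hence weak, convergence in $L^2_{loc}(\mathbf{R})$.

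Finally, identify $g=\partial_x u$. For $\phi\in C_0^\infty(\mathbf{R})$, integrate by parts and pass to the limit:
$$\int u\,\partial_x\phi=\lim\int u_{\sigma(n)}\partial_x\phi=-\lim\int \partial_xu_{\sigma(n)}\phi=-\int g\phi.$$
Here the first limit uses local uniform convergence and the last uses weak convergence in $L^2$. Therefore $\partial_x u=g\in L^2(\mathbf{R})$, and $u\in \dot H^1(\mathbf{R})\cap L^2_{loc}(\mathbf{R})$ (indeed $u\in L^\infty$ with $\|u\|_{L^\infty}\le M$).

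There is no real obstacle here. The only essential input is the coercivity $\varepsilon E_\gamma\le\mathcal{K}$ on $V_2$, which is what makes the sequence bounded despite $\mathcal{K}$ being unbounded below (Lemma \ref{bozza_K_unbounded}). The one point needing care is that the subsequence must be chosen once, via the diagonal procedure, so that all three convergences hold simultaneously along it.
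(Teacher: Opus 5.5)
Your argument is correct and is essentially the paper's. In both, the coercivity $\varepsilon E_\gamma\le\mathcal{K}$ on $V_2$ gives the uniform bounds, the derivatives converge weakly in $L^2(\mathbf{R})$, and compactness on bounded intervals gives local uniform convergence; you use Arzel\`a--Ascoli via the H\"older-$1/2$ bound, while the paper uses Rellich in $H^1([-A,A])$. The difference is organizational: the paper defines the limit directly as $u(x)=c+\int_0^x g$ with $c=\lim u_{\sigma(n)}(0)$, gets pointwise convergence from the weak convergence of $\partial_x u_{\sigma(n)}$, and uses uniqueness of this limit to obtain uniform convergence on every $[-A,A]$ along the same subsequence, which avoids both your diagonal extraction and your a posteriori identification $\partial_x u=g$.
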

    \begin{proof}
        Consider $\{u_n\}_{n \in \mathbf{N}}$ a minimizing sequence. By means of Lemma \ref{bozza_lemma_K>E}, we know that the sequence $\{\partial_xu_n\}$ is uniformly bounded in $L^2(\mathbf{R})$. Since $L^2(\mathbf{R})$ is a reflexive Banach space, there exists $g \in L^2(\mathbf{R})$ such that, up to subsequences, $\partial_xu_n \rightharpoonup g$ in $L^2(\mathbf{R})$. Moreover, the sequence $\{|u_n(0)|\} \subset \mathbf{R}_+$ is uniformly bounded. We deduce the existence of a constant $c \in \mathbf{C}$ such that, up to subsequences, $u_n(0) \to c $ as $n \to \infty.$ We denote by $\{u_{\sigma(n)}\}$ the subsequence of $\{u_n\}$ such that
        \begin{equation*}
            \partial_xu_{\sigma(n)} \rightharpoonup g \quad \text{in} \quad L^2(\mathbf{R}), \qquad \text{and} \qquad  u_{\sigma(n)}(0) \to c \qquad \text{as} \quad n \to +\infty.
        \end{equation*}
    We define $$u(x) = \int_{0}^xg(s)ds + c.$$
    Then, $\partial_xu = g \in L^2(\mathbf{R})$ and $u(0) = c$. For any $x_0 \in \mathbf{R}$, we have
    \begin{equation*}
        |u_{\sigma(n)}(x_0)-u(x_0)| \leq \bigl|\int_{0}^{x_0}\partial_xu_{\sigma(n)}(s)-g(s)ds\bigr| + \bigl|u_{\sigma(n)}(0)-c\bigr| \to 0 , \qquad \text{as} \quad n \to \infty.
    \end{equation*}
    Hence, $u_{\sigma(n)}$ converges to $u$ pointwise on $\mathbf{R}$. Similarly, one can show that the sequence $\{u_{\sigma(n)}\}$ satisfies $u_{\sigma(n)} \rightharpoonup u$ in $L^2_{loc}(\mathbf{R})$ as $n \to \infty$. This implies that $u_{\sigma(n)} \rightharpoonup u$ in $H^1_{loc}(\mathbf{R})$.\\
    Finally, for any given $A>0$, one can show that the sequence $\{u_{\sigma(n)}\}$ is uniformly bounded in $H^1([-A,A])$. By Rellich's compactness theorem \cite[Theorem 8.8]{brezis}, and the pointwise convergence, we have $u_{\sigma(n)} \to u$ in $L^{\infty}([-A,A])$, up to subsequences. We want to show that the convergence holds for the sequence $\{u_{\sigma(n)}\}$ itself. Given $A>0$, suppose by contradiction that $\{u_{\sigma(n)}\}$ does not converge uniformly to $u$ in $[-A,A]$. This means that there exists $\varepsilon >0$ such that $||u_{\sigma(n)}-u||_{L^{\infty}([-A,A])} \geq \varepsilon$ infinitely often. We can extract a subsequence $\{u_{\pi(\sigma(n))}\}$ such that $||u_{\pi(\sigma(n))}-u||_{L^{\infty}([-A,A])} \geq \varepsilon$ for every $n \in \mathbf{N}$. This subsequence is still uniformly bounded in $H^1([-A,A])$ and converges to $u$ pointwise. By Rellich's compactness theorem, we can extract a further subsequence that converges to $u$ uniformly in $[-A,A]$, and this is a contradiction.
    \end{proof}
    In the following lemma we state further properties of the weak limit $u$ from Lemma \ref{bozza_lemma_weak_limit}.
    \begin{lemma}
        Let $\{u_n\}_{n \in \mathbf{N}}$ be a minimizing sequence for \eqref{bozza_minimization_problem}. Let $u \in \dot{H}^1(\mathbf{R})\cap L^2_{loc}(\mathbf{R})$ be its weak limit, given by Lemma \ref{bozza_lemma_weak_limit}. Then $u \in V_2$ and, for any $A>0$, we have
        \begin{equation*}
            \int_{-A}^Ae(u) \ dx + \frac{\gamma}{2}|u(0)|^2 \leq \liminf_{n \to \infty}\int_{-A}^Ae(u_n) \ dx + \frac{\gamma}{2}|u_n(0)|^2,
        \end{equation*}
        and
        \begin{equation*}
            \int_{-A}^A p(u) \  dx = \lim_{n \to \infty}\int_{-A}^Ap(u_n) \ dx.
        \end{equation*}
        \label{bozza_lemma_inequality_e_and_p}
    \end{lemma}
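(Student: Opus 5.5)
First I would show that the weak limit $u$ belongs to $V_2$ and that it has finite energy. By Lemma \ref{bozza_lemma_weak_limit}, $u_n \to u$ in $L^\infty_{loc}(\mathbf{R})$, so pointwise. Since $|u_n(x)| \geq 1+a_2$ for every $n$ and every $x$, we get $|u(x)| \geq 1+a_2$ for all $x$. Lemma \ref{bozza_lemma_K>E} together with $\mathcal{K}(u_n)\to\mathcal{K}(u_{a_1})$ gives a uniform bound on $E_\gamma(u_n)$. In particular $\|1-|u_n|^2\|_{L^2}$ and $\|\partial_x u_n\|_{L^2}$ are bounded. Weak lower semicontinuity then gives $\partial_x u\in L^2$. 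Fatou's lemma applied to the pointwise convergence $(1-|u_n|^2)^2\to(1-|u|^2)^2$ gives $1-|u|^2\in L^2$. Hence $u\in\mathcal{E}$, and so $u\in V_2$.

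Next I would pass to polar coordinates on $[-A,A]$. All the $u_n$ and $u$ are bounded below by $1+a_2>0$, so I write $u_n=\rho_n e^{i\theta_n}$ and $u=\rho e^{i\theta}$. I would use the formulas $\rho_n=|u_n|$ and $\partial_x\theta_n = \Re(iu_n\partial_x\overline{u}_n)/\rho_n^2$, which avoids fixing phase branches. The density $e(u_n)$ can be rewritten as $\frac12|\partial_x u_n|^2+\frac14(\rho_n^2-1)^2$, because $(\partial_x\rho)^2+\rho^2(\partial_x\theta)^2=|\partial_x u|^2$. For the potential part, uniform convergence on $[-A,A]$ gives $\frac14\int_{-A}^A(\rho_n^2-1)^2\to\frac14\int_{-A}^A(\rho^2-1)^2$, and it also gives $u_n(0)\to u(0)$. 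The gradient term satisfies $\int_{-A}^A|\partial_x u|^2\leq\liminf\int_{-A}^A|\partial_x u_n|^2$, by weak convergence of $\partial_x u_n$ in $L^2(-A,A)$. Summing these three limits gives the first inequality.

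For the momentum, I write
\[
p(u_n)=\tfrac12\,\frac{\rho_n^2-1}{\rho_n^2}\,\Re\bigl(iu_n\partial_x\overline{u}_n\bigr).
\]
The factor $\frac{\rho_n^2-1}{\rho_n^2}u_n$ converges uniformly on $[-A,A]$ to $\frac{\rho^2-1}{\rho^2}u$, since the denominators are bounded below by $(1+a_2)^2$. The factor $\partial_x\overline{u}_n$ converges weakly in $L^2(-A,A)$. A product of a strongly convergent (in $L^2(-A,A)$, indeed uniformly) sequence and a weakly convergent sequence has integral converging to the integral of the product of the limits. This gives $\int_{-A}^Ap(u_n)\to\int_{-A}^Ap(u)$.

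I expect the main point needing care to be this last step. Weak convergence of the gradient alone does not pass to the limit in quadratic expressions. The argument works because $p$ is linear in $\partial_x u$ once the smooth function of $u$ in front of it converges uniformly, and that uniform convergence rests on the lower bound $1+a_2$ coming from membership in $V_2$. The other point to check is that the whole sequence, not just a further subsequence, is used. This holds because Lemma \ref{bozza_lemma_weak_limit} already gives convergence of the chosen subsequence (relabelled $u_n$), so no further extraction is needed.
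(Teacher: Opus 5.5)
Your proposal is correct and follows essentially the paper's proof. The gradient term is handled by weak lower semicontinuity, the potential term and $u_n(0)$ by uniform local convergence, $u\in\mathcal{E}$ comes from Lemma~\ref{bozza_lemma_K>E} together with Fatou, and the momentum by a strong-times-weak argument, which the paper writes out as an explicit two-term splitting. The only cosmetic difference is that you prove $u\in\mathcal{E}$ before the local energy inequality, whereas the paper deduces it afterwards by extending that inequality to $\mathbf{R}$.
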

    \begin{proof}
    Set $A>0$. The first inequality follows by weak lower semi-continuity of the $L^2(\mathbf{R})$-norm and the $L^{\infty}_{loc}(\mathbf{R})$ convergence. Moreover, by means of Fatou's lemma, the same inequality holds on the real line, i.e.
    \begin{equation}
        \int_{\mathbf{R}}e(u) \ dx + \frac{\gamma}{2}|u(0)|^2 \leq \liminf_{n \to \infty}\int_{\mathbf{R}}e(u_n) \ dx + \frac{\gamma}{2}|u_n(0)|^2.
    \end{equation}
    Since $\{u_n\}_{n \in \mathbf{N}}$ is a minimizing sequence and by means of Lemma \ref{bozza_lemma_K>E}, this implies $u \in \mathcal{E}$. Since $u_n \to u$ in $L^{\infty}_{loc}(\mathbf{R})$, we have $u \in V_2$. For the second equality, we write
    \begin{equation}
       \begin{split}
            |\int_{-A}^A p(u_n)-p(u) dx| &=\frac{1}{2} \Bigl|\int_{-A}^A \frac{|u_n|^2-1}{|u_n|^2} \Re(iu_n\partial_x\overline{u}_n) - \frac{|u|^2-1}{|u|^2} \Re(iu\partial_x\overline{u}) dx \Bigr|\\ & \leq \frac{1}{2} \Bigl|\int_{-A}^A  \Bigl(\frac{|u_n|^2-1}{|u_n|^2}-\frac{|u|^2-1}{|u|^2} \Bigr) \Re(iu_n\partial_x\overline{u}_n) dx \Bigr| + \frac{1}{2}\Bigl|\int_{-A}^{A} \frac{|u|^2-1}{|u|^2} \Re\bigl(iu\partial_x\overline{u}-iu_n\partial_x\overline{u}_n\bigr)dx\Bigr|.
       \end{split}
       \label{bozza_mom_convergence}
    \end{equation}
    For the first contribution, we write
    \begin{equation*}
        \Bigl|\frac{|u_n|^2-1}{|u_n|^2}-\frac{|u|^2-1}{|u|^2} \Bigr| = \frac{\Bigl||u_n|^2-|u|^2\Bigr|}{|u|^2|u_n|^2} \leq (1+a_2)^{-4} \Bigl||u_n|^2-|u|^2\Bigr| \to 0, \qquad \text{in} \quad L^{\infty}([-A,A]),
    \end{equation*}
    as $n \to \infty$, by the $L^{\infty}_{loc}(\mathbf{R})$-convergence. At the same time, the integral $\int_{-A}^A |u_n\partial_x\overline{u}_n|dx$ is uniformly bounded, which implies that the first contribution converges to zero. For the second contribution in (\ref{bozza_mom_convergence}), we have
    \begin{equation*}
        \begin{split}
            &\Bigl|\int_{-A}^A\frac{|u|^2-1}{|u|^2} \Re\bigl(iu\partial_x\overline{u}-iu_n\partial_x\overline{u}_n\bigr)dx\Bigr| = \Bigl|\Re i\int_{-A}^A\frac{|u|^2-1}{|u|^2} \bigl(u\partial_x(\overline{u}-\overline{u}_n)-(u_n-u)\partial_x\overline{u}_n\bigr)dx\Bigr| \\ & \leq \Bigl|\int_{-A}^A\frac{|u|^2-1}{|u|^2}u\partial_x(\overline{u}-\overline{u}_n)dx\Bigr| + \int_{-A}^{A}\Bigl| \frac{|u|^2-1}{|u|^2} (u_n-u)\partial_x\overline{u}_n\Bigr|dx.
        \end{split}
    \end{equation*}
    The first term goes to zero as $n \to \infty$ since $\partial_xu_n \rightharpoonup \partial_xu$ in $L^2(\mathbf{R})$. The second term goes to zero as $n \to \infty$ since $u_n \to u$ in $L^{\infty}_{loc}(\mathbf{R})$.
    \end{proof}
    In the next lemma we consider a minimizing sequence $\{u_n\}_{n \in \mathbf{N}}$ and its weak limit $u \in V_2$. We would like to show that, up to phase shifts, $u$ coincides with $u_{a_1}$. In order to do that, we show that the sequence $\{u_n\}$ doesn't lose energy $e$ nor momentum $p$ at infinity. In particular,  we show that any such loss has the effect of decreasing the energy $\mathcal{K}(u)$ of the limit map. By the minimization property of $u_{a_1}$, we conclude that the sequence stays compact and that $u = u_{a_1}e^{i\phi}$, for $\phi \in \mathbf{R}$. The approach we follow is based on the concentration-compactness argument as developed in \cite[Theorem 3]{bethuel_existence} for the case of traveling waves to the standard Gross-Pitaevskii equation. Before we state the lemma, recall the definition of the distance $d_A$ in \eqref{bozza_d_A}.
    \begin{lemma}
        Let $A >0$. Let $\{u_n\}_{n \in \mathbf{N}}$ be a minimizing sequence for \eqref{bozza_minimization_problem}, and let $u \in V_2$ be its weak limit. Then, there exists $\phi \in \mathbf{R}$ such that $u = e^{i\phi}u_{a_1}$ and, up to subsequences, $$d_{A}(u_n, e^{i\phi}u_{a_1}) \to 0, \qquad \text{as} \quad n \to \infty.$$
        \label{bozza_lemma_conv_min_seq}
    \end{lemma}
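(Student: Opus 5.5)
Our plan is to show that no energy or momentum escapes to infinity, so that $\mathcal{K}(u)\le\lim\mathcal{K}(u_n)=\mathcal{K}(u_{a_1})$; Corollary \ref{bozza_corollary_u_a1} then identifies $u$, and the absence of loss upgrades weak convergence to convergence in $d_A$.

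\textbf{Step 1: splitting near and far.} For $R>0$ we write
\begin{equation*}
\mathcal{K}(u_n)=\Bigl[\int_{-R}^{R}(e(u_n)-vp(u_n))\,dx+\tfrac{\gamma}{2}|u_n(0)|^2\Bigr]+\int_{|x|>R}(e(u_n)-vp(u_n))\,dx .
\end{equation*}
By Lemma \ref{bozza_lemma_K>E}, the integrand $e(u_n)-vp(u_n)$ is pointwise at least $\varepsilon e(u_n)\ge 0$. Hence the tail term is nonnegative. By Lemma \ref{bozza_lemma_inequality_e_and_p}, $\liminf_n$ of the bracket is at least $\int_{-R}^R(e(u)-vp(u))dx+\frac{\gamma}{2}|u(0)|^2$. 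Letting $R\to\infty$ and using $u\in V_2$ (so $u\in\mathcal{E}$ and $e(u),p(u)\in L^1$) gives
\begin{equation*}
\mathcal{K}(u)+\limsup_{R\to\infty}\liminf_{n}\int_{|x|>R}(e(u_n)-vp(u_n))\le \mathcal{K}(u_{a_1}).
\end{equation*}
Since $u\in V_2$, minimality gives $\mathcal{K}(u)\ge\mathcal{K}(u_{a_1})$. Therefore $\mathcal{K}(u)=\mathcal{K}(u_{a_1})$, and by the uniqueness in Corollary \ref{bozza_corollary_u_a1} we get $u=e^{i\phi}u_{a_1}$.

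\textbf{Step 2: tail control and convergence in $d_A$.} Step 1 also shows that, along a subsequence, $\int_{|x|>R}e(u_n)\,dx\le\varepsilon^{-1}o_R(1)$ uniformly in large $n$. In addition, the local energy inequality of Lemma \ref{bozza_lemma_inequality_e_and_p} must be an equality in the limit:
\begin{equation*}
\int_{-R}^R e(u_n)\to\int_{-R}^R e(u) \quad\text{for each } R .
\end{equation*}
This holds because any strict gap would again produce $\mathcal{K}(u)<\mathcal{K}(u_{a_1})$, using the convergence of the momentum terms and the nonnegativity of $e-vp$. Combined with the tail bound, we obtain $\int_{\mathbf{R}}e(u_n)\to\int_{\mathbf{R}}e(u)$.

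We then write $e(u_n)=\frac12|\partial_x u_n|^2+\frac14(|u_n|^2-1)^2$; note $(\partial_x\rho)^2+\rho^2(\partial_x\theta)^2=|\partial_xu|^2$. Each of the two pieces is weakly lower semicontinuous, so convergence of the sum forces convergence of each piece separately.
\begin{itemize}
\item For the gradient: $\|\partial_xu_n\|_{L^2}\to\|\partial_xu\|_{L^2}$ together with weak convergence yields $\partial_xu_n\to\partial_xu$ strongly in $L^2(\mathbf{R})$.
\item For the density: $|u_n|^2-1\to|u|^2-1$ locally uniformly, the norms converge, and the tails are uniformly small, so $\||u_n|^2-|u|^2\|_{L^2}\to0$.
\item For the third term of $d_A$: $L^\infty([-A,A])$ convergence is already given by Lemma \ref{bozza_lemma_weak_limit}.
\end{itemize}
Together these give $d_A(u_n,e^{i\phi}u_{a_1})\to0$.

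\textbf{Main obstacle.} The expected difficulty is the lower semicontinuity argument near $x=0$ and the handling of the phase. The decomposition $\rho e^{i\theta}$ is only used through the gauge-invariant densities $e$ and $p$, which are well defined on $V_2$ since $|u_n|\ge1+a_2>0$. The point term $\frac{\gamma}{2}|u_n(0)|^2$ converges because $u_n(0)\to u(0)$. The potential subtlety is a diagonal choice of subsequence over $R\to\infty$. This is handled by a standard diagonal extraction, using that $\liminf$ over the already fixed subsequence is monotone in $R$.
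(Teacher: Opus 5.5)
Your proposal is correct and follows essentially the same route as the paper. Both split into near and far parts, using $e-vp\ge\varepsilon e\ge 0$ on $V_2$ to get $\mathcal{K}(u)=\mathcal{K}(u_{a_1})$ and hence $u=e^{i\phi}u_{a_1}$, then use small tail energy, energy convergence, and weak-plus-norm convergence to reach $d_A$ (the paper gets $E_\gamma(u_n)\to E_\gamma(u)$ via $\mathcal{P}(u_n)\to\mathcal{P}(u)$ rather than your local energy equality). The diagonal extraction is unnecessary, since $\mathcal{K}(u_n)$ converges and the near part has a $\liminf$ lower bound along the whole (already extracted) sequence, which gives $\limsup_n\int_{|x|>R}(e-vp)(u_n)\,dx\le\int_{|x|>R}(e-vp)(u)\,dx\to 0$ as $R\to\infty$.
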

    \begin{proof}
        Let $\{u_n\}_{n \in \mathbf{N}}$ be a minimizing sequence and let $u \in V_2$ be its weak limit, given by Lemma \ref{bozza_lemma_weak_limit}. Choose $\mu >0$. By Lemma \ref{bozza_lemma_inequality_e_and_p}, there exists $A_{\mu}>0$ and $n_{\mu} \in \mathbf{N}$ such that
        \begin{equation*}
            \Bigl|\int_{[-A_{\mu},A_{\mu}]}vp(u_n)dx - v\mathcal{P}(u)\Bigr| \leq \mu
        \end{equation*}
        and
        \begin{equation*}
            \int_{[-A_{\mu},A_{\mu}]}e(u_n)dx + \frac{\gamma}{2}|u_n(0)|^2 \geq E_{\gamma}(u)-\mu,
        \end{equation*}
        for all $n \geq n_{\mu}$. As in Lemma \ref{bozza_lemma_K>E}, consider $\varepsilon \in (0,1)$ such that $1+a_2 = \frac{1}{1-\varepsilon}\frac{v}{\sqrt{2}}$. We have, 
        \begin{equation}
            \begin{split}
                \mathcal{K}(u_n) &= \int_{[-A_{\mu},A_{\mu}]}\bigl(e(u_n) - vp(u_n) \bigr) dx + \frac{\gamma}{2}|u_n(0)|^2 +  \int_{ \mathbf{R}\backslash[-A_{\mu},A_{\mu}]}\bigl(e(u_n) - vp(u_n) \bigr) dx \\ & \geq \mathcal{K}(u) -2\mu +    \int_{ \mathbf{R}\backslash[-A_{\mu},A_{\mu}]}\bigl(e(u_n) - vp(u_n) \bigr) dx \\ &
                \geq \mathcal{K}(u) - 2\mu + \varepsilon\int_{\mathbf{R}\backslash[-A_{\mu},A_{\mu}]} e(u_n)dx, \qquad  \qquad \text{for all} \quad   n \geq n_{\mu},
                \label{bozza_K_inequality}
            \end{split}
        \end{equation}
        where in the last inequality we used Lemma (\ref{bozza_lemma_K>E}) and the fact that $\{u_n\}_{n \in \mathbf{N}} \subset V_2$. We conclude that $\mathcal{K}(u_n) \geq \mathcal{K}(u) - 2\mu$ for $n \geq n_{\mu}$.  Since $\mu >0$ is arbitrary, it must be $\mathcal{K}(u) = \mathcal{K}(u_{a_1})$. 
        Since $u \in V_2$ (see Lemma \ref{bozza_lemma_inequality_e_and_p}), we conclude $u = u_{a_1}e^{i\phi}$ for $\phi \in \mathbf{R}$.\\ Since $\mathcal{K}(u_n) \to \mathcal{K}(u)$ as $n \to \infty$, we can consider $m_{\mu} > n_{\mu}$ such that $\mu \geq \mathcal{K}(u_n)-\mathcal{K}(u)$ for all $n \geq m_{\mu}$. Using (\ref{bozza_K_inequality}), we obtain \begin{equation*}
            \int_{\mathbf{R}\backslash[-A_{\mu},A_{\mu}]}e(u_n)dx \leq \frac{3\mu}{\varepsilon}, \qquad \forall n\geq m_{\mu}.
        \end{equation*}
        Using Lemma \ref{bozza_lemma_K>E}, this implies
        $$\Bigl|\int_{\mathbf{R}\backslash[-A_{\mu},A_{\mu}]}vp(u_n) dx\Bigr| \leq \frac{1-\varepsilon}{\varepsilon}3\mu \qquad \forall n\geq m_{\mu}.$$ We conclude that
        \begin{equation*}
            \bigl|v\mathcal{P}(u_n)-v\mathcal{P}(u)\bigr| \leq \Bigl|\int_{[-A_{\mu},A_{\mu}]}vp(u_n)dx - v\mathcal{P}(u)\Bigr| +  \Bigl|\int_{\mathbf{R}\backslash[-A_{\mu},A_{\mu}]}vp(u_n)dx \Bigr| \leq \mu + \frac{1-\varepsilon}{\varepsilon}3\mu.
        \end{equation*}
        for all $n \geq m_{\mu}$. This implies that $\mathcal{P}(u_n) \to \mathcal{P}(u)$ as $n \to \infty$. Consequently, we have 
        \begin{equation}
            E_{\gamma}(u_n) \to E_{\gamma}(u), \qquad \text{as} \quad n \to \infty.
            \label{bozza_convergence_energy}
        \end{equation}
        Together with the weak convergence of Lemma \ref{bozza_lemma_weak_limit}, this implies $\partial_xu_n \to e^{i\phi}\partial_xu_{a_1}$ in $L^2(\mathbf{R})$ as $n \to \infty$, up to subsequences. Using (\ref{bozza_convergence_energy}), we then deduce that
        \begin{equation}
            \int_{\mathbf{R}}(1-|u_n|^2)^2dx \to  \int_{\mathbf{R}}(1-|u|^2)^2dx \qquad \text{as} \quad n \to \infty.
        \end{equation} 
        Using the uniform convergence $u_n \to u$ in $L^{\infty}_{loc}(\mathbf{R})$ from Lemma \ref{bozza_lemma_weak_limit} and using (\ref{bozza_convergence_energy}), we have, up to subsequences, $$\bigl(1-|u_n|^2\bigr) \rightharpoonup \bigl(1-|u|^2\bigr) \qquad \text{in} \quad L^2(\mathbf{R}), \qquad \text{as} \quad n \to \infty.$$ We conclude that $(1-|u_n|^2) \to (1-|u|^2)$ in $L^2(\mathbf{R})$ and that $d_{A}(u_n, e^{i\phi}u_{a_1}) \to 0 $ as $n \to \infty$ for any $A >0$.
    \end{proof}
    We are now ready to prove Proposition \ref{bozza_intro_stability}. Recall that the state $u(x;\xi_1)$ coincides with $u_{a_1}$ (see Remark \ref{bozza_remark_xi_1_a_1}).
    \begin{proof}[Proof of Proposition \ref{bozza_intro_stability}]
        We argue by contradiction and we suppose that $u_{a_1}$ is not orbitally stable. In this case we can find $A >0$ and $\varepsilon>0$, a sequences of times $\{t_n\} \subset \mathbf{R}$ and a sequence of initial data $\{u_{0,n}\} \subset \mathcal{E}$ such that 
        \begin{equation}
            d_A(u_{0,n}, u_{a_1}) \to 0 \qquad \text{as} \quad n \to \infty,
            \label{bozza_stability_contr}
        \end{equation}
        and
        \begin{equation}
             \inf_{\phi \in \mathbf{R}}d_A(u_n(t_n), e^{i\phi}u_{a_1}) \geq \varepsilon \qquad \forall n \in \mathbf{N},  
            \label{bozza_stability_contr_2}
        \end{equation}
        where $u_n(t)$ is the unique solution to (\ref{bozza_NLS}) with initial datum $u_{0,n}$.\\
        First, we want to show that $\{u_{0,n}\}\subset \mathcal{E}$ is a minimizing sequence for the problem in \eqref{bozza_minimization_problem}. Since $E_{\gamma}$ is continuous for the distance $d_A$ (Lemma \ref{bozza_lemma_energy0}),  we have, using (\ref{bozza_stability_contr}),
        \begin{equation}
            E_{\gamma}(u_{0,n}) \to E_{\gamma}(u_{a_1}), \qquad \text{as} \quad n \to \infty.
        \end{equation}
        Moreover, using Lemma \ref{bozza_lemma_modulus}, we have that $|u_{0,n}|$ converges uniformly to $|u_{a_1}|$ in $\mathbf{R}$. The uniform convergence implies that there exists $m \in \mathbf{N}$ such that $u_{0,n} \in V_2$ for any $n \geq m$. Up to extracting a subsequence, we can suppose that $u_{0,n} \in V_2$ for every $n \in \mathbf{N}$. In this way, the functions $u_{0,n}$ are nowhere vanishing and we can evaluate the momentum $\mathcal{P}(u_{0,n})$. From the continuity of $\mathcal{P}$ in Lemma \ref{bozza_mom_continuity}, we deduce
        \begin{equation}
            \mathcal{P}(u_{0,n}) \to \mathcal{P}(u_{a_1}), \qquad \text{as} \quad n \to +\infty.
        \end{equation}
        We conclude that $\{u_{0,n}\}_{n \in \mathbf{N}} \subset V_2$ and $\mathcal{K}(u_{0,n}) \to \mathcal{K}(u_{a_1})$ as $n \to \infty$. Hence, $\{u_{0,n}\}_{n \in \mathbf{N}}$ is a minimizing sequence.\\
        Since the inequality $\mathcal{K}(u_{a_1}) < \mathcal{K}(u_{a_2})$ holds true (see Corollary \ref{bozza_corollary_u_a1}), there exists $M>0$ such that $\mathcal{K}(u_{0,n}) < \mathcal{K}(u_{a_2})$ for any $n \geq M$. By Lemma \ref{bozza_pot_well}, we have  that the sequence of evolved states $\{u_n(t_n)\}_{n \in \mathbf{N}}$ satisfies $u_n(t_n) \in V_2$ and $\mathcal{K}(u_n(t_n)) = \mathcal{K}(u_{0,n})$ for all $n \geq M$. We conclude that $\{u_n(t_n)\}_{n \geq M}$ is a minimizing sequence for \eqref{bozza_minimization_problem}.  By Lemma \ref{bozza_lemma_conv_min_seq}, there exists $\phi \in \mathbf{R}$ such that $d_A(u_n(t_n), e^{i\phi}u_{a_1}) \to 0$ as $n \to \infty$, and this is in contradiction with (\ref{bozza_stability_contr_2}).
        \end{proof}

        We can now prove Corollary \ref{bozza_corollary_intro_momentum_exchange}.
        \begin{proof}[Proof of Corollary \ref{bozza_corollary_intro_momentum_exchange}]
            Let $A>0$ and $\varepsilon>0$ be given. For $\delta >0$ consider $u_0 \in U_0$ such that $d_A(u_0,u_{a_1}) <\delta$. Let $\psi(t)$ be the solution to \eqref{bozza_NLS} with initial datum $u_0$. We want to prove that, if $\delta>0$ is small enough, we have $|\mathcal{P}(\psi(t))-\mathcal{P}(u_0)|< \varepsilon$ for all $t \in \mathbf{R}$.\\
            By choosing $\delta >0$ small enough, we have $u_0 \in V_2$, thanks to Lemma \ref{bozza_lemma_modulus}. By the continuity of $\mathcal{K}$ with respect to $d_A$ and Corollary \ref{bozza_corollary_u_a1}, we have $\mathcal{K}(u_0)<\mathcal{K}(u_{a_2})$, , for $\delta>0$ small enough. This implies that $\psi(t) \in V_2$ for all $t \in \mathbf{R}$, by Lemma \ref{bozza_pot_well}.
            In particular, the momentum $\mathcal{P}(\psi(t))$ is well-defined for all times. Then, for a given $t \in \mathbf{R}$, we write
             \begin{equation}
                |\mathcal{P}(\psi(t))-\mathcal{P}(u_0)| \leq |\mathcal{P}(\psi(t))-\mathcal{P}(u_{a_1})| + |\mathcal{P}(u_{a_1})-\mathcal{P}(u_0)|.
                \label{bozza_momentum_split}
            \end{equation}                         
            By the continuity of $\mathcal{P}$ with respect to $d_A$, and the stability of $u_{a_1}$, both contributions to the right-hand-side of \eqref{bozza_momentum_split} can be made smaller than $\varepsilon/2$ by choosing $\delta>0$ small enough.
        \end{proof}
\appendix 
\section{Proof of Proposition \ref{bozza_prop_sa_extension}}
\label{bozza_appendix}
In this Appendix we prove Proposition \ref{bozza_prop_sa_extension}.
    We consider the operator $(A,D(A))$ given by $$A = -\partial_x^2+iv\partial_x, \qquad D(A) = \{f \in H^2(\mathbf{R}) \ | \  f(0) =0 \ \}.$$
    In Fourier space, this operator reads $(B,D(B))$ where $$B = p^2-vp, \qquad D(B) = \Bigl\{\hat{f} \in L^2(\mathbf{R}) \ | \ \int_{\mathbf{R}}|p^2\hat{f}|^2dp < \infty, \quad  \text{and} \quad  \int_{\mathbf{R}}\hat{f}(p)dp =0\Bigr\}.$$
    The first step is to compute the adjoint operator $(B^*,D(B^*))$. To this purpose, we introduce the following proposition, whose proof is based on \cite[Proposition 8.6]{teta2018primer}.
    \begin{proposition}
    Set $\lambda > \frac{v^2}{4}$. Then
        $$Ran(B+\lambda)^{\perp} = \{u \in L^2(\mathbf{R}) \ | \ u = q \hat{G}_v^{\lambda}, \ q \in \mathbf{C}\},$$
        where $\hat{G}_v^{\lambda}$ has been defined in (\ref{bozza_green_function_Fourier}).
    \end{proposition}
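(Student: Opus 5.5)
The statement to prove is that, for $\lambda>\frac{v^2}{4}$, the orthogonal complement of $Ran(B+\lambda)$ is exactly the line spanned by $\hat{G}_v^{\lambda}$. I would characterize $Ran(B+\lambda)$ explicitly and then compute its orthogonal complement directly in Fourier space, following \cite[Proposition 8.6]{teta2018primer}.

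\textbf{Step 1 (the range).} Since $\lambda>\frac{v^2}{4}$, we have $p^2-vp+\lambda=(p-\frac{v}{2})^2+\lambda-\frac{v^2}{4}\geq \lambda-\frac{v^2}{4}>0$. Hence multiplication by $p^2-vp+\lambda$ is a bijection from $\{\hat f\in L^2: p^2\hat f\in L^2\}$ onto $L^2(\mathbf{R})$. Under this bijection the constraint $\int\hat f\,dp=0$ becomes $\int \frac{g(p)}{p^2-vp+\lambda}dp=0$. Therefore
\begin{equation*}
Ran(B+\lambda)=\Bigl\{g\in L^2(\mathbf{R}) \ \Big| \ \int_{\mathbf{R}}\frac{g(p)}{p^2-vp+\lambda}dp=0\Bigr\}=\{g \ | \ \langle \hat G_v^{\lambda},g\rangle=0\},
\end{equation*}
using that $\hat G_v^\lambda=\frac{1}{\sqrt{2\pi}}(p^2-vp+\lambda)^{-1}$ is real valued. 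This step also needs $\hat G_v^\lambda\in L^2$, which holds because the function decays like $p^{-2}$ and is bounded. Here one should be careful with the complex conjugation in the inner product. Since the weight is real, the condition $\int g/(p^2-vp+\lambda)=0$ is indeed the same as $\langle\hat G_v^\lambda,g\rangle=0$.

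\textbf{Step 2 (the complement).} By Step 1, $Ran(B+\lambda)=\{\hat G_v^\lambda\}^{\perp}$. This set is the kernel of a continuous linear functional, so it is closed. Taking the orthogonal complement then gives $Ran(B+\lambda)^\perp=(\{\hat G_v^\lambda\}^\perp)^\perp=\mathrm{span}\{\hat G_v^\lambda\}$, which is the claimed set $\{q\hat G_v^\lambda, \ q\in\mathbf{C}\}$.

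The only delicate point is the identification of $D(B)$ with its preimage under multiplication by $p^2-vp+\lambda$ in Step 1. This requires checking that $\int\hat f\,dp$ is well defined, i.e. that $\hat f\in L^1$ for $\hat f\in D(B)$. That follows from Cauchy–Schwarz applied to $\hat f=(1+p^2)^{-1}(1+p^2)\hat f$. Apart from this, the argument is routine.
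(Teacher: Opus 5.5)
Your proof is correct, but it argues through the range rather than through the annihilator as the paper does.

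The paper never identifies $Ran(B+\lambda)$. It takes $w$ in the complement, sets $g=\overline{w}(p^2-vp+\lambda)\in L^2_{loc}(\mathbf{R})$, and shows that a function with $\int g u\,dp=0$ for all $u\in D(B)$ must be constant. It first proves this for step functions, testing against mean-zero combinations $\chi_{(-R,R)}(b_1\chi_{\Omega_1}+b_2\chi_{\Omega_2})$, and then extends it to $L^2_{loc}(\mathbf{R})$ by a density argument that is only sketched. This is essentially a du Bois-Reymond lemma. It uses only that $D(B)$ contains all bounded, compactly supported, mean-zero functions.

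You instead use two facts: $\hat{H}_0+\lambda$ maps $D(\hat{H}_0)$ bijectively onto $L^2(\mathbf{R})$, and $D(B)$ is the kernel of the single functional $\hat f\mapsto\int\hat f\,dp$ inside $D(\hat{H}_0)$. Hence $Ran(B+\lambda)$ is the hyperplane $\{\hat G_v^\lambda\}^{\perp}$, and the conclusion is just $M^{\perp\perp}=\overline{M}$ for the one-dimensional (hence closed) space $M=\mathrm{span}\{\hat G_v^\lambda\}$.

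Your route is shorter and fully rigorous. Its only analytic inputs are $(p^2-vp+\lambda)^{-1}\in L^2(\mathbf{R})$ and the boundedness of $p^2/(p^2-vp+\lambda)$. It avoids the informal density step, and it gives an explicit description of the range as a by-product. The paper's route avoids computing the range and follows the template of the cited reference more literally.
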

    \begin{proof}
        The set $Ran(B+\lambda)^{\perp}$ consists of functions $w \in L^2(\mathbf{R})$ satisfying $$\int_{\mathbf{R}}\overline{w}(p)(p^2-vp+\lambda)u(p)dp = 0, \qquad \forall u \in D(B).$$ A solution is $w = q \hat{G}^{\lambda}$ for any $q \in \mathbf{C}$. We need to show that these solutions are unique. For $w \in L^2(\mathbf{R})$, define $g(p) = \overline{w}(p)(p^2-vp+\lambda)$ and consider the problem
        \begin{equation}
            \int_{\mathbf{R}}g(p)u(p)dp = 0, \qquad \forall u \in D(B).
            \label{bozza_range_orth}
        \end{equation}
        We show that, in the class of step functions, the constant function is the unique solution. Consider $\Omega_1,..., \Omega_n$ a sequence of intervals of $\mathbf{R}$ such that $\Omega_i\cap\Omega_j = \emptyset$ if $i \neq j$ and $\cup_i\Omega_i = \mathbf{R}$. Let $g(p) = \sum_{i=1}^n c_i\chi_{\Omega_i}(p)$, where $c_i$ are constants in $\mathbf{C}$ and $\chi_{\Omega_i}$ denotes the characteristic function of the interval $\Omega_i\subset \mathbf{R}$. We proceed by contradiction and, without loss of generality, we assume $c_1 \neq c_2$. Then, equation (\ref{bozza_range_orth}) reads
        \begin{equation*}
            c_1\int_{\Omega_1}u(p)dp + c_2 \int_{\Omega_2}u(p)dp + \sum_{i \neq 1,2}c_i\int_{\Omega_i}u(p)dp =0.
        \end{equation*}
        Consider $u = \chi_{(-R,R)}(b_1\chi_{\Omega_1}+b_2\chi_{\Omega_2})$, where $R>0$ is large enough so that $m_i:= |(-R,R)\cap \Omega_i| >0$ for $i = 1,2$ and where $b_1,b_2 \in \mathbf{R}\backslash\{0\}$ satisfy $b_1m_1+b_2m_2=0$. Then we have
        \begin{equation*}
            0 = c_1\int_{\Omega_1}u(p)dp + c_2 \int_{\Omega_2}u(p)dp = c_1m_1b_1+c_2m_2b_2 =(c_1-c_2)m_1b_1. 
        \end{equation*}
        We conclude $c_1=c_2$, and that the constant function is the unique solution to (\ref{bozza_range_orth}) in the class of step functions. Using a density argument, one can show that the constant function is the unique solution also in the class $L^2_{loc}(\mathbf{R})$ and then conclude the proof.
    \end{proof}
    We can now compute the adjoint operator $(B^*, D(B^*))$.
    \begin{proposition}
        We have the following characterization:
        \begin{equation*}
            \begin{split}
                & D(B^*)  = \{\nu \in L^2(\mathbf{R}) \ | \ \nu = w+ q\hat{G}_v^{\lambda}, \quad w \in D(\hat{H}_0), \quad q \in \mathbf{C} \} \\ & (B^*+\lambda)\nu = (\hat{H}_0+\lambda)w,
            \end{split}
        \end{equation*}
        where $(\hat{H}_0,D(\hat{H}_0))$ is the free operator defined in (\ref{bozza_H_0_free}).
    \end{proposition}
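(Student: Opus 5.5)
The plan is to follow the standard von Neumann-type description of the adjoint of a closed symmetric restriction, as in \cite[Chapter 8]{teta2018primer}, using the preceding proposition on $Ran(B+\lambda)^{\perp}$. First I will check the inclusion ``$\supseteq$'' together with the formula for the action. Take $\nu = w + q\hat{G}_v^{\lambda}$ with $w \in D(\hat{H}_0)$ and $q \in \mathbf{C}$. For any $f \in D(B)$, self-adjointness of $\hat{H}_0$ and $B \subset \hat{H}_0$ give
\begin{equation*}
\langle \nu, (B+\lambda) f\rangle = \langle (\hat{H}_0+\lambda)w, f\rangle + q\,\langle \hat{G}_v^{\lambda}, (B+\lambda)f\rangle .
\end{equation*}
The last term vanishes by the previous proposition, since $\hat{G}_v^{\lambda}$ spans $Ran(B+\lambda)^{\perp}$. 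Hence $\nu \in D(B^*)$ and $(B^*+\lambda)\nu = (\hat{H}_0+\lambda)w$. This also shows the formula is well-defined, because $\hat G_v^\lambda\notin D(\hat H_0)$. Indeed $p^2\hat G_v^\lambda\notin L^2$, so the decomposition of $\nu$ is unique.

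Next I will prove the inclusion ``$\subseteq$''. Let $\nu \in D(B^*)$ and set $g := (B^*+\lambda)\nu \in L^2(\mathbf{R})$. Since $\lambda > v^2/4$, the multiplier $p^2-vp+\lambda$ is bounded below by $\lambda - v^2/4>0$, so $\hat{H}_0+\lambda$ is a bijection from $D(\hat{H}_0)$ onto $L^2(\mathbf{R})$. I define $w := (\hat{H}_0+\lambda)^{-1} g \in D(\hat{H}_0)$. Then for every $f \in D(B)$,
\begin{equation*}
\langle \nu - w, (B+\lambda) f\rangle = \langle g, f\rangle - \langle (\hat{H}_0+\lambda) w, f\rangle = 0,
\end{equation*}
so $\nu - w \in Ran(B+\lambda)^{\perp}$. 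By the previous proposition, $\nu - w = q\hat{G}_v^{\lambda}$ for some $q\in\mathbf{C}$, which is the claimed form. The action formula then holds by construction.

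Finally, I will note that the characterization is independent of $\lambda>v^2/4$. The difference $\hat G_v^{\lambda}-\hat G_v^{\mu}$ lies in $D(\hat H_0)$, so changing $\lambda$ only reshuffles the regular part. No real obstacle is expected here. The only delicate input is the uniqueness statement for $Ran(B+\lambda)^\perp$, which is already provided, together with the invertibility of $\hat H_0+\lambda$, which is immediate from the explicit multiplier.
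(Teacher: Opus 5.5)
Your proposal is correct and follows the paper's proof essentially step by step. For ``$\supseteq$'' you use that $\hat G_v^{\lambda}\perp Ran(B+\lambda)$ and that $\hat H_0$ is self-adjoint with $B\subset\hat H_0$; for ``$\subseteq$'' you use surjectivity of $\hat H_0+\lambda$ followed by the characterization of $Ran(B+\lambda)^{\perp}$. Your extra observation that the decomposition is unique, because $p^2\hat G_v^{\lambda}\notin L^2(\mathbf{R})$ and hence $\hat G_v^{\lambda}\notin D(\hat H_0)$, is a small but welcome point that the paper leaves implicit when it asserts the action formula.
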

    \begin{proof}
        First we notice that $$\langle \hat{G}_v^{\lambda}, (B+\lambda)u \rangle = 0, \qquad \forall u \in D(B).$$
        This implies that $\hat{G}_v^{\lambda} \in D(B^*)$ and $(B^*+\lambda)\hat{G}_v^{\lambda}=0$. Then, if $w \in D(\hat{H}_0)$, we have $\langle w, (B+\lambda)u \rangle = \langle (\hat{H}_0+\lambda)w, u\rangle$ for all $ u \in D(B)$. This implies that $w \in D(B^*)$ and $(B^*+\lambda)w = (\hat{H}_0+\lambda)w$. We conclude that if $\nu = w + q \hat{G}^{\lambda}$, then $\nu \in D(B^*)$ and $(B^*+\lambda)\nu = (\hat{H}_0+\lambda)w$. Conversely, suppose that $\nu \in D(B^*)$. Since $Ran(\hat{H}_0+\lambda) = L^2(\mathbf{R})$, there exists $w \in D(\hat{H}_0)$ such that
        \begin{equation*}
            (B^*+\lambda)\nu = (\hat{H}_0+\lambda)w.
        \end{equation*}
        For any $u \in D(B)$, we have $\langle w, (B+\lambda)u \rangle = \langle (\hat{H}_0+\lambda)w, u\rangle = \langle (B^*+\lambda)\nu, u \rangle$. We conclude that $w \in D(B^*)$ and $(B^*+\lambda)w = (B^*+\lambda)\nu$. Thus we have, for any $u \in D(B)$, $\langle \nu-w,(B+\lambda)u \rangle = 0$. This implies that $\nu-w \in Ran(B+\lambda)^{\perp}$, and using the previous proposition we can conclude the proof.
    \end{proof}
    The strategy now is to define a suitable restriction of the operator $(B^*,D(B^*))$ that is self-adjoint. The first candidate is a symmetric restriction of $B^*$, which is defined as follows. For any $\gamma \in \mathbf{R} \backslash\{0\}$, we define $(B_{\gamma},D(B_{\gamma}))$ as
    \begin{equation*}
        D(B_{\gamma}) = \Bigl\{ u \in D(B^*) \ | \ q = -\frac{\gamma}{\sqrt{2\pi}}\int_{\mathbf{R}} u(p) dp \Bigr\}, \qquad B_{\gamma} = B^*|_{D(B_{\gamma})}.
    \end{equation*}
    The operator $(B_{\gamma},D(B_{\gamma}))$ is symmetric: for $\nu_1,\nu_2 \in D(B_{\gamma})$
    \begin{equation*}
        \begin{split}
            \langle \nu_1, (B_{\gamma}+\lambda)\nu_2 \rangle &= \langle w_1+q_1\hat{G}_v^{\lambda}, (\hat{H}_0+\lambda)w_2\rangle = \langle (\hat{H}_0+\lambda)w_1, w_2 \rangle + q_1\int_{\mathbf{R}}\overline{w_2(p)}dp \\ & = \langle (\hat{H}_0+\lambda)w_1, \nu_2 \rangle - \overline{q_2}\int_{\mathbf{R}}w_1(p)dp +  q_1\int_{\mathbf{R}}\overline{w_2(p)}dp \\ & = \langle (\hat{H}_0+\lambda)w_1, \nu_2 \rangle - \overline{q_2}\int_{\mathbf{R}}\nu_1(p)dp +  q_1\int_{\mathbf{R}}\overline{\nu_2(p)}dp = \langle (B_{\gamma}+\lambda)\nu_1,\nu_2\rangle.
        \end{split}
    \end{equation*}
    We will make use of the following self-adjointess criterion (see \cite[Proposition 4.5]{teta2018primer}).
    \begin{proposition}
       Let $(H,D(H))$ be a symmetric operator defined in a Hilbert space $\mathcal{H}$. If there exists $z \in \mathbf{C}$, $\Im z \neq 0$, such that 
       \begin{equation*}
           Ran(H-z) = Ran(H-\overline{z}) = \mathcal{H},
       \end{equation*}
       then $(H,D(H))$ is self-adjoint.
    \end{proposition}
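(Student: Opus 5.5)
The plan is to verify directly that $D(H^*)\subset D(H)$. Since $(H,D(H))$ is symmetric, it is by definition densely defined and satisfies $H\subset H^*$, that is, $D(H)\subset D(H^*)$ and $H^*=H$ on $D(H)$. Proving the reverse inclusion of domains will therefore give $H=H^*$. The only tool needed is the elementary identity
\begin{equation*}
    \ker(H^*-\overline{z}) = Ran(H-z)^{\perp},
\end{equation*}
valid for any densely defined operator and any $z\in\mathbf{C}$. It follows at once from the definition of the adjoint: $u\in\ker(H^*-\overline{z})$ if and only if $\langle u,(H-z)f\rangle=0$ for all $f\in D(H)$.

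Fix $u\in D(H^*)$ and consider $(H^*-\overline{z})u\in\mathcal{H}$. By the hypothesis $Ran(H-\overline{z})=\mathcal{H}$, there is $w\in D(H)$ with $(H-\overline{z})w=(H^*-\overline{z})u$. Because $H\subset H^*$, we also have $(H-\overline{z})w=(H^*-\overline{z})w$. Subtracting gives $(H^*-\overline{z})(u-w)=0$, so $u-w\in\ker(H^*-\overline{z})=Ran(H-z)^{\perp}$. The other hypothesis, $Ran(H-z)=\mathcal{H}$, forces this orthogonal complement to be $\{0\}$. Hence $u=w\in D(H)$, which proves $D(H^*)\subset D(H)$ and therefore $H=H^*$.

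There is no real obstacle here. The only point needing care is the bookkeeping of which hypothesis is used where: surjectivity of $H-\overline{z}$ produces the preimage $w$, and surjectivity of $H-z$ kills the kernel of $H^*-\overline{z}$. The condition $\Im z\neq 0$ plays no role in the argument itself; it only matters for the criterion to be applicable, since symmetry gives $\|(H-z)f\|\geq|\Im z|\,\|f\|$. In the application to $(B_{\gamma},D(B_{\gamma}))$, the substantive work will be checking the range conditions by solving $(B_{\gamma}-z)\nu=g$ explicitly with $\nu=w+q\hat{G}_v^{-z}$, which is where the resolvent formula of Proposition \ref{bozza_prop_sa_extension} comes from.
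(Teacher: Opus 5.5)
Your proof is correct: it is the standard argument for this criterion, showing $D(H^*)\subset D(H)$ via a preimage under $H-\overline{z}$ and the identity $\ker(H^*-\overline{z})=Ran(H-z)^{\perp}$. The paper does not prove the statement itself but quotes it from \cite[Proposition 4.5]{teta2018primer}, and the usual proof of that result runs along these same lines, so there is nothing to compare beyond that.
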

    \begin{proof}[Proof of Proposition \ref{bozza_prop_sa_extension}]
        Our goal is to show the self-adjointness of $(B_{\gamma},D(B_{\gamma}))$ by means of the self-adjointness criterion above. For any $z \in \mathbf{C}$, with $\Im z \neq 0$, and $f \in L^2(\mathbf{R})$, we want to solve the problem
        \begin{equation}
            (B_{\gamma} - z)u = f, \quad u \in D(B_{\gamma}).
            \label{bozza_range_sa}
        \end{equation}
        Writing $u = w^{\lambda}+q\hat{G}_v^{\lambda}$, we have $(B_{\gamma}-z)u = (B_{\gamma}+\lambda)u - (z+\lambda)u = (\hat{H}_0-z)w^{\lambda} - (z+\lambda)q\hat{G}_v^{\lambda}$, where $q = -\frac{\gamma}{\sqrt{2\pi}}\int_{\mathbf{R}}u(p)dp$. We write (\ref{bozza_range_sa}) as
        \begin{equation*}
            (\hat{H}_0 - z)w^{\lambda} -(z+\lambda)q\hat{G}_v^{\lambda} = f.
        \end{equation*}
        We write $(\hat{H}_0-z)^{-1}(p) = \sqrt{2\pi}\ \hat{G}^{-z}(p) = \frac{1}{p^2-vp-z}$ and we use the identity $$\hat{G}_v^{-z}-\hat{G}_v^{\lambda} = \frac{1}{\sqrt{2\pi}}\frac{z+\lambda}{(p^2-vp-z)(p^2-vp+\lambda)} = \sqrt{2\pi}(z+\lambda)\hat{G}_v^{-z}\hat{G}_v^{\lambda}.$$
        We can write (\ref{bozza_range_sa}) as
        \begin{equation}
            u + \frac{\gamma}{\sqrt{2\pi}}\Bigl(\int_{\mathbf{R}}u(p)dp \Bigr)
\  \hat{G}_v^{-z} = \sqrt{2\pi}\hat{G}_v^{-z} f.
\label{bozza_eq_u_sa}
        \end{equation}
        If we integrate over $\mathbf{R}$, we get
        \begin{equation*}
            \int_{\mathbf{R}}u(p)dp = \frac{\sqrt{2\pi}}{1+\gamma G_v^{-z}(0)}\int_{\mathbf{R}}\hat{G}_v^{-z}(p)f(p)dp.
        \end{equation*}
        where $G_v^{-z}$ is defined in (\ref{bozza_green}). Inserting this into equation (\ref{bozza_eq_u_sa}) we obtain
        \begin{equation}
            u = (\hat{H}_0-z)^{-1}f - \frac{\gamma}{1+\gamma G_v^{-z}(0)}\Bigl(\int_{\mathbf{R}}\hat{G}_v^{-z}(p)f(p)dp \Bigr) \hat{G}_v^{-z}.
            \label{bozza_resolvent_fourier_gamma}
        \end{equation}
        By the self-adjointess criterion, we conclude that $(B_{\gamma},D(B_{\gamma}))$ is self-adjoint in $L^2(\mathbf{R})$. The expression of the resolvent $(B_{\gamma}-z)^{-1}f$ corresponds to the left-hand side of (\ref{bozza_resolvent_fourier_gamma}), for $f \in L^2(\mathbf{R})$.\\
        One can verify that the operator $(B_{\gamma}, D(B_{\gamma}))$ corresponds to the Fourier transform of the original operator $(H_{\gamma}, D(H_{\gamma}))$, namely $B_{\gamma} = \mathcal{F}H_{\gamma}\mathcal{F}^{-1}$. By the unitarity of the Fourier transform, the operator $(H_{\gamma}, D(H_{\gamma}))$ is self-adjoint (see \cite[Proposition 4.17]{teta2018primer}). We can compute the kernel of the resolvent operator $(H_{\gamma}-z)^{-1}$, for $z \in \mathbf{C}$ with $\Im z \neq 0$, by the inverse Fourier transform of (\ref{bozza_resolvent_fourier_gamma}). We obtain, for $g \in L^2(\mathbf{R})$,
        \begin{equation*}
            \bigl((H_{\gamma}-z)^{-1}g\bigr)(x) = \bigl(\mathcal{F}^{-1}(B_{\gamma}-z)^{-1}\mathcal{F}g\bigr)(x) = \bigl((H_0-z)^{-1}g\bigr)(x) - \frac{\gamma}{1+\gamma/2k}G^{-z}_v(x) \int_{\mathbf{R}}\hat{G}^{-z}_v(p)\hat{g}(p)dp.
        \end{equation*}
        If we write $$\int_{\mathbf{R}}\hat{G}_v^{-z}(p)\hat{g}(p)dp = \int_{\mathbf{R}}\overline{G^{\overline{-z}}_v}(y)g(y)dy = \int_{\mathbf{R}}G^{-z}_{-v}(y)g(y)dy,$$
        we obtain that the integral kernel of $(H_{\gamma}-z)^{-1} $ reads
        \begin{equation}
            (H_{\gamma}-z)^{-1}(x,y) = (H_0-z)^{-1}(x-y) - \frac{\gamma}{1+\gamma/2k} G^{-z}_{-v}(y)G^{-z}_v(x).
            \label{bozza_comput_kernel}
        \end{equation}
        In particular, $(H_{\gamma}-z)^{-1}$ is a compact perturbation of $(H_{0}-z)^{-1}$. By Weyl's theorem, $\sigma_{ess}(H_{\gamma}) = \sigma_{ess}(H_0) = [-\frac{v^2}{4}, +\infty)$. Finally, $H_{\gamma}$ does not have any eigenvalue for $\gamma>0$. Indeed, suppose $H_{\gamma}u = E u$, for $u \in D(H_{\gamma})$ with $||u||_{L^2(\mathbf{R})}=1$. First suppose that $E \geq -\frac{v^2}{4}$. We have
       \begin{equation*}
           -\partial_x^2u(x)+iv\partial_xu(x) = E u(x), \qquad x\neq0.
       \end{equation*}
       If we consider $\eta$ such that $u = e^{i\frac{v}{2}x}\eta$, we obtain
       \begin{equation*}
           -\partial_x^2\eta = (E+\frac{v^2}{4})\eta, \qquad x\neq0,
       \end{equation*}
       and we conclude that $\eta$, and hence $u$, cannot belong to $L^2(\mathbf{R})$. Now suppose $E < -\frac{v^2}{4}$. On the one hand we have $\langle u, H_{\gamma}u\rangle = E$. On the other hand, using point v) of Remark \ref{bozza_remark_sa}, we have
       \begin{equation*}
          \begin{split}
               \langle u, H_{\gamma}u \rangle &= \int_{\mathbf{R}}|\partial_xu|^2 + ivu(x)\partial_x\overline{u}(x) dx + \gamma |u(0)|^2 \geq \int_{\mathbf{R}}|\partial_xu|^2dx - |v| \bigl|\bigl|\partial_xu\bigr|\bigr|_{L^2(\mathbf{R})} + \gamma |u(0)|^2 \\ & \geq  \int_{\mathbf{R}}|\partial_xu|^2dx - \frac{v^2}{4} - ||\partial_xu||^2_{L^2(\mathbf{R})} + \gamma |u(0)|^2\geq -\frac{v^2}{4},
          \end{split}
       \end{equation*}
       which is a contradiction. We conclude that $\sigma(H_{\gamma}) = \sigma_c(H_{\gamma}) = [-\frac{v^2}{4}, +\infty)$ and that formula (\ref{bozza_comput_kernel}) holds for $z \in \mathbf{C}\backslash\sigma(H_\gamma)$.\\
    \end{proof}
\section{Proof of Proposition \ref{bozza_persistence_regularity}}
\label{section: appendix_II}
In this Appendix we prove Proposition \ref{bozza_persistence_regularity}. 
\begin{proof}
    Let $u_0 \in X^2_{\gamma}\cap \mathcal{E}$ and consider $R>0$ such that $|u_0|_{\mathcal{E}}< R$ and $||\tilde{H}_{\gamma}u||_{L^2(\mathbf{R})} < R$. For $\tilde{T}>0$ small enough, our goal is to find $\tilde{w} \in C^0((-\tilde{T},\tilde{T}),H^1(\mathbf{R})) \cap C^1((-\tilde{T},\tilde{T}),L^2(\mathbf{R}))$ such that $\tilde{w} = \Phi_{\tilde{T},u_0}(\tilde{w})$, where $\Phi_{\tilde{T},u_0}$ has been defined in (\ref{bozza_Phi}). We set $\tilde{R}>0$ and $\tilde{T}>0$, and we define the space
    \begin{equation*}
        \begin{split}
            \tilde{W}_{\tilde{R}}(\tilde{T}) = \{\tilde{w} \in &C^0((-\tilde{T},\tilde{T}),H^1(\mathbf{R})) \cap C^1((-\tilde{T},\tilde{T}),L^2(\mathbf{R})),  \\ & \text{s.t.} \quad \tilde{w}(0) =0, \quad  \text{and} \quad \forall t \in (-\tilde{T},\tilde{T}) \quad  ||\tilde{w}(t)||_{H^1(\mathbf{R})} < \tilde{R}, \quad ||\partial_t\tilde{w}(t)||_{L^2(\mathbf{R})} < \tilde{R}\}
        \end{split}
    \end{equation*}
    For $\tilde{w} \in \tilde{W}_{\tilde{R}}(\tilde{T})$, we set $u(t) = \tilde{w}(t)+T_{\gamma}(t)u_0$ and we have $u \in C^1((-\tilde{T},\tilde{T}), L^2(\mathbf{R}))$ by Proposition \ref{bozza_prop_X2}. Then, we have $F\circ u \in C^1((-\tilde{T},\tilde{T}), L^2(\mathbf{R}))$ with
    \begin{equation*}
        \partial_t (F\circ u)(t) = \partial_t u -2|u|^2\partial_tu -u^2\partial_t\overline{u}.
    \end{equation*}
    For $t \in (-\tilde{T},\tilde{T})$, it holds
    \begin{equation*}
        \partial_t\Phi_{\tilde{T},u_0}(\tilde{w})(t) = i \int_0^te^{-isH_{\gamma}}\partial_t(F \circ u)(t-s) ds + i e^{-itH_{\gamma}}F(u_0) \in L^2(\mathbf{R}),
    \end{equation*}
    where we use $\tilde{w}(0)=0$. In particular, $\partial_t\Phi_{\tilde{T},u_0}(\tilde{w})\in C^0((-\tilde{T}, \tilde{T}))$ and there exist $C_R>0$, depending only on $R$, and $C_{\tilde{R}}>0$, depending only on $\tilde{R}$, such that
    $$\Bigl|\Bigl|\partial_t\Phi_{\tilde{T},u_0}(\tilde{w})(t)\Bigr|\Bigr|_{L^2(\mathbf{R})} \lesssim C_R(1+\tilde{T}C_{\tilde{R}}).$$
    Since $\Phi_{\tilde{T},u_0}(\tilde{w})(0)=0$, and using inequality (\ref{bozza_contraction_0}), we can choose $\tilde{R}$ big enough and then $\tilde{T}$ small enough, so that $\Phi_{\tilde{T},u_0}(\tilde{w}) \in \tilde{W}_{\tilde{R}}(\tilde{T}).$ Similarly, for $\tilde{w}_1,\tilde{w}_2 \in \tilde{W}_{\tilde{R}}(\tilde{T})$, we have
    \begin{equation*}
        \begin{split}
            \Bigl|\Bigl|\partial_t\Phi_{\tilde{T},u_0}(\tilde{w}_1)(t)-&\partial_t\Phi_{\tilde{T},u_0}(\tilde{w}_2)(t)\Bigr|\Bigr|_{L^2(\mathbf{R})} \\ &\lesssim \tilde{T}C_RC_{\tilde{R}}\Bigl(||\tilde{w}_1-\tilde{w}_2||_{(L^{\infty}(-\tilde{T},\tilde{T}),H^1(\mathbf{R}))}+||\partial_t\tilde{w}_1-\partial_t\tilde{w}_2||_{(L^{\infty}(-\tilde{T},\tilde{T}),L^2(\mathbf{R}))}\Bigr).
        \end{split}
    \end{equation*}
    Together with inequality (\ref{bozza_contraction}), we obtain that $\Phi_{\tilde{T},u_0}$ is a contraction on $\tilde{W}_{\tilde{R}}(\tilde{T})$ for $\tilde{T}$ small enough. By the fixed point theorem, there exists a unique $\tilde{w} \in \tilde{W}_{\tilde{R}}(\tilde{T})$ such that $\Phi_{\tilde{T},u_0}(\tilde{w})=\tilde{w}$. We conclude that $u(t) = \tilde{w}(t)+T_{\gamma}(t)u_0$ is the unique solution to (\ref{bozza_NLS}) with $u(0) = u_0$ for $t \in (-\tilde{T},\tilde{T})$.\\
    For $t \in (-\tilde{T},\tilde{T})$, we have
    \begin{equation*}
        \begin{split}
            \lim_{\tau \to 0}\frac{e^{-i\tau H_{\gamma}}-1}{\tau}\Phi_{\tilde{T},u_0}(\tilde{w})(t) &=  \lim_{\tau \to 0} \Bigl[ \frac{\Phi_{\tilde{T}+\tau,u_0}(\tilde{w})(t)-\Phi_{\tilde{T},u_0}(\tilde{w})(t)}{\tau} - \frac{i}{\tau}\int_{t}^{t+\tau}e^{-i(t+\tau-s)H_{\gamma}}F(u(s))ds \Bigr] \\ & = \partial_t \Phi_{\tilde{T},u_0}(\tilde{w})(t)-iF(u(t)).
        \end{split}
    \end{equation*}
    This implies that $\tilde{w}(t) \in D(H_{\gamma})$ for any $t \in (-\tilde{T},\tilde{T})$, and 
    \begin{equation*}
        \partial_t u(t) = -i\tilde{H}_{\gamma}u(t)+iF(u(t)), \qquad \forall t \in (-\tilde{T},\tilde{T}).
    \end{equation*}
    In particular, $u(t) \in X^2_{\gamma}$ for every $t \in (-\tilde{T},\tilde{T})$. By iterating the fixed point argument, we can find a maximal solution $u$ defined on the interval $(-\tilde{T}_-,\tilde{T}_+)$ with $\tilde{T}_{\pm} \in (0,+\infty]$. If $\tilde{T}_{\pm}< +\infty$, we have
    \begin{equation}
        |u|_{\mathcal{E}}+||\tilde{H}_{\gamma}u||_{L^2(\mathbf{R})} \to +\infty, \quad \text{as} \quad t \to \pm \tilde{T}_{\pm}.
        \label{bozza_alternative}
    \end{equation}
    If $(-T_-,T_+)$ represents the maximal time of existence given by Corollary \ref{bozza_corollary}, for $T_{\pm} \in (0,+\infty]$, we have $\tilde{T}_{\pm} \leq T_{\pm}$, by uniqueness. Suppose $\tilde{T}_+ < T_+$. From the Duhamel formula in Proposition \ref{bozza_duhamel}, we have
    \begin{equation*}
        \partial_t u = -ie^{-itH_{\gamma}}\tilde{H}_{\gamma}u_0 + i e^{-itH_{\gamma}}F(u_0) + i \int_{0}^t e^{-isH_{\gamma}} \partial_t(F\circ u) (t-s)ds.
    \end{equation*}
    Using the fact that $|u|_{\mathcal{E}}$ remains bounded on $[0,\tilde{T}_+)$, and using Lemma \ref{bozza_L_infty}, we can find a constant $C>0$ such that
    \begin{equation*}
        ||\partial_t u||_{L^2(\mathbf{R})} \leq C + C \int_{0}^{t} ||\partial_t u||_{L^2(\mathbf{R})}.
    \end{equation*}
    By the Gr\"{o}nwall lemma, $\partial_t u$ is uniformly bounded in $L^2(\mathbf{R})$ on the interval $[0,\tilde{T}_+)$. By (\ref{bozza_eq_regular}), we have that $\tilde{H}_{\gamma}u$ stays bounded on $[0,\tilde{T}_+)$, and this contradicts (\ref{bozza_alternative}). We conclude that $\tilde{T}_{\pm} = T_{\pm}$.
\end{proof}
\section*{Acknowledgments.}
\noindent The first author is partially supported by the PRIN 2022 Project 2022YXWSLR "Boundary analysis for dispersive and viscous fluids", by Istituto Nazionale di Alta Matematica through the GNAMPA Research Group, by the Italian Ministry of University and Research (MUR)
through the Excellence Department Project awarded to GSSI, CUP D13C22003740001. The second author is grateful to Raffaele Scandone for fruitful and kind discussions on the theory of point interaction.
\bibliographystyle{siam}
\bibliography{biblio_1dsuperflow}
\end{document}